\theoremstyle{plain}
\newtheorem{theor}{Theorem}[section]
\newtheorem{lem}[theor]{Lemma}
\newtheorem{prop}[theor]{Proposition}
\newtheorem{cor}[theor]{Corollary}
\newtheorem{hyp}[theor]{Hypothesis}
\theoremstyle{definition}
\newtheorem{defin}[theor]{Definition}
\newtheorem{ex}[theor]{Example}
\newtheorem{rem}[theor]{Remark}
\newcommand{\N}{\mathbb N}
\newcommand{\e}{\varepsilon}
\newcommand{\Q}{\mathbb Q}
\newcommand{\R}{\mathbb R}
\newcommand{\Z}{\mathbb Z}
\newcommand{\T}{\mathbb T}
\newcommand{\B}{\mathcal B}
\newcommand{\D}{\mathcal D}
\newcommand{\F}{\mathcal F}
\newcommand{\I}{\mathcal I}
\newcommand{\calM}{\mathcal M}
\newcommand{\Nc}{\mathcal N}
\newcommand{\Id}{\operatorname{Id}}
\newcommand{\p}{\mathbb{P}}
\newcommand{\E}{\mathbb{E}}
\newcommand{\dist}{\operatorname{dist}}
\newcommand{\supess}{\operatorname{sup\,ess}}
\newcommand{\supessd}[1]{\mathrel{\mathop{\supess}\limits_{#1}}}
\newcommand{\Mes}{\operatorname{Mes}}
\newcommand{\pot}{{\operatorname{pot}}}
\newcommand{\sol}{{\operatorname{sol}}}
\newcommand{\conv}{{\operatorname{conv}}}
\newcommand{\Zc}{\mathcal Z}
\newcommand{\loc}{{\operatorname{loc}}}
\newcommand{\sign}{{\operatorname{sign}}}
\newcommand{\per}{{\operatorname{per}}}
\newcommand{\adh}{{\operatorname{adh\,}}}
\newcommand{\dom}{{\operatorname{dom}}}
\newcommand{\Ld}{\operatorname{L}}
\newcommand{\supp}{\operatorname{supp}}
\newcommand{\inter}{\operatorname{int}}
\newcommand{\cvf}[1]{\mathrel{\mathop{\xrightharpoonup{#1}}}}
\newcommand{\step}[1]{\noindent \textit{Step} #1.}
\numberwithin{equation}{section}
\title[Stochastic homogenization of nonconvex unbounded integral functionals]{Stochastic homogenization of nonconvex unbounded integral functionals
with convex growth}
\author[M. Duerinckx]{Mitia Duerinckx}
\author[A. Gloria]{Antoine Gloria}
\date{\today}
\address[Mitia Duerinckx]{D\'epartement de math\'ematique, Universit\'e Libre de Bruxelles, Belgium \\  and MEPHYSTO team, Inria Lille - Nord Europe, Villeneuve d'Ascq, France}
\email{mduerinc@ulb.ac.be}
\address[Antoine Gloria]{D\'epartement de math\'ematique, Universit\'e Libre de Bruxelles, Belgium \\  and MEPHYSTO team, Inria Lille - Nord Europe, Villeneuve d'Ascq, France}
\email{agloria@ulb.ac.be}
\begin{document}
\selectlanguage{english}
\maketitle

We consider the well-travelled problem of homogenization of random integral functionals. When the integrand has standard growth conditions, the qualitative theory is well-understood. When it comes to unbounded functionals, that is, when the domain of the integrand is not the whole space and may depend on the space-variable, there is no satisfactory theory. In this contribution we develop a complete qualitative stochastic homogenization theory for nonconvex unbounded functionals with convex growth. We first prove that if the integrand is convex and has $p$-growth from below (with $p>d$, the dimension), then it admits homogenization regardless of growth conditions from above.  This result, that crucially relies on the existence and sublinearity at infinity of correctors, is also new in the periodic case. In the case of nonconvex integrands, we prove that a similar homogenization result holds provided the nonconvex integrand admits a two-sided estimate by a convex integrand  (the domain of which may depend on the space variable) that itself admits homogenization. This result is of interest to the rigorous derivation of rubber elasticity from polymer physics, which involves the stochastic homogenization of such unbounded functionals.

\tableofcontents

\section{Introduction}

Let $O$ be a bounded Lipschitz domain of $\R^d$, $d,m\geq 1$. We consider the well-travelled problem of homogenization of random integral functionals $I_\e:W^{1,p}(O;\R^m)\to [0,\infty]$ given by
$$
I_\e(u)\,=\,\int_O W\left(\frac{x}{\e},\nabla u(x)\right)dx,
$$
where $W$ is a random Borel function stationary in its first variable that satisfies  for almost every $y\in \R^d$ and all $\Lambda \in \R^{m\times d}$ the two-sided estimate
\begin{equation}\label{eq:intro-gc}
\frac{1}{C}|\Lambda|^p-C\leq  V(y,\Lambda) \,\leq \, W(y,\Lambda) \,\leq \,
C(1+V(y,\Lambda)),
\end{equation}
for some $C>0$, $p>1$, and a random convex Borel function $V:\R^d \times \R^{m\times d} \to [0,\infty]$.
The originality of the growth condition we consider here is that $V(y,\cdot)$ may take infinite values and that its domain may depend on $y$, so that the domain of the homogenized integrand $\overline W$ (if it exists) is unknown a priori.
The motivation for considering such a problem comes from the derivation of nonlinear elasticity from the statistical physics of polymer-chain networks, cf. \cite{Alicandro-Cicalese-Gloria-11,Gloria-LeTallec-Vidrascu-08b,DeBuhan-Gloria-LeTallec-Vidrascu-10}. Indeed, the free energy of the polymer-chain network is given by two contributions: a steric effect (for which proving homogenization is one of the most important open problems of the field), and the sum of free energies of the deformed chains. The free energy of a single chain is a convex increasing function of the square of the length of the deformed polymer-chain, which blows up at finite deformation. 
The corresponding problem in a continuum setting would be the homogenization of the nonconvex integrand
\begin{equation}\label{eq:motivation}
W(y,\Lambda)\,=\,V(y,\Lambda)+g(\det \Lambda) \,\leq\, C(1+V(y,\Lambda)),
\end{equation}
where $V$ is an \emph{infinite-valued convex stationary ergodic integrand} whose domain depends on the space variable, and $g$ is a convex function (in this paper we assume that $g$ is controlled by $V$, which unfortunately rules out the finite compressibility of matter).

\medskip

Homogenization of multiple integrals has a long history, and we start with the state of the art when $V$ and $W$ are periodic in the first variable:
\begin{enumerate}[(i)]
\item The first contribution (beyond the linear case) is due to Marcellini \cite{Marcellini-78}, who addressed the homogenization of \emph{convex periodic} integrands satisfying a \emph{polynomial standard growth condition}, that is \eqref{eq:intro-gc} for $V(y,\Lambda)=|\Lambda|^p$.
\item Marcellini's result was then generalized to  \emph{nonconvex periodic} integrands satisfying 
a \emph{polynomial standard growth condition}, by Braides \cite{Braides-85} (which covers in addition almost-periodic coefficients) and M\"uller \cite[Theorem~1.3]{Muller-87}, independently.
\item In \cite[Theorem~1.5]{Muller-87}, M\"uller also addressed the case of a \emph{convex periodic} integrand satisfying a \emph{convex standard growth condition} \eqref{eq:intro-gc} for $V(y,\Lambda)=\tilde V(\Lambda)$ with $\tilde V:\R^{m\times d}\to \R^+$ a convex finite-valued map, and $p>d$.
\item In \cite[Chapter~21]{Braides98}, Braides and Defranceschi  treated the case of \emph{nonconvex periodic} integrands (see also~\cite{CorboEsposito-DeArcangelis-92} in the convex case)
satisfying~\eqref{eq:intro-gc} where $V$ is \emph{convex periodic} and satisfies the \emph{polynomial non-standard growth condition}

\begin{equation*}
\frac{1}{C}|\Lambda|^p-C\,\leq\, V(y,\Lambda)\,\leq \, C(1+|\Lambda|^q)
\end{equation*}
for some $q<p^*$ (with $p^*$ the Sobolev-conjugate of $p>1$), and the \emph{doubling property}
\begin{equation*}
V(y,2\Lambda) \,\leq \, C(1+V(y,\Lambda)).
\end{equation*}
\item In~\cite{Braides-Garroni-95}, given a collection of well-separated periodic inclusions, Braides and Garroni treated the case of {\it nonconvex periodic} integrands satisfying a {\it polynomial standard growth condition} as well as the (strong) {\it doubling property}
\begin{equation*}
W(y,2\Lambda) \,\leq \, CW(y,\Lambda).
\end{equation*}
outside the inclusions, but only satisfying inside the inclusions a \emph{convex standard growth condition}~\eqref{eq:intro-gc} for some possibly unbounded map $V(y,\Lambda)$ that is convex in the $\Lambda$-variable.
\item More recently Anza Hafsa and Mandallena studied in \cite{Anza-Mandallena-11} the homogenization of \emph{quasiconvex periodic} integrands satisfying a \emph{standard (unbounded) convex growth condition}, that is, \eqref{eq:intro-gc} for $V(y,\Lambda)=\tilde V(\Lambda)$ with $\tilde V:\R^{m\times d}\to [0,\infty]$ a convex infinite-valued map such that $\tilde V(\Lambda)\geq |\Lambda|^p$, and with $p>d$. Note that in this case the domain is fixed.
\end{enumerate}
When $W$ is random, the results are more sparse:
\begin{enumerate}[(vii)]
\item The first contribution beyond the linear case is due to Dal Maso and Modica, who addressed the homogenization of \emph{convex random stationary} integrands satisfying a \emph{polynomial standard growth condition} \cite{DalMaso-Modica-86}, generalizing Marcellini's result~(ii) to the random setting.
\item[(viii)] Messaoudi and Michaille later treated the homogenization of \emph{quasiconvex stationary ergodic} integrands satisfying a \emph{polynomial standard growth condition} \cite{Messaoudi-Michaille-94}, following Dal Maso and Modica's approach.
\item[(ix)] In their monograph on homogenization, Jikov, Kozlov and Oleinik treated the case of \emph{convex stationary ergodic} integrands satisfying a \emph{polynomial non-standard growth condition} 
\begin{equation*}
\frac{1}{C}|\Lambda|^p-C\,\leq\, W(y,\Lambda)\,\leq \, C(1+|\Lambda|^q)
\end{equation*}
for some $q<p^*$ (with $p^*$ the Sobolev-conjugate of $p>1$), see \cite[Chapter~15]{JKO94}. 
\end{enumerate}
For scalar functionals, that is, when $m=1$, results are much more precise, and we refer the reader to the monograph \cite{Carbone-DeArcangelis-02} by Carbone and De Arcangelis (which is however only concerned with the periodic setting)
and \cite{CCDAG-02,CCDAG-04}. 
When the domain of $V(y,\cdot)$ is not the whole of $\R^{d}$, the authors call $I_\e$ an unbounded functional.
The main technical tool for {\it scalar} unbounded functionals is truncation of test-functions (see also \cite[Section~15.2]{JKO94}), which cannot be used for systems in general (see however the end of this introduction). In particular, such truncation arguments replace the Sobolev embedding we shall use for systems and allow one to relax the assumption $p>d$ for scalar problems.

\medskip

In this contribution we give a far-reaching generalization of (i)--(iv) and (vi)--(ix) for systems in the random setting, by relaxing the assumption that the domain of $V(y,\cdot)$ in \eqref{eq:intro-gc} is independent of $y$. Our contribution also generalizes (v) by relaxing all geometric assumptions. We argue in two steps. 
For convex integrands, our result shows that homogenization holds without any growth condition from above (cf. Theorem~\ref{th:conv} for Neumann boundary conditions, and Corollary~\ref{cor:dir} for the more subtle case of Dirichlet boundary conditions), so that we may homogenize the bound $V$ itself. 
We proceed by truncation of the energy density (following the approach by M\"uller in~\cite{Muller-87}), and first prove in Proposition~\ref{prop:commut} that homogenization and truncation commute at the level of the definition of the homogenized energy density.
The proof relies on the existence of correctors with stationary gradients for convex problems and exploits quantitatively their sublinearity at infinity, see Lemma~\ref{lem:sublincor} (which is a substantial difference between the periodic and random cases, and makes the latter more subtle).
The second main technical achievement is the construction of recovery sequences in Proposition~\ref{prop:gammasupN} (a gluing argument based on affine boundary data trivially fails since the domain of $V(y,\cdot)$ depends on~$y$ --- this difficulty is already present in the periodic setting, and prevents us from using the standard homogenization formula with Dirichlet boundary conditions).
In the case of nonconvex integrands with a two-sided convex estimate \eqref{eq:intro-gc}, we show
in Theorem~\ref{th:nonconv} that homogenization reduces to the homogenization of the convex bound $V$. The first obstacle in this program is the definition of the homogenized energy density itself.
Indeed, in the absence of correctors (which is a consequence of nonconvexity), one usually defines the homogenized energy density through an asymptotic limit with linear boundary data on increasing cubes. As pointed out above, such an approach fails in general when the domain of the integrand is not fixed. 
Instead, in  Lemma~\ref{lem:Whomnc}, we use the (well-defined) corrector of the associated \emph{convex} problem as boundary data for the \emph{nonconvex} problem on these increasing cubes. Next we argue by blow-up in Proposition~\ref{prop:gammaliminfnc} for the $\Gamma$-$\liminf$ inequality (following the approach by Fonseca and M\"uller in \cite{Fonseca-Muller-92}), and make a systematic use of the corrector of the convex problem to control the nonconvex energy from above. Then, for the $\Gamma$-$\limsup$ we argue  in Proposition~\ref{prop:gammalimsupnc} by relaxation (following the approach introduced by Fonseca in \cite{Fonseca-88} for relaxation and first used in homogenization by Anza Hafsa and Mandallena in \cite{Anza-Mandallena-11}), making a similar use of the corrector of the convex problem in the estimates.

\medskip

To conclude this introduction, let us go back to our motivation, that is, the homogenization of \eqref{eq:motivation}.
On the one hand, we have reduced the homogenization for such integrands to the homogenization for the convex integrand $V$. On the other hand, we have proved homogenization 
for convex integrands without growth condition from above, and therefore proved homogenization for \eqref{eq:motivation}.
In the specific setting of \eqref{eq:motivation},
we can sharpen the general results described above, by simplifying the definition of the homogenized energy density $\overline W$, cf. Corollary~\ref{cor:model-nc}.
We believe a similar approach can be successfully implemented in the discrete setting considered in \cite{Alicandro-Cicalese-Gloria-11}
for the derivation of nonlinear elasticity from polymer physics.

In the scalar case $m=1$, combining our approach with truncation arguments, we may further refine our general results, in particular relaxing the condition $p>d$. Our approach then improves (and extends to the stochastic setting) some scalar results of \cite{CCDAG-02,Carbone-DeArcangelis-02}.

\medskip

The main results are given in Section~\ref{sec:results}. The proofs of the results for convex integrands are displayed in Section~\ref{sec:convex}, whereas Section~\ref{sec:nonconv} is dedicated to the proofs for nonconvex integrands. In Section~\ref{chap:improved} we turn to various possible improvements of our general results under additional assumptions.
In the appendix we prove several standard and less standard results on approximation of functions, measurability of integral functionals, and Weyl decompositions in probability, that are abundantly used in this article.

\section{Main results}\label{sec:results}

Let $(\Omega,\F,\p)$ be a complete probability space and let  $\tau:=(\tau_y)_{y\in\R^d}$  be a measurable {\it ergodic} action of $(\R^d,+)$ on $(\Omega,\F,\p)$, that are fixed once and for all throughout the paper. We denote by $\E$ the expectation on $\Omega$ with respect to $\p$.

Consider a map $W:\R^d\times\R^{m\times d}\times\Omega\to[0,\infty]$ that is $\tau$-stationary in the sense that, for all $\Lambda\in\R^{m\times d}$, all $\omega\in\Omega$, and all $y,z\in\R^d$,
\begin{align}\label{eq:statnormalrandomint}
W(y,\Lambda,\tau_{-z}\omega)=W(y+z,\Lambda,\omega),
\end{align}
and assume that $W(y,\cdot,\omega)$ is lower semicontinuous on $\R^{m\times d}$ for almost all $y,\omega$.
We also assume in the rest of this paper that, for almost all $\omega$, the map $y\mapsto W(y,\Lambda+u(y),\omega)$ is measurable for all $u\in\Mes(\R^d;\R^{m\times d})$, and that, for almost all $y\in\R^d$, the map $\omega\mapsto W(y,\Lambda+v(\omega),\omega)$ is measurable for all $v\in\Mes(\Omega;\R^{m\times d})$.
Continuity in the second variable and joint measurability (in which case $W$ is called a {\it Carath\'eodory} integrand) would ensure these properties; weaker sufficient conditions for these properties are given in Appendix~\ref{app:intnormal}.
Such integrands $W$ will be called {\it $\tau$-stationary normal random integrands}.

We further make the following additional measurability assumption on $W$:
\begin{hyp}\label{hypo:asmeasinf}
For any jointly measurable function $f:\R^d\times \Omega\to\R$ and any bounded domain $O\subset\R^d$,
\begin{align*}
\omega\mapsto\inf_{u\in W^{1,1}_0(O;\R^m)}\int_OW(y/\e,f(y,\omega)+\nabla u(y),\omega)dy
\end{align*}
is $\F$-measurable on $\Omega$.\qed
\end{hyp}

As discussed in Appendix~\ref{app:infmes}, this last hypothesis is always satisfied if $W$ is convex in the second variable, and more generally if it is sup-quasiconvex (in the sense of Definition~\ref{def:approxquasiconv}), which includes e.g. the case of a sum of a convex integrand and of a ``nice'' nonconvex part.

For any bounded domain $O\subset\R^d$, we define the following family of random integral functionals, parametrized by $\e>0$,
\begin{align}\label{eq:defIeps}
I_\e(\cdot,\cdot;O):W^{1,1}(O;\R^m)\times\Omega\to [0,\infty]: \quad(u,\omega)\mapsto I_\e(u,\omega;O):=\int_OW(y/\e,\nabla u(y),\omega)dy.
\end{align}
The aim of this paper is to prove homogenization for $I_\e$ as $\e \downarrow 0$ under mild growth conditions on $W$, which we formulate in terms of $\Gamma$-convergence for the weak convergence of $W^{1,p}(O;\R^m)$ (for some $p>1$).
When $\Lambda \mapsto W(y,\Lambda,\omega)$ is convex for almost all $y,\omega$, we say that $W$ is a {\it $\tau$-stationary convex normal random integrand}, and shall use the notation $V$ and $J_\e$ instead of $W$ and $I_\e$, that is, for every bounded domain $O\subset\R^d$ and $\e>0$,
\begin{align}\label{eq:defJeps}
J_\e(\cdot,\cdot;O):W^{1,1}(O,\R^m)\times\Omega\to [0,\infty]:\quad  (u,\omega)\mapsto J_\e(u,\omega;O):=\int_OV(y/\e,\nabla u(y),\omega)dy.
\end{align}
The notation $W$ and $I_\e$ will be used for nonconvex integrands.
We start our analysis with the case of convex integrands, then turn to nonconvex integrands, present an application to nonlinear elasticity, and conclude with a discussion of several possible improvements of these general results under additional assumptions.

\subsection{Convex integrands}

In this subsection we state homogenization results for $J_\e$ with (essentially) no growth condition from above.
We start with Neumann boundary conditions, and then address the more subtle case of Dirichlet boundary conditions.

\subsubsection{Homogenization with Neumann boundary conditions}

Our first result is as follows.
\begin{theor}[Convex integrands with Neumann boundary data]\label{th:conv}
Let $V:\R^d\times\R^{m\times d}\times\Omega\to [0,\infty]$ be a $\tau$-stationary convex normal random integrand that satisfies the following
uniform coercivity condition: there exist $C>0$ and $p>d$ such that for almost all $\omega$ and $y$, we have for all $\Lambda$,
\begin{align}\label{eq:aslowerbound0}
\frac{1}{C}|\Lambda|^{p}-C\,\le\, V(y,\Lambda,\omega).
\end{align}
Assume that the convex function $M:=\supess_{y,\omega}V(y,\cdot,\omega)$ has $0$ in the interior of its domain.
Then, for almost all $\omega\in\Omega$ and all  bounded Lipschitz domains $O\subset\R^d$, the integral functionals $J_\e(\cdot,\omega;O)$ $\Gamma$-converge
to the integral functional $J(\cdot;O):W^{1,p}(O;\R^m)\to[0,\infty]$ defined by
\begin{align*}
J(u;O)=\int_O\overline V(\nabla u(y))dy,
\end{align*}
for some lower semicontinuous convex function $\overline V:\R^{m\times d}\to [0,\infty]$ characterized by the following three equivalent formulas:
\begin{enumerate}[(i)]
\item Formula in probability: for all $\Lambda\in \R^{m\times d}$,
\begin{align}\label{eq:homogformproba}
\overline V(\Lambda)=\inf_{g\in F^p_\pot(\Omega)^m}\E[V(0,\Lambda+g,\cdot)],
\end{align}
where the space of mean-zero potential random variables $F^p_\pot(\Omega)$ is recalled in Section~\ref{chap:preliminweyl}.
\item Dirichlet formula with truncation: for any increasing sequence $V^k\uparrow V$ of $\tau$-stationary convex random integrands 
that satisfy the standard $p$-growth condition
\begin{equation}\label{eq:coercivity-conv}
\frac{1}{C}|\Lambda|^{p}-C\le V^k(y,\Lambda,\omega)\le C_k(1+|\Lambda|^p)
\end{equation}
for all $y,\omega,\Lambda$, and some sequence $C_k<\infty$, 
we have for almost all $\omega$, all $\Lambda$, and all bounded Lipschitz domains $O\subset\R^d$,
\begin{align}\label{eq:homogformmuller}
\overline{V}(\Lambda)=\lim_{k\uparrow\infty} \lim_{\e\downarrow 0} \inf_{\phi\in W^{1,p}_0(O/\e;\R^m)}\fint_{O/\e} V^k(y,\Lambda+\nabla \phi(y),\omega)dy.
\end{align}
\item Convexification formula: for all $\Lambda$ and all bounded Lipschitz domains $O\subset\R^d$, we have, for almost all $\omega$,
\begin{align}\label{eq:formconvexif}
\overline{V}(\Lambda)=\lim_{t\uparrow1}\lim_{\e\downarrow 0} \inf_{\phi\in W^{1,p}(O/\e;\R^m)\atop \fint_{O/\e}\nabla\phi=0}\fint_{O/\e} V(y,t\Lambda+\nabla \phi(y),\omega)dy.
\end{align}
As a consequence of convexity, the limit $t\uparrow1$ can be omitted when $\Lambda\notin \partial\dom\overline V$.\qed
\end{enumerate}
\end{theor}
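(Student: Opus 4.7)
The plan is to combine a truncation-approximation strategy with the use of sublinear correctors for the convex problem. Since no growth from above is assumed on $V$, the classical homogenization theorems (viz.\ (vii)--(ix) above) do not apply directly. The natural remedy is to introduce an increasing sequence of truncated integrands $V^k\uparrow V$ satisfying the standard two-sided growth \eqref{eq:coercivity-conv}, to apply the classical theory to each $V^k$, and to pass to the limit $k\uparrow\infty$. The cornerstone of the argument is Proposition~\ref{prop:commut}, which establishes that truncation and homogenization commute at the level of the homogenized integrand, i.e.\ that $\overline{V^k}\uparrow\overline V$ pointwise, with $\overline V$ given by \eqref{eq:homogformproba}.

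The second central ingredient is the existence of a mean-zero stationary corrector realising the infimum in the probabilistic formula \eqref{eq:homogformproba} for $\Lambda$ in the interior of $\dom\overline V$, together with the quantitative sublinearity at infinity of its realisation-wise primitive $\varphi_\Lambda(\cdot,\omega)$ provided by Lemma~\ref{lem:sublincor}. Existence follows from the coercivity \eqref{eq:aslowerbound0}, lower semi-continuity, and the Weyl decomposition of stationary gradient fields discussed in the appendix, while the sublinearity relies on the ergodic theorem combined with the fact that $\nabla\varphi_\Lambda\in F^p_\pot(\Omega)^m$ has vanishing mean.

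Equipped with these tools, the $\Gamma$-$\liminf$ inequality is straightforward: for any sequence $u_\e\rightharpoonup u$ in $W^{1,p}(O;\R^m)$, the monotonicity $V^k\le V$ yields $J_\e(u_\e,\omega;O)\ge J^k_\e(u_\e,\omega;O)$; taking the $\liminf$ in $\e$ via classical homogenization for $V^k$ and then passing to the limit in $k$ by monotone convergence furnishes the required lower bound. The $\Gamma$-$\limsup$ inequality, handled in Proposition~\ref{prop:gammasupN}, is the most delicate step. I would first reduce by density to target functions $u$ that are piecewise affine on a polyhedral decomposition of $O$ with slopes $\Lambda_j\in\Int\dom\overline V$, and on each piece use the oscillating perturbation $u_\e(y)=u(y)+\e\,\varphi_{\Lambda_j}(y/\e,\omega)$. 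The difficulty is that, since $\dom V(y,\cdot)$ depends on $y$, one cannot glue adjacent pieces by reverting to affine boundary data in a transition layer without escaping the effective domain. Instead, one must interpolate between two oscillating correctors via a cut-off in a thin layer; here the assumption $p>d$ enters crucially, through the Sobolev embedding into H\"older continuous functions, which combined with the quantitative sublinearity of the corrector ensures that $\e\varphi_{\Lambda_j}(\cdot/\e,\omega)$ is uniformly small and, together with the assumption that $0$ lies in the interior of $\dom M$, that the glued perturbation remains admissible for $V(\cdot/\e,\cdot,\omega)$ almost everywhere.

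The equivalence of the three characterisations is then organised as follows. Formula (i) serves as the definition used to compute the $\Gamma$-limit via the corrector. Formula (ii) is inherited from the classical Dirichlet formula applied to each truncated integrand $V^k$, combined with Proposition~\ref{prop:commut} to pass to the limit $k\uparrow\infty$. Formula (iii) follows from convexity by a rescaling argument: replacing $\Lambda$ by $t\Lambda$ with $t<1$ places the argument strictly in $\Int\dom\overline V$, so that a compactly supported corrector-type perturbation is admissible, making the mean-constrained infimum on expanding cubes coincide with the unconstrained one up to a boundary error that vanishes in the limit; lower semi-continuity of $\overline V$ then allows one to let $t\uparrow 1$, and convexity makes this outer limit redundant on the interior of $\dom\overline V$. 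The main obstacle throughout is Proposition~\ref{prop:gammasupN}, where the $y$-dependence of the effective domain, the quantitative sublinearity of the corrector, and the Sobolev embedding for $p>d$ must all be delicately combined.
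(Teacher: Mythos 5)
Your proposal follows the same architecture as the paper's proof: truncation $V^k\uparrow V$ with classical homogenization for the truncated integrands, corrector existence and sublinearity via the ergodic theorem (Lemma~\ref{lem:sublincor}), commutation of truncation and homogenization (Proposition~\ref{prop:commut}) to identify $\overline V$ with the probabilistic formula, $\Gamma$-$\liminf$ from monotone convergence, $\Gamma$-$\limsup$ by reduction to piecewise affine targets and a corrector gluing that exploits $p>d$ and $0\in\inter\dom M$, and the same scheme for passing between formulas (i)--(iii). That you treat the probabilistic formula~\eqref{eq:homogformproba} as the definition while the paper starts from~\eqref{eq:homogformmuller} is immaterial since both directions of the equivalence are needed anyway.

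There is, however, a concrete gap in the gluing step of the $\Gamma$-$\limsup$. You correctly identify that one must interpolate between the oscillating correctors $\e\varphi_{\Lambda_i}(\cdot/\e,\omega)$ via a cut-off $\chi^i$ in an $\eta$-layer, and that $p>d$ makes the corrector oscillations uniformly small. But when you form the convex combination $t\sum_i\chi^i\nabla u_\e^i+(1-t)\tfrac{t}{1-t}S_{\e,\eta}$ and apply the bound $V(\cdot)\le M(\cdot)$ on the $(1-t)$-weighted error piece, the quantity $S_{\e,\eta}$ contains not only the cut-off gradients hitting the oscillating parts (which indeed vanish by Sobolev embedding as $\e\downarrow0$) but also the term $\sum_i\chi^i(\nabla u-\Lambda_i)$, which on a transition layer between pieces $i$ and $j$ equals a convex combination of $\Lambda_i-\Lambda_j$ and is therefore of order $|\Lambda_i-\Lambda_j|$, \emph{independent of} $\e$ and $\eta$. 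For a generic piecewise affine $u$ this is $O(1)$, so $\tfrac{t}{1-t}S_{\e,\eta}$ escapes any fixed $\delta$-ball inside $\inter\dom M$ as $t\uparrow1$, and the error term $(1-t)\int_O V(\cdot/\e,\tfrac{t}{1-t}S_{\e,\eta},\omega)$ cannot be bounded; since $V$ may be $+\infty$ there is no way to absorb it. The paper resolves this with an additional preprocessing step (Proposition~\ref{prop:approxaffine}): the piecewise affine target is first replaced by a refined piecewise affine function whose slopes differ by at most $\kappa$ across adjacent pieces, with $\kappa\downarrow0$ inserted before $t\uparrow1$ in the order of limits. Without this refinement, or some equivalent device controlling the slope jumps, the gluing argument as you describe it does not close.
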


Comments are in order:
\begin{itemize}
\item The limit $t\uparrow1$ cannot be omitted in \eqref{eq:formconvexif} in general for $\Lambda\in\partial\dom\overline V$.
Indeed, let $V$ coincide with a convex map $\tilde V:\R^{m\times d}\to [0,\infty]$ with a closed domain, and which is not lower semicontinuous at the boundary of its domain.
In the interior of $\dom \tilde V$, $\overline V$ coincides with $\tilde V$. However, since $\overline V$ is necessarily lower semicontinuous on its domain, it cannot 
coincide with $\tilde V$ on $\partial \dom \tilde V$.
\item In the proof we take~\eqref{eq:homogformmuller} as the defining formula for $\overline V$, following the approach by M\"uller in~\cite{Muller-87}. Formula~\eqref{eq:homogformproba} is interesting in two respects: first, it is intrinsinc (no approximation is required), and second it is an exact formula (there is no asymptotic limit involved). The equivalence of both formulas, which can be interpreted as the commutation of trunction and homogenization, is the key to the proof of the $\Gamma$-convergence result. 
\end{itemize}

\medskip

We may extend Theorem~\ref{th:conv} in two directions:
\begin{itemize}
\item The extension of Theorem~\ref{th:conv} to the case of domains with holes (or more generally to soft inclusions, for which the coercivity assumption \eqref{eq:aslowerbound0} does not hold everywhere) is straighforward provided we have a suitable extension result.
When holes are well-separated, such extension results are standard (see e.g.~\cite[Sections~3.1]{JKO94}). 
For general situations however, this can become a subtle issue (see in particular~\cite[Sections~3.1 and~3.5]{JKO94}). 
In the particular case of the periodic setting, there is a very general extension result \cite{ACPDMP-92}, which is used e.g. in~\cite{Braides-Garroni-95}. 
\item The assumption $p>d$ is crucial in the generality of Theorem~\ref{th:conv}, which is used in the form of the Sobolev embedding of $W^{1,p}(O;\R^m)$ in $\Ld^\infty(O;\R^m)$.
In the case $1<p\le d$, 
the conclusions of the theorem still hold true provided that $V(y,\Lambda,\omega)\le M(\Lambda)$ for some convex function $M:\R^{m\times d}\to\R$ that satisfies
the growth condition $\lim_{|\Lambda|\to\infty}M(\Lambda)/|\Lambda|^q<\infty$ for $q=dp/(d-p)$ if $p<d$ or for some $q<\infty$ if $p=d$. 
In the scalar case $m=1$, the use of the Sobolev embedding can be avoided by a truncation argument, as explained in Corollary~\ref{cor:subcritp} below, see also \cite{Carbone-DeArcangelis-02,CCDAG-02}.
\end{itemize}

\subsubsection{Dirichlet boundary conditions}\label{chap:generalBC}
We now discuss the homogenization result in the case of Dirichlet boundary conditions (the case of mixed boundary data can then be dealt with in a straightforward way, and we leave the details to the reader). A first remark is that Dirichlet data have to be well-prepared, as the following elementary example shows.

\begin{ex}{}\label{example}
Consider random unit spherical inclusions centered at the points of a Poisson point process, choose the integrand $V$ to be equal to $|\Lambda|^p$ outside the inclusions and to have a bounded domain $\mathcal D\subset\R^{m\times d}$ inside the inclusions. Given a (nonempty) bounded open set $O\subset\R^d$, for almost all $\omega$, the realization of the inclusions corresponding to $\omega$ intersects $\partial (O/\e)$ for infinitely many $\e>0$, and hence $\limsup_\e J_\e(u+\Lambda\cdot x,\omega;O)=\infty$ for all $u\in W^{1,p}_0(O;\R^m)$, due to the Dirichlet boundary condition. In contrast, if the intensity of the underlying Poisson process is not too big, it is easily seen that the homogenized integral functional $J$ defined in Theorem~\ref{th:conv} is finite-valued. This proves that, for all $\Lambda\notin\mathcal D$ and almost all $\omega$, $J_\e(\cdot+\Lambda\cdot x,\omega;O)$ cannot $\Gamma$-converge to $J(\cdot;O)$ on $W^{1,p}_0(O;\R^m)$, due to the intersection of some rigid inclusions with the boundary of the domain where the Dirichlet condition is imposed.
\qed\end{ex}

We propose two ways to prepare Dirichlet data:
\begin{itemize}
\item by relaxing the boundary data so that the energy remains finite for all $\e>0$ while ensuring that the boundary data are recovered at the limit $\e \downarrow 0$ --- we call ``lifting'' this procedure;
\item by replacing the integrand $V$ by a softer integrand on a neighborhood of the boundary where the boundary condition is imposed --- we call this a ``soft buffer zone''. 
\end{itemize}

\begin{cor}[Convex integrands with Dirichlet boundary data]\label{cor:dir}
Let $V$, $M$, $J_\e$, and $J$ be as in Theorem~\ref{th:conv} for some $p>d$.  Then, for almost all $\omega\in\Omega$ and all  bounded Lipschitz domains $O\subset\R^d$, we have
\begin{enumerate}[(i)]
\item For all boundary data $u\in W^{1,p}(O;\R^m)$ such that $J(\alpha u;O)<\infty$ for some $\alpha >1$, there exists a lifted sequence $(u_\e)_\e$ with $u_\e\cvf{} u$ in $W^{1,p}(O;\R^m)$, such that 
we have on $W^{1,p}_0(O;\R^m)$:
\begin{eqnarray*}
J(\cdot+u;O)&=&\Gamma\text{-}\lim_{t\uparrow1}~\Gamma\text{-}\liminf_{\e\downarrow0}~J_\e(\cdot+tu_\e,\omega;O)\\
&=&\Gamma\text{-}\lim_{t\uparrow1}~\Gamma\text{-}\limsup_{\e\downarrow0}~J_\e(\cdot+tu_\e,\omega;O).
\end{eqnarray*}
In particular, 
\begin{eqnarray*}
\inf_{v\in W^{1,p}_0(O)} J(v+u;O)&=&\lim_{t\uparrow 1}\liminf_{\e \downarrow 0} \inf_{v\in W^{1,p}_0(O)}J_\e(v+tu_\e;O) \\
&=&\lim_{t\uparrow 1}\limsup_{\e \downarrow 0} \inf_{v\in W^{1,p}_0(O)}J_\e(v+tu_\e;O).
\end{eqnarray*}
If $u$ satisfies the additional condition $\int_OM(\nabla u(y))dy<\infty$, then we may take $u_\e \equiv u$, and if this condition is strengthened to 
$\int_OM(\alpha \nabla u(y))dy<\infty$ for some $\alpha>1$ then the limit $t\uparrow1$ can be omitted.
\item For all boundary data $u\in W^{1,p}(O;\R^m)$ such that $J(u;O)<\infty$, we have on $W^{1,p}_0(O;\R^m)$:
\begin{eqnarray*}
J(\cdot+u;O)&=&\Gamma\text{-}\lim_{t\uparrow1, \eta\downarrow0}~\Gamma\text{-}\liminf_{\e\downarrow0}~J_\e^\eta(\cdot+tu,\omega;O)\\
&=&\Gamma\text{-}\lim_{t\uparrow1, \eta\downarrow0}~\Gamma\text{-}\limsup_{\e\downarrow0}~J_\e^\eta(\cdot+tu,\omega;O),
\end{eqnarray*}
where $J_\e^\eta$ is the following modification of $J_\e$ on an $\eta$-neighborhood of $\partial O$: 
\begin{align}\label{eq:defVepsOeta}
J_\e^\eta(v,\omega;O):=~&\int_OV_\e^{O,\eta}(y,\nabla v(y),\omega)dy,\nonumber\\
V_\e^{O,\eta}(y,\Lambda,\omega):=~&\begin{cases}V(y/\e,\Lambda,\omega),&\text{if $\dist(y,\partial O)>\eta$};\\ |\Lambda|^p,&\text{if $\dist(y,\partial O)<\eta$.}\end{cases}
\end{align}
In particular, 
\begin{eqnarray*}
\inf_{v\in W^{1,p}_0(O)} J(v+u;O)&=&\lim_{t\uparrow 1,\eta \downarrow 0}\liminf_{\e \downarrow 0} \inf_{v\in W^{1,p}_0(O)}J_\e^\eta(v+tu;O) \\
&=&\lim_{t\uparrow 1,\eta \downarrow 0}\limsup_{\e \downarrow 0} \inf_{v\in W^{1,p}_0(O)}J_\e^\eta(v+tu;O).
\end{eqnarray*}
If $u$ satisfies the additional condition $J(\alpha u;O)<\infty$ for some $\alpha>1$, then the limit $t\uparrow1$ can be omitted.\qed
\end{enumerate}
\end{cor}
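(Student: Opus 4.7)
The plan is to reduce both parts to the Neumann homogenization result of Theorem~\ref{th:conv} by carefully preparing the Dirichlet data. Fix $\omega$ in the full-measure set on which Theorem~\ref{th:conv} applies. The $\Gamma$-$\liminf$ inequalities come almost for free: if $v_\e\cvf{}v$ in $W^{1,p}_0(O;\R^m)$ and $u_\e\cvf{}u$ in $W^{1,p}(O;\R^m)$, then $v_\e+tu_\e\cvf{}v+tu$ in $W^{1,p}(O;\R^m)$, and Theorem~\ref{th:conv} yields
$$\liminf_{\e\downarrow 0} J_\e(v_\e+tu_\e,\omega;O)\,\geq\, J(v+tu;O).$$
Convexity of $J$ and the assumption $J(\alpha u;O)<\infty$ for some $\alpha>1$ then give $J(v+tu;O)\to J(v+u;O)$ as $t\uparrow 1$ pointwise in $v$ by dominated convergence at the level of the integrand. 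For (ii) the same reasoning applies after observing that $J_\e^\eta(u,\omega;O)\geq\int_{\{\dist(\cdot,\partial O)>\eta\}}V(y/\e,\nabla u(y),\omega)dy$, whose $\liminf$ is at least $\int_{\{\dist>\eta\}}\overline V(\nabla(v+tu))\,dy$ by applying Theorem~\ref{th:conv} on the Lipschitz subdomain $\{\dist>\eta\}$; monotone convergence as $\eta\downarrow 0$ then recovers $J(v+u;O)$.

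The $\Gamma$-$\limsup$ in (ii) is handled by a direct gluing argument exploiting the soft buffer zone. On the interior region $\{\dist(\cdot,\partial O)>2\eta\}$ one uses the Neumann recovery sequence $\tilde v_\e$ for $v+tu$ provided by Theorem~\ref{th:conv}; this is interpolated via a smooth cutoff to $tu$ on the shell $\{\eta<\dist<2\eta\}$ and kept equal to $tu$ on $\{\dist<\eta\}$. Since the integrand is the standard $p$-growth $|\Lambda|^p$ throughout the collar, the boundary-layer contribution is controlled by $\int_{\{\dist<2\eta\}}(|\nabla\tilde v_\e|^p+|\nabla u|^p)dy$ plus a cutoff error absorbed by a De Giorgi slicing over a dyadic family of shells. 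The coercivity~\eqref{eq:aslowerbound0} and $J(u;O)<\infty$ provide uniform $W^{1,p}$ bounds on $\tilde v_\e$, so this contribution vanishes as $\eta\downarrow 0$. Sending $\e\downarrow 0$, $\eta\downarrow 0$, and $t\uparrow 1$ in this order produces the recovery sequence.

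The $\Gamma$-$\limsup$ in (i) is the heart of the matter. The lifting $u_\e$ must be built so that $J_\e(u_\e,\omega;O)$ is finite and converges to $J(u;O)$ despite the absence of any upper growth bound on $V$. Following the Neumann recovery construction of Proposition~\ref{prop:gammasupN}, approximate $u$ by a piecewise-affine map on a mesh of size $\delta$, and on each affine piece $\Lambda_i x+b_i$ set $u_\e(x)=\Lambda_i x+b_i+\e\phi_{\Lambda_i}(x/\e,\omega)$, where $\phi_{\Lambda_i}$ is the sublinear corrector of the convex problem. Sublinearity (Lemma~\ref{lem:sublincor}) ensures $u_\e\cvf{}u$ in $W^{1,p}(O;\R^m)$, and the ergodic theorem applied to $V(\cdot,\Lambda_i+\nabla\phi_{\Lambda_i}(\cdot,\omega))$ gives $J_\e(u_\e,\omega;O)\to J(u;O)$ in the double limit $\e\downarrow 0$, $\delta\downarrow 0$. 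For a general $v\in W^{1,p}_0(O;\R^m)$ with $J(v+u;O)<\infty$, take the Neumann recovery $\tilde v_\e$ for $v+tu$ and splice it to $tu_\e$ near $\partial O$ by a cutoff $\chi$ selected, via De Giorgi slicing, from a dyadic family of shells so that the transition cost is small. The strict inequality $t<1$ is crucial: combined with convexity, it gives $V(y/\e,\chi\nabla\tilde v_\e+(1-\chi)t\nabla u_\e)\leq\chi V(y/\e,\nabla\tilde v_\e)+(1-\chi)V(y/\e,t\nabla u_\e)$, and the further convex combination $V(y/\e,t\nabla u_\e)\leq\tfrac{t}{\alpha}V(y/\e,\alpha\nabla u_\e)+(1-\tfrac{t}{\alpha})V(y/\e,0)$ (valid since $t/\alpha<1$) together with $J(\alpha u;O)<\infty$ yields a uniformly integrable majorant that circumvents the missing upper growth bound. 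Sending $\e\downarrow 0$ and then $t\uparrow 1$ delivers the recovery sequence. The sharpening under $\int_O M(\nabla u)dy<\infty$ is immediate since $u_\e\equiv u$ is then admissible; the strengthening to $\int_O M(\alpha\nabla u)dy<\infty$ for some $\alpha>1$ allows $t=1$, because all convex combinations of $\nabla u$ and $\nabla u_\e$ then lie in the interior of $\dom M$ and the splicing goes through without the shrinking factor. The main obstacle is this convex-combination/slicing argument in the nonstandard growth regime: the cutoff cost cannot be controlled by the Dirichlet $p$-energy alone, and the interplay of $t<1$, the assumption $J(\alpha u;O)<\infty$, and the equi-integrability of $V(\cdot/\e,t\nabla u_\e)$ provided by the lifting is what drives the proof.
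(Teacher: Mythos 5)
The proposal has the right high-level architecture (reduce to the Neumann case, splice recovery sequences with a cutoff, use the shrinking factor $t<1$), but the mechanism by which you claim to control the cutoff cost is wrong in both parts, and in both cases the paper's actual argument is substantially different.

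In part~(ii), you place the interpolation between $\tilde v_\e$ and $tu$ in the shell $\{\eta<\dist(\cdot,\partial O)<2\eta\}$, and then assert that ``the integrand is the standard $p$-growth $|\Lambda|^p$ throughout the collar''. That is false: by~\eqref{eq:defVepsOeta}, the integrand $V_\e^{O,\eta}$ equals $|\Lambda|^p$ only on $\{\dist<\eta\}$, while on $\{\eta<\dist<2\eta\}$ it is still the unbounded $V(y/\e,\cdot,\omega)$. Your cutoff error therefore cannot be bounded by a $p$-energy, and no De~Giorgi slicing can help, since each candidate shell may carry infinite cost. The correct device, used in Section~\ref{chap:2dirbd}, is to take $\chi_\eta\equiv1$ on $O_\eta=\{\dist>\eta\}$ and vanishing on $\partial O$, so that $\nabla\chi_\eta$ lives \emph{inside} the softened zone and the error term $\int_OV_\e^{O,\eta}\big(y,\tfrac{t}{1-t}w_\e\nabla\chi_\eta,\omega\big)dy$ reduces (apart from a harmless $|O_\eta|M(0)$) to $\int_{O\setminus O_\eta}\big|\tfrac{t}{1-t}w_\e\nabla\chi_\eta\big|^p$, which vanishes because $w_\e\to0$ strongly in $\Ld^p$.

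In part~(i), two issues. First, you bound $V\big(y/\e,\chi\nabla\tilde v_\e+(1-\chi)t\nabla u_\e\big)$, but this is not the gradient of $\chi\tilde v_\e+(1-\chi)tu_\e$: the omitted term $\nabla\chi\cdot(\tilde v_\e-tu_\e)$ is the entire difficulty. Second, De~Giorgi slicing cannot absorb it: the term $V\big(y/\e,\tfrac{t}{1-t}\nabla\chi(\tilde v_\e-u_\e),\omega\big)$ can be $+\infty$ in every shell unless you have pointwise control of its argument. The paper's mechanism (Section~\ref{chap:liftdirbd}) is a three-way convex combination with weights $t\chi$, $t(1-\chi)$, $(1-t)$ and, crucially, the $\Ld^\infty$ smallness of $\tilde v_\e-u_\e$ coming from the Sobolev embedding $W^{1,p}\hookrightarrow\Ld^\infty$ (this is precisely where $p>d$ enters): once $\big\|\tfrac{t}{1-t}\nabla\chi(\tilde v_\e-u_\e)\big\|_{\Ld^\infty}<\delta$ with $\adh B_\delta\subset\inter\dom M$, the error term is bounded by $(1-t)|O|\sup_{B_\delta}M$, which vanishes as $t\uparrow1$. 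Without this pointwise control, $J(\alpha u;O)<\infty$ and equi-integrability alone do not close the argument. Finally, your direct piecewise-affine lifting ($u_\e=\Lambda_i x+b_i+\e\phi_{\Lambda_i}(x/\e)$ on each cell) is discontinuous across the mesh and so is not an admissible element of $W^{1,p}(O;\R^m)$; repairing it requires exactly the delicate gluing of Step~2 of Proposition~\ref{prop:gammasupN}, so the paper simply takes $u_\e$ to be the Neumann recovery from that proposition and, in addition, approximates the target $v\in u+W^{1,p}_0(O;\R^m)$ by compactly supported perturbations $v_k\in u+C_c^\infty(O;\R^m)$ via Proposition~\ref{prop:approxw}(ii)(a) so that the cutoffs eventually act as the identity.
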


Comments are in order:
\begin{itemize}
\item The results of Corollary~\ref{cor:dir} are not completely satisfactory.
Indeed, if one makes a diagonal extraction of $t$ and $\eta$ wrt $\e$ to obtain a $\Gamma$-convergence in $\e$ only, then the extraction for the $\Gamma$-limsup depends on the target function $v+u$ and not only on the boundary data $u$ as we would hope for.  
This dependence is however restricted to a dilation parameter only in the case of the lifting.
In the case of the buffer zone, the result can be (optimally) improved to $\eta_\e=\theta\e$ for any fixed $\theta>0$, under some additional structural assumption in the form of the existence of stationary quasi-correctors (see Proposition~\ref{prop:statcorbuffer}). For specific examples for which this assumption holds, see Corollary~\ref{cor:dir-bis} below.
\item In the specific situation when $\dom \overline V=\dom M$ (this is trivially the case when the domain is fixed, i.e. $\dom V(y,\cdot,\omega)=\dom M$ for almost all $y,\omega$),
then the strong assumptions $\int_OM(\nabla u(y))dy<\infty$ and $\int_OM(\alpha \nabla u(y))dy<\infty$ (for some $\alpha>1$) in part~(i) of the statement reduce to the simpler assumptions $J(u;O)<\infty$ and $J(\alpha u;O)<\infty$ (for some $\alpha>1$), respectively. In particular, in that situation, the lifting can always be chosen to be trivial, $u_\e:=u$ for all $\e$.
\item In \cite{Braides-Garroni-95} (see also \cite[Chapter 20]{Braides98}), Braides and Garroni prepare the boundary data in a different way in the specific case of stiff inclusions. In particular they introduce an operator $R^\e$ which acts on functions $u$ as follows: on each stiff inclusion $R^\e(u)$ has value the average of $u$ on the considered stiff inclusion, away from all inclusions $R^\e(u)$ coincides with $u$, and in between $R^\e(u)$ is an interpolation between $u$ and the average of $u$ on the inclusion. Such a construction can be used here as well, but seem to admit no natural generalization in other settings than stiff inclusions.
\end{itemize}

\subsection{Nonconvex integrands with convex growth}
In the case when $W$ is nonconvex and admits a two-sided estimate by a convex function (which may depend on the space variable), we show that a  $\Gamma$-convergence result similar to the convex case holds. Before we precisely state this result, let us recall the notion of radial uniform upper semicontinuity (which is trivially satisfied by convex maps).

\begin{defin}\label{def:ruusc}
A map $Z:\R^{m\times d}\to[0,\infty]$ is said to be {\it ru-usc} (i.e. {\it radially uniformly upper semicontinuous}) if there is some $\alpha\ge0$ such that the function 
\begin{align*}
\Delta^\alpha_Z(t)=\sup_{\Lambda\in\dom Z}\frac{Z(t\Lambda)-Z(\Lambda)}{\alpha+Z(\Lambda)}
\end{align*}
satisfies $\limsup_{t\uparrow1}\Delta^\alpha_Z(t)\le0$.
A $\tau$-stationary normal random integrand $W$ is said to be {\it ru-usc}  if there exists a $\tau$-stationary integrable random field $a:\R^d\times\Omega\to[0,\infty]$ such that the  function
\begin{align*}
\Delta^a_W(t):=\supessd{y\in\R^d}\quad\supessd{\omega\in\Omega}\sup_{\Lambda\in\dom W(y,\cdot,\omega)}\frac{W(y,t\Lambda,\omega)-W(y,\Lambda,\omega)}{a(y,\omega)+W(y,\Lambda,\omega)}
\end{align*}
satisfies $\limsup_{t\uparrow1}\Delta^a_W(t)\le0$.\qed
\end{defin}

The following result is a far-reaching generalization of \cite[Theorem~1.5]{Muller-87} to a wide class of \emph{random and nonconvex} integrands; it is also a substantial extension of \cite[Corollary~2.2]{Anza-Mandallena-11}.

\begin{theor}[Nonconvex integrands with convex growth]\label{th:nonconv}
Let $W:\R^d\times\R^{m\times d}\times\Omega\to [0,\infty]$ be a (nonconvex) ru-usc $\tau$-stationary normal random integrand satisfying Hypothesis~\ref{hypo:asmeasinf}. Assume that, for almost all $\omega$, $y$, and for all $\Lambda$,
\begin{align}\label{eq:convhomb}
V(y,\Lambda,\omega)\,\le\, W(y,\Lambda,\omega)\,\le\, C(1+V(y,\Lambda,\omega)),
\end{align}
for some $C>0$ and some $\tau$-stationary convex random integrand $V:\R^d\times\R^{m\times d}\times\Omega\to [0,\infty]$
that satisfies the assumptions of Theorem~\ref{th:conv} for some $p>d$.
Then, for almost all $\omega\in\Omega$ and all  bounded Lipschitz domains $O\subset\R^d$, the integral functionals $I_\e(\cdot,\omega;O)$ $\Gamma$-converge
to the integral functional $I(\cdot;O):W^{1,p}(O;\R^m)\to[0,\infty]$ defined by
\begin{align*}
I(u;O)=\int_O\overline W(\nabla u(y))dy,
\end{align*}
for some ru-usc lower semicontinuous quasiconvex function $\overline W:\R^{m\times d}\to [0,\infty]$ that satisfies $\overline V\le\overline W\le C(1+\overline V)$,
where $\overline V$ is the homogenized integrand associated with $V$ by Theorem~\ref{th:conv}. In addition, the results stated in Corollary~\ref{cor:dir} for $J_\e$ also hold for $I_\e$.

For all $\Lambda\in\R^{m\times d}$, let $g_\Lambda$ be the potential field in probability minimizing $\E[V(0,\Lambda+\cdot)]$ (cf. \eqref{eq:homogformproba}),
and note that $x\mapsto g_\Lambda(\tau_x \omega)$ is a gradient field on $\R^d$ for almost all $\omega \in \Omega$, which we denote by $\nabla \varphi_\Lambda(x,\omega)$. The homogenized integrand $\overline W$ is characterized for all $\Lambda \in \R^{m\times d}$ by
\begin{align}\label{eq:homogformnc}
\overline W(\Lambda)=\liminf_{t\uparrow1}\liminf_{\Lambda'\to\Lambda}\lim_{\e \downarrow 0} \inf_{v\in W^{1,p}_0(O/\e;\R^m)}\fint_{O/\e} W(y,t\Lambda'+t\nabla\varphi_{\Lambda'}(y,\omega)+\nabla v(y),\omega)dy.
\end{align}
for any bounded Lipschitz domain $O\subset\R^d$ and almost every $\omega \in \Omega$.\qed
\end{theor}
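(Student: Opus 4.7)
The plan is to follow the standard $\Gamma$-convergence strategy: establish the $\Gamma$-$\liminf$ and $\Gamma$-$\limsup$ inequalities separately, using the formula \eqref{eq:homogformnc} as the defining expression for $\overline W$. First, I would verify that \eqref{eq:homogformnc} is well-defined and independent of $O$ and of $\omega$ almost surely. The inner quantity $\inf_{v \in W^{1,p}_0(O/\e;\R^m)} \int_{O/\e} W(y, t\Lambda' + t\nabla\varphi_{\Lambda'}(y,\omega) + \nabla v(y),\omega)\,dy$ defines a subadditive process on cubes thanks to Hypothesis~\ref{hypo:asmeasinf} (the corrector $\varphi_{\Lambda'}$ has stationary gradient $g_{\Lambda'}$, which allows one to rewrite the argument of $W$ as a jointly measurable function of $y$ and $\omega$). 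Applying the subadditive ergodic theorem as in Theorem~\ref{th:conv} yields the existence of the $\e\downarrow 0$ limit, a.s.\ and independently of $O$. The bounds $\overline V\le\overline W\le C(1+\overline V)$ then follow at once from \eqref{eq:convhomb} applied to the near-minimizers (and from Theorem~\ref{th:conv}(ii) for the lower bound).

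For the $\Gamma$-$\liminf$ (Proposition~\ref{prop:gammaliminfnc}), I would follow the Fonseca-M\"uller blow-up method: given $u_\e$ weakly converging to $u$ in $W^{1,p}(O;\R^m)$ with $\liminf_\e I_\e(u_\e,\omega;O)<\infty$, decompose (along a subsequence) the measures $W(\cdot/\e,\nabla u_\e,\omega)\,dx$ into Radon-Nikodym and singular parts on $O$, and at Lebesgue points $x_0$ of $\nabla u$ perform a rescaling so as to compare the local energy density with the cell formula defining $\overline W(\nabla u(x_0))$. The crucial technical step is to modify the blown-up sequence on a thin boundary layer so that its trace matches $t\nabla u(x_0)\cdot y + t\varphi_{\Lambda'}$ for $\Lambda'$ close to $\nabla u(x_0)$; this is carried out by a De Giorgi-type slicing argument exploiting the convex upper bound $W\le C(1+V)$ together with the sublinearity at infinity of the convex corrector from Lemma~\ref{lem:sublincor}, so that the boundary-layer energy is negligible. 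Quasiconvexity and lower semicontinuity of $\overline W$ then follow automatically from general properties of $\Gamma$-limits of integral functionals on $W^{1,p}$.

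For the $\Gamma$-$\limsup$ (Proposition~\ref{prop:gammalimsupnc}), I would first construct a recovery sequence for affine targets $u(x)=\Lambda x$. Picking a near-minimizer $v_\e\in W^{1,p}_0(O/\e;\R^m)$ of the cell problem at parameters $(t,\Lambda',\e)$, the candidate sequence is
\begin{equation*}
u_\e(x)\,:=\,t\Lambda x + t\e\,\varphi_{\Lambda'}(x/\e,\omega) + \e\, v_\e(x/\e).
\end{equation*}
The sublinearity of $\varphi_{\Lambda'}$ (Lemma~\ref{lem:sublincor}) gives strong $L^p$-convergence of $u_\e$ to $t\Lambda\cdot x$, while stationarity-ergodicity applied to $g_{\Lambda'}+\nabla v_\e(\cdot/\e)$ yields weak $L^p$-convergence of $\nabla u_\e$ to $t\Lambda'$. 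Taking successively $\e\downarrow0$, $\Lambda'\to\Lambda$, and $t\uparrow1$ produces an affine recovery sequence for $\Lambda$, where the ru-usc property of $W$ is precisely what allows one to absorb the factor $t<1$ in the energy estimate (this is also what ensures that $\overline W$ itself is ru-usc). For a general target $u\in W^{1,p}(O;\R^m)$ with $I(u;O)<\infty$, I would then argue by relaxation, following Fonseca \cite{Fonseca-88} and Anza Hafsa-Mandallena~\cite{Anza-Mandallena-11}: approximate $u$ by piecewise affine maps whose energy converges to $I(u;O)$ thanks to quasiconvexity and ru-usc of $\overline W$, and glue the previously constructed affine recovery sequences via a Meyers-Serrin cut-off argument, keeping boundary layers harmless through the convex upper bound $W\le C(1+V)$ and the sublinearity of $\varphi_{\Lambda'}$. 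The Dirichlet boundary statements then follow as in Corollary~\ref{cor:dir}, transferred from $V$ to $W$ via $\overline V\le\overline W\le C(1+\overline V)$.

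The main obstacle is the $\Gamma$-$\limsup$ construction: since $\dom V(y,\cdot,\omega)$ depends on $y$, one cannot glue affine pieces together in the naive way used in the standard-growth theory, and the usual homogenization formula with Dirichlet data on large cubes is ill-posed. The workaround is precisely to use the stationary convex corrector $\varphi_{\Lambda'}$ of the convex problem as a ``background'' deformation guaranteed to live in $\dom V(y,\cdot,\omega)$ for a.e.\ $y$, combined with the ru-usc property to accommodate the multiplicative factor $t<1$ in the relaxation step, and with the measurability provided by Hypothesis~\ref{hypo:asmeasinf} to ensure that all infima considered are genuine random variables.
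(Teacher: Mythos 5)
Your roadmap matches the paper's strategy in its broad outlines: define $\overline W$ via the subadditive ergodic theorem applied to the cell problem with the convex corrector as boundary data, run a Fonseca--M\"uller blow-up for the $\Gamma$-$\liminf$, and then use a Fonseca/Anza Hafsa--Mandallena relaxation argument for the $\Gamma$-$\limsup$. There is, however, a circularity in your $\Gamma$-$\limsup$ construction that you do not resolve and that the paper handles by a nontrivial bootstrap.

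You write that ``quasiconvexity and lower semicontinuity of $\overline W$ follow automatically from general properties of $\Gamma$-limits of integral functionals,'' and you then use this to ``approximate $u$ by piecewise affine maps whose energy converges to $I(u;O)$.'' But at that stage of the argument the $\Gamma$-convergence is precisely what you are trying to prove, so you cannot invoke the $\Gamma$-limit's automatic lower semicontinuity to get regularity of $\overline W$. Concretely, the approximation-by-piecewise-affine-maps step (Proposition~\ref{prop:approxw}(ii)(c) in the paper) requires $\overline W$ to be lower semicontinuous on $\R^{m\times d}$ and continuous on $\inter\dom\overline V$; without these, you cannot guarantee $\int_O\overline W(\nabla u_n)\to\int_O\overline W(\nabla u)$ along your piecewise affine approximants, and the diagonal extraction for the recovery sequence collapses. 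The paper sidesteps this by introducing the two relaxations $\Zc W$ (the Fonseca ``piecewise affine infimum'' relaxation, continuous on $\inter\dom\Zc W$ by construction) and its lower semicontinuous envelope $\widehat\Zc W$, which sit between $\overline V$ and $\overline W$. Lemma~\ref{lem:equZc} then proves $\overline W=\Zc W=\widehat\Zc W$ by first building recovery sequences for \emph{affine} targets $\Lambda\cdot x$ towards $\widehat\Zc W(\Lambda)$ (gluing, truncating, diagonalizing over $t,\Lambda'$), and then applying the already-proved $\Gamma$-$\liminf$ inequality to conclude $\widehat\Zc W(\Lambda)\ge\overline W(\Lambda)$; since the opposite inequality holds by construction, equality follows, and only \emph{then} does $\overline W$ inherit the continuity and lower semicontinuity needed in the general $\Gamma$-$\limsup$. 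This ``a posteriori regularity'' step is not a cosmetic detail but the crux of the $\Gamma$-$\limsup$ argument, and your proposal needs an explicit version of it; citing Anza Hafsa--Mandallena points in the right direction, but the bootstrap itself must appear.

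Two smaller omissions worth noting: (1)~you should verify that the value $\overline W(\Lambda)$ does not depend on the particular choice of corrector $\varphi_\Lambda$ (Lemma~\ref{lem:Whomncindep}), since this independence is invoked inside the proof that $\overline W_t$ is ru-usc (part~(c) of Lemma~\ref{lem:lemma}); and (2)~for the ru-usc property of $\overline W$ you need the argument that produces a uniform modulus $\Delta^\alpha_{\overline W}$ with $\alpha=\E[a(0,\cdot)]$ by passing the ru-usc bound through the ergodic theorem and the cell formula, rather than simply asserting that ru-usc of $W$ transfers to the $\Gamma$-limit.
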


In general, we do not expect that the limits $t\uparrow1$ and $\Lambda'\to\Lambda$ can be dropped in~\eqref{eq:homogformnc}, see however Corollary~\ref{cor:model-nc} below under Hypothesis~\ref{hypo:model-nc}.

\subsection{Application to nonlinear elasticity}\label{subsec:nonlinear}

In the example from the statistical physics of polymer-chain networks, the integrand has the specific decomposition \eqref{eq:motivation}. Moreover, the nonconvex part of the integrand satisfies the following assumption, which in particular implies that $W$ satisfies Hypothesis~\ref{hypo:asmeasinf} (see indeed Lemma~\ref{lem:psupquasiconv}), as well as the ru-usc property.
\begin{hyp}\label{hypo:model-nc}
There exists a $\tau$-stationary convex map $V:\R^d\times\R^{m\times d}\times\Omega\to [0,\infty]$ and some $p>1$ 
such that 
$$
W(y,\Lambda,\omega)=V(y,\Lambda,\omega)+W^{nc}(y,\Lambda,\omega),
$$
where $W^{nc}:\R^d\times\R^{m\times d}\times\Omega\to[0,\infty]$ is a (nonconvex) $\tau$-stationary normal random integrand satisfying the $p$-th order upper bound and the local Lipschitz conditions, for some $C>0$,
\begin{enumerate}[(i)]
\item for almost all $y,\omega$, for all $\Lambda$,
\[W^{nc}(y,\Lambda,\omega)\,\le\,C(|\Lambda|^{p}+1);\]
\item for almost all $y,\omega$, for all $\Lambda,\Lambda'$,
\[|W^{nc}(y,\Lambda,\omega)-W^{nc}(y,\Lambda',\omega)|\le C(1+|\Lambda|^{p-1}+|\Lambda'|^{p-1})|\Lambda-\Lambda'|.\]\qed
\end{enumerate}
\end{hyp}
Under this assumption, the following variant of Theorem~\ref{th:nonconv} holds and yields in particular a simpler formula for the homogenized energy density.
\begin{cor}\label{cor:model-nc}
Let $W:\R^d\times\R^{m\times d}\times\Omega\to [0,\infty]$ be a (nonconvex) $\tau$-stationary normal random integrand satisfying Hypothesis~\ref{hypo:model-nc}. Assume that, for almost all $\omega$, $y$, and for all $\Lambda$,
\begin{align}\label{eq:convhomb2}
V(y,\Lambda,\omega)\,\le\, W(y,\Lambda,\omega)\,\le\, C(1+V(y,\Lambda,\omega)),
\end{align}
for some $C>0$, where $V:\R^d\times\R^{m\times d}\times\Omega\to [0,\infty]$ satisfies the assumptions of Theorem~\ref{th:conv} for some $p>d$.
Then the conclusions of Theorem~\ref{th:nonconv} hold true.
In addition we have for all $\Lambda\in \R^{m\times d}$ for almost all $\omega$
\begin{align}\label{eq:homogformnc-bis}
\overline W(\Lambda)=\lim_{R\uparrow \infty} \inf_{v\in W^{1,p}_0(Q_R;\R^m)}\fint_{Q_R} W(y,\Lambda+\nabla\varphi_{\Lambda}(y,\omega)+\nabla v(y),\omega)dy,
\end{align}
where $\varphi_\Lambda$ is as in Theorem~\ref{th:nonconv}.
\qed
\end{cor}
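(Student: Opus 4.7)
The plan has three main steps: verify Hypothesis~\ref{hypo:model-nc} implies the assumptions of Theorem~\ref{th:nonconv}, apply that theorem, then simplify the resulting formula to derive~\eqref{eq:homogformnc-bis}. The two-sided bound~\eqref{eq:convhomb2} is part of the assumptions. Hypothesis~\ref{hypo:asmeasinf} for $W=V+W^{nc}$ follows from sup-quasiconvexity via Lemma~\ref{lem:psupquasiconv}. For the ru-usc property, the convex $V$ is automatically ru-usc with the integrable field $a(y,\omega):=1+V(y,0,\omega)$ (finite since $0\in\Int\dom M$), while for the nonconvex part the local Lipschitz condition~(ii) combined with the $p$-growth~(i) and the coercivity $V\ge\tfrac1C|\Lambda|^p-C$ yields
$$|W^{nc}(y,t\Lambda,\omega)-W^{nc}(y,\Lambda,\omega)|\le C(1+|\Lambda|^{p-1})(1-t)|\Lambda|\le C'(1-t)(1+V(y,\Lambda,\omega)),$$
so $\Delta^a_W(t)\to 0$ as $t\uparrow 1$. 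Theorem~\ref{th:nonconv} then provides the $\Gamma$-convergence, the general representation~\eqref{eq:homogformnc} of $\overline W$, and the Dirichlet extensions of Corollary~\ref{cor:dir}.

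\textbf{Existence of the simpler limit.} Fix $\Lambda\in\dom\overline V$ (the case $\overline V(\Lambda)=\infty$ being trivial, since then $\overline W(\Lambda)=\infty$ and the RHS of~\eqref{eq:homogformnc-bis} is interpreted as $+\infty$), and set
$$\mu_Q(\omega):=\inf_{v\in W^{1,p}_0(Q;\R^m)}\int_Q W(y,\Lambda+\nabla\varphi_\Lambda(y,\omega)+\nabla v(y),\omega)\,dy.$$
The assignment $Q\mapsto\mu_Q$ is a stationary subadditive process on cubes: stationarity follows from~\eqref{eq:statnormalrandomint} and from the stationarity of $\nabla\varphi_\Lambda$, while subadditivity is obtained by extending by zero the individual minimizers on disjoint sub-cubes and bounding the leftover via $W\le C(1+V)$ together with $\E[V(0,\Lambda+\nabla\varphi_\Lambda,\cdot)]=\overline V(\Lambda)<\infty$. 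The Akcoglu--Krengel subadditive ergodic theorem then gives almost sure convergence $\mu_{Q_R}(\omega)/|Q_R|\to F(\Lambda)$ for some deterministic $F(\Lambda)$.

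\textbf{Identification $F(\Lambda)=\overline W(\Lambda)$.} For the inequality $\overline W(\Lambda)\le F(\Lambda)$, I feed the sequence $u_\e(x):=\Lambda x+\e\varphi_\Lambda(x/\e,\omega)+\e v_\e(x/\e)$ --- with $v_\e$ an $\e$-near minimizer of $\mu_{Q_{1/\e}}$ --- into the $\Gamma$-$\liminf$ inequality on $O=Q_1$. The $L^\infty$-sublinearity of $\varphi_\Lambda$ (available since $p>d$, cf.~Lemma~\ref{lem:sublincor}) and the Poincar\'e inequality for $v_\e$ yield, up to an additive constant that $I_\e$ ignores, $u_\e\rightharpoonup\Lambda x$ weakly in $W^{1,p}(Q_1;\R^m)$, while by construction $I_\e(u_\e,\omega;Q_1)=|Q_1|\fint_{Q_{1/\e}}W(y,\Lambda+\nabla\varphi_\Lambda+\nabla v_\e)\,dy\to F(\Lambda)|Q_1|$. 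Conversely, for $F(\Lambda)\le\overline W(\Lambda)$, I start from~\eqref{eq:homogformnc}: given $t<1$, $\Lambda'$ near $\Lambda$, and near-minimizers $w_\e\in W^{1,p}_0(Q_{1/\e})$ of the general formula, I build a competitor for $\mu_{Q_{1/\e}}$ via
$$v_\e(y):=w_\e(y)+\chi_\e(y)\bigl[(t\Lambda'-\Lambda)y+t\varphi_{\Lambda'}(y,\omega)-\varphi_\Lambda(y,\omega)\bigr],$$
where $\chi_\e$ is a smooth cutoff equal to $1$ on $Q_{(1-\delta_\e)/\e}$ and vanishing on $\partial Q_{1/\e}$ for some $\delta_\e\downarrow 0$. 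On $\{\chi_\e=1\}$ one has $\Lambda+\nabla\varphi_\Lambda+\nabla v_\e=t\Lambda'+t\nabla\varphi_{\Lambda'}+\nabla w_\e$, so the energy densities match exactly; in the buffer layer the $V$-part is controlled by convexity via $V(y,tA)\le tV(y,A)+(1-t)V(y,0)$ and the $W^{nc}$-part by the local Lipschitz bound, with the quantitative sublinearity of correctors (Lemma~\ref{lem:sublincor}) ensuring that the buffer contribution vanishes in the iterated limit $\e\downarrow 0$, $\Lambda'\to\Lambda$, $t\uparrow 1$.

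\textbf{Main obstacle.} The crux is the boundary gluing in the reverse direction: the cutoff $\chi_\e$ introduces a perturbation involving both the affine mismatch $(t\Lambda'-\Lambda)y$ and the difference of correctors at the two slopes $\Lambda$ and $\Lambda'$, and the three limits $\e\downarrow 0$, $\Lambda'\to\Lambda$, $t\uparrow 1$ must be closed successively. This rests critically on the $L^\infty$-sublinearity of correctors (available only because $p>d$), on the local Lipschitz estimate on $W^{nc}$ combined with the $L^p$-integrability of $\nabla\varphi_\Lambda$ to close the $\Lambda'\to\Lambda$ limit, and on the ru-usc property to close the $t\uparrow 1$ limit.
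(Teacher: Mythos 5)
The paper itself does not display a proof of Corollary~\ref{cor:model-nc}, so the comparison can only be with what a correct proof would need. Your overall plan (check the hypotheses of Theorem~\ref{th:nonconv}, define $F(\Lambda)$ by Akcoglu--Krengel at $t=1$, then prove $F(\Lambda)=\overline W(\Lambda)$) is the right one, the hypothesis verification is essentially correct, and the subadditive-ergodic step is a straightforward extension of Lemma~\ref{lem:Whomnc} to $t=1$ (finiteness is guaranteed by $\E[\mu_\Lambda(Q)]\le C(1+\overline V(\Lambda))<\infty$). However, both halves of the identification $F(\Lambda)=\overline W(\Lambda)$ contain genuine gaps.

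\textbf{Direction $\overline W(\Lambda)\le F(\Lambda)$.} Feeding $u_\e=\Lambda x+\e\varphi_\Lambda(\cdot/\e)+\e v_\e(\cdot/\e)$ into the $\Gamma$-$\liminf$ inequality does \emph{not} directly give $u_\e\rightharpoonup \Lambda x$: the functions $\e v_\e(\cdot/\e)\in W^{1,p}_0(Q_1;\R^m)$ are merely bounded in $W^{1,p}$, so they have a weak limit $w\in W^{1,p}_0(Q_1;\R^m)$ which need not vanish. You then obtain $F(\Lambda)|Q_1|\ge\int_{Q_1}\overline W(\Lambda+\nabla w)$, and to conclude you must invoke a quasiconvexity inequality for the unbounded integrand $\overline W$ tested against $W^{1,p}_0$ functions, which itself requires the approximation and relaxation machinery of Lemma~\ref{lem:equZc} and Proposition~\ref{prop:approxw}. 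Either justify this (or use the subcube construction of Step~1 of the proof of Lemma~\ref{lem:equZc} to force $\e v_\e(\cdot/\e)\to 0$), or simply replace this whole route by the cleaner ru-usc argument: testing $t v$ (with $v$ a near-minimizer for $\mu_\Lambda(Q_R)$) against $\mu^t_\Lambda(Q_R)$ gives, since $t\Lambda+t\nabla\varphi_\Lambda+\nabla(tv)=t(\Lambda+\nabla\varphi_\Lambda+\nabla v)$,
\begin{equation*}
\overline W_t(\Lambda)\le(1+\Delta^a_W(t))F(\Lambda)+\Delta^a_W(t)\,\E[a],
\end{equation*}
and since $\Delta^a_W(t)\ge 0$ always (take $\Lambda=0$ in its definition) and $\limsup_t\Delta^a_W(t)\le 0$, this yields $\overline W(\Lambda)\le\liminf_t\overline W_t(\Lambda)\le F(\Lambda)$ with no weak-limit issue.

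\textbf{Direction $F(\Lambda)\le\overline W(\Lambda)$.} This is where the argument really breaks down. First, with your competitor $v_\e=w_\e+\chi_\e[\,\cdot\,]$ the term $\nabla w_\e$ is \emph{not} cut off, so on the buffer the total gradient reads $(1-\chi_\e)(\Lambda+\nabla\varphi_\Lambda)+\chi_\e(t\Lambda'+t\nabla\varphi_{\Lambda'})+\nabla w_\e+\nabla\chi_\e\otimes\psi$; since $w_\e$ is tuned so that $t\Lambda'+t\nabla\varphi_{\Lambda'}+\nabla w_\e\in\dom V(y,\cdot)$, there is no reason for $(1-\chi_\e)(\Lambda+\nabla\varphi_\Lambda)+\chi_\e(t\Lambda'+t\nabla\varphi_{\Lambda'})+\nabla w_\e$ to lie in $\dom V(y,\cdot)$, and hence $V$ may be identically $+\infty$ on a set of positive measure in the buffer, wrecking the estimate. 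The competitor must be $v_\e=\chi_\e(w_\e+\psi)$ so that the buffer gradient is a \emph{convex combination} of $\Lambda+\nabla\varphi_\Lambda$ and $t\Lambda'+t\nabla\varphi_{\Lambda'}+\nabla w_\e$ plus the $\nabla\chi_\e$ perturbation; this in turn requires $\|\e w_\e(\cdot/\e)\|_{L^\infty}\to 0$, which holds only for a specific near-minimizer construction (Step~1 of the proof of Lemma~\ref{lem:equZc}), not for an arbitrary one. Second, the rate $\delta_\e\downarrow 0$ is incompatible with the affine mismatch: the term $\nabla\chi_\e\otimes(t\Lambda'-\Lambda)y$ contributes $\sim|t\Lambda'-\Lambda|/\delta_\e$ to the gradient, which blows up as $\e\downarrow 0$ for fixed $t,\Lambda'$; the relative buffer width must be held at a fixed $\delta$ until after the $\e,\Lambda',t$ limits, and sent to zero last. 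Third, and most seriously, even after these fixes the buffer estimate for $V$ requires an additional dilation parameter $s<1$ to guarantee the argument of $V$ stays in its domain (a convex combination of $\dom V(y,\cdot)$ and a small perturbation is not in $\dom V(y,\cdot)$ unless one first contracts by $s$ towards $0\in\inter\dom M$). But then what one controls is $\mu^s_\Lambda(Q_R)$, i.e.\ $\overline W_s(\Lambda)$, not $\overline W_1(\Lambda)=F(\Lambda)$; to remove $s$ one needs the \emph{reverse} ru-usc estimate $F(\Lambda)\le\liminf_{s\uparrow1}\overline W_s(\Lambda)$, which is precisely the hard direction that the convexity of $V$ alone does not deliver (the $1/s$-dilation may exit $\dom V(y,\cdot)$). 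The sketch does not engage with this circularity, and no combination of ``local Lipschitz bound'' for $W^{nc}$ and ``quantitative sublinearity'' as invoked suffices to close it; the special structure of Hypothesis~\ref{hypo:model-nc} (that the ``bad'' part $V$ is \emph{exactly} the convex bound, so that the buffer correction for $V$ can be matched against the clean convex result $\lim_R\fint_{Q_R}\inf_v V(y,\Lambda+\nabla\varphi_\Lambda+\nabla v)=\overline V(\Lambda)$) must enter the argument explicitly.
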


The behavior of $\overline W$ close to the boundary of its domain is of particular interest.
From a mechanical point of view, in the case of \eqref{eq:motivation} with $V$ satisfying $V(y,\Lambda_n)\to \infty$ as $\dist(\Lambda_n, \partial \text{dom}V(y,\cdot))\to  0$, we expect that $\overline W(\Lambda_n)\to \infty$ as $\dist(\Lambda_n, \partial \text{dom}\overline W)\to 0$.
Except in a few explicit examples, we do not know under which condition and how to prove such a property.
In the case of the geometry that saturates the Hashin-Shtrikman bound (obtained by a stationary and statistically isotropic Vitali covering of $\R^d$ by balls, cf. \cite[Section~6.2]{JKO94}),
where each ball is the homothetic image of a unit ball with a given spherical inclusion, the corrector gradient field is explicit when $\Lambda$ is a dilation,
and $t\mapsto \overline{W}(t\Id)$ indeed blows up when $t$ approaches the boundary of the domain.

\subsection{Some improved results}
The general results above naturally call for various questions concerning possible improvements:
\begin{itemize}
\item What about the subcritical case $1<p\le d$?
\item What is the minimal size $\eta_\e$ of the soft buffer zone needed in the presence of Dirichlet boundary conditions (see Corollary~\ref{cor:dir}(ii))? Under which conditions can we take $\eta_\e=\theta\e$ for some constant $\theta>0$?
\item What about periodic boundary data? In particular, can one approximate $\overline V$ by periodization (in law) in the spirit of~\cite{EGMN-15}?
\end{itemize}
These three questions are partially addressed below under various additional assumptions.

\subsubsection{Subcritical case $1<p\le d$}
The first improvement concerns the growth condition from below in Theorem~\ref{th:conv}. It is relaxed here to any $p>1$ in the convex scalar case $m=1$ under the additional assumption that $V$ has fixed domain. The idea is to avoid the use of Sobolev embedding by using suitable truncations (in the spirit of e.g. \cite[proof of Lemma~13.1.5]{Carbone-DeArcangelis-02}), which are indeed only available in the scalar case with fixed domain. We recover in particular in this way the results of~\cite[Section~13.4]{Carbone-DeArcangelis-02} in Sobolev spaces.

\begin{cor}[Subcritical case]\label{cor:subcritp}
Let $V$ and $M$ satisfy the assumptions of Theorem~\ref{th:conv} in the scalar case $m=1$, for some $p>1$. Also assume that $\dom V(y,\cdot,\omega)=\dom M$ for almost all $y,\omega$.
Then the conclusions of Theorem~\ref{th:conv} and Corollary~\ref{cor:dir} hold true for this $p>1$.
\qed
\end{cor}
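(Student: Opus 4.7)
The plan is to bypass the use of the Sobolev embedding $W^{1,p}(O;\R^m)\hookrightarrow \Ld^\infty(O;\R^m)$ --- the only place where $p>d$ enters the proof of Theorem~\ref{th:conv} --- by a scalar truncation argument that is available only when $m=1$ and $\dom V(y,\cdot,\omega)=\dom M$ is fixed. For every $u\in W^{1,p}(O)$ and $L>0$, the truncation $T_Lu:=(u\wedge L)\vee(-L)$ lies in $W^{1,p}(O)\cap \Ld^\infty(O)$ with $\nabla T_Lu=\nabla u\,\mathbf{1}_{|u|<L}$; since $0\in\Int\dom M$, $\nabla T_Lu$ takes values in $\dom M$ wherever $\nabla u$ does, and $T_Lu\to u$ strongly in $W^{1,p}(O)$ by dominated convergence. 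As $V(y,\nabla T_Lu,\omega)$ either equals $V(y,\nabla u,\omega)$ or the bounded quantity $V(y,0,\omega)\le M(0)$, dominated convergence also yields $J(T_Lu;O)\to J(u;O)$ whenever $J(u;O)<\infty$. In particular $\dom\overline V=\dom M$ here.

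I then follow the proof scheme of Theorem~\ref{th:conv} for any $p>1$: the probabilistic formula \eqref{eq:homogformproba}, the existence and sublinearity of correctors (Lemma~\ref{lem:sublincor}), the commutation of truncation and homogenization (Proposition~\ref{prop:commut}), and the $\Gamma$-$\liminf$ inequality rely only on convexity and coercivity from below, not on any $\Ld^\infty$ bound. The only step where $p>d$ is used is the $\Gamma$-$\limsup$ construction of Proposition~\ref{prop:gammasupN}, which glues a recovery candidate for a target $u$ to the corrector via a cut-off near $\partial O$ and uses $u\in \Ld^\infty$ to guarantee that the glued sequence stays in $\dom M$. I replace this step by a diagonal argument: given $u\in W^{1,p}(O)$ with $J(u;O)<\infty$, apply the recovery construction to the bounded target $u^L:=T_Lu$ to obtain a sequence $(u_\e^L)_\e$ satisfying $u_\e^L\cvf{}u^L$ weakly in $W^{1,p}(O)$ and $\limsup_{\e\downarrow 0}J_\e(u_\e^L,\omega;O)\le J(u^L;O)$; a diagonal extraction $L=L(\e)\uparrow\infty$ then yields a recovery sequence for $u$ since $J(u^L;O)\to J(u;O)$.

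For the Dirichlet extension (Corollary~\ref{cor:dir}), the same truncation-diagonal scheme applies: $T_L$ preserves the affine class $u+W^{1,p}_0(O)$ by continuity of trace and of $T_L$ on $W^{1,p}$, since $(u+v)|_{\partial O}=u|_{\partial O}$ forces $T_L(u+v)|_{\partial O}=T_Lu|_{\partial O}$; the lifting and soft buffer zone constructions then commute with $T_L$ up to the same diagonal extraction in $L$. The main obstacle I expect is controlling the truncation error introduced by the cut-off $\eta$ in the recovery construction, which produces a spurious gradient $(\nabla\eta)\varphi_\Lambda$ involving a sublinear corrector $\varphi_\Lambda$; after truncating at level $L$, one must ensure that the energy on the super-level set $\{|u_\e^L|\ge L\}$ vanishes in the iterated limit $\e\downarrow0$ then $L\uparrow\infty$. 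This is handled by combining the Sobolev embedding $W^{1,p}(O)\hookrightarrow \Ld^{p^*}(O)$ (valid for $1<p\le d$) with the quantitative sublinearity of correctors from Lemma~\ref{lem:sublincor}. Hypothesis~\ref{hypo:asmeasinf} is automatic from convexity of $V$, so no additional verification is required.
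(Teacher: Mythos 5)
There is a genuine gap: you truncate the wrong object. Your plan truncates the \emph{target} $u$, obtaining $u^L:=T_Lu$ and proposing to run the $\Gamma$-$\limsup$ recovery construction on $u^L$. But the Sobolev embedding $W^{1,p}\hookrightarrow \Ld^\infty$ is not needed to bound the target in Step~2 of Proposition~\ref{prop:gammasupN} --- the target there is a continuous piecewise affine function $u_{\kappa,r}$ on a bounded Lipschitz domain, hence automatically bounded. The embedding is needed to bound the \emph{corrector perturbation} $u_{\e,\kappa,r}^i-(c_{\kappa,r}^i+\Lambda_{\kappa,r}^i\cdot x)=\e\varphi_{\Lambda_{\kappa,r}^i}(\cdot/\e,\omega)$, because the gluing with the partition of unity $\chi_{\kappa,r,\eta}^i$ produces the spurious term $\nabla\chi_{\kappa,r,\eta}^i\,(u_{\e,\kappa,r}^i-(c^i_{\kappa,r}+\Lambda_{\kappa,r}^i\cdot x))$ inside $V$, and one must ensure this term is small in $\Ld^\infty$ to stay inside $\adh B_\delta\subset\inter\dom M$. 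For $p\le d$, Lemma~\ref{lem:sublincor} only gives weak $W^{1,p}$ convergence of $\e\varphi_\Lambda(\cdot/\e,\omega)$ to $0$ (hence strong $\Ld^p$ via Rellich--Kondrachov), and the embedding $W^{1,p}\hookrightarrow\Ld^{p^*}$ does not upgrade this to $\Ld^\infty$ control. Truncating the target $u$ before entering the gluing argument does nothing to fix this; you would still run into the same unbounded corrector.

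The paper's actual fix is to truncate the \emph{recovery sequence}: it defines $u_{\e,\kappa,r,\eta,s}:=T_s(u_{\e,\kappa,r,\eta}-u_{\kappa,r})+u_{\kappa,r}$, i.e.\ it truncates precisely the glued corrector contributions at level $s$. The chain rule gives
\[
\nabla u_{\e,\kappa,r,\eta,s}=T_s'(u_{\e,\kappa,r,\eta}-u_{\kappa,r})\,\nabla u_{\e,\kappa,r,\eta}+\bigl(1-T_s'(u_{\e,\kappa,r,\eta}-u_{\kappa,r})\bigr)\nabla u_{\kappa,r},
\]
which is a pointwise convex combination since $T_s'\in[0,1]$; because the domain of $V(y,\cdot,\omega)$ is fixed and equals $\dom M$, each of $\nabla u_{\e,\kappa,r,\eta}$ and $\nabla u_{\kappa,r}$ lies in $\dom M$, so the combination does too. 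Convexity of $V$ then splits the energy, and the error on the super-level set $\{|u_{\e,\kappa,r,\eta}-u_{\kappa,r}|>s\}$ is bounded by $\max_l M(t\Lambda_l)$ times the measure of that set, controlled via Chebyshev by $s^{-p}\|u_{\e,\kappa,r,\eta}-u_{\kappa,r}\|_{\Ld^p(O)}^p$, which vanishes as $\e\downarrow 0$ by Rellich--Kondrachov. This is why the scalar case \emph{and} the fixed-domain assumption $\dom V(y,\cdot,\omega)=\dom M$ are both essential: the first makes the chain rule for $T_s$ available, the second guarantees that the convex combination stays in the domain. Your proposal correctly identifies the tools (scalar truncation, chain rule, convexity) and correctly recognizes that your target truncation preserves $\dom M$, but it applies them at the wrong stage and cannot close the argument. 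The same remark applies to your sketch of the Dirichlet case: Step~1 of the proof of Corollary~\ref{cor:dir}(i) also uses $\Ld^\infty$ control of recovery sequences near $\partial O$, not of the target, and the same recovery-sequence truncation is needed there.
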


\subsubsection{Minimal soft buffer zone}

The second improvement concerns the size of the buffer zone for Dirichlet boundary data, at least for affine target functions. The minimal size $\eta_\e=\theta\e$, for any constant $\theta>0$, is achieved under the technical structural assumption that stationary quasi-correctors exist, cf. Proposition~\ref{prop:statcorbuffer}. Understanding the validity of this assumption in general seems to be a difficult question of functional analytic nature. It is trivially satisfied in the periodic case. It is also valid provided that truncations are available, which holds in the scalar case with fixed domain.

\begin{cor}[Minimal soft buffer zone]\label{cor:dir-bis}
Let $V,J_\e,J,M$ and $W,I_\e,I$ be as in Theorems~\ref{th:conv} and~\ref{th:nonconv} for some $p>1$. Also assume that one of the following holds:
\begin{enumerate}[(1)]
\item $p>d$, and, for all $\Lambda,\omega$, $V(\cdot,\Lambda,\omega)$ and $W(\cdot,\Lambda,\omega)$ are $Q$-periodic;
\item $m=1$, and $\dom V(y,\cdot,\omega)=\dom M$ is open for almost all $y,\omega$.
\end{enumerate}
Then, for all $\Lambda$, for almost all $\omega\in\Omega$,
\begin{eqnarray*}
\overline V(\Lambda)=\lim_{\e \downarrow 0} \inf_{v\in W^{1,p}_0(O)}\fint_OV_\e^{O,\theta\e}(y,\Lambda+\nabla v(y),\omega)dy,
\end{eqnarray*}
where $V_\e^{O,\theta \e}$ is defined as in~\eqref{eq:defVepsOeta} with $\eta=\theta\e$.
The same result also holds for $V,J_\e,J$ replaced by $W,I_\e,I$ (for $p>d$).
\qed
\end{cor}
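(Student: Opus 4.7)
The plan is to reduce the corollary to the structural criterion of Proposition~\ref{prop:statcorbuffer}, which yields the minimal buffer $\eta_\e=\theta\e$ as soon as a \emph{bounded (quasi-)stationary corrector} exists for each $\Lambda$: a field $\psi_\Lambda:\R^d\times\Omega\to\R^m$ with $\E[\nabla\psi_\Lambda]=\Lambda$, energy approaching $\overline V(\Lambda)$, and $\|\psi_\Lambda-\Lambda\cdot x\|_{L^\infty(\R^d)}<\infty$ almost surely. The $L^\infty$ bound is exactly what makes the gluing cost across the $\theta\e$-thin shell vanish: with a cutoff $\chi_\e$ satisfying $|\nabla\chi_\e|\le C/(\theta\e)$, the extra gradient $\e(\psi_\Lambda-\Lambda\cdot x)(\cdot/\e)\otimes\nabla\chi_\e$ produced by cutting down to the affine boundary data is uniformly bounded by $\|\psi_\Lambda-\Lambda\cdot x\|_{L^\infty}/\theta$, and its $p$-th power integrated over the shell of volume $O(\theta\e)$ contributes $O(\theta\e)\to0$. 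The companion $\liminf$ inequality is immediate: $V_\e^{O,\theta\e}$ agrees with $V_\e$ on the bulk (whose measure fills up $O$), so Theorem~\ref{th:conv} together with a trace/mean argument on the affine Dirichlet data gives $\liminf_\e\inf_v\fint V_\e^{O,\theta\e}\ge\overline V(\Lambda)$. Hence the whole task reduces to producing $\psi_\Lambda$ in each case.

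\emph{Case (1): periodic setting with $p>d$.} Periodicity forces the minimizer $g_\Lambda\in F^p_\pot(\Omega)^m$ from~\eqref{eq:homogformproba} to be $Q$-periodic, so $g_\Lambda=\nabla\varphi_\Lambda$ for some $\varphi_\Lambda\in W^{1,p}_\per(Q;\R^m)$ with zero mean. Since $p>d$, Morrey's embedding $W^{1,p}(Q)\hookrightarrow C^0(\overline Q)$ yields $\varphi_\Lambda\in L^\infty(Q;\R^m)$, extending by periodicity to a bounded function on all of $\R^d$. Thus $\psi_\Lambda(x):=\Lambda\cdot x+\varphi_\Lambda(x)$ is a genuine bounded stationary corrector with energy exactly $\overline V(\Lambda)$, and Proposition~\ref{prop:statcorbuffer} applies directly.

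\emph{Case (2): scalar setting with $m=1$ and fixed open domain.} Here we truncate in the scalar variable. A dilation $t\uparrow1$ reduces to $\Lambda\in\inter\dom M$. For $k>0$, set
\[\varphi_\Lambda^k(x,\omega):=\big(\varphi_\Lambda(x,\omega)\wedge k\big)\vee(-k),\qquad \psi_\Lambda^k(x,\omega):=\Lambda\cdot x+\varphi_\Lambda^k(x,\omega),\]
uniformly bounded by $k$. Its gradient equals $\Lambda+\nabla\varphi_\Lambda$ on $\{|\varphi_\Lambda|<k\}$ and $\Lambda$ on the truncation set, where $V(y,\Lambda,\omega)\le M(\Lambda)<\infty$ guarantees admissibility (using openness of $\dom M$). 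Integrating,
\[\E\big[V(0,\nabla\psi_\Lambda^k,\cdot)\big]\le\E\big[V(0,\Lambda+\nabla\varphi_\Lambda,\cdot)\mathbf 1_{|\varphi_\Lambda|<k}\big]+M(\Lambda)\,\p\big(|\varphi_\Lambda|\ge k\big),\]
which tends to $\overline V(\Lambda)$ as $k\uparrow\infty$ by monotone/dominated convergence, using almost sure local integrability of $\varphi_\Lambda$ (Lemma~\ref{lem:sublincor}). Hence $\psi_\Lambda^k$ provides bounded quasi-correctors with excess energy $o(1)$, so Proposition~\ref{prop:statcorbuffer} applies after a diagonal extraction in $k$ and in $t\uparrow1$.

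\emph{Extension to $W$.} For the nonconvex integrand, the same $\psi_\Lambda$ (resp.\ $\psi_\Lambda^k$) serve as boundary data in formula~\eqref{eq:homogformnc}, and the two-sided bound~\eqref{eq:convhomb} transfers the buffer estimate from $V$ to $W$ up to a multiplicative constant, yielding the minimal buffer conclusion for $\overline W$. The main subtlety lies in case (2), where truncating the corrector must neither destroy admissibility nor ruin the energy asymptotics as $k\uparrow\infty$; openness of $\dom M$ is precisely what makes the truncated field admissible (with gradient $\Lambda$ on the truncation set), while lower semicontinuity of $\overline V$ together with the sublinearity of $\varphi_\Lambda$ ensures recovery of the correct limit energy.
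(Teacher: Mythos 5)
Your reduction to Proposition~\ref{prop:statcorbuffer} is the right move, and your treatment of case~(1) is essentially correct (periodicity plus Morrey for $p>d$ gives a bounded periodic corrector; the paper instead uses Poincar\'e on the cell to identify $F^p_\pot(\Omega)$ with the space of stationary gradients, which works for any $p>1$, but both yield criterion~\eqref{eq:Whomstat}). However, case~(2) has a genuine gap, and it is precisely the subtlety that Lemma~\ref{lem:reg-quasi} of the paper is designed to overcome. The criterion of Proposition~\ref{prop:statcorbuffer} is~\eqref{eq:Whomstat}, which asks for the infimum over the \emph{stationary} Sobolev space $W^{1,p}(\Omega;\R^m)$ to equal $\overline V(\Lambda)$; the crucial point is that the quasi-corrector $\phi$ must be a stationary random field, because the proof of that proposition relies on the Birkhoff--Khinchin ergodic theorem to control both the $L^p$ mass of $\e\phi(\cdot/\e,\omega)$ in the thin shell and the bulk energy. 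Your object $\varphi_\Lambda^k:=(\varphi_\Lambda\wedge k)\vee(-k)$ is bounded, but $\varphi_\Lambda$ itself is \emph{not} stationary (only its gradient is), so $\varphi_\Lambda^k$ is not stationary either, and the field $y\mapsto V(y,\Lambda+\nabla\varphi_\Lambda^k(y,\omega),\omega)$ has no ergodic interpretation. In particular, the spatial average $\fint_{O/\e}V(y,\Lambda+\nabla\varphi_\Lambda^k(y,\omega),\omega)\,dy$ does not converge to the ``expectation at the origin'' you compute. Concretely, since $\varphi_\Lambda$ is merely sublinear and can grow without bound, for any fixed $k$ the fraction of $O/\e$ where $|\varphi_\Lambda|<k$ may tend to $0$ as $\e\downarrow0$; then $\nabla\psi^k_\Lambda\equiv\Lambda$ on an asymptotically full-measure set, and the bulk energy approaches $\E[V(0,\Lambda,\cdot)]$, not $\overline V(\Lambda)$. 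Sending $k\uparrow\infty$ afterwards cannot repair this, since the $\e\downarrow0$ limit has already erased the corrector contribution.

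The paper's Lemma~\ref{lem:reg-quasi} avoids this trap through a more elaborate construction: it first recenters $\varphi_\Lambda$ around its spatial mean on $Q_R$ (whence the truncation level is controlled by the stochastic estimate $\p[|u_R|\ge s]$, exploiting the sublinearity of Lemma~\ref{lem:sublincor}), then cuts off near $\partial Q_R$ and truncates, and---crucially---\emph{averages over translations},
\[
w_{R,r}^s(x,\omega):=\fint_{Q_R}v_{R,r}^s(x+y,\tau_y\omega)\,dy,
\]
which converts the non-stationary truncated object into a genuine stationary field $w_{R,r}^s\in W^{1,p}(\Omega)$ to which the ergodic theorem and Jensen's inequality apply. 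The truncation and cut-off scales $s_R$, $r_R$ are then chosen as carefully coupled functions of $R$ by a diagonal extraction, so that the error terms in the convexity decomposition stay uniformly inside $\dom M$ (here the openness of $\dom M$ and the scalar structure $m=1$ enter). Your naive truncation, without this translation-averaging step, does not produce the stationary quasi-corrector that~\eqref{eq:Whomstat} requires, so the reduction to Proposition~\ref{prop:statcorbuffer} is not completed in case~(2).
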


\subsubsection{Approximation by periodization}
The last improvement concerns the approximation of $\overline V$ by periodization.
As for Dirichlet boundary conditions, periodic boundary conditions require ``well-prepared data''. 
In this case, the well-preparedness is formalized in terms of periodization of the law of the energy density (as opposed to the more naive periodization in space, cf. Figure~\ref{fig:Poisson-again}):

\begin{defin}\label{def:admperlaw}
A collection $(V^R)_{R>0}$ of random maps $V^R:\R^d\times\R^{m\times d}\times\Omega\to[0,\infty]$ is said to be an {\it admissible periodization in law} for $V$ if
\begin{enumerate}[(i)]
\item Periodicity in law: for all $R>0$, $V^R$ is $Q_R=[-\frac{R}{2},\frac{R}{2})^d$-periodic and is stationary with respect to translations on the torus $\T_R=(\R/R\Z)^d$ (more precisely there exists a measurable action $\tau^R$ of $\T_R$ on $\Omega$ such that, for all $y=y_1+y_2$ with $y_1\in (R\Z)^d$ and $y_2\in Q_R$, we have $V^R(y,\cdot,\omega)=V^R(y_2,\cdot,\omega)=V^R(0,\cdot,\tau^R_{-y_2}\omega)$);
\item Stabilization property: for all $\theta \in (0,1)$ and for almost all $\omega$, there exists $R_\theta(\omega)>0$ such that  for all $R>R_\theta(\omega)$ we have $V^R(\cdot,\cdot,\omega)|_{ Q_{\theta R}\times \R^{m\times d}}= V(\cdot,\cdot,\omega)|_{Q_{\theta R}\times \R^{m\times d}}$.\qed
\end{enumerate}
\end{defin}

\begin{rem}
It may seem quite unnatural to make in this definition the action $\tau^R$ depend on $R$. Another definition would consist in keeping the same integrand $V$ and action $\tau$ but letting the probability measure vary according to $\p_R:=(T_R)_*\p$, which amounts to defining $V^R(y,\Lambda,\omega)=V(y,\Lambda,T_R(\omega))$, for some projection $T_R:\Omega\to\Omega$ with suitable properties such as the $\tau$-invariance of the set $T_R(\Omega)$. As shown in the examples below, $T_R$ will typically be the (natural) $Q_R$-periodization of an underlying Poisson process, and $T_R(\Omega)$ will be the set of $Q_R$-periodic realizations (that is of $\p$-measure $0$). Note that this is a particular case of the formalism of Definition~\ref{def:admperlaw} above, for the conjugated action $\tau^R_y:=\tau_{y}\circ T_R$.
\end{rem}

The main goal here is to prove an approximation formula for $\overline V$ and $\overline W$ by periodization in law, that is an asymptotic formula of the following form: for all $\Lambda \in \R^{m\times d}$, for almost all $\omega$,
\begin{equation}\label{eq:approx-per0}
\overline V(\Lambda)\sim\lim_{R \uparrow \infty} \inf_{u\in W^{1,p}_\per(Q_R)} \fint_{Q_R} V^R(y,\Lambda+\nabla u(y),\omega)dy,
\end{equation}
and similarly for $\overline W$. Such a formula would be of particular interest for numerical approximations of $\overline V$ and $\overline W$. The notion of periodization in law was introduced in \cite{EGMN-15}. It crucially differs from ``periodization in space'', which would consist in replacing the argument of the limit in \eqref{eq:approx-per0}
by
\[\inf_{u\in W^{1,p}_\per(Q_R)} \fint_{Q_R} V(y,\Lambda+\nabla u(y),\omega)dy.\]
The difference between both types of periodization is illustrated on Figure~\ref{fig:Poisson-again} for Poisson random inclusions.
\begin{figure}
\centering
\begin{minipage}[b]{0.49\linewidth}
\includegraphics[scale=.7]{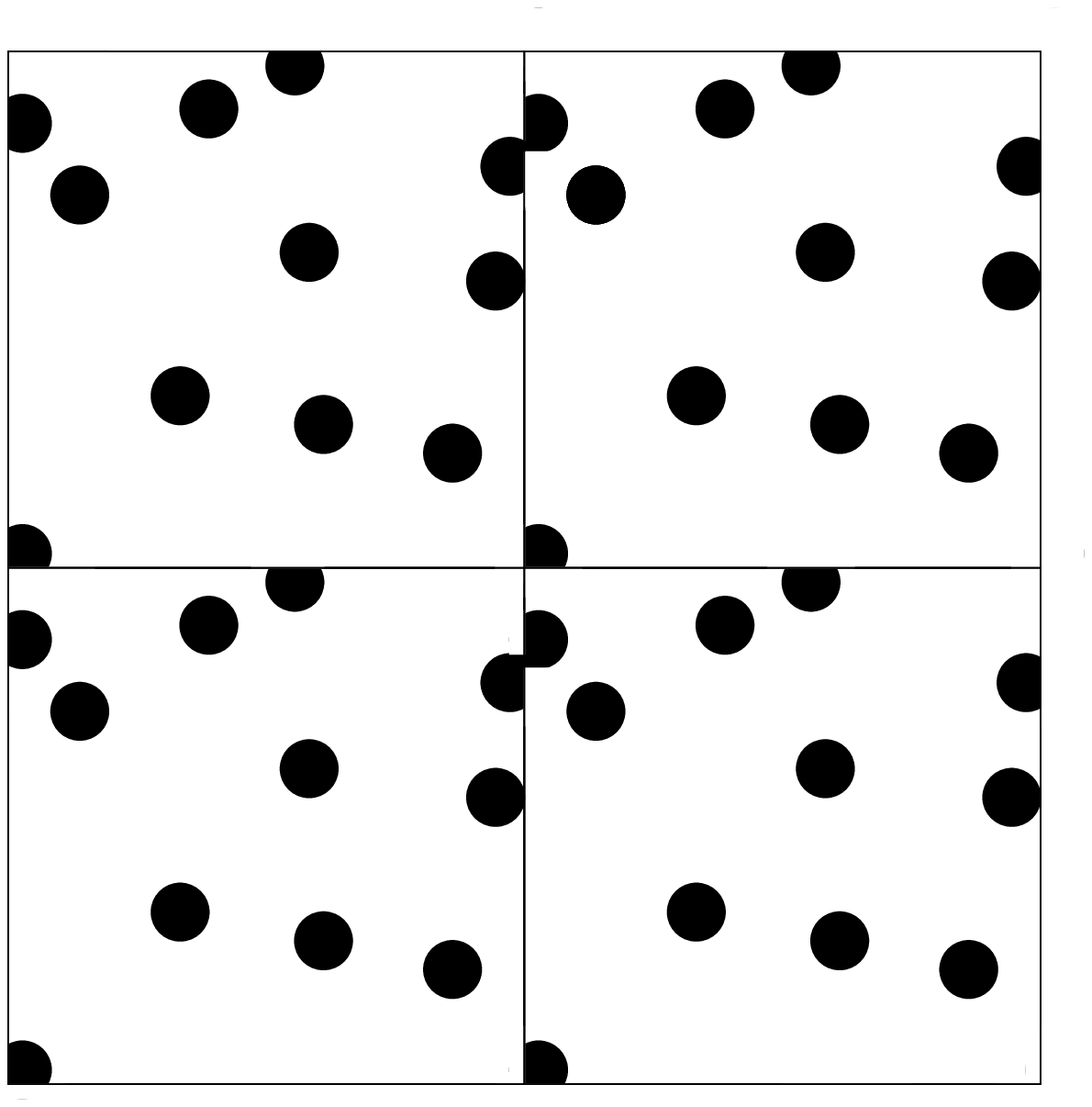} 
\end{minipage}
\begin{minipage}[b]{0.49\linewidth}
\includegraphics[scale=.7]{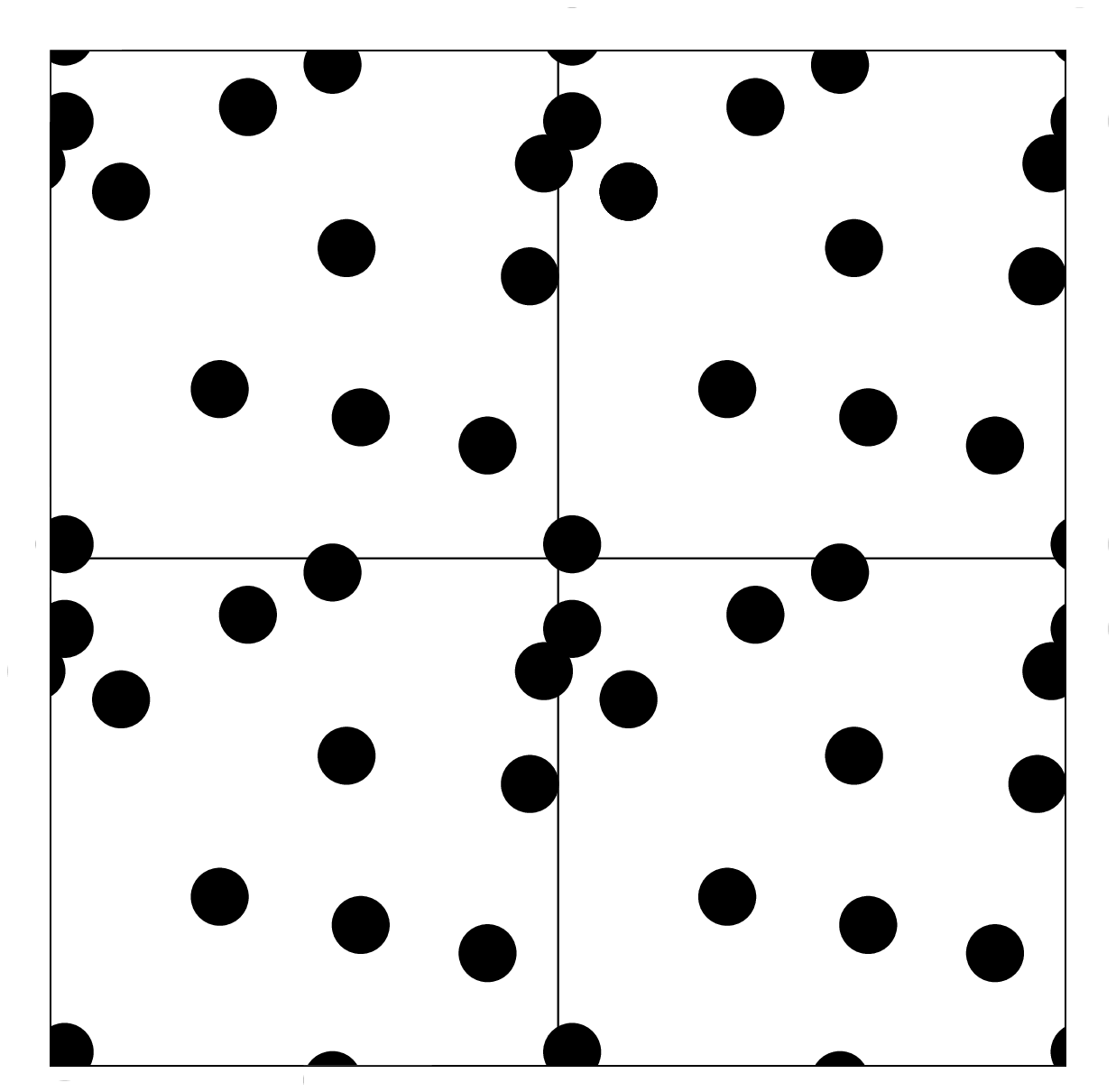}
\end{minipage}
\caption{For Poisson random inclusions: periodization in space (left) versus periodization in law (right), four periods.}
\label{fig:Poisson-again}
\end{figure}
Empirical results in~\cite{EGMN-15} tend to show that periodization in law is more precise than periodization in space for the approximation of homogenized coefficients in the case of discrete linear elliptic equations, whereas \cite{Glotto-Neukamm-15,Gloria-Nolen-14} provide with a complete numerical analysis (convergence rate, quantitative central limit theorem) of this method when the coefficients are independent and identically distributed.
In the case of the unbounded integral functionals considered here, it is not clear whether the approximation by periodization \emph{in space} converges in general. Note that for problems with discrete stationarity (like the random chessboard or periodic structures) such periodizations are not needed but the sequences $\e,R$ have to be discrete: $\e_k=1/k,R=k$ with $k\in \N$.

\medskip

Before we state our results, let us shortly discuss periodization in law on the example of random inclusions.
Let
\begin{align}\label{eq:inclusions}
V(y,\Lambda,\omega):=a(\Lambda)\mathds1_{\R^d\setminus E(\omega)}(y)+b(\Lambda)\mathds1_{E(\omega)}(y),
\end{align}
where $a,b:\R^{m\times d}\to[0,\infty]$ are convex functions and $E(\omega)$ is a stationary random set (to be thought of as a collection of inclusions). 
A typical choice for $E$ is $E=\bigcup_{n=1}^\infty B(q_n)$, where $\rho=(q_n)_n$ is a stationary point process in $\R^d$. 
For $R>0$, set $\rho^R=(q_{n,R})_n$, $E^R:=\bigcup_{n=1}^\infty B(q_{n,R})$, and define $V^R(y,\Lambda,\omega):=a(\Lambda)\mathds1_{\R^d\setminus E^R(\omega)}(y)+b(\Lambda)\mathds1_{E^R(\omega)}(y)$. For $V^R$ to be an admissible periodization in law of $V$, it is sufficient that $(\rho^R)_R$ is 
an admissible periodization of $\rho$ in the following sense:
\begin{enumerate}[(i')]
\item Periodicity in law: for all $R>0$, $\rho^R$ is $Q_R$-periodic and $\tau^R$-stationary with respect to translations on the torus $\T_R=(\R/R\Z)^d$;
\item Stabilization: for all $\theta \in(0,1)$ and for almost all $\omega$ there is $R_\theta(\omega)>0$ so that for all $R\ge R_\theta(\omega)$,
$\rho^R(\omega)\cap Q_{\theta R}= \rho(\omega)\cap Q_{\theta R}$.
\end{enumerate}
Let us now describe three typical examples, for which the periodization in law can be explicitly constructed:
\begin{enumerate}[(a)]
\item {\it Poisson point process.} If $\rho$ is a Poisson point process on $\R^d$, a suitable periodization in law is given by
$\rho^R=\bigcup_{z\in \Z^d}(Rz+\rho\cap Q_R)$. Property~(i') is satisfied by complete independence of the Poisson process, while property~(ii') holds for $\theta=1$.
\item {\it Random parking point process.} We now let $\rho$ be the random parking measure on $\R^d$ defined in~\cite{Penrose-01}.
Penrose's graphical construction is as follows: $\rho$ is obtained as a transformation $\rho:=T(\rho_0)$ of a Poisson point process $\rho_0$ on $\R^d\times\R^+$. 
Following~\cite[Remark~5]{Gloria-Penrose-13}, we may periodize the underlying Poisson process on $Q_R$ as above
by setting  $\rho_0^R:=\bigcup_{z\in\Z^d}((Rz,0)+\rho_0\cap (Q_R\times \R^+))$, and define a periodization of $\rho$ as $\rho^R:=T(\rho_0^R)$. Property~(i') holds by construction.
Property~(ii') is more subtle and relies on the stabilization properties of the random parking measure.
By a union bound argument we may rephrase the exponential stabilization of~\cite[Lemma~3.5]{Schreiber-Penrose-Yukich-07}
as follows: There exist $K,\kappa>0$ and for all $R>0$ there exists a random variable $t_R$
such that
\begin{itemize}
\item {\it $\rho\cap Q_R$ does not depend on $\rho_0\cap ((\R^d\setminus  Q_{t_R})\times\R^+)$}, and in particular, for any locally finite subset $F\subset (\R^d\setminus Q_{t_R})\times\R^+$,  we have
$$
Q_R\cap T(F\cup(\rho_0\cap(Q_{t_R}\times\R^+)))=Q_R\cap T(\rho_0\cap( Q_{t_R}\times\R^+));
$$
\item for all $L\ge 0$, $\p[t_R\ge L]\le R^d K e^{-\kappa  (L-R)}$.
\end{itemize}
Hence, for all $\theta \in(0,1)$,
\begin{align*}
&\sum_{R=1}^\infty\p[\text{$\rho\cap Q_{\theta R}$ does depend on $\rho_0\cap((\R^d\setminus Q_R)\times \R^+)$}]\\
\le~&\sum_{R=1}^\infty\p[t_{\theta R}\ge R]\le\sum_{R=1}^\infty (\theta R)^d K e^{-\kappa (1-\theta)R}<\infty.
\end{align*}
By the Borel-Cantelli lemma, $\rho\cap Q_{\theta R}$ does eventually not depend on $\rho_0\cap((\R^d\setminus Q_R)\times \R^+)$  as $R\uparrow\infty$
almost surely. In particular, for all $\theta\in(0,1)$ and for almost all $\omega$, there exists some $R_\omega>0$ with the following property: for all $R\ge R_\omega$, $\rho(\omega)\cap Q_{\theta R}$ does not depend on $\rho_0(\omega)\cap((\R^d\setminus Q_R)\times\R^+)$, so that $\rho^R(\omega)\cap Q_{\theta R}=\rho(\omega)\cap Q_{\theta R}$, that is,~(ii').
\item {\it Hardcore Poisson point process.} As noticed in~\cite[Step~1 of Section~4.2]{DG-15} (in a more general context), a hardcore Poisson point process $\rho$ on $\R^d$ can be obtained by applying the transformation $T$ of~(b) to a Poisson point process $\rho_0$ on $\R^d\times(0,1)$, that is, $\rho=T(\rho_0)$. 
The same argument as in~(b) (with however easier bounds) then yields the desired periodization in law of $\rho$.
\end{enumerate}

\medskip

In terms of applications, example (b) is of particular interest since it is used in \cite{Gloria-Penrose-13,Gloria-LeTallec-Vidrascu-08b,DeBuhan-Gloria-LeTallec-Vidrascu-10} as a suitable random point set for the derivation of nonlinear elasticity from polymer physics started in~\cite{Alicandro-Cicalese-Gloria-11}.

\medskip

We now turn to our $\Gamma$-convergence results with periodic boundary data under periodization in law (yielding in particular an approximation of the homogenized energy density of the form~\eqref{eq:approx-per0}).
The main difficulty is the following: in order to carry out the analysis as before, we would somehow need the uniform sublinearity of the correctors associated to the family of periodized integrands $(V^R)_{R>0}$. It is not clear to us whether this uniformity holds in general. In what follows, we give an alternative argument in the scalar case with fix domain, as well as in the very particular example of well-separated random stiff inclusions in a soft matrix. Note that because of the separation assumption this does not include the example (a) of Poisson inclusions; however we believe that up to some technicalities the proof could be adapted to that case. For simplicity, we consider spherical inclusions, but random shapes could of course be considered too.

\begin{cor}[Approximation by periodization]\label{cor:per}
Let $V,J_\e,J,M$ and $W,I_\e,I$ be as in Theorems~\ref{th:conv} and~\ref{th:nonconv} for some $p>1$.
Also assume that one of the following holds:
\begin{enumerate}[(1)]
\item $m=1$, and $\dom V(y,\cdot,\omega)=\dom M$ for almost all $y,\omega$;
\item $p>d$, and $V(y,\Lambda,\omega)\le C(1+|\Lambda|^p)$ for all $\omega$ and all $y\notin E^\omega$, for some random stationary set $E^\omega=\bigcup_{n=1}^\infty B_{R_n^\omega}(q_n^\omega)\subset\R^d$ satisfying almost surely, for all $n$, and some constant $C>0$,
\[\frac1{R_n^\omega} ~{\inf_{m,m\ne n}\dist(B_{R_m^\omega}(q_m^\omega),B_{R_n^\omega}(q_n^\omega))}\ge \frac1C,\qquad R_n^\omega\le C.\]
\end{enumerate}
Let $(V^R)_{R>0}$ be an admissible periodization in law for $V$ in the sense of Definition~\ref{def:admperlaw}, and for all $\e>0$ and all $\Lambda \in \R^{m\times d}$ denote by $J^{\per}_\e(\cdot,\Lambda,\cdot)$ the following random integral functional on the unit cube $Q=[-\frac{1}{2},\frac{1}{2})^d$:
$$
J^{\per}_\e(u,\Lambda,\omega):=\int_Q V^{1/\e}({y}/{\e},\Lambda+\nabla u(y),\omega)dy, \qquad u\in W^{1,p}_{\per}(Q;\R^m),
$$
where $W^{1,p}_\per(Q;\R^m)$ denotes the closure in $W^{1,p}(Q;\R^m)$ of the set of smooth periodic functions on $Q$.
Then, for almost all $\omega$ and for all $\Lambda$,  the integral functionals $J_\e^{\per}(\cdot,\Lambda,\omega)$ $\Gamma$-converge to $J(\cdot+\Lambda\cdot x,\omega)$ on the space $W^{1,p}_{\per}(Q;\R^m)$.
In particular, we have the following approximation of $\overline V$ by periodization:
for all $\Lambda \in \R^{m\times d}$, for almost all $\omega$,
\begin{align*}
\overline V(\Lambda)&=\lim_{t\uparrow1}\lim_{R \uparrow \infty} \inf_{u\in W^{1,p}_\per(Q_R)} \fint_{Q_R} V^R(y,t\Lambda+\nabla u(y),\omega)dy\\
&=\lim_{t\uparrow1}\lim_{R \uparrow \infty} \E\left[\inf_{u\in W^{1,p}_\per(Q_R)} \fint_{Q_R} V^R(y,t\Lambda+\nabla u(y),\cdot)dy\right].
\end{align*}
By convexity, if $\Lambda\in\inter \dom\overline V$, then the limits $t\uparrow1$ can be dropped. Note that $\dom\overline V=\R^{m\times d}$ holds in case~(2).
The same results also hold for $V,J_\e,J$ replaced by $W,I_\e,I$, provided that $p>d$.
\qed
\end{cor}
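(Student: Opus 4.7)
The plan is to first establish $\Gamma$-convergence of $J_\e^{\per}(\cdot,\Lambda,\omega)$ to the functional $u\mapsto J(u+\Lambda\cdot x;Q)$ on $W^{1,p}_{\per}(Q;\R^m)$, and then extract the approximation formulas by testing the associated minimization problem with $u\equiv 0$.

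For the $\Gamma$-liminf inequality, given $u_\e\cvf{}u$ in $W^{1,p}_{\per}(Q;\R^m)$, I would fix $\theta\in(0,1)$ and localize to $\theta Q$: by the stabilization property of Definition~\ref{def:admperlaw}, for $\e$ small enough (depending on $\omega$ and $\theta$) the periodized integrand $V^{1/\e}(\cdot/\e,\cdot,\omega)$ coincides with $V(\cdot/\e,\cdot,\omega)$ on $\theta Q$, so that Theorem~\ref{th:conv} (or Theorem~\ref{th:nonconv}) directly yields $\liminf_\e J_\e^{\per}(u_\e,\Lambda,\omega)\ge \int_{\theta Q}\overline V(\Lambda+\nabla u)$; letting $\theta\uparrow 1$ concludes.

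The main obstacle is the $\Gamma$-limsup inequality, which demands a recovery sequence that is genuinely $Q$-periodic. The natural candidate is the rescaling $\e\varphi_\Lambda^R(\cdot/\e,\omega)$ of the corrector $\varphi_\Lambda^R$ associated to $V^R$ with $R=1/\e$, which is $Q$-periodic by construction and has $\T_R$-stationary gradient. The crucial missing ingredient is the sublinearity of $\varphi_\Lambda^R$ \emph{uniformly in $R$}, which is the periodized analogue of Lemma~\ref{lem:sublincor} and is not automatic because the upper growth of $V^R$ is not uniform in $R$. This uniformity is precisely what the two structural assumptions secure: in case~(1), the scalar setting with fixed domain makes truncation of correctors legitimate (as in the proof of Corollary~\ref{cor:subcritp}), which reduces the analysis to the standard-growth case and transfers the sublinearity of $\varphi_\Lambda$ to $\varphi_\Lambda^R$ by a monotone argument; in case~(2), the well-separated stiff inclusions lie strictly inside $Q_R$ for $R$ large enough by stabilization, and the polynomial $p$-growth of $V^R$ in the soft matrix permits a gluing between the non-periodized corrector $\varphi_\Lambda$ and a $Q_R$-periodic modification through a corridor in the soft matrix, at an asymptotically negligible cost. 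Once uniform sublinearity is secured for affine targets, a density argument with piecewise affine functions combined with the continuity of $\overline V$ on $\inter\dom\overline V$ (and the auxiliary truncation $t\uparrow 1$ to approach its boundary) extends the construction to arbitrary $u\in W^{1,p}_{\per}(Q;\R^m)$.

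To conclude, testing with $u\equiv 0$ gives $\inf_{u\in W^{1,p}_{\per}(Q)}J(u+\Lambda\cdot x;Q)=\overline V(\Lambda)$ by Jensen's inequality (periodic functions have zero mean gradient), and the equicoercivity of $J_\e^{\per}$ coming from \eqref{eq:aslowerbound0} combined with $\Gamma$-convergence yields the pathwise limit $\lim_{\e\downarrow 0}\inf_u J_\e^{\per}(u,\Lambda,\omega)=\overline V(\Lambda)$; the change of variables $R=1/\e$, $z=y/\e$ rewrites this as the claimed almost-sure formula, with the outer limit $t\uparrow 1$ absorbing the case $\Lambda\in\partial\dom\overline V$. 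The averaged formula then follows by dominated convergence on the infima: using the constant test $v\equiv 0$ together with the torus-stationarity of $V^R$ from~(i) of Definition~\ref{def:admperlaw}, one has $\E\bigl[\inf_u\fint_{Q_R}V^R(z,t\Lambda+\nabla u,\cdot)dz\bigr]\le \E[V^R(0,t\Lambda,\cdot)]$, which is bounded by $M(t\Lambda)<\infty$ uniformly in $R$, so that the pathwise convergence transfers to expectations; convexity allows the removal of $t\uparrow 1$ when $\Lambda\in\inter\dom\overline V$. The statement for $W,I_\e,I$ follows identically upon invoking Theorem~\ref{th:nonconv} in place of Theorem~\ref{th:conv}, the convex bound $V$ still furnishing the corrector used in the recovery construction through~\eqref{eq:homogformnc}.
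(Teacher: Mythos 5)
Your $\Gamma$-$\liminf$ argument (localize to $\theta Q$ and use the stabilization property) matches the paper's Step~1 exactly, and your treatment of case~(2) is in essence the paper's construction: modify the non-periodized corrector $\varphi_\Lambda$ near $\partial Q_R$, cutting through the soft matrix and freezing the gradient on the inclusions that touch the boundary, at an asymptotically negligible cost. Your concluding reduction — test with $u\equiv 0$, use Jensen to identify $\inf_u J(u+\Lambda\cdot x;Q)=\overline V(\Lambda)$, invoke equicoercivity — is also correct, though the passage to expectations by dominated convergence implicitly requires a pointwise bound $V^R\le M$ that Definition~\ref{def:admperlaw} does not grant in the abstract (the paper handles this differently, case by case).

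However, your treatment of case~(1) contains a genuine gap. You reduce the $\Gamma$-$\limsup$ to proving sublinearity of the periodized correctors $\varphi_\Lambda^R$ uniformly in $R$, and claim that ``truncation of correctors \ldots transfers the sublinearity of $\varphi_\Lambda$ to $\varphi_\Lambda^R$ by a monotone argument.'' This is precisely the point the paper flags as unresolved: it states explicitly that ``it is not clear to us whether this uniformity holds in general'' and then gives an \emph{alternative} argument avoiding the question entirely. Truncating $\varphi_\Lambda^R$ certainly keeps it in $W^{1,p}_\per(Q_R)$, and testing the periodic minimization with $u\equiv 0$ gives $\fint_{Q_R}|\nabla\varphi_\Lambda^R|^p\le C$ uniformly in $R$; but by Poincar\'e this only yields $\|\varphi_\Lambda^R/R\|_{\Ld^p(Q_R)}\le C$ after mean-normalization, which is boundedness, not sublinearity. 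There is no monotone comparison relating $\varphi_\Lambda^R$ to $\varphi_\Lambda$, since the minimization problems are over different spaces (periodic versus potential-in-probability) for different integrands. The paper's actual route in case~(1) is structurally different: it first establishes a $\Gamma$-$\limsup$ inequality \emph{in expectation} for the non-periodized problem (Proposition~\ref{prop:gammasupNexpect}, which is where truncations genuinely enter, via \eqref{eq:deftruncueps}), then applies this locally on subdomains of diameter smaller than the stabilization radius $\theta_0$ where $V^{1/\e}=V$ on a high-probability event $\Omega_R$, glues these local constructions, and finally converts the expectation inequality into an almost-sure one by a contradiction argument using a union bound over the events $\tau_{x_l/\e}^{1/\e}\Omega_R$. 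Your proposal does not contain this mechanism and, as written, case~(1) would not go through.
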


\section{Proof of the results for convex integrands}\label{sec:convex}

This section is dedicated to the proofs of Theorem~\ref{th:conv}, Corollary~\ref{cor:dir},
and Corollaries~\ref{cor:dir-bis} and~\ref{cor:per}. Let $V$ be a convex $\tau$-stationary normal random integrand. Up to the addition of a constant, we may restrict to the following stronger version of~\eqref{eq:aslowerbound0}: for almost all $\omega$, $y$, we have, for all $\Lambda$,
\begin{align}\label{eq:cond12s}
\frac1C|\Lambda|^{p}\,\le\, V(y,\Lambda,\omega),
\end{align}
for some $C>0$ and $1<p<\infty$. 
We assume that $0$ belongs to the interior of the domain of the convex function $M:=\supess_{y,\omega}V(y,\cdot,\omega)$.

Following the strategy of~\cite[Theorem~1.5]{Muller-87}, we proceed by truncation of $V$. We let $(V^k)_k$ be an increasing sequence of $\tau$-stationary convex normal random integrands
$V^k:\R^d\times\R^{m\times d}\times\Omega\to\R^+$ such that, for almost all $\omega$, $y$, and for all $\Lambda$, we have
\begin{eqnarray}\label{eq:SGC-trun}
\lim_{k\uparrow \infty} V^k(y,\Lambda,\omega)\,=\,V(y,\Lambda,\omega),\qquad\text{and}\qquad\frac1C |\Lambda|^p\, \le \, V^k(y,\Lambda,\omega)\,\le\, C^k(|\Lambda|^p+1),
\end{eqnarray}
for some $C>0$ and some sequence $C^k \uparrow \infty$ (see \cite[Lemma~3.4]{Muller-87} for such a construction). Let $\Omega_{0}\subset\Omega$ be a subset of maximal probability on which all these assumptions (about $V$ and the $V^k$'s) are simultaneously pointwise satisfied.

We shall prove the existence of a subset $\Omega'\subset\Omega_0$ of maximal probability such that, for all $\omega\in\Omega'$ and all bounded Lipschitz domains $O\subset\R^d$, the functionals $J_\e(\cdot,\omega;O)$ $\Gamma$-converge to the functional $J(\cdot;O)$ on $W^{1,p}(O;\R^m)$, where we recall
the definitions
$$
J_\e(u,\omega;O):=\int_O V(y/\e,\nabla u(y),\omega)dy,\qquad J(u;O):=\int_O\overline V(\nabla u(y))dy.
$$
As usual, the proof of $\Gamma$-convergence splits into two parts: the proof of a lower bound ($\Gamma$-$\liminf$ inequality) and the explicit construction of a recovery sequence which achieves the lower bound ($\Gamma$-$\limsup$ inequality). 


\subsection{Preliminaries}

We first need to briefly recall the standard stationary differential calculus in probability (first introduced by~\cite[Section~2]{PapaVara}), as well as some results on ergodic Weyl decompositions.

\subsubsection{Stationary differential calculus in probability}\label{chap:preliminD}
Let $1\le p<\infty$. For all $1\le i\le d$, consider the partial action $(T^i_h)_{h\in\R}$ of $(\R,+)$ on $\Ld^p(\Omega)$, defined by $(T^i_h f)(\omega)= f(\tau_{-he_i}\omega)$, for $h\in\R$. The actions $(T^i_h)_{h\in\R}$ (for $1\le i\le d$) commute with each other and are unitary and strongly continuous by Lemma~\ref{lem:unit}. For all $i$, we may then consider the infinitesimal generator $D_i$ of $(T^i_h)_{h\in\R}$, defined by
\[D_i f=\lim_{h\to0}\frac{T^i_h f- f}{h},\quad f\in\Ld^p(\Omega),\]
whenever the limit exists in the strong sense of $\Ld^p(\Omega)$. By classical semigroup theory, the generators $D_i$ are closed linear operators with dense domains $\D_i\subset\Ld^p(\Omega)$, and the intersection $W^{1,p}(\Omega):=\bigcap_{i=1}^d\D_i$ is also dense in $\Ld^p(\Omega)$. Moreover, $W^{1,p}(\Omega)$ is endowed with a natural Banach space structure.

For $f\in W^{1,p}(\Omega)$, its {\it stationary gradient} is then defined by $Df:=(D_1f,\ldots,D_df)\in\Ld^p(\Omega;\R^d)$.
Through the usual correspondence between random variables and $\tau$-stationary random fields as recalled in Appendix~\ref{app:cadreproba} (for all $g\in \Ld^p(\Omega)$, write $g(x,\omega):=g(\tau_{-x}\omega)$, defining $g\in\Ld^p_{\loc}(\R^d;\Ld^p(\Omega))$), we may define $Df(x,\omega):=D f(\tau_{-x}\omega)$ for all $x$. By unitarity of the action $T$, the operator $D$ is skew-symmetric, so that the following ``integration by parts formula'' holds, for all $ f\in W^{1,p}(\Omega)$ and $ g\in W^{1,p'}(\Omega)$, $p'=p/(p-1)$,
\[\E [D f]=0 \qquad\text{and} \qquad\E [f D g]=-\E[gD f].\]

As explained in Section~\ref{app:statsobolev} (see in particular Lemma~\ref{lem:statsobolev}), for almost all $\omega$, the function $Df(\cdot,\omega)$ is nothing but the distributional derivative of $f(\cdot,\omega)\in \Ld^p_{\loc}(\R^d)$, and the following identity holds:
\begin{align}\label{eq:linkDnabla}
W^{1,p}(\Omega)=\{f\in W^{1,p}_{\loc}(\R^d;\Ld^p(\Omega)): f(x+y,\omega)=f(x,\tau_{-y}\omega),\forall x,y,\omega\}.
\end{align}
This justifies that in the sequel we simply use the notation $Df=\nabla f$.

\subsubsection{Ergodic Weyl decomposition}\label{chap:preliminweyl}
Ergodicity of the measurable action $\tau$ of $(\R^d,+)$ on the probability space $(\Omega,\F,\p)$ is crucial in the sequel. Let $1<p<\infty$. In analogy with the classical Weyl subspaces of $\Ld^p_{\loc}(\R^d;\R^d)$, we define the subspaces of potential and solenoidal random fields with respect to the differential calculus associated with the group action in the following way: for $p'=p/(p-1)$,
\begin{align}\label{eq:defweylerg-0}
\Ld^p_\pot(\Omega)&=\{f\in\Ld^p(\Omega;\R^d)\,:\,\E [f\cdot(\nabla\times g)]=0,~\forall g\in W^{1,p'}(\Omega;\R^d)\},\\
\Ld^p_\sol(\Omega)&=\{f\in\Ld^p(\Omega;\R^d)\,:\,\E[ f\cdot \nabla g]=0,~\forall g\in W^{1,p'}(\Omega)\}.\nonumber
\end{align}
Reinterpreting these definitions in physical space, we easily obtain the following reformulations in terms of stationary extensions:
\begin{align}\label{eq:defweylerg-1}
\Ld^p_\pot(\Omega)&=\{f\in\Ld^p(\Omega;\R^d)\,:\,\text{for almost all $\omega$, $x\mapsto f(\tau_{x}\omega)\in\Ld^p_\loc(\R^d;\R^d)$ is potential}\},\\
\Ld^p_\sol(\Omega)&=\{f\in\Ld^p(\Omega;\R^d)\,:\,\text{for almost all $\omega$, $x\mapsto f(\tau_{x}\omega)\in\Ld^p_\loc(\R^d;\R^d)$ is solenoidal}\},\nonumber
\end{align}
where a function $h\in \Ld^p_\loc(\R^d)$ is said to be potential (resp. solenoidal) if $\nabla\times h=0$ (resp. $\nabla\cdot f=0$) in the distributional sense. Constant functions belong to both subspaces, and we further define
\begin{align}\label{eq:defweylerg-F}
F^p_\pot(\Omega)=\{ f\in\Ld^p_\pot(\Omega)\,:\,\E[ f]=0\}\quad\text{and}\quad F^p_\sol(\Omega)=\{ f\in\Ld^p_\sol(\Omega)\,:\,\E[ f]=0\}.
\end{align}
The spaces $\Ld^p_\pot(\Omega)$, $\Ld^p_\sol(\Omega)$, $F^p_\pot(\Omega)$ and $F^p_\sol(\Omega)$ are all closed in $\Ld^p(\Omega;\R^d)$, and the following Banach direct sum decomposition holds
\begin{align}\label{eq:decweyl}
\Ld^p(\Omega;\R^d)=F^p_\pot(\Omega)\oplus F^p_\sol(\Omega)\oplus\R^d,
\end{align}
as well as the following density results:
\begin{align}\label{eq:densweyl}
F^p_\pot(\Omega)&=\adh_{\Ld^p(\Omega;\R^d)}\{\nabla g\,:\,g\in W^{1,p}(\Omega)\},\\
F^p_\sol(\Omega)&=\adh_{\Ld^p(\Omega;\R^d)}\{\nabla\times g\,:\,g\in W^{1,p}(\Omega)\}.\nonumber
\end{align}
Since we did not find a suitable reference for these results (besides the Hilbert setting), we provide a proof of~\eqref{eq:decweyl} and~\eqref{eq:densweyl}  in Appendix~\ref{app:weylerg}.

\subsection{$\Gamma$-convergence of truncated energies}\label{chap:gammaconvtrunc}
Since the approximations $V^k$ of $V$ all satisfy standard polynomial growth conditions, we can appeal to the classical stochastic homogenization result of \cite{DalMaso-Modica-86} (which could be reproved via a direct adaptation of the (periodic) arguments of~\cite[Theorem 1.3]{Muller-87}). More precisely, there exists a subset $\Omega_1\subset\Omega_{0}$ of maximal probability such that, for all $\omega\in\Omega_1$, all $k$, and all $\Lambda\in\R^{m\times d}$, the following limit exists (as a consequence of the Ackoglu-Krengel subadditive ergodic theorem) and defines the homogenized integrand $\overline V^k$:
\begin{align}\label{eq:firstdefwbark}
\overline V^k(\Lambda)=\lim_{R\uparrow\infty}\inf_{\phi\in W^{1,p}_0(Q_R;\R^m)}\fint_{Q_R}V^k(y,\Lambda+\nabla\phi(y),\omega)dy,
\end{align}
where $Q_R:=[-\frac{R}{2},\frac{R}{2})^d$. By dominated convergence, this convergence also holds when taking the expectation of the infimum. 
In addition, for any bounded Lipschitz domain $O\subset\R^d$, and for all $\omega\in\Omega_1$ and all $k$, the functionals $J^k_\e(\cdot,\omega;O)$ $\Gamma$-converge, as $\e\downarrow0$, to the functional $J^k(\cdot;O)$,  defined by
$$
J^k_\e(u,\omega;O):=\int_OV^k(y/\e,\nabla u(y),\omega)dy,\qquad\text{and}\qquad J^k(u;O):=\int_O\overline V^k(\nabla u(y))dy.
$$
Since $k\mapsto V^k$ is increasing, $k\mapsto \overline V^k$ is increasing as well, and for all $\Lambda\in \R^{m\times d}$
we may define $\overline V(\Lambda):=\lim_{k\to \infty}\overline V^k(\Lambda)$. In particular,
\begin{align}\label{eq:defWbar}
\overline V(\Lambda)=\sup_k\overline V^k(\Lambda)=\sup_k\lim_{R\uparrow\infty}\inf_{\phi\in W^{1,p}_0(Q_R;\R^m)}\fint_{Q_R}V^k(y,\Lambda+\nabla\phi(y),\omega)dy.
\end{align}
It remains to pass to the limit $k\uparrow \infty$ in the $\Gamma$-convergence result. The key is to prove the commutation of homogenization and truncation, which we do in Subsection~\ref{chap:commutform} below.

Alternative formulas for $\overline V^k$ are obtained in Lemma~\ref{lem:equivk}.
Since $\overline V^k$ is convex and everywhere finite, it is continuous on $\R^{m\times d}$,  and we may directly deduce from the definition $\overline V(\Lambda):=\sup_k\overline V^k(\Lambda)$:
\begin{lem}
The map $\overline V:\R^{m\times d}\to[0,\infty]$ is convex and lower semicontinuous.
\qed
\end{lem}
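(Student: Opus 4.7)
The plan is to exploit the identity $\overline V = \sup_k \overline V^k$ established just above the statement, together with elementary stability properties of convexity and lower semicontinuity under pointwise suprema. The argument has two ingredients: first, each $\overline V^k$ is convex and continuous on $\R^{m\times d}$; second, taking $\sup_k$ preserves both convexity and lower semicontinuity.

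For convexity of each $\overline V^k$, I would argue as follows. For fixed $k$, $\omega\in\Omega_1$, and $R>0$, the map
\[\Lambda\,\mapsto\,\inf_{\phi\in W^{1,p}_0(Q_R;\R^m)}\fint_{Q_R} V^k(y,\Lambda+\nabla\phi(y),\omega)\,dy\]
is convex in $\Lambda$: this follows from the general principle that the partial infimum of a jointly convex function is convex in the remaining variable, applied to the jointly convex map $(\Lambda,\phi)\mapsto\fint_{Q_R}V^k(y,\Lambda+\nabla\phi(y),\omega)\,dy$, whose joint convexity is immediate from convexity of $V^k(y,\cdot,\omega)$ and linearity of $(\Lambda,\phi)\mapsto\Lambda+\nabla\phi(\cdot)$. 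Passing to the limit $R\uparrow\infty$ in \eqref{eq:firstdefwbark} preserves convexity. Combined with the upper bound $V^k(y,\Lambda,\omega)\le C_k(1+|\Lambda|^p)$ from \eqref{eq:coercivity-conv} (which, by testing with $\phi\equiv0$, gives $\overline V^k(\Lambda)\le C_k(1+|\Lambda|^p)<\infty$ for every $\Lambda$), this shows each $\overline V^k$ is a finite-valued convex function on $\R^{m\times d}$, hence automatically continuous, and in particular lower semicontinuous.

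Finally, the pointwise supremum of a family of convex functions is convex (directly from the definition of convexity), and the pointwise supremum of a family of lower semicontinuous functions is lower semicontinuous (its sublevel sets are intersections of closed sets, hence closed). Applied to $\overline V=\sup_k\overline V^k$, these two observations yield the claim. There is no genuine obstacle here; the only substantive input is the finiteness of $\overline V^k$ on all of $\R^{m\times d}$, which upgrades convexity to continuity at the approximant level before the supremum is taken.
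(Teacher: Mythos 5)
Your proposal is correct and follows precisely the paper's own reasoning: the paper notes (just before the lemma) that each $\overline V^k$ is convex and everywhere finite, hence continuous, and that $\overline V=\sup_k\overline V^k$ then inherits convexity and lower semicontinuity as a pointwise supremum. You have simply filled in the routine verification of convexity and finiteness of the $\overline V^k$'s, which the paper leaves implicit.
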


\subsection{$\Gamma$-$\liminf$ inequality}

In view of the definition of $\overline V$, the $\Gamma$-$\liminf$ inequality is an elementary consequence of the monotone convergence theorem:
\begin{prop}[$\Gamma$-$\liminf$ inequality]\label{prop:gammaliminfneu}
For all $\omega\in\Omega_1$, all bounded domains $O\subset\R^d$, and all sequences $(u_\e)_\e\subset W^{1,p}(O;\R^m)$ with $u_\e\cvf{} u$ in $W^{1,p}(O;\R^m)$, we have
\[\liminf_{\e\downarrow0}J_\e(u_\e,\omega;O)\ge J(u;O).\]
\qed
\end{prop}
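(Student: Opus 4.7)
The plan is to reduce the inequality to the corresponding $\Gamma$-$\liminf$ statement for the truncated integrands $V^k$, for which the result is already available via the classical Dal Maso--Modica theorem recalled in Subsection~\ref{chap:gammaconvtrunc}, and then to pass to the limit $k\uparrow\infty$ by monotone convergence.

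First, I would fix $\omega\in\Omega_1$, a bounded domain $O\subset\R^d$, a sequence $(u_\e)_\e\subset W^{1,p}(O;\R^m)$ with $u_\e\cvfd{}{} u$ in $W^{1,p}(O;\R^m)$, and an integer $k\ge1$. Since $V^k\le V$ pointwise (on a set of full probability), we have the trivial energy comparison $J_\e^k(u_\e,\omega;O)\le J_\e(u_\e,\omega;O)$ for every $\e>0$. The integrand $V^k$ satisfies the standard $p$-growth condition~\eqref{eq:SGC-trun}, so the $\Gamma$-convergence result of \cite{DalMaso-Modica-86} applies and yields, for $\omega\in\Omega_1$,
\[
\liminf_{\e\downarrow0}J_\e(u_\e,\omega;O)\,\ge\,\liminf_{\e\downarrow0}J_\e^k(u_\e,\omega;O)\,\ge\, J^k(u;O)\,=\,\int_O\overline V^k(\nabla u(y))\,dy.
\]

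Second, I would take the supremum over $k\ge1$ on the right-hand side. By construction $\overline V^k\uparrow\overline V$ pointwise on $\R^{m\times d}$ (cf.~\eqref{eq:defWbar}), and since $\overline V^k\ge0$, the monotone convergence theorem gives
\[
\sup_{k}\int_O\overline V^k(\nabla u(y))\,dy\,=\,\int_O\overline V(\nabla u(y))\,dy\,=\,J(u;O).
\]
Combining with the previous display yields the desired bound $\liminf_{\e\downarrow0}J_\e(u_\e,\omega;O)\ge J(u;O)$.

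There is no genuine obstacle here: the argument is a two-line chain (energy monotonicity in $k$, then monotone convergence in $k$), and it works for every $\omega$ in the full-probability set $\Omega_1$ on which the Dal Maso--Modica convergence holds simultaneously for all $k$. The only mildly delicate point is ensuring that a single $\omega$-set of full measure is available for all truncation parameters $k$ at once, but this has already been arranged in the construction of $\Omega_1\subset\Omega_0$ in Subsection~\ref{chap:gammaconvtrunc}. Note also that no appeal to the lower bound~\eqref{eq:cond12s} nor to the Sobolev embedding $W^{1,p}\hookrightarrow\Ld^\infty$ is needed at this stage; the difficult part of the $\Gamma$-convergence proof will be the construction of recovery sequences (the $\Gamma$-$\limsup$ inequality), which is precisely where sublinearity of correctors and the hypothesis $p>d$ will enter.
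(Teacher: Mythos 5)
Your argument is correct and is exactly the proof in the paper: comparison $J_\e^k\le J_\e$ combined with the known $\Gamma$-$\liminf$ for the truncated standard-growth integrands $V^k$, followed by the monotone convergence theorem in $k$ using $\overline V^k\uparrow\overline V$. Your closing remarks about where the harder hypotheses enter are also accurate.
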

\begin{proof}
Let $O$, $(u_\e)_\e$, and $u$ be as in the statement. Then, for all $\omega\in\Omega_1$ and all $k\in\N$, using the $\Gamma$-$\liminf$ result for $J^{k}_\e(\cdot,\omega;O)$ towards $J^k(\cdot;O)$ (see Section~\ref{chap:gammaconvtrunc}), and that $V\ge V^k$,
$$
\liminf_{\e\downarrow0}J_\e(u_\e,\omega;O)\ge\liminf_{\e\downarrow0}J_\e^{k}(u_\e,\omega;O)\ge J^k(u;O),
$$
so that, by monotone convergence,
\[\liminf_{\e\downarrow0}J_\e(u_\e,\omega;O)\ge\lim_{k\uparrow \infty}J^{k}(u;O)=J(u;O).\qedhere\]
\end{proof}

From this $\Gamma$-$\liminf$ result, we deduce the locality of recovery sequences, if they exist. 
\begin{cor}[Locality of recovery sequences]\label{cor:localizlim}
If for some $\omega\in\Omega_1$, some bounded domain $O\subset\R^d$, and some function $u\in W^{1,p}(O;\R^m)$, there exists a sequence $(u_\e)_\e\subset W^{1,p}(O;\R^m)$ with $u_\e\cvf{}u$ in $W^{1,p}(O;\R^m)$ and $J_\e(u_\e,\omega;O)\to J(u;O)$, then we also have $J_\e(u_\e,\omega;O')\to J(u;O')$ for any subdomain $O'\subset O$. Hence, by an extension result, the $\Gamma$-$\limsup$ inequality on a bounded Lipschitz domain $O$ implies the $\Gamma$-$\limsup$ inequality on any Lipschitz subdomain $O'\subset O$.\qed
\end{cor}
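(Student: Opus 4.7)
The argument has a soft flavor and rests on two ingredients: the additivity of the integral functional with respect to the integration domain, and the $\Gamma$-$\liminf$ inequality already established in Proposition~\ref{prop:gammaliminfneu}. For the first claim, given an open subdomain $O'\subset O$, set $O'':=O\setminus \overline{O'}$. Since $\partial O'$ has zero Lebesgue measure (recall that any subdomain in the sense used here is at worst Lipschitz, or may be reduced to that case by inner approximation), additivity of the Lebesgue integral yields
\begin{equation*}
J_\e(u_\e,\omega;O)=J_\e(u_\e,\omega;O')+J_\e(u_\e,\omega;O''),\qquad J(u;O)=J(u;O')+J(u;O'').
\end{equation*}
Moreover, the weak convergence $u_\e\cvf{}u$ in $W^{1,p}(O;\R^m)$ passes trivially by restriction to weak convergence on both $O'$ and $O''$.

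Applying Proposition~\ref{prop:gammaliminfneu} on the two bounded open sets $O'$ and $O''$ separately gives $\liminf_{\e}J_\e(u_\e,\omega;O')\ge J(u;O')$ and $\liminf_{\e}J_\e(u_\e,\omega;O'')\ge J(u;O'')$. Combined with the hypothesis $J_\e(u_\e,\omega;O)\to J(u;O)=J(u;O')+J(u;O'')$, a standard subsequence argument shows that neither of the two $\liminf$'s may be strict: if, along some subsequence $\e_k$, $J_{\e_k}(u_{\e_k},\omega;O')\to A>J(u;O')$, then the additive identity would force $J_{\e_k}(u_{\e_k},\omega;O'')\to J(u;O)-A<J(u;O'')$, contradicting the $\Gamma$-$\liminf$ inequality on $O''$. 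Hence $J_\e(u_\e,\omega;O')\to J(u;O')$ and $J_\e(u_\e,\omega;O'')\to J(u;O'')$, as claimed. This simultaneously upgrades each $\liminf$ to a genuine limit.

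For the second claim, given any $u\in W^{1,p}(O';\R^m)$ with $O'\subset O$ a bounded Lipschitz subdomain, invoke a standard Sobolev extension operator (for instance, Calder\'on--Stein for Lipschitz domains) to produce $\tilde u\in W^{1,p}(O;\R^m)$ with $\tilde u|_{O'}=u$. The assumed $\Gamma$-$\limsup$ inequality on $O$ provides a recovery sequence $\tilde u_\e\cvf{}\tilde u$ in $W^{1,p}(O;\R^m)$ satisfying $J_\e(\tilde u_\e,\omega;O)\to J(\tilde u;O)$. By the locality statement just proved, $J_\e(\tilde u_\e,\omega;O')\to J(\tilde u;O')=J(u;O')$, and since $\tilde u_\e|_{O'}\cvf{}u$ in $W^{1,p}(O';\R^m)$, the restricted sequence is a recovery sequence for $u$ on $O'$.

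The argument is entirely routine; the only mild technical point is ensuring that the decomposition $O=O'\sqcup O''$ is valid up to a Lebesgue-negligible set, which is automatic for Lipschitz subdomains, and that the Sobolev extension on $O'$ does not interact pathologically with the boundary of $O$ (handled by the standard Lipschitz extension theorem). No further input from the structure of $V$ is needed beyond the already-proven $\Gamma$-$\liminf$ inequality.
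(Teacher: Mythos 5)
Your proof is correct and follows essentially the same route as the paper: split $O$ into $O'$ and its complement, invoke the $\Gamma$-$\liminf$ inequality (Proposition~\ref{prop:gammaliminfneu}) on both pieces, and squeeze using additivity of the energy. The only cosmetic differences are that you use a subsequence/contradiction formulation where the paper writes the equivalent chain $J(u;O)\ge\liminf_\e J_\e(u_\e,\omega;O')+\liminf_\e J_\e(u_\e,\omega;O'')\ge J(u;O')+J(u;O'')=J(u;O)$ and reads off equality everywhere, and that you take $O''=O\setminus\overline{O'}$ (rather than $O\setminus O'$) to keep $O''$ open — a small improvement in precision, harmless when $|\partial O'|=0$. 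Your treatment of the second claim (Sobolev extension plus restriction of the recovery sequence) is exactly the reading of the paper's ``by an extension result'' remark.
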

\begin{proof}
Choose a subdomain $O'\subset O$, and define $O'':=O\setminus O'$. We then have by assumption
\begin{align*}
J(u;O)=\lim_{\e\downarrow0}J(u_\e;O)&=\lim_{\e\downarrow0}\left(J_\e(u_\e,\omega;O')+J_\e(u_\e,\omega;O'')\right)\\
&\ge \liminf_{\e\downarrow0}J_\e(u_\e,\omega;O')+\liminf_{\e\downarrow0}J_\e(u_\e,\omega;O'').
\end{align*}
Now by Proposition~\ref{prop:gammaliminfneu} we have $\liminf_{\e\downarrow0}J_\e(u_\e,\omega;O')\ge J(u;O')$ and $\liminf_{\e\downarrow0}J_\e(u_\e,\omega;O'')\ge J(u;O'')$. The conclusion then follows from the identity $J(u;O')+J(u;O'')=J(u;O)$.
\end{proof}

\subsection{Commutation of truncation and homogenization}\label{chap:commutform}

The crucial ingredient to prove the commutation of truncation and homogenization is the reformulation of the asymptotic homogenization formula in the probability space. For that purpose, we first introduce the following proxy for $\overline V$:
\begin{align}\label{eq:defpb1}
\overline P(\Lambda):=\inf_{f\in F^{p}_\pot(\Omega)^m}\E[V(0,\Lambda+f,\cdot)].
\end{align}
Likewise, for all $k\in \N$, we set
\begin{align}\label{eq:defpb1-k}
\overline P^k(\Lambda):=\inf_{f\in F^{p}_\pot(\Omega)^m}\E[V^k(0,\Lambda+f,\cdot)].
\end{align}
In this case, due to the growth condition~\eqref{eq:SGC-trun}, we may prove (see Lemma~\ref{lem:equivk} below)
that
$$
\lim_{\e\downarrow0}\E\left[\inf_{\phi\in W^{1,p}_0(O/\e;\R^m)}\fint_{O/\e}V^k(y,\Lambda+\nabla\phi(y),\cdot)dy\right]=\overline P^k(\Lambda).
$$
However, for $V$ itself, this equality has no chance to be true if $\Lambda\in \dom \overline P\setminus\dom M$ since the left-hand side could be infinite (because of the Dirichlet boundary condition) while the right-hand side is not --- see Example~\ref{example}. 
We thus rather use a ``relaxed version'' of the Dirichlet boundary conditions and set for all $\e>0$ 
\begin{align}\label{eq:defpe1}
P^\e(\Lambda,\omega;O):=\inf_{\phi\in W^{1,p}(O/\e;\R^m)\atop \fint_{O/\e}\nabla\phi=0}\fint_{O/\e}V(y,\Lambda+\nabla\phi(y),\omega)dy.
\end{align}
As opposed to the case of Dirichlet boundary conditions, there is no natural subadditive property in this definition (two test-functions on disjoint domains cannot be glued together). This difficulty will be overcome by using a more sophisticated gluing argument that relies quantitatively on the following sublinearity property of the correctors.
\begin{lem}[Sublinearity of correctors]\label{lem:sublincor}
For all $\Lambda\in\R^{m\times d}$, there exists a corrector field $\varphi_\Lambda\in\Mes(\Omega;W^{1,p}_\loc(\R^d;\R^m))$ such that $\nabla\varphi_\Lambda(0,\cdot)\in F^p_\pot(\Omega)^m$, and
$$
\overline P(\Lambda)=\E\left[V(0,\Lambda+\nabla\varphi_\Lambda(0,\cdot),\cdot)\right].
$$
In addition, $\varphi_\Lambda$ is sublinear at infinity in the sense that, for almost all $\omega\in\Omega$, 
\begin{align}\label{eq:sublincor}
\e\varphi_\Lambda(\cdot/\e,\omega)\cvf{} 0
\end{align}
weakly in $W^{1,p}(O;\R^m)$ for all bounded domains $O\subset\R^d$.\qed
\end{lem}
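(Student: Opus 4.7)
The plan is to prove the lemma in three steps: variational existence of $g_\Lambda$ in probability, lift of $g_\Lambda$ to a physical-space potential $\varphi_\Lambda$, and sublinearity at infinity.

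\emph{Step 1: Existence of the minimizer.} The functional $F:g\mapsto\E[V(0,\Lambda+g,\cdot)]$ is convex on the closed subspace $F^p_\pot(\Omega)^m$ of the reflexive space $\Ld^p(\Omega;\R^{m\times d})$, and weakly lower semicontinuous by Fatou's lemma combined with the pointwise lower semicontinuity of $V(y,\cdot,\omega)$. The coercivity bound $V\ge\tfrac{1}{C}|\cdot|^p-C$ makes $F$ coercive for the $\Ld^p(\Omega)$-norm. When $\overline P(\Lambda)<\infty$, the direct method of the calculus of variations yields a minimizer $g_\Lambda\in F^p_\pot(\Omega)^m$; when $\overline P(\Lambda)=+\infty$ any element of $F^p_\pot(\Omega)^m$ trivially realizes the identity.

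\emph{Step 2: Lift to $\R^d$.} By the characterization \eqref{eq:defweylerg-1}, for almost every $\omega$ the stationary realization $x\mapsto g_\Lambda(\tau_x\omega)$ is a curl-free field in $\Ld^p_\loc(\R^d;\R^{m\times d})$. Since $\R^d$ is simply connected, Poincaré's lemma integrates it into a potential $\varphi_\Lambda(\cdot,\omega)\in W^{1,p}_\loc(\R^d;\R^m)$, unique up to an additive constant which we fix by imposing $\fint_{Q_1}\varphi_\Lambda(\cdot,\omega)\,dy=0$. Joint measurability of $\omega\mapsto\varphi_\Lambda(\cdot,\omega)$ is preserved by the continuous dependence of the antiderivative on its data, and Poincaré-Wirtinger on $Q_1$ gives the integrability $\E[\|\varphi_\Lambda\|_{\Ld^p(Q_1)}^p]\le C\,\E[|g_\Lambda|^p]<\infty$.

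\emph{Step 3: Sublinearity at infinity.} Set $u_\e(x,\omega):=\e\varphi_\Lambda(x/\e,\omega)$, so that $\nabla u_\e(x,\omega)=g_\Lambda(\tau_{x/\e}\omega)$. The multiparameter Birkhoff ergodic theorem, combined with the vanishing of $\E[g_\Lambda]$ (by definition of $F^p_\pot(\Omega)^m$), yields for almost every $\omega$ the weak convergence $\nabla u_\e(\cdot,\omega)\rightharpoonup 0$ in $\Ld^p(O;\R^{m\times d})$ for every bounded Lipschitz $O$. Poincaré-Wirtinger then bounds $u_\e-\fint_Ou_\e$ in $W^{1,p}(O;\R^m)$, and Rellich-Kondrachov combined with the gradient convergence implies that $u_\e-\fint_Ou_\e\to0$ strongly in $\Ld^p(O;\R^m)$. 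The claim thus reduces to the almost sure convergence $\fint_Ou_\e(\cdot,\omega)\,dx\to0$, i.e.\ to the sublinearity at infinity of the additive cocycle $\varphi_\Lambda$ whose gradient is stationary with zero mean. This is the main obstacle, and the genuinely random aspect of the statement: unlike in the periodic setting, the corrector $\varphi_\Lambda$ itself is not stationary. A convenient handling is to use the ray representation $\varphi_\Lambda(x,\omega)-\varphi_\Lambda(0,\omega)=\int_0^1 x\cdot g_\Lambda(\tau_{sx}\omega)\,ds$, and to combine Fubini with a spatial Birkhoff theorem applied to $g_\Lambda$ (after a change of variable $y=sx/\e$ reducing averages of $g_\Lambda(\tau_{y}\omega)$ to their vanishing mean); alternatively one invokes classical sublinearity results for $\Ld^p$-integrable additive $\R^d$-cocycles under an ergodic action, and then recasts the spatial mean as a spatial Birkhoff average of these cocycle increments. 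The integrability secured in Step~2 is precisely what is needed to apply these ergodic estimates.
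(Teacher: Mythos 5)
Your three-step architecture matches the paper's proof exactly, and Steps~1 and the reduction in Step~3 (Poincar\'e--Wirtinger plus Rellich--Kondrachov to boil everything down to $\fint_Ou_\e\to0$, then the ray representation) are precisely the route the authors take. Two technical points, however, are glossed over in a way that leaves real gaps.

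First, in Step~2, the assertion that ``joint measurability of $\omega\mapsto\varphi_\Lambda(\cdot,\omega)$ is preserved by the continuous dependence of the antiderivative on its data'' is too quick. Poincar\'e's lemma only produces a potential up to an additive constant on each realization, and turning that into a \emph{measurable} map $\omega\mapsto\varphi_\Lambda(\cdot,\omega)$ in $\Mes(\Omega;W^{1,p}_\loc(\R^d;\R^m))$ is a genuine measurable-selection problem. The paper isolates this as Proposition~\ref{prop:measpot}, whose proof relies on the Rokhlin--Kuratowski--Ryll~Nardzewski theorem applied to the compact Polish spaces $X_{n,k}=\{\phi\in W^{1,p}(B_n):\|\nabla\phi\|_{\Ld^p}\le k,\ \int_{B_n}\phi=0\}$ with the weak topology. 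You cannot bypass it with a continuity argument.

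Second, in Step~3, after the ray representation and Fubini the quantity $\fint_Ou_\e$ becomes a Ces\`aro average $\fint_0^{1/\e}\psi_\omega(t)\,dt$ of $\psi_\omega(t):=\fint_Ox\cdot\nabla\varphi_\Lambda(tx,\omega)\,dx$. Birkhoff only gives $\psi_\omega(t)\to0$ as $t\uparrow\infty$; for the Ces\`aro mean on $[0,1/\e]$ to vanish one additionally needs to control $\psi_\omega$ near $t=0$, since the Birkhoff-type convergence is not uniform and the averaging interval touches the origin. The paper obtains $\limsup_{t\downarrow0}|\psi_\omega(t)|<\infty$ by observing that, thanks to joint measurability, local integrability and stationarity, the origin is almost surely a Lebesgue point of $\nabla\varphi_\Lambda(\cdot,\omega)$. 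Your phrase ``combine Fubini with a spatial Birkhoff theorem'' does not account for this small-$t$ control; invoking an off-the-shelf cocycle sublinearity theorem would be an acceptable alternative if you cite a precise statement, but as written the argument does not close.
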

\begin{rem}
Although the space $\{\nabla g:g\in W^{1,p}(\Omega)\}$ is dense in $F^p_\pot(\Omega)$ (see~\eqref{eq:densweyl}), the infimum~\eqref{eq:defpb1} defining $\overline P(\Lambda)$ cannot be replaced in general by an infimum over this smaller dense subspace because of a possible lack of strong continuity of the functional,
see however Proposition~\ref{prop:statcorbuffer}. \qed 
\end{rem}
\begin{proof}
Let $\Lambda\in \R^{m\times d}$ be fixed.
By convexity and by the lower bound \eqref{eq:cond12s} on $V$, $\chi \mapsto \E[V(0,\Lambda+\chi,\cdot)]$ is lower semicontinuous and coercive on $F^p_\pot(\Omega)^m$, and therefore attains its infimum. Let $g\in F^p_\pot(\Omega)^m$ be a minimizer.
The $\tau$-stationary extension $(x,\omega) \mapsto g(\tau_x\omega)$ of  $g$ is a potential field on $\R^d$ for almost every $\omega$.
Hence, there exists a map $\varphi_\Lambda\in\Mes(\Omega; W^{1,p}_\loc(\R^d;\R^m))$ such that $g(\tau_x\omega)=\nabla\varphi_\Lambda(x,\omega)$ for almost all $x,\omega$ (see indeed Proposition~\ref{prop:measpot}).
The claim now follows from the combination of the following two applications of the Birkhoff-Khinchin ergodic theorem: for almost all $\omega$,
\begin{eqnarray}
\nabla \varphi_\Lambda(\cdot/\e,\omega)&\cvf{}&0,\quad\text{(weakly) in }\Ld^p(O;\R^m), \label{eq:pr-sublin-1}\\
\e \int_O \varphi_\Lambda(y/\e,\omega)dy& \to& 0.  \label{eq:pr-sublin-2}
\end{eqnarray}
Indeed, by Poincar\'e's inequality and \eqref{eq:pr-sublin-1}, the sequence $y\mapsto \e \varphi_\Lambda(y/\e,\omega)- \e \int_O \varphi_\Lambda(z/\e,\omega)dz$ 
converges weakly to zero in $W^{1,p}(O;\R^m)$ for almost every $\omega$.
Combined with \eqref{eq:pr-sublin-2}, this implies \eqref{eq:sublincor}.

To conclude, we turn to the proofs of \eqref{eq:pr-sublin-1} and \eqref{eq:pr-sublin-2}.
The weak convergence \eqref{eq:pr-sublin-1} to zero is a direct consequence of the Birkhoff-Khinchin ergodic theorem
in the form $\nabla \varphi_\Lambda(\cdot/\e,\omega)\cvf{}\E[\nabla \varphi_\Lambda(0,\cdot)]=0$ in $\Ld^p(O;\R^m)$.
It remains to prove \eqref{eq:pr-sublin-2}.
We may assume wlog $\varphi_\Lambda(0,\cdot)=0$ almost surely, so that 
\begin{align}
\left|\e\fint_{O}\varphi_\Lambda(y/\e,\omega)dy\right|=\left|\e\fint_{O/\e}\varphi_\Lambda(\cdot,\omega)\right| &=\left|\e\fint_{O/\e} \int_0^1 x\cdot\nabla \varphi_\Lambda(tx,\omega)dtdx\right|\nonumber\\
&\le \fint_0^{1/\e}\left|\fint_{O}x\cdot\nabla \varphi_\Lambda(tx,\omega)dx\right|dt.\label{eq:boundintphil}
\end{align}
For almost all $\omega$, the function $\psi_\omega(t):=\fint_{O}x\cdot\nabla \varphi_\Lambda(tx,\omega)dx$ is continuous on $(0,\infty)$. By~\eqref{eq:pr-sublin-1}, $\psi_\omega(t)\to0$ as $t\uparrow\infty$ for almost all $\omega$. By joint measurability and (local) integrability of $\nabla\varphi_\Lambda$, and by stationarity, $0$ is a Lebesgue point of $\nabla\varphi_\Lambda(\cdot,\omega)$ for almost all $\omega$, and hence $\limsup_{t\downarrow0}|\psi_\omega(t)|<\infty$ for almost all $\omega$. The result~\eqref{eq:pr-sublin-2} then follows from~\eqref{eq:boundintphil}.
\end{proof}

For all $\Lambda\in\dom\overline P$, let $\varphi_\Lambda$ be defined as in Lemma~\ref{lem:sublincor}, and let $\Omega_\Lambda\subset\Omega_1$ be a subset of maximal probability such that~\eqref{eq:sublincor} holds on $\Omega_\Lambda$ for all bounded Lipschitz domains. Restricting $\Omega_\Lambda$ further, the Birkhoff-Khinchin ergodic ensures that, for all $\omega\in\Omega_\Lambda$, we have for all bounded subsets $O\subset\R^d$ and all $t\in\Q$,
\begin{align}\label{eq:ergthoml0}
\fint_{O/\e}\nabla\varphi_{t\Lambda}(\cdot,\omega)\xrightarrow{\e\downarrow0}0
\end{align}
and
\begin{align}\label{eq:ergthoml}
\fint_{O/\e}V(y,t\Lambda+\nabla\varphi_{t\Lambda}(y,\omega),\omega)dy\xrightarrow{\e\downarrow0}\E[V(0,t\Lambda+\nabla\varphi_{t\Lambda}(0,\cdot))]=\overline P(t\Lambda).
\end{align}

We now turn to the proof that $\lim_\e P^\e(\Lambda,\omega;O)=\overline P(\Lambda)$ for all $\Lambda$ for almost all $\omega \in \Omega$.
The following inequality is the most subtle part.
\begin{lem}\label{lem:ineqhomog}
For all $\Lambda\in\inter\dom\overline P$ and all bounded domains $O\subset\R^d$, there exists a sequence $\psi_{\Lambda,O,\e}\in \Mes(\Omega;W^{1,p}(O/\e;\R^m))$ such that, for all $\omega\in\Omega_\Lambda$, $\fint_{O/\e}\nabla \psi_{\Lambda,O,\e}(\cdot,\omega)=0$, 
$$\e\psi_{\Lambda,O,\e}(\cdot/\e,\omega)\cvf{}0$$
weakly in $W^{1,p}(O;\R^m)$ as $\e\downarrow0$, 
and
\begin{equation}\label{eq:commut-dur}
\overline P(\Lambda)\ge\limsup_{\e\downarrow0}\fint_{O/\e}V(y,\Lambda+\nabla\psi_{\Lambda,O,\e}(y,\omega),\omega)dy\ge \limsup_{\e\downarrow0}P^\e(\Lambda,\omega;O).
\end{equation}
\qed
\end{lem}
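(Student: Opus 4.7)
The second inequality in \eqref{eq:commut-dur} is immediate from the definition \eqref{eq:defpe1} of $P^\e$, so the real content is the first. My plan is to construct $\psi_{\Lambda,O,\e}$ as a measurable affine perturbation of the corrector $\varphi_{s\Lambda}$ provided by Lemma~\ref{lem:sublincor} for a parameter $s>1$ close to $1$, and then to diagonalize over $s\downarrow1$. Since $\Lambda\in\inter\dom\overline P$ and $\overline P$ is convex (hence continuous on the interior of its domain), there exists $s_0>1$ such that $s\Lambda\in\inter\dom\overline P$ and $\overline P(s\Lambda)\to\overline P(\Lambda)$ as $s\downarrow1$ along $(1,s_0]$. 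For each rational $s\in(1,s_0]$, set $c^{(s)}_\e(\omega):=\fint_{O/\e}\nabla\varphi_{s\Lambda}(\cdot,\omega)$ and define
\[
\psi^{(s)}_\e(y,\omega)\,:=\,\tfrac1s\bigl(\varphi_{s\Lambda}(y,\omega)-c^{(s)}_\e(\omega)\cdot y\bigr).
\]
By construction $\fint_{O/\e}\nabla\psi^{(s)}_\e(\cdot,\omega)=0$, and for $\omega\in\Omega_{s\Lambda}$ the sublinearity \eqref{eq:sublincor} combined with \eqref{eq:ergthoml0} yields $\e\psi^{(s)}_\e(\cdot/\e,\omega)\cvf{}0$ weakly in $W^{1,p}(O;\R^m)$.

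The heart of the argument is the algebraic identity
\[
\Lambda+\nabla\psi^{(s)}_\e(y,\omega)\,=\,\tfrac1s\bigl(s\Lambda+\nabla\varphi_{s\Lambda}(y,\omega)\bigr)+\tfrac{s-1}{s}\,\xi^{(s)}_\e(\omega),\qquad \xi^{(s)}_\e(\omega)\,:=\,-\tfrac{c^{(s)}_\e(\omega)}{s-1},
\]
which exhibits $\Lambda+\nabla\psi^{(s)}_\e$ as a genuine convex combination. Applying convexity of $V(y,\cdot,\omega)$ and the pointwise bound $V\le M$ to this combination gives
\[
V(y,\Lambda+\nabla\psi^{(s)}_\e,\omega)\,\le\, \tfrac1s V(y,s\Lambda+\nabla\varphi_{s\Lambda}(y,\omega),\omega)+\tfrac{s-1}{s}M\bigl(\xi^{(s)}_\e(\omega)\bigr).
\]
Averaging over $O/\e$ and letting $\e\downarrow0$, the first term converges by \eqref{eq:ergthoml} to $\tfrac1s\overline P(s\Lambda)$ on $\Omega_{s\Lambda}$, while $\xi^{(s)}_\e(\omega)\to0$ by \eqref{eq:ergthoml0} and continuity of $M$ at $0$ (a consequence of $0\in\inter\dom M$) yield $M(\xi^{(s)}_\e(\omega))\to M(0)$. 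This produces
\[
\limsup_{\e\downarrow0}\fint_{O/\e}V(y,\Lambda+\nabla\psi^{(s)}_\e,\omega)\,dy\,\le\,\tfrac1s\overline P(s\Lambda)+\tfrac{s-1}{s}M(0),
\]
and letting $s\downarrow1$ along rationals gives the target upper bound $\overline P(\Lambda)$.

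It remains to collapse the two-parameter family $(\psi^{(s)}_\e)$ into a single measurable sequence $\psi_{\Lambda,O,\e}$ realizing the estimate directly. I would fix once and for all a sequence $s_n\downarrow1$ of rationals in $(1,s_0]$ and work on the full-probability intersection $\Omega':=\bigcap_n\Omega_{s_n\Lambda}$. For each $n$, the joint measurability in $(\e,\omega)$ of the relevant defects (the $W^{1,p}$-distance of $\e\psi^{(s_n)}_\e(\cdot/\e,\omega)$ to $0$ after testing against a countable dense family, and the gap between $\fint_{O/\e}V(y,\Lambda+\nabla\psi^{(s_n)}_\e,\omega)\,dy$ and $\tfrac1{s_n}\overline P(s_n\Lambda)+\tfrac{s_n-1}{s_n}M(0)$) allows a standard measurable selection of thresholds $\e_n(\omega)\downarrow0$; one then sets $\psi_{\Lambda,O,\e}(\cdot,\omega):=\psi^{(s_n)}_\e(\cdot,\omega)$ for $\e\in(\e_{n+1}(\omega),\e_n(\omega)]$. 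The main obstacle is precisely this measurable coupling: $\e$ must go to $0$ fast enough relative to $s_n\downarrow1$ so that $\xi^{(s_n)}_\e=-c^{(s_n)}_\e/(s_n-1)$ still tends to $0$, without destroying measurability in $\omega$. Restricting to a countable parameter set $(s_n)$ and exploiting joint measurability is what makes this workable, and all three required properties of $\psi_{\Lambda,O,\e}$ are then inherited from the fixed-$s$ analysis.
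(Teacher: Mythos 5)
Your argument is correct and follows essentially the same route as the paper's: you perturb the corrector at a slightly dilated direction, subtract its cell average to enforce the zero-mean constraint, write $\Lambda+\nabla\psi$ as a convex combination of the corrected field at the dilated slope and a small error vector, control the error via $V\le M$ and the fact that $0\in\inter\dom M$, invoke the ergodic theorem, and diagonalize. The paper uses the parametrization $t\in\Q\cap[0,1)$ with $\varphi_{\Lambda/t}$ and coefficients $t,1-t$; setting $s=1/t$ shows your construction with $\psi^{(s)}_\e=\frac1s(\varphi_{s\Lambda}-c^{(s)}_\e\cdot y)$ and weights $\frac1s,\frac{s-1}{s}$ is literally the same decomposition, so this is a cosmetic difference only. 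The final diagonalization you sketch (restricting to a countable set of rational $s_n$ and selecting thresholds $\e_n(\omega)$) is the hands-on version of what the paper does via Attouch's lemma, and you correctly flag the measurability-in-$\omega$ issue, which the paper itself handles somewhat implicitly by working with a fixed countable parameter family.
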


\begin{proof}
Let $\Lambda\in\inter\dom\overline P$ be fixed, and let $\omega\in\Omega_\Lambda$.
For all $t\in[0,1)\cap\Q$ and $\e>0$, set
$$
\Lambda_{O,\e,t}^\omega:=-t\fint_{O/\e}\nabla\varphi_{\Lambda/t}(\cdot,\omega),\quad\text{and}\quad\psi_{\Lambda,O,\e,t}(x,\omega):=t\varphi_{\Lambda/t}(x,\omega)+\Lambda_{O,\e,t}^\omega\cdot x.
$$
By definition, $\fint_{O/\e}\nabla\psi_{\Lambda,O,\e,t}=0$, and by  Lemma~\ref{lem:sublincor}, $\e\psi_{\Lambda,O,\e,t}(\cdot/\e,\omega)\cvf{}0$ in $W^{1,p}(O;\R^m)$ as $\e\downarrow0$.
Hence,
\begin{align*}
P^\e(\Lambda,\omega;O)&\le\fint_{O/\e}V(y,\Lambda+\nabla\psi_{\Lambda,O,\e,t}(y,\omega),\omega)=:\widehat P^\e_t(\Lambda,\omega;O).
\end{align*}
By convexity
\begin{align*}
\widehat P^\e_t(\Lambda,\omega;O)&=\fint_{O/\e}V(y,\Lambda+t\nabla\varphi_{\Lambda/t}(y,\omega)+\Lambda_{O,\e,t}^\omega,\omega)dy\\
&\le t\fint_{O/\e}V(y,\Lambda/t+\nabla\varphi_{\Lambda/t}(y,\omega),\omega)dy+(1-t)\fint_{O/\e}V\left(y,\frac1{1-t}\Lambda_{O,\e,t}^\omega,\omega\right)dy.
\end{align*}
Since $0\in \inter \dom M$, there exists $\delta>0$ such that $\adh B_\delta\subset\inter\dom M$. As $t$ is rational and $\omega\in\Omega_\Lambda$, we have $\Lambda_{O,\e,t}^\omega\to0$ as $\e\downarrow0$ by the Birkhoff-Khinchin ergodic theorem in the form of~\eqref{eq:ergthoml0}.
Hence there exists $\e_{\Lambda,O,t}^{\omega}>0$ such that, for all $0<\e<\e_{\Lambda,O,t}^{\omega}$, we have
$$\left|\frac{1}{1-t}\Lambda_{O,\e,t}^\omega\right|<\delta,$$
and therefore,
$$\fint_{O/\e}V\left(y,\frac1{1-t}\Lambda_{O,\e,t}^\omega,\omega\right)dy\le \sup_{|\Lambda'|<\delta}M(\Lambda')<\infty.$$
This implies that
$$\limsup_{t\uparrow1, t\in\Q}\limsup_{\e\downarrow0}\widehat P_t^\e(\Lambda,\omega;O)\le\limsup_{t\uparrow1, t\in\Q}\limsup_{\e\downarrow0}\fint_{O/\e}V(y,\Lambda/t+\nabla\varphi_{\Lambda/t}(y,\omega),\omega)dy.$$
By the Birkhoff-Khinchin ergodic theorem in the form of~\eqref{eq:ergthoml} and the continuity of $\overline P$ in the interior of its domain (as a consequence of convexity), this yields
$$\limsup_{t\uparrow1, t\in\Q}\limsup_{\e\downarrow0}\widehat P_t^\e(\Lambda,\omega;O)\le \limsup_{t\uparrow1, t\in\Q}\E[V(0,\Lambda/t+\nabla\varphi_{\Lambda/t}(0,\cdot),\cdot)]=\limsup_{t\uparrow1, t\in\Q}\overline P(\Lambda/t)=\overline P(\Lambda).
$$
We have thus proved:
\begin{align*}
&\limsup_{t\uparrow1, t\in\Q}\limsup_{\e\downarrow0}\left(\left(\widehat P_t^\e(\Lambda,\omega;O)-\overline P(\Lambda)\right)^++\|\e\psi_{\Lambda,O,\e,t}(\cdot/\e,\omega)\|_{\Ld^p(O;\R^m)}\right)=0.
\end{align*}
By Attouch's diagonalization lemma (see~\cite[Corollary~1.16]{Attouch-84}), this implies the existence of a sequence $(\psi_{\Lambda,O,\e})_\e$ with $\psi_{\Lambda,O,\e}\in W^{1,p}(O/\e;\R^m)$ such that  $\int_{O/\e} \nabla \psi_{\Lambda,O,\e}=0$, $\limsup_\e\widehat P_t^\e(\Lambda,\omega;O)\le\overline P(\Lambda)$, and $\e\psi_{\Lambda,O,\e}(\cdot/\e,\omega)\to0$ in $\Ld^p(O;\R^m)$, for all $\omega\in\Omega_\Lambda$. 
By the choice of $\Lambda$, $\overline P(\Lambda)<\infty$, so that the lower bound \eqref{eq:cond12s} on $V$ implies that the sequence $(\nabla\psi_{\Lambda,O,\e}(\cdot/{\e},\omega))_\e$ is bounded in $\Ld^p(O;\R^m)$. We thus conclude that $\e\psi_{\Lambda,O,\e}(\cdot/\e,\omega)\cvf{}0$ weakly in $W^{1,p}(O;\R^m)$, as claimed.
\end{proof}

In the case of standard growth conditions (thus e.g. for the $V^k$'s), the corresponding inequality \eqref{eq:commut-dur} in Lemma~\ref{lem:ineqhomog} 
is indeed an equality. The following lemma gives equivalent definitions for the $\overline V^k$'s, which will be crucial in the sequel.
\begin{lem}\label{lem:equivk}
Let $O$ be a bounded Lipschitz domain of $\R^d$.
For all $\omega\in\Omega_1$, all $k$, and all $\Lambda\in\R^{m\times d}$, the following quantities are well-defined:
\begin{eqnarray}
\overline V_1^k(\Lambda)\label{eq:equivk1}&:=&\lim_{\e\downarrow0}\inf_{\phi\in W^{1,p}_0(O/\e;\R^m)}\fint_{O/\e}V^k(y,\Lambda+\nabla\phi(y),\omega)dy,\\
\overline V_{2}^k(\Lambda,\omega)\label{eq:equivk3}&:=&\lim_{\e\downarrow0}\inf_{\phi\in W^{1,p}(O/\e;\R^m)\atop\fint_{O/\e}\nabla\phi=0}\fint_{O/\e}V^k(y,\Lambda+\nabla\phi(y),\omega)dy,\\
\overline V_3^k(\Lambda)\label{eq:equivk4}&:=&\inf_{\tilde f\in F^p_\pot(\Omega)^m}\E[V^k(0,\Lambda+\tilde f,\cdot)],
\end{eqnarray}
and we have
$$
\overline V^k(\Lambda)=\overline V^k_1(\Lambda)=\overline V^k_2(\Lambda)=\overline V^k_3(\Lambda).
$$
\qed
\end{lem}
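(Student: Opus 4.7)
The plan is to close the chain $\overline V^k = \overline V_1^k = \overline V_2^k = \overline V_3^k$ via the circular inequalities $\overline V^k \ge \overline V_1^k \ge \overline V_2^k \ge \overline V_3^k \ge \overline V^k$, exploiting that the truncation $V^k$ enjoys the standard $p$-growth condition~\eqref{eq:SGC-trun}. This both makes the Dal Maso--Modica theory directly applicable and enables cutoff arguments that would fail for the untruncated $V$. For the identity $\overline V^k(\Lambda) = \overline V_1^k(\Lambda)$, rescaling shows that the right-hand side of~\eqref{eq:equivk1} equals $|O|^{-1}$ times $\inf_{\psi\in W^{1,p}_0(O;\R^m)} J^k_\e(\Lambda\cdot y + \psi,\omega;O)$; the Dal Maso--Modica $\Gamma$-convergence of $J^k_\e(\cdot,\omega;O)$ on $W^{1,p}(O;\R^m)$ with Dirichlet boundary data $\Lambda\cdot y$ gives the limit $\overline V^k(\Lambda)$, independent of the shape of $O$, and also guarantees existence of the limit in~\eqref{eq:equivk1}.

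The inequality $\overline V_1^k \ge \overline V_2^k$ is immediate, since any $\phi \in W^{1,p}_0(O/\e;\R^m)$ automatically satisfies $\fint_{O/\e} \nabla \phi = 0$. The inequality $\overline V_2^k \ge \overline V_3^k$ proceeds by stationarization: given an almost-minimizer $\phi_\e$ of~\eqref{eq:equivk3}, one associates to the gradient $\nabla\phi_\e$ a mean-zero random potential field in $F^p_\pot(\Omega)^m$ by averaging $\nabla\phi_\e(\cdot,\omega)$ along a uniformly distributed translation in the dilated cube $O/\e$ (using the $\tau$-stationarity of $V^k$), in the spirit of~\cite{DalMaso-Modica-86}. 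Passing to the limit $\e\downarrow0$ and using Jensen's inequality together with the upper $p$-growth of $V^k$ to control equi-integrability yields the existence of some $\chi\in F^p_\pot(\Omega)^m$ with $\E[V^k(0,\Lambda+\chi,\cdot)]\le\overline V_2^k(\Lambda)$, whence $\overline V_3^k \le \overline V_2^k$.

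The remaining and most delicate step is the reverse bound $\overline V_3^k \ge \overline V^k$, for which Lemma~\ref{lem:sublincor} is applied to $V^k$ (the hypothesis $0 \in \inter\dom M^k$ is trivially satisfied since $\dom M^k = \R^{m\times d}$ by~\eqref{eq:SGC-trun}). This produces a corrector $\varphi^k_\Lambda \in \Mes(\Omega; W^{1,p}_\loc(\R^d;\R^m))$ achieving $\overline V_3^k(\Lambda) = \E[V^k(0,\Lambda+\nabla\varphi^k_\Lambda(0,\cdot),\cdot)]$ and enjoying the quantitative sublinearity $\e\varphi^k_\Lambda(\cdot/\e,\omega) \cvf{} 0$ in $W^{1,p}(O;\R^m)$. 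Introducing a smooth cutoff $\eta_\e \in C^\infty_c(O/\e;[0,1])$ equal to $1$ outside a $\delta/\e$-neighbourhood of $\partial(O/\e)$ with $|\nabla\eta_\e| \le C\e/\delta$, the test function $\phi_\e := \eta_\e\,\varphi^k_\Lambda(\cdot,\omega)$ belongs to $W^{1,p}_0(O/\e;\R^m)$, and the $p$-growth upper bound in~\eqref{eq:SGC-trun} gives
\begin{align*}
\fint_{O/\e} V^k(y,\Lambda + \nabla\phi_\e,\omega)\,dy \,\le\, \fint_{O/\e} V^k(y,\Lambda + \nabla\varphi^k_\Lambda,\omega)\,dy \,+\, C^k\fint_{O/\e} |\varphi^k_\Lambda|^p|\nabla\eta_\e|^p\,dy \,+\, \mathrm{bulk\ error},
\end{align*}
where the boundary-layer remainder is controlled by $\delta^{-p}\|\e\varphi^k_\Lambda(\cdot/\e,\omega)\|_{\Ld^p(O;\R^m)}^p$, which tends to zero as $\e\downarrow0$ by sublinearity. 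Taking first $\e\downarrow0$ (using Birkhoff for the bulk term, whose limit is $\overline V_3^k(\Lambda)$) and then $\delta\downarrow0$ yields $\overline V_1^k(\Lambda) \le \overline V_3^k(\Lambda)$, closing the cycle.

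The principal obstacle is precisely this cutoff argument in the last step: driving the boundary-layer error $|\varphi^k_\Lambda|^p|\nabla\eta_\e|^p$ to zero requires the combination of the quantitative sublinearity of $\varphi^k_\Lambda$ from Lemma~\ref{lem:sublincor} with the upper $p$-growth of $V^k$ from~\eqref{eq:SGC-trun}. This is exactly why the paper introduces the truncations $V^k$: the identification $\overline V^k = \overline V_3^k$ via this cutoff is not available for the untruncated $V$, and the subsequent analysis must resort to the more sophisticated gluing argument of Lemma~\ref{lem:ineqhomog} relying on a relaxed mean-zero boundary constraint rather than genuine Dirichlet data.
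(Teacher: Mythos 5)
Your Steps 1 and 2 match the paper's proof, and your Step 4 (cutting off the corrector $\varphi^k_\Lambda$) is a valid, and in fact streamlined, route to the inequality $\overline V_1^k\le\overline V_3^k$: since $V^k$ has standard $p$-growth, the boundary-layer error from the cutoff is driven to zero by the strong $\Ld^p$ convergence $\e\varphi^k_\Lambda(\cdot/\e,\omega)\to0$, which follows from Lemma~\ref{lem:sublincor} via Rellich--Kondrachov. In the paper this direction is obtained by combining the gluing Lemma~\ref{lem:ineqhomog} with Dal Maso--Modica $\Gamma$-convergence on the mean-zero class; your cutoff exploits the truncation more directly and is genuinely shorter.

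The gap is in Step~3, the claim $\overline V_2^k\ge\overline V_3^k$ by stationarizing an almost-minimizer of the mean-zero problem. The paper's stationarization
\[
\xi(x,\omega):=\frac1{|O'|}\int_{\R^d}\zeta(x+z,\tau_z\omega)\,dz
\]
produces an element of $W^{1,p}(\Omega;\R^m)$, hence a potential field $\nabla\xi\in F^p_\pot(\Omega)^m$, precisely because $\zeta\in W^{1,p}_0(O';\R^m)$ extends by zero without introducing a boundary singularity in its gradient. For a mean-zero competitor $\phi\in W^{1,p}(O';\R^m)$ that does not vanish on $\partial O'$, the analogous averaged field $g(\omega)=\fint_{O'}\nabla\phi(z,\tau_z\omega)\,dz$ still satisfies $\E[g]=0$, but is not curl-free in general: rewriting $\E[g\cdot(\nabla\times\psi)]$ as $\E\bigl[\fint_{O'}\nabla\phi(z,\cdot)\cdot(\nabla\times\psi)(z,\cdot)\,dz\bigr]$ and integrating by parts in $z$ leaves a nonvanishing boundary term. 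So $g$ need not lie in $F^p_\pot(\Omega)^m$, the inequality $\overline V_3^k\le\overline V_2^k$ does not follow from this argument, and your chain does not close. The paper avoids the issue by stationarizing the \emph{Dirichlet} minimizer --- which gives $\overline V_3^k\le\overline V_1^k$, the converse of your cutoff inequality --- and by appealing to Dal Maso--Modica $\Gamma$-convergence on the weakly closed mean-zero class $\{u\in W^{1,p}:\int_O\nabla u=0\}$ to get $\overline V_2^k=\overline V^k$; at least one of these ingredients is needed to tie $\overline V_2^k$ into the circle.
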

This result is standard (see for instance \cite[Chapter~15]{JKO94}) and we display its proof for completeness.
Note that the formulas \eqref{eq:equivk3} and \eqref{eq:equivk4} for $V$ will be shown to be equivalent to $\overline V$, whereas formula \eqref{eq:equivk1} is in general larger than $\overline V$.

\begin{proof}
Let $O\subset\R^d$ be a bounded Lipschitz domain, and $k\in \N$. 
By the definition of $\Gamma$-convergence for $J^k$ on $W^{1,p}_0(O;\R^m)$ and the convergence of infima with Dirichlet boundary conditions, for all $\Lambda$ and  $\omega\in\Omega_1$
we have
\begin{align}\label{eq:seconddefwbarkesp}
\overline V^k(\Lambda)&=\,\frac{1}{|O|}\inf_{\phi\in W^{1,p}_0(O;\R^m)} \int_O\overline V^k(\Lambda +\nabla \phi)\nonumber\\
&=\,\frac{1}{|O|}\lim_{\e \downarrow0}\inf_{\phi\in W^{1,p}_0(O;\R^m)}\fint_{O}V^k(y/\e,\Lambda+\nabla\phi(y),\cdot)dy\,=\,\overline V^k_1(\Lambda).
\end{align}
Likewise, the $\Gamma$-convergence result holds on $\{u \in W^{1,p}(O):\int_O \nabla u=0\}$ so that the identity
$$
\overline V^k(\Lambda)=\overline V^k_2(\Lambda)
$$
also follows from the convergence of infima.
Since Lemma~\ref{lem:ineqhomog} (applied to $V^k$ instead of $V$) yields $\overline V_{2}^k(\Lambda)\le\overline V_3^k(\Lambda)$, it remains to prove that $\overline V_3^k(\Lambda)\le\overline V_1^k(\Lambda)$ for all $\Lambda$.

Let $O'\subset\R^d$ be a bounded domain. By the coercivity and the lower semicontinuity of the integral functional $J^k$ (which follow from the growth condition \eqref{eq:SGC-trun} and the convexity of $V^k$), there exists a minimizer $\zeta\in \Ld^\infty(\Omega;W^{1,p}_0(O';\R^m))$ (where measurability follows from Proposition~\ref{prop:selmes}) such that, for almost all $\omega$,
$$
\fint_{O'}V^k(y,\Lambda+\nabla\zeta(y,\omega),\omega)dy=\inf_{\phi\in W^{1,p}_0({O'};\R^m)}\fint_{O'}V^k(y,\Lambda+\nabla\phi(y),\omega)dy.
$$
Set
$$
\xi(x,\omega):=\frac1{|O'|}\int_{\R^d}\zeta(x+z,\tau_{z}\omega)dz=\fint_{-x+O'}\zeta(x+z,\tau_{z}\omega)dz.
$$
Clearly, $\xi$ is well-defined and stationary, belongs to $W^{1,p}(\Omega;\R^m)$, and
$$
\nabla \xi(x,\omega)=\fint_{-x+O'}\nabla\zeta(x+z,\tau_z\omega)dz.
$$
Hence
$$
\overline V^k_3(\Lambda)\le\E\left[  V^k(0,\Lambda+\nabla \xi(0,\cdot),\cdot)\right]=\E\left[ V^k\left(0,\Lambda+\fint_{O'}\nabla\zeta(z,\tau_z\cdot)dz,\,\cdot\right)\right],
$$
and by convexity of $V^k$
$$
\overline V^k_3(\Lambda)\le \E\left[\fint_{O'} V^k(0,\Lambda+\nabla\zeta(z,\tau_z\cdot),\cdot)dz\right].
$$
By stationarity and the Fubini theorem, we may conclude
$$
\overline V^k_3(\Lambda)\le \fint_{O'}\E[V^k(z,\Lambda+\nabla\zeta(z,\cdot),\cdot)]dz=\E\left[\inf_{\phi\in W^{1,p}_0(O';\R^m)}\fint_{O'}V^k(y,\Lambda+\nabla\phi(y),\cdot)dy\right].
$$
With $O':=O/\e$, the claim $\overline V^k_3(\Lambda)\le\overline V^k_1(\Lambda)$ follows by the dominated convergence theorem and the growth condition from above \eqref{eq:SGC-trun}.
\end{proof}

The following result proves the equivalence between formulas~(i), (ii) and (iii) in Theorem~\ref{th:conv}.

\begin{prop}[Commutation of limits]\label{prop:commut}
For all bounded Lipschitz domains $O\subset\R^d$, and all $\Lambda\in\R^{m\times d}$, we have for almost all $\omega$
$$
\overline V(\Lambda)=\overline P(\Lambda)=\lim_{t\uparrow1}\lim_{\e\downarrow0}P^\e(t\Lambda,\omega;O).
$$
By convexity, for all $\Lambda\notin\partial\dom\overline V$, this takes the form
$\overline V(\Lambda)=\overline P(\Lambda)=\lim_{\e\downarrow0}P^\e(\Lambda,\omega;O)$.
\qed
\end{prop}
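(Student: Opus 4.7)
My plan is to proceed in three steps: establish the abstract identity $\overline V(\Lambda)=\overline P(\Lambda)$ by a weak-compactness argument in $F^p_\pot(\Omega)^m$; combine this with the monotonicity $V\ge V^k$ and Lemma~\ref{lem:ineqhomog} to prove the asymptotic formula $\lim_{\e\downarrow 0}P^\e(\Lambda,\omega;O)=\overline V(\Lambda)$ for $\Lambda\in\inter\dom\overline V$; and finally reach the boundary via the dilation $t\uparrow1$ and convexity. I expect the main difficulty to lie in the first step, where one must exchange $\sup_k$ with the infimum defining $\overline P$. The inequality $\overline V\le\overline P$ is immediate from Lemma~\ref{lem:equivk}: since $V^k\le V$, $\overline V^k(\Lambda)=\inf_{f\in F^p_\pot(\Omega)^m}\E[V^k(0,\Lambda+f,\cdot)]\le\overline P(\Lambda)$, and the supremum in $k$ gives the claim. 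For the converse, assuming $\overline V(\Lambda)<\infty$, I pick $f_k\in F^p_\pot(\Omega)^m$ with $\E[V^k(0,\Lambda+f_k,\cdot)]\le\overline V(\Lambda)+\tfrac1k$. The uniform coercivity~\eqref{eq:cond12s} yields a uniform $\Ld^p$-bound on $(f_k)_k$, and since $F^p_\pot(\Omega)^m$ is weakly closed in the reflexive space $\Ld^p(\Omega;\R^{m\times d})$, a subsequence converges weakly to some $f_\infty\in F^p_\pot(\Omega)^m$. For each fixed $j$, the functional $f\mapsto\E[V^j(0,\Lambda+f,\cdot)]$ is convex and strongly lower semicontinuous on $\Ld^p(\Omega;\R^{m\times d})$ (by Fatou and the lower semicontinuity of $V^j(0,\cdot,\cdot)$), hence weakly lower semicontinuous. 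Using $V^j\le V^k$ for $k\ge j$, this yields
$$\E[V^j(0,\Lambda+f_\infty,\cdot)]\le\liminf_{k\uparrow\infty}\E[V^j(0,\Lambda+f_k,\cdot)]\le\liminf_{k\uparrow\infty}\E[V^k(0,\Lambda+f_k,\cdot)]\le\overline V(\Lambda),$$
and monotone convergence as $j\uparrow\infty$ gives $\E[V(0,\Lambda+f_\infty,\cdot)]\le\overline V(\Lambda)$, hence $\overline P(\Lambda)\le\overline V(\Lambda)$.

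For the asymptotic formula on $\inter\dom\overline V$, I observe that since $V\ge V^k$,
$$P^\e(\Lambda,\omega;O)\ge\inf_{\phi\in W^{1,p}(O/\e;\R^m),\,\fint_{O/\e}\nabla\phi=0}\fint_{O/\e}V^k(y,\Lambda+\nabla\phi(y),\omega)dy,$$
so that passing to the limit $\e\downarrow0$ via Lemma~\ref{lem:equivk} and then to $\sup_k$ yields
$$\liminf_{\e\downarrow0}P^\e(\Lambda,\omega;O)\ge\sup_k\overline V^k(\Lambda)=\overline V(\Lambda)$$
for all $\omega\in\Omega_1$. Combining with the reverse bound $\limsup_{\e\downarrow0}P^\e(\Lambda,\omega;O)\le\overline P(\Lambda)$ provided by Lemma~\ref{lem:ineqhomog} whenever $\Lambda\in\inter\dom\overline P$ and $\omega\in\Omega_\Lambda$, and invoking the identity $\overline V=\overline P$ of the first step, I conclude $\lim_{\e\downarrow0}P^\e(\Lambda,\omega;O)=\overline V(\Lambda)=\overline P(\Lambda)$ for every $\Lambda\in\inter\dom\overline V$ and every $\omega$ in a subset of $\Omega$ of full measure (depending on $\Lambda$).

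It remains to extend the identity to arbitrary $\Lambda$ via the dilation $t\uparrow 1$. The bound $\overline V\le M$, obtained by testing $f=0$ in~\eqref{eq:defpb1}, together with the assumption $0\in\inter\dom M$, guarantees $0\in\inter\dom\overline V$, so that by convexity $t\Lambda\in\inter\dom\overline V$ for every $\Lambda\in\dom\overline V$ and every $t\in[0,1)$. Applying the previous step along rationals $t\in\Q\cap(0,1)$, on a countable intersection of full-probability $\omega$-sets, yields $\lim_{\e\downarrow0}P^\e(t\Lambda,\omega;O)=\overline V(t\Lambda)$, and convexity plus lower semicontinuity of $s\mapsto\overline V(s\Lambda)$ on $[0,1]$ give $\overline V(t\Lambda)\uparrow\overline V(\Lambda)$ as $t\uparrow1$ along rationals. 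If on the contrary $\Lambda\notin\dom\overline V$, the liminf bound of the second step and the monotonicity of $s\mapsto\overline V(s\Lambda)$ force $\lim_{\e\downarrow0}P^\e(t\Lambda,\omega;O)\ge\overline V(t\Lambda)\uparrow\infty=\overline V(\Lambda)$ as $t\uparrow1$. Finally, the refined statement for $\Lambda\notin\partial\dom\overline V$ is nothing but the content of the second step, together with the trivial case $\Lambda\notin\overline{\dom\overline V}$ where both sides are infinite.
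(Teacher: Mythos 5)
Your proof is correct and follows essentially the same route as the paper's own argument: the key identity $\overline V=\overline P$ is obtained by extracting a weak cluster point of (near-)minimizers of the truncated functionals and combining weak lower semicontinuity with monotone convergence, exactly as in the paper (the only cosmetic difference being that you use $\e$-minimizers bounded by $\overline V(\Lambda)+1/k$, whereas the paper uses exact minimizers attaining $\overline V^k(\Lambda)$); the asymptotic formula is then the combination of the liminf bound from Lemma~\ref{lem:equivk} and the limsup bound from Lemma~\ref{lem:ineqhomog}, again as in the paper; and the extension to $\partial\dom\overline V$ proceeds via the dilation $t\uparrow1$ along rational parameters on a countable intersection of full-measure sets, which is also the paper's (implicit) mechanism. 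Your treatment of the refined statement for $\Lambda\notin\partial\dom\overline V$ is a bit more elementary than the paper's --- you separate the interior case (Step~2 directly) from the case $\Lambda$ outside the closure of $\dom\overline V$ (where both sides are $+\infty$), rather than invoking continuity of $\Lambda\mapsto\lim_\e P^\e(\Lambda,\omega;O)$ off $\partial\dom\overline V$ --- but this is equivalent and arguably cleaner.
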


\begin{rem}
Although not stated explicitly, this result proves the commutation of truncation and homogenization.
By monotone convergence (cf. the proof of $\overline V\equiv\overline P$ below) we have for all $\e>0$
and almost every $\omega$,
\begin{align*}
&\inf_{\phi\in W^{1,p}(O/\e;\R^m)\atop \fint_{O/\e}\nabla\phi=0}\fint_{O/\e}V(y,\Lambda+\nabla\phi(y),\omega)dy=\sup_k \inf_{\phi\in W^{1,p}_0(O/\e;\R^m)\atop \fint_{O/\e}\nabla\phi=0}\fint_{O/\e}V^{k}(y,\Lambda+\nabla\phi(y),\omega)dy\,
\end{align*}
so that Proposition~\ref{prop:commut}, combined with~\eqref{eq:equivk3} in Lemma~\ref{lem:equivk}, yields
the desired commutation result
\begin{multline*}
\lim_{\e\downarrow0}\sup_k\inf_{\phi\in W^{1,p}(O/\e;\R^m)\atop\fint_{O/\e}\nabla\phi=0}\fint_{O/\e}V^{k}(y,\Lambda+\nabla\phi(y),\omega)dy
\\=\overline V(\Lambda)
=\sup_k\overline V^k(\Lambda)
=\sup_k\lim_{\e\downarrow0}\inf_{\phi\in W^{1,p}(O/\e;\R^m)\atop \fint_{O/\e}\nabla\phi=0}\fint_{O/\e}V^{k}(y,\Lambda+\nabla\phi(y),\omega)dy.
\end{multline*}
\qed
\end{rem}

\begin{proof}[Proof of Proposition~\ref{prop:commut}]
We split the proof into two steps.

\medskip

\step{1} Proof of $\overline V\equiv \overline P$.

Let $\Lambda\in\dom\overline V$.
By~\eqref{eq:equivk4} in Lemma~\ref{lem:equivk}, for all $k$,
$$
\overline V^k(\Lambda)=\inf_{f\in F^p_\pot(\Omega)^m}\E[ V^k(0,\Lambda+f)].
$$
By convexity, $f \mapsto \E[ V^k(0,\Lambda+f)]$ is lower semicontinuous on $F^p_\pot(\Omega)^m$, and by coercivity, the infimum is attained.
Hence there exists $g_k\in F^p_\pot(\Omega)^m$ such that
\begin{align*}
\overline V^k(\Lambda)=\E[ V^k(0,\Lambda +g_k)].
\end{align*}
By the uniform growth condition from below \eqref{eq:SGC-trun}, $(g_k)_k$ is bounded in $\Ld^p(\Omega;\R^{m\times d})$:
$$
\frac1C2^{-p+1}\E[| g_k|^p]-\frac1C |\Lambda|^p\le \frac1C\E[|\Lambda+ g_k|^p]\le \E[ V^k(0,\Lambda+ g_k)]= \overline V^k(\Lambda)\le\overline V(\Lambda).
$$
Let $g\in F^{p}_\pot(\Omega)^m$ be a cluster point of $(g_{k})$ for the weak convergence of $\Ld^{p}(\Omega;\R^{m\times d})$.
We have along the subsequence
$$
\overline V(\Lambda)=\sup_k\overline V^k(\Lambda)=\lim_{k \uparrow\infty}\E[ V^{k}(0,\Lambda+ g_{k})].
$$
Since $k\mapsto V^k$ is increasing and $f\mapsto \E[V^k(0,\Lambda+f)]$ is lower semicontinuous for the weak convergence of $\Ld^{p}(\Omega;\R^{m\times d})$, this yields for all $\ell$
$$
\overline V(\Lambda)\ge\liminf_{k\uparrow\infty}\E[ V^\ell(0,\Lambda+ g_{k})]\ge\E[ V^\ell(0,\Lambda+ g)].
$$
We then conclude by monotone convergence that
$$
\overline V(\Lambda)\ge\E[ V(0,\Lambda+ g)]\ge\inf_{f\in F^{p}_\pot(\Omega)^m}\E[ V(0,\Lambda+f)]= \overline P(\Lambda).
$$
For $\Lambda \notin \dom\overline V$, the above inequality is trivial so that $\overline V(\Lambda)\ge \overline P(\Lambda)$ for all $\Lambda \in \R^{m\times d}$.
For the converse inequality, note that for all $\Lambda$,
$$
\overline P(\Lambda)\ge\sup_k\inf_{f\in F^p_\pot(\Omega)^m}\E[ V^k(0,\Lambda+f)]=\sup_k\overline V^k(\Lambda)=\overline V(\Lambda).
$$
Hence, $\overline V\equiv \overline P$, as claimed. 

\medskip

\step{2} Proof of 
$
\lim_{t\uparrow1}\lim_{\e\downarrow0}P^\e(t\Lambda,\omega;O)=\overline V(\Lambda).
$

Since for $\Lambda\in\dom\overline V$ and $t\in[0,1)$,  $t\Lambda\in \inter\dom\overline V$, Lemma~\ref{lem:ineqhomog} and Step~1 yield
for almost all $\omega$
\begin{align}\label{eq:wbarpeps}
\overline V(t\Lambda)=\overline P(t\Lambda) \ge \limsup_{\e\downarrow0} P^\e(t\Lambda,\omega;O).
\end{align}
By~\eqref{eq:equivk3} in Lemma~\ref{lem:equivk}, for all $\Lambda\in\R^{m\times d}$ and almost all $\omega$,
\begin{align}\label{eq:wbarpeps2}
\liminf_{\e\downarrow0}P^\e(\Lambda,\omega;O)&\ge\sup_k \lim_{\e\downarrow0}\inf_{\phi\in W^{1,p}(O/\e;\R^m)\atop \fint_{O/\e}\nabla\phi=0}\fint_{O/\e}V^k(y,\Lambda+\nabla\phi(y),\omega)dy= \sup_k\overline V^k(\Lambda)=\overline V(\Lambda).
\end{align}
Combined with~\eqref{eq:wbarpeps}, this yields $\lim_{\e}P^\e(t\Lambda,\omega;O)=\overline V(t\Lambda)$ for almost all $\omega$, for all $\Lambda\in\dom\overline V$ and $t\in[0,1)$.
By convexity and lower semicontinuity of $\overline V$, this implies for all $\Lambda\in\dom\overline V$ 
\begin{align}\label{eq:equivwbarpeps}
\lim_{t\uparrow1}\lim_{\e\downarrow0}P^\e(t\Lambda,\omega;O)=\lim_{t\uparrow1}\overline V(t\Lambda)=\overline V(\Lambda),
\end{align}
and~\eqref{eq:wbarpeps2} ensures that this equality also holds for $\Lambda\notin \dom\overline V$. By convexity and by~\eqref{eq:equivwbarpeps}, the function $\Lambda\mapsto \lim_{\e}P^\e(\Lambda,\omega;O)$ is continuous outside $\partial\dom\overline V$, so that the limit $t\uparrow1$ can be omitted for $\Lambda\notin\partial\dom\overline V$.
\end{proof}

\subsection{Proof of Theorem~\ref{th:conv}: $\Gamma$-convergence with Neumann boundary data}

It only remains to prove the $\Gamma$-$\limsup$ inequality.

\begin{prop}[$\Gamma$-$\limsup$ inequality with Neumann boundary data]\label{prop:gammasupN}
Assume $p>d$. There exists a subset $\Omega'\subset\Omega_1$ of maximal probability with the following property: for all $\omega\in\Omega'$, all bounded Lipschitz domains $O\subset\R^d$, and all $u\in W^{1,p}(O;\R^m)$, there exists a sequence $(u_\e)_\e\subset W^{1,p}(O;\R^m)$ such that $u_\e\cvf{}u$ in $W^{1,p}(O;\R^m)$ and $J_\e(u_\e,\omega;O)\to J(u;O)$.
\qed
\end{prop}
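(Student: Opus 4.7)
The plan is to establish the $\Gamma$-$\limsup$ inequality by the standard three-step hierarchy: (a)~affine data, (b)~continuous piecewise affine data with small gradient jumps, and (c)~general $u\in W^{1,p}(O;\R^m)$ by approximation and diagonalization. The assumption $p>d$ is used essentially through Morrey's embedding $W^{1,p}(O;\R^m)\hookrightarrow L^\infty(O;\R^m)$, and a dilation--convexity device is needed to glue local pieces because $\dom V(y,\cdot,\omega)$ depends on $y$.

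For step~(a), given $u(x)=\Lambda\cdot x+b$ with $\Lambda\in\dom\overline V$, Lemma~\ref{lem:ineqhomog} supplies a mean-zero-gradient quasi-corrector $\psi_{\Lambda,O,\e}$, and I set $u_\e(x):=u(x)+\e\psi_{\Lambda,O,\e}(x/\e,\omega)$. A change of variables immediately gives $\limsup_\e J_\e(u_\e,\omega;O)\le\overline V(\Lambda)|O|=J(u;O)$, the weak convergence $u_\e\cvf{}u$ in $W^{1,p}(O;\R^m)$ is part of Lemma~\ref{lem:ineqhomog}, and Proposition~\ref{prop:gammaliminfneu} upgrades the inequality to an equality.

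For step~(b), consider $u$ continuous piecewise affine on a polyhedral partition $\{O_i\}_i$ of $O$ with $\nabla u|_{O_i}=\Lambda_i\in\dom\overline V$, and set $\mu:=\max_{i\sim j}|\Lambda_i-\Lambda_j|$. Build $u_\e^{(i)}$ on each $O_i$ as in step~(a), and glue via a smooth partition of unity $\{\eta_i^\delta\}_i$ subordinate to a $\delta$-thickening of the partition: $v_\e:=\sum_i\eta_i^\delta u_\e^{(i)}$. The gradient splits as $\nabla v_\e=A_\e+R_\e$ where $A_\e:=\sum_i\eta_i^\delta\nabla u_\e^{(i)}$ is a convex combination of values a.e.\ in $\dom V(\cdot/\e,\cdot,\omega)$, while $R_\e:=\sum_i(\nabla\eta_i^\delta)(u_\e^{(i)}-u_\e^{(j_0)})$ --- rewritten using $\sum_i\nabla\eta_i^\delta=0$ --- is uniformly bounded by $C\mu+O(\delta^{-1}\|\e\psi\|_{L^\infty})$, by the continuity of $u$ across interfaces (which forces the affine differences $u^{(i)}-u^{(j_0)}$ to be $O(\delta)$ on the $\delta$-strip) and by Morrey's embedding (which upgrades $\e\psi(\cdot/\e,\omega)\cvf{}0$ in $W^{1,p}$ to uniform convergence); choosing $\delta(\e)$ with $\|\e\psi\|_{L^\infty}\ll\delta\to 0$ reduces $R_\e$ to its essential $O(\mu)$ part. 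The main obstacle is that even a bounded correction may push $\nabla v_\e$ out of the infinite-valued $\dom V$. I circumvent this via the contraction $u_\e:=tv_\e$ ($t\in(0,1)$), using the convex decomposition
\[\nabla u_\e=(1-\alpha)A_\e+\alpha\cdot\bigl(tR_\e/\alpha\bigr),\qquad\alpha:=1-t:\]
by convexity of $V$, the first slot is controlled by $\sum_i\eta_i^\delta V(y/\e,\nabla u_\e^{(i)},\omega)$ (another application of convexity), and the second slot $tR_\e/(1-t)$, of size $O(\mu t/(1-t))$, lies in a fixed ball $\adh B_{r_0}\subset\inter\dom M$ as soon as $\mu\le r_0(1-t)/(Ct)$, so that $V\le\sup_{\adh B_{r_0}}M<\infty$ essentially. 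Sending successively $\e\downarrow 0$ (Birkhoff plus step~(a)), $\delta\downarrow 0$ (the strip volume vanishes), and finally $t\uparrow 1$ (yielding $\overline V(t\Lambda_i)\to\overline V(\Lambda_i)$ by lower semicontinuity of $\overline V$) yields the desired recovery for piecewise affine $u$ with small~$\mu$.

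For step~(c), fix $u\in W^{1,p}(O;\R^m)$ with $J(u;O)<\infty$ (the only relevant case). First dilate $u\mapsto tu$ with $t<1$ to ensure $\nabla(tu)\in\inter\dom\overline V$ a.e.\ (using $0\in\inter\dom M\subset\inter\dom\overline V$ and convexity), then approximate $tu$ strongly in $W^{1,p}(O;\R^m)$ by continuous piecewise affine functions $u^n$ with gradients in $\inter\dom\overline V$ and vanishing adjacent-gradient oscillations $\mu^n\to 0$ (mollify $u$ and interpolate on finer and finer simplicial meshes; this is standard on Lipschitz domains). Continuity of $\overline V$ on $\inter\dom\overline V$ (from convexity and local boundedness) gives $J(u^n;O)\to J(tu;O)$, and convexity together with lower semicontinuity gives $J(tu;O)\to J(u;O)$ as $t\uparrow 1$. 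Applying step~(b) to each $u^n$ with $\mu^n$ small enough to accommodate $t\uparrow 1$, and invoking Attouch's diagonalization lemma on the parameters $(\e,\delta,n,t)$, produces a sequence $u_\e$ with $u_\e\cvf{}u$ in $W^{1,p}(O;\R^m)$ and $\limsup_\e J_\e(u_\e,\omega;O)\le J(u;O)$. The full-probability set $\Omega'\subset\Omega_1$ is obtained by intersecting the exceptional sets $\Omega_\Lambda$ of Lemma~\ref{lem:ineqhomog} over a countable dense subset of $\dom\overline V$, together with the almost-sure Birkhoff sets used along the way.
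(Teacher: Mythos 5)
Your proposal takes essentially the same route as the paper: affine data via correctors, gluing via a partition of unity with the dilation--convexity device, then approximation and Attouch diagonalization. The key ideas --- Morrey's embedding for $p>d$, the convex split with the extra $(1-t)$ slot to absorb the gluing error into $\sup_{\adh B_{r_0}} M$, and diagonalizing over $(\e,\delta,n,t)$ --- all match.

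Where you differ from the paper is largely organizational, and I would flag a few small imprecisions that should be tightened. First, in the rewriting $R_\e = \sum_i (\nabla\eta_i^\delta)(u_\e^{(i)} - u_\e^{(j_0)})$, the index $j_0$ must be chosen \emph{locally} (e.g.\ $j_0 = j_0(y)$ the piece containing $y$): with a globally fixed $j_0$, non-adjacent pieces give $|u^{(i)} - u^{(j_0)}|$ of order $\mathrm{diam}(O)\cdot\max_{i,j}|\Lambda_i-\Lambda_j|$, and the bound $C\mu$ on $R_\e$ fails. (Since $\sum_i\nabla\eta_i^\delta(y)=0$ pointwise, the subtraction is free to depend on $y$, so this is only a notational fix.) The paper avoids this entirely by subtracting $u_{\kappa,r}$ itself, which yields the term $\sum_i(\nabla u_{\kappa,r}-\Lambda^i)\chi^i$ controlled directly by $\kappa$. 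Second, your step~(a) builds the local recovery from the quasi-correctors $\psi_{\Lambda,O,\e}$ of Lemma~\ref{lem:ineqhomog}, which are tied to the domain $O/\e$; in step~(b) you then need $u_\e^{(i)}$ to be defined on the $\delta$-thickening of $O_i$, not just on $O_i$. The paper sidesteps this by working directly with the corrector $\varphi_\Lambda$ defined on all of $\R^d$, which gives local recovery sequences for free. Either use $\varphi_\Lambda$ (as the paper does) or build $\psi$ on a slightly enlarged $O_i^+$. Third, as you yourself note, step~(b) alone does not give a recovery sequence for a fixed piecewise affine $u$: the constraint $\mu \le r_0(1-t)/(Ct)$ forbids $t\uparrow 1$ at fixed $\mu>0$, so step~(b) really produces only a one-parameter family of approximations. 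The paper handles this inside step~2 by replacing $u$ internally with the refined $u_{\kappa,r}$ of Proposition~\ref{prop:approxaffine} (so arbitrary piecewise affine data are treated in step~2), whereas you defer the refinement to step~(c); both are legitimate, but the statement ``yields the desired recovery for piecewise affine $u$ with small $\mu$'' is slightly misleading since it is the diagonalization with $\mu^n\to 0$ that closes the argument. Finally, in step~(c) the interpolation of mollified Sobolev functions on finer simplicial meshes works cleanly on strongly star-shaped domains; for general Lipschitz $O$ the paper first localizes via Corollary~\ref{cor:localizlim} to a ball, which you should invoke explicitly.

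None of these are fatal, and the overall argument you give is sound; the differences are a matter of where the ``small gradient jumps'' refinement is performed and which form of the corrector serves as the building block.
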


\begin{proof}
We split the proof into three steps. We first treat the case of affine functions, then the case of continuous piecewise affine functions, and finally the general case.
The novelty of our approach is the careful gluing argument needed to pass from affine to piecewise affine functions.

\medskip

\step{1} Recovery sequence for affine functions.

In this step, we consider the case when $u=\Lambda\cdot x$ is an affine function. More precisely, we prove the existence of a subset $\Omega'\subset\Omega_1$ of maximal probability with the following property: given a bounded Lipschitz domain $O\subset\R^d$, for all $\omega\in\Omega'$ and all $\Lambda\in\inter\dom \overline V$, there exists a sequence $(u_{\Lambda,\e}^\omega)_\e\subset W^{1,p}(O;\R^m)$ with $u_{\Lambda,\e}^\omega\cvf{}\Lambda\cdot x$ weakly in $W^{1,p}(O;\R^m)$ such that, for all Lipschitz subdomains $O'\subset O$, we have $J_\e(u_{\Lambda,\e}^\omega,\omega;O')\to J(\Lambda\cdot x;O')$. By Corollary~\ref{cor:localizlim}, it suffices to prove this for $O'=O$.

By Lemma~\ref{lem:sublincor} and Proposition~\ref{prop:commut}, there exists a sequence $\varphi_{\Lambda}\in\Mes(\Omega;W^{1,p}_\loc(\R^d;\R^m))$ such that, for all $\omega\in\Omega_\Lambda$, we have $\e\varphi_{\Lambda}(\cdot/\e,\omega)\cvf{}0$ weakly in $W^{1,p}(O;\R^m)$
 and, by the Birkhoff-Khinchin ergodic theorem in the form of~\eqref{eq:ergthoml},
$$
\overline V(\Lambda)=\overline P(\Lambda)=\lim_{\e\downarrow0}\fint_{O/\e}V(y,\Lambda+\nabla\varphi_{\Lambda}(y,\omega),\omega)dy.
$$
In particular, by a change of variables, this yields
\begin{align*}
J(\Lambda\cdot x;O)=|O|\overline V(\Lambda)&=\lim_{\e\downarrow0}J_\e(\Lambda\cdot x+\e \varphi_{\Lambda}(\cdot/\e,\omega),\omega;O).
\end{align*}
The function $u_{\e}^{\Lambda,\omega}(x):=\Lambda\cdot x+\e\varphi_{\Lambda}(x/\e,\omega)$ thus satisfies $u_\e^{\Lambda,\omega}\cvf{}\Lambda\cdot x$ in $W^{1,p}(O;\R^m)$ and $J_\e(u_\e^{\Lambda,\omega},\omega;O)\to J(\Lambda\cdot x;O)$ as $\e\downarrow0$, for all $\omega\in\Omega_\Lambda$.

We then define $\Omega'\subset\Omega_1$ as the (countable) intersection of all $\Omega_\Lambda$'s with $\Lambda\in\Q^{m\times d}\cap \inter\dom\overline V$, which is still of maximal probability.
Let $\Lambda\in\inter\dom\overline V$ and $\omega\in\Omega'$ be fixed. Choose a sequence $(\Lambda_n)_n\subset\Q^{m\times d}\cap\inter\dom\overline V$ such that $\Lambda_n\to\Lambda$. For all $n$, we have already constructed a sequence $(u_{\e,n}^\omega)_\e\subset W^{1,p}(O;\R^m)$ such that $u_{\e,n}^\omega\rightharpoonup\Lambda_n\cdot x$ in $W^{1,p}(O;\R^m)$ and $J_\e(u_{\e,n}^\omega,\omega;O)\to J(\Lambda_n\cdot x;O)$. Since by convexity,  $\overline V$ is continuous on $\inter\dom\overline V$, we have
\begin{align*}
&\limsup_{n\uparrow\infty}\limsup_{\e\downarrow0}\left(|J_\e(u_{\e,n}^\omega,\omega;O)-J(\Lambda\cdot x;O)|+\|u_{\e,n}^\omega-\Lambda\cdot x\|_{\Ld^p(O;\R^m)}\right)\\
=~&\limsup_{n\uparrow\infty}\left(|J(\Lambda_n\cdot x;O)-J(\Lambda\cdot x;O)|+\|\Lambda_n\cdot x-\Lambda\cdot x\|_{\Ld^p(O;\R^m)}\right)\\
\le~&\limsup_{n\uparrow\infty}\left(|O||\overline V(\Lambda_n)-\overline V(\Lambda)|+C_O|\Lambda_n-\Lambda|\right)=0.
\end{align*}
By the Attouch diagonalization lemma (see~\cite[Corollary~1.16]{Attouch-84}), this implies the existence of a sequence $(v_{\e}^\omega)_\e$ such that $J_\e(v_{\e}^\omega,\omega;O)\to J(\Lambda\cdot x;O)$ and $v_{\e}^\omega\to\Lambda\cdot x$ in $\Ld^p(O;\R^m)$ for all $\omega\in\Omega'$. 
By the $p$-th order lower bound for $V$, we conclude that $v_{\e}^\omega$ converges weakly to $\Lambda\cdot x$ in $W^{1,p}(O;\R^m)$.

\medskip

\step{2} Recovery sequence for continuous piecewise affine functions.

Let $\omega\in\Omega'$, $O\subset \R^d$ be a bounded Lipschitz domain, and $u$ be a continuous piecewise affine function on $O$ such that $\nabla u\in\inter\dom\overline V$ pointwise. We shall prove that there exists a sequence $(u_{\e}^\omega)_\e\subset W^{1,p}(O;\R^m)$ with $u^\omega_{\e}\cvf{}u$ weakly in $W^{1,p}(O;\R^m)$, such that $J_\e(u^\omega_{\e},\omega;O)\to J(u;O)$. For that purpose, the major issue consists in gluing the recovery sequences for the different affine parts together, which requires a particularly careful treatment.

Consider the open partition $O=\biguplus_{l=1}^kO_l$ associated with $u$ (note that the $O_l$'s have piecewise flat boundary outside $\partial O$), and define $c_l+\Lambda_l\cdot x:= u|_{O_l}$, with $\Lambda_l\in\inter\dom\overline V$, for all $1\le l\le k$. 
Let $\calM:=(\bigcup_{l=1}^k\partial O_l)\setminus\partial O$ be the interior boundary of the partition of $O$, and for all $r>0$ set $\calM_r:=(\calM+B_r)\cap O=\{x\in O:\dist(x,\calM)<r\}$, the $r$-neighborhood of this interior boundary. 
By Proposition~\ref{prop:approxaffine}, for all $0<\kappa\le 1$ and $r>0$, there exists a continuous piecewise affine function $u_{\kappa,r}$ on $O$ with the following properties:
\begin{enumerate}[(i)]
\item $\nabla u_{\kappa,r}=\nabla u$ pointwise on $O\setminus \calM_r$, and 
\begin{align}\label{eq:convkaprhouappr}
\limsup_{r\downarrow0}\sup_{0<\kappa\le 1}\|u_{\kappa,r}-u\|_{\Ld^\infty(O)}=0;
\end{align}
\item $\nabla u_{\kappa,r}\in \conv(\{\Lambda_l:1\le l\le k\})\Subset\inter\dom\overline V$ pointwise (where $\conv(\cdot)$ denotes the convex hull);
\item denoting by $O:=\biguplus_{l=1}^{n_{\kappa,r}}O_{\kappa,r}^l$ the open partition associated with $u_{\kappa,r}$, and denoting $c^l_{\kappa,r}+\Lambda^l_{\kappa,r}\cdot x:= u_{\kappa,r}|_{O^l_{\kappa,r}}$ for all~$l$, then, for any $i,j$ with $\partial O^i_{\kappa,r}\cap\partial O^j_{\kappa,r}\ne\varnothing$, we have $|\Lambda^i_{\kappa,r}-\Lambda^j_{\kappa,r}|\le \kappa$.
\end{enumerate}
We shall approximate $u$ with these refined continuous piecewise affine functions $u_{\kappa,r}$ having smoother variations; in the sequel, we shall successively take the limits $\kappa\downarrow0$ and $r\downarrow0$.

Since $\omega\in\Omega'$ and $O\subset\R^d$ are fixed in the argument, we drop them from our notation.
Fix $\kappa,r>0$. 
By Step~1, for all $1\le i\le n_{\kappa,r}$ there exists a sequence $(u_{\e,\kappa,r}^{i})_\e\subset W^{1,p}_\loc(\R^d;\R^m)$ with $u_{\e,\kappa,r}^{i}\cvf{} c_{\kappa,r}^i+\Lambda_{\kappa,r}^i\cdot x$ in $W^{1,p}_{\loc}(\R^d;\R^m)$ and such that, for all Lipschitz subdomains $O'\subset O$, we have $J_\e(u_{\e,\kappa,r}^{i},\omega;O')\to J(\Lambda_{\kappa,r}^i\cdot x;O')$. For all $\eta>0$ and all $1\le i\le n_{\kappa,r}$, define the sets
\begin{align*}
O^{i+}_{\kappa,r,\eta}:=&\{x\in O:\dist(x,O^{i}_{\kappa,r})<\eta\}=O\cap(O^i_{\kappa,r}+B_{\eta}),\\
O^{i-}_{\kappa,r,\eta}:=&\{x\in O_{\kappa,r}^i:\dist(x,\partial O^{i}_{\kappa,r})>\eta\}
\end{align*}
Let then $\sum_{i=1}^{n_{\kappa,r}}\chi_{\kappa,r,\eta}^{i}=1$ be a partition of unity on $O$, where, for all $1\le i\le n_{\kappa,r}$, the smooth cut-off function $\chi_{\kappa,r,\eta}^{i}$ has values in $[0,1]$, equals $1$ on $O^{i-}_{\kappa,r,\eta}$ and vanishes outside $O^{i+}_{\kappa,r,\eta}$, and satisfies the bound $|\nabla\chi_{\kappa,r,\eta}^{i}|\le C'/\eta$ pointwise for some constant $C'>0$. We now set
$$
u_{\e,\kappa,r,\eta}:=u_{\kappa,r}+\sum_{i=1}^{n_{\kappa,r}}(u_{\e,\kappa,r}^{i}-(c^i_{\kappa,r}+\Lambda_{\kappa,r}^i\cdot x))~\chi_{\kappa,r,\eta}^{i}.
$$
By the Sobolev compact embedding for $p>d$, we have $u_{\e,\kappa,r}^{i}\to c_{\kappa,r}^i+\Lambda_{\kappa,r}^i\cdot x$ in $\Ld^\infty(O;\R^m)$ as $\e\downarrow0$, and hence $\limsup_{\eta}\limsup_{\e}\|u_{\e,\kappa,r,\eta}- u_{\kappa,r}\|_{\Ld^\infty(O)}=0$, so that~\eqref{eq:convkaprhouappr} yields
\begin{align}\label{eq:Linftyconvaffine}
\lim_{t\uparrow1}\limsup_{r\downarrow0}\limsup_{\kappa\downarrow0}\limsup_{\eta\downarrow0}\limsup_{\e\downarrow0}\|tu_{\e,\kappa,r,\eta}- u\|_{\Ld^\infty(O)}=0.
\end{align}

Let us now evaluate the integral functional $J_\e(\cdot,\omega;O)$ at $tu_{\e,\kappa,r,\eta}$ for $t\in[0,1)$.
Since 
\begin{multline*}
t\nabla u_{\e,\kappa,r,\eta}=\sum_{i=1}^{n_{\kappa,r}}t\chi_{\kappa,r,\eta}^{i}\nabla u_{\e,\kappa,r}^{i}\\
+(1-t)\frac t{1-t}\sum_{i=1}^{n_{\kappa,r}}\left((u_{\e,\kappa,r}^{i}-(c^i_{\kappa,r}+\Lambda_{\kappa,r}^i\cdot x))\nabla\chi_{\kappa,r,\eta}^{i}+(\nabla u_{\kappa,r}-\Lambda_{\kappa,r}^i)\chi_{\kappa,r,\eta}^i\right),
\end{multline*}
and $(1-t)+\sum_{i=1}^{n_{\kappa,r}}t\chi_{\kappa,r,\eta}^{i}=1$, we have by convexity and non-negativity of $V$
\begin{align}
J_\e(tu_{\e,\kappa,r,\eta},\omega;O)&\le(1-t)E_{\e,\kappa,r,\eta,t}+t\sum_{i=1}^{n_{\kappa,r}}\int_{O^{i+}_{\kappa,r,\eta}} \chi_{\kappa,r,\eta}^{i}(y)V(y/\e,\nabla u^{i}_{\e,\kappa,r}(y),\omega)dy\nonumber\\
&\le(1-t)E_{\e,\kappa,r,\eta,t}+\sum_{i=1}^{n_{\kappa,r}}J_\e(u^{i}_{\e,\kappa,r},\omega;O^{i+}_{\kappa,r,\eta}),\label{eq:boundconvglueaff}
\end{align}
where the error term reads
\begin{multline*}
E_{\e,\kappa,r,\eta,t}=\int_OV\bigg(y/\e,\frac t{1-t}\sum_{i=1}^{n_{\kappa,r}}\Big((u_{\e,\kappa,r}^{i}(y)-(c^i_{\kappa,r}+\Lambda_{\kappa,r}^i\cdot y))\nabla\chi_{\kappa,r,\eta}^{i}(y)\\
+(\nabla u_{\kappa,r}(y)-\Lambda_{\kappa,r}^i)\chi_{\kappa,r,\eta}^i(y)\Big),\omega\bigg)dy.
\end{multline*}
For all $i$, set $N_{\kappa,r,\eta}^{i}:=\{j: j\ne i,\,O^{j+}_{\kappa,r,\eta}\cap O^{i+}_{\kappa,r,\eta}\ne\varnothing\}$.
We then rewrite the argument of the energy density in the error term as
\begin{align*}
S_{\e,\kappa,r,\eta}(y):=~&\left|\sum_{i=1}^{n_{\kappa,r}}\Big((u_{\e,\kappa,r}^{i}(y)-(c^i_{\kappa,r}+\Lambda_{\kappa,r}^i\cdot y))\nabla\chi_{\kappa,r,\eta}^{i}(y)+(\nabla u_{\kappa,r}(y)-\Lambda_{\kappa,r}^i)\chi_{\kappa,r,\eta}^i(y)\Big)\right|\\
\le~&\frac{C'}\eta\sum_{i=1}^{n_{\kappa,r}}\|u^{i}_{\e,\kappa,r}-(c^i_{\kappa,r}+\Lambda_{\kappa,r}^i\cdot x)\|_{\Ld^\infty(O)}+\sup_{1\le i\le n_{\kappa,r}}\sup_{j\in N^i_{\kappa,r,\eta}}|\Lambda^j_{\kappa,r}-\Lambda^i_{\kappa,r}|.
\end{align*}
Since by definition $\limsup_{\eta\downarrow0}\sup_{j\in N^i_{\kappa,r,\eta}}|\Lambda^j_{\kappa,r}-\Lambda^i_{\kappa,r}|\le\kappa$ for all $i$, we have
$$
\limsup_{\kappa\downarrow0}\limsup_{\eta\downarrow0}\limsup_{\e\downarrow0} S_{\e,\kappa,r,\eta}(y)=0
$$ 
for all $r,\eta>0$. 
By assumption, there exists $\delta>0$ such that $\adh B_\delta\subset\inter\dom M$. 
Hence, for all $r,t>0$ there exists $\kappa_{r,t}>0$ such that for all $0<\kappa<\kappa_{r,t}$ there exists $\eta_{\kappa,r}>0$ such that for all $0<\eta<\eta_{\kappa,r}$ there exists $\e_{\kappa,r,\eta,t}>0$ with the following property: for all $0<\e<\e_{\kappa,r,\eta,t}$, we have
$$
\left\|\frac{t}{1-t}S_{\e,\kappa,r,\eta}\right\|_{\Ld^\infty(O)}<\delta.
$$
This yields the bound
$$
E_{\e,\kappa,r,\eta,t}\le |O|\sup_{|\Lambda'|<\delta}M(\Lambda') <\infty,
$$
and proves
$$
\lim_{t\uparrow1}\limsup_{r\downarrow0}\limsup_{\kappa\downarrow0}\limsup_{\eta\downarrow0}\limsup_{\e\downarrow0}(1-t)E_{\e,\kappa,r,\eta,t}=0,
$$
so that~\eqref{eq:boundconvglueaff} turns into
\begin{align}\nonumber
&\limsup_{t\uparrow1}\limsup_{r\downarrow0}\limsup_{\kappa\downarrow0}\limsup_{\eta\downarrow0}\limsup_{\e\downarrow0}J_\e(tu_{\e,\kappa,r,\eta},\omega;O)\\
&\hspace{4cm}\le\limsup_{r\downarrow0}\limsup_{\kappa\downarrow0}\sum_{i=1}^{n_{\kappa,r}}\limsup_{\eta\downarrow0}\limsup_{\e\downarrow0}J_\e(u^{i}_{\e,\kappa,r},\omega;O^{i+}_{\kappa,r,\eta}).\label{eq:boundlimsupglueaff}
\end{align}
For all $i$, we have by construction
$$
\lim_{\e\downarrow0}J_\e(u^{i}_{\e,\kappa,r},\omega;O^{i+}_{\kappa,r,\eta})=|O^{i+}_{\kappa,r,\eta}|\overline V(\Lambda^i_{\kappa,r}),
$$
so that, by definition of $O^{i+}_{\kappa,r,\eta}$,
$$
\lim_{\eta\downarrow0}\lim_{\e\downarrow0}J_\e(u^{i}_{\e,\kappa,r},\omega;O^{i+}_{\kappa,r,\eta})=|O^{i}_{\kappa,r}|\overline V(\Lambda^i_{\kappa,r}).
$$
Hence, summing over $i$, $1\le i\le n_{\kappa,r}$, yields
\begin{equation*}
\sum_{i=1}^{n_{\kappa,r}}\lim_{\eta\downarrow0}\lim_{\e\downarrow0}J_\e(u^{i}_{\e,\kappa,r},\omega;O^{i+}_{\kappa,r,\eta})\,=\,\sum_{i=1}^{n_{\kappa,r}}|O^{i}_{\kappa,r}|\overline V(\Lambda^i_{\kappa,r})
.
\end{equation*}
On the one hand, $\nabla u_{\kappa,r}=\nabla u$ holds on $O\setminus \calM_r$. On the other hand, for all $i,\kappa,r$, $\Lambda^i_{\kappa,r} \in K:=\conv(\{\Lambda_l:1\le l\le k\})$, which is a compact subset of $\inter\dom\overline V$.
Using in addition the non-negativity of the energy density, one may then turn the above equality into
\begin{align*}
\sum_{i=1}^{n_{\kappa,r}}\lim_{\eta\downarrow0}\lim_{\e\downarrow0}J_\e(u^{i}_{\e,\kappa,r},\omega;O^{i+}_{\kappa,r,\eta})=J(u_{\kappa,r};O)&=J(u;O\setminus \calM_r)+J(u_{\kappa,r};\calM_r)\\
&\le J(u;O)+|\calM_r|\sup_K \overline V.
\end{align*}
Combined with~\eqref{eq:boundlimsupglueaff}, this yields
\begin{align}\label{eq:gammalimsupglueup}
\limsup_{t\uparrow1}\limsup_{r\downarrow0}\limsup_{\kappa\downarrow0}\limsup_{\eta\downarrow0}\limsup_{\e\downarrow0}J_\e(tu_{\e,\kappa,r,\eta},\omega;O)\le J(u;O).
\end{align}
We are now in position to conclude.
By coercivity of $V$, the sequence $\nabla (tu_{\e,\kappa,r,\eta})$ is bounded in $\Ld^p(O;\R^{m\times d})$. Combined with \eqref{eq:Linftyconvaffine} (convergence in $\Ld^\infty(O;\R^m)$), this shows that any weakly converging subsequence of $(tu_{\e,\kappa,r,\eta})_{\e,\eta,\kappa,r,t}$  in  $W^{1,p}(O;\R^m)$ converges to $u$.
Hence the $\Gamma$-$\liminf$ inequality of Proposition~\ref{prop:gammaliminfneu} yields
$$
\liminf_{t\uparrow1}\liminf_{r\downarrow0}\liminf_{\kappa\downarrow0}\liminf_{\eta\downarrow0}\liminf_{\e\downarrow0}J_\e(tu_{\e,\kappa,r,\eta},\omega;O)\ge J(u;O).
$$
These last two inequalities combine to
$$
\limsup_{t\uparrow1}\limsup_{r\downarrow0}\limsup_{\kappa\downarrow0}\limsup_{\eta\downarrow0}\limsup_{\e\downarrow0}\left(|J_\e(tu_{\e,\kappa,r,\eta},\omega;O)-J(u;O)|+\|tu_{\e,\kappa,r,\eta}-u\|_{\Ld^p(O;\R^m)}\right)=0,
$$
and we conclude as before by the Attouch diagonalization lemma.

\medskip

\step{3} Recovery sequence for general functions.

We claim that, for all $\omega\in\Omega'$, all bounded Lipschitz domains $O\subset\R^d$ and all $u\in W^{1,p}(O;\R^m)$, there is a sequence $(u_\e)_\e\subset W^{1,p}(O;\R^m)$ with $u_\e\cvf{}u$ in $W^{1,p}(O;\R^m)$ and $J_\e(u_\e,\omega;O)\to J(u;O)$. 
By the locality of recovery sequences (cf. Corollary~\ref{cor:localizlim}), we may consider that $O$ is a ball of $\R^d$ to which we may apply the approximation result of Proposition~\ref{prop:approxw}.
By the $\Gamma$-$\liminf$ inequality of Proposition~\ref{prop:gammaliminfneu}, we can further assume that $u\in W^{1,p}(O;\R^m)$ satisfies
$$
J(u;O)=\int_O\overline V(\nabla u(y))dy<\infty,
$$
so that $\nabla u\in\dom\overline V$ almost everywhere. Let $u$ be such a function and let $\omega\in\Omega'$ be fixed.

Since $O$ is bounded, Lipschitz and strongly star-shaped, $\overline V$ is convex, and $0\in\inter\dom\overline V$, Proposition~\ref{prop:approxw}(ii) shows 
that there exists a sequence  $(u_{n})_n$ of continuous piecewise affine functions with $\nabla u_{n}\in\inter\dom\overline V$ pointwise such that $u_{n}\to u$ (strongly) in $W^{1,p}(O;\R^m)$ and $J( u_{n};O)\longrightarrow J(u;O)$ as $n\uparrow\infty$.
By Step~2, for all $n$, there exists a sequence $(u_{\e,n})_\e\subset W^{1,p}(O;\R^m)$ such that $u_{\e,n}\cvf{} u_{n}$ in $W^{1,p}(U;\R^m)$ and $J_\e(u_{\e,n},\omega;O)\to J(u_{n};O)$, as $\e\downarrow0$. In particular,
\begin{multline*}
\lim_{n\uparrow\infty}\lim_{\e\downarrow0}\left(|J_\e(u_{\e,n},\omega;O)-J(u;O)|+\|u_{\e,n}-u\|_{\Ld^p(O;\R^m)}\right)\\
=\lim_{n\uparrow\infty}\left(|J(u_{n};O)-J(u;O)|+\|u_{n}-u\|_{\Ld^p(O;\R^m)}\right)=0.
\end{multline*}
We then conclude as before by the Attouch diagonalization argument.
\end{proof}

\subsection{Proof of Corollary~\ref{cor:dir}(i): lifting Dirichlet boundary data}\label{chap:liftdirbd}
We split the proof into two steps. We first consider the case when $J(\alpha u;O)<\infty$ for some $\alpha>1$, and then turn to 
the case when in addition $\int_O M(\nabla u)<\infty$ or $\int_O M(\alpha\nabla u)<\infty$ for some $\alpha>1$.

\medskip

\step{1} Case when $J(\alpha u;O)<\infty$, for some $\alpha>1$.

As $v\in u+ W^{1,p}_0(O;\R^m)$ and $J(\alpha u;O)<\infty$, Proposition~\ref{prop:approxw}(ii)(a) yields the existence of a sequence $(v_k)_k\subset u+C^\infty_c(O;\R^m)$ with $v_k\to v$ in $W^{1,p}(O;\R^m)$ and $J(v_k;O)\to J(v;O)$. 
For all $r>0$, set $O_r^1:=\{x\in O:\dist(x,\partial O)>2r\}$, $O_r^2:=\{x\in O:\dist(x,\partial O)>r\}$, and choose smooth cut-off functions $\chi_r^1,\chi_r^2$ with the following properties: the functions take values in $[0,1]$, $\chi_r^1$ equals $1$ on $O_r^1$ and $0$ on $\R^d\setminus O_r^2$,  $\chi_r^2$ equals $1$ on $O_r^2$ and $0$ on $\R^d\setminus O$, and $|\nabla\chi_r^1|,|\nabla\chi_r^2|\le C'/r$ for some constant $C'$.
For all $\omega\in\Omega'$, Proposition~\ref{prop:gammasupN} provides sequences $(u_\e^\omega)_\e$ and $(v_{\e,r,k}^\omega)_\e$ in $W^{1,p}(O;\R^m)$ such that $u_\e^\omega\cvf{}u$ and $v_{\e,r,k}^\omega\cvf{}\chi_r^1v_k+(1-\chi_r^1)u$ in $W^{1,p}(O;\R^m)$, and  $J_\e(u_\e^\omega,\omega;O')\to J(u;O')$ and $J_\e(v_{\e,r,k}^\omega,\omega;O')\to J(\chi_r^1v_k+(1-\chi_r^1)u;O')$, for any subdomain $O'\subset O$. We then set $w_{\e,r,k}^\omega:=\chi_r^2v_{\e,r,k}^\omega+(1-\chi_r^2)u_\e^\omega$. Given $t\in[0,1)$, using the decomposition
$$
t\nabla w_{\e,r,k}^\omega=t\chi_r^2\nabla v_{\e,r,k}^\omega+t(1-\chi_r^2)\nabla u_\e^\omega+(1-t)\frac t{1-t}\nabla\chi_r^2(v_{\e,r,k}^\omega-u_\e^\omega),
$$
and convexity, we obtain
\begin{align}\label{eq:inequIeps}
J_\e(tw_{\e,r,k}^\omega,\omega;O)&\le (1-t)E_{\e,r,k,t}^\omega+J_\e(v_{\e,r,k}^\omega,\omega;O)+J_\e(u_\e^\omega,\omega;O\setminus O_r^2),
\end{align}
where the error term reads
$$
E_{\e,r,k,t}^\omega:=\int_O V\left(y/\e,\frac t{1-t}\nabla\chi_r^2(y)(v_{\e,r,k}^\omega(y)-u_\e^\omega(y)),\omega\right)dy.
$$
For all $y\in O\setminus O_r^2$, since $\chi_r^1(y)=0$, we have
\begin{align*}
|v_{\e,r,k}^\omega(y)-u_\e^\omega(y)|&\le \|v_{\e,r,k}^\omega-(\chi_r^1v_k+(1-\chi_r^1)u)\|_{\Ld^\infty(O)}+\|u_\e^\omega-u\|_{\Ld^\infty(O)}.
\end{align*}
By assumption, there is some $\delta>0$ with $\adh B_\delta\subset \inter\dom M$.
Hence, for all fixed $r,k,t$, there exists $\e_{r,k,t}>0$ such that for all $0<\e<\e_{r,k,t}$ we have
$$
\left\|\frac t{1-t}\nabla\chi_r^2(v_{\e,r,k}^\omega-u_\e^\omega)\right\|_{\Ld^\infty(O)}<\delta,
$$
and therefore
$$
\limsup_{\e\downarrow0}E_{\e,r,k,t}^\omega\le |O|\sup_{|\Lambda'|<\delta}M(\Lambda')<\infty.
$$
Inequality~\eqref{eq:inequIeps} then turns into
\begin{multline*}
\limsup_{t\uparrow1}\limsup_{k\uparrow\infty}\limsup_{r\downarrow0}\limsup_{\e\downarrow0}J_\e(tw_{\e,r,k}^\omega,\omega;O)\\
\le~\limsup_{k\uparrow\infty}\limsup_{r\downarrow0}J(\chi_r^1v_k+(1-\chi_r^1)u;O)+\limsup_{r\downarrow0}J(u;O\setminus O_r^2).
\end{multline*}
The second term in the right-hand side vanishes since $J(u;O)<\infty$, and it only remains to study the first term. 
By definition, for fixed $k$, we have $v_k\in u+C_c^\infty(O;\R^m)$, so that for all $r>0$ small enough, $\chi_r^1v_k+(1-\chi_r^1)u=v_k$ pointwise on $O$.
This implies
$$
\limsup_{k\uparrow\infty}\limsup_{r\downarrow0}J(\chi_r^1v_k+(1-\chi_r^1)u;O)=\limsup_{k\uparrow\infty}J(v_k;O)=J(v;O),
$$
and thus
\begin{align*}
\limsup_{t\uparrow1}\limsup_{k\uparrow\infty}\limsup_{r\downarrow0}\limsup_{\e\downarrow0}J_\e(tw_{\e,r,k}^\omega,\omega;O)\le J(v;O).
\end{align*}
Combined with the $\Gamma$-$\liminf$ inequality of Proposition~\ref{prop:gammaliminfneu} and a diagonalization argument, this proves the first part of the statement.

\medskip

\step{2} Cases when $\int_O M(\nabla u)<\infty$ or $\int_O M(\alpha\nabla u)<\infty$ for some $\alpha>1$.
 
If $u$ is chosen in such a way that $\int_OM(\nabla u(y))dy<\infty$, then we can repeat the argument of Step~1 with $u_\e^\omega:= u$, and bound the last term in the right-hand side of~\eqref{eq:inequIeps} by
$$
\limsup_{r\downarrow0}\limsup_{\e\downarrow0}J_\e(u,\omega;O\setminus O_r^2)\le\limsup_{r\downarrow0}\int_{O\setminus O_r^2}M(\nabla u(y))dy=0.
$$
We conclude with the case when $u$ satisfies $\int_OM(\alpha\nabla u(y))dy<\infty$ for some $\alpha>1$. 
Let $v_k,\chi_r^1,\chi_r^2$ be chosen as in Step~1, and let $\omega\in\Omega'$ be fixed. For any $t\in[0,1)$, Proposition~\ref{prop:gammasupN} shows the existence of a sequence $(v_{\e,r,k,t}^\omega)_\e$ in $W^{1,p}(O;\R^m)$ such that $v_{\e,r,k,t}^\omega\cvf{}\chi_r^1v_k+(1-\chi_r^1)u/t$ in $W^{1,p}(O;\R^m)$ and $J_\e(v_{\e,r,k,t}^\omega,\omega;O')\to J(\chi_r^1v_k+(1-\chi_r^1)u/t;O')$, for any subdomain $O'\subset O$. 
Set $w_{\e,r,k,t}^\omega:=\chi_r^2v_{\e,r,k,t}^\omega+(1-\chi_r^2)u/t$. As before, we obtain by convexity
\begin{multline*}
J_\e(tw_{\e,r,k,t}^\omega,\omega)\le J_\e(v_{\e,r,k,t}^\omega,\omega;O)+J_\e(u/t,\omega;O\setminus O_r^2)\\
+(1-t)\int_OM\left(\frac{t}{1-t}\nabla\chi_r^2(y)(v_{\e,r,k,t}^\omega(y)-u(y)/t)\right)dy,
\end{multline*}
and the conclusion then follows, using the convexity once more in the following form: for $t>1/\alpha$,
\begin{align*}
\limsup_{r\downarrow0}\limsup_{\e\downarrow0}J_\e(u/t,\omega;O\setminus O_r^2)&\le\limsup_{r\downarrow0}\int_{O\setminus O_r^2}M(\nabla u(y)/t)dy\\
&\le \frac1{\alpha t}\limsup_{r\downarrow0}\int_{O\setminus O_r^2}M(\alpha\nabla u(y))dy+\limsup_{r\downarrow0}|O\setminus O_r^2|M(0)=0.
\end{align*}
This completes the proof.\qed

\begin{rem}
As can be seen in the proof, the assumption that $J(\alpha u;O)<\infty$ can be relaxed to $J(\alpha u;O')<\infty$ for some open neighborhood $O'\subset O$ of $\partial O$ in $O$.
\qed
\end{rem}

\subsection{Proof of Corollary~\ref{cor:dir}(ii): soft buffer zone for Dirichlet boundary data}\label{chap:2dirbd}

We split the proof into two steps.
For all $s>0$ and $O\subset \R^d$, we use the notation $O_s:=\{x\in O:\dist(x,\partial O)>s\}$.

\medskip

\step{1} $\Gamma$-$\liminf$ inequality.

Let $\omega\in\Omega'$, let $O\subset\R^d$ be a bounded Lipschitz domain, let $u\in W^{1,p}(O;\R^m)$ with $J(u;O)<\infty$, and let $(u_\e)_\e\subset W^{1,p}(O;\R^m)$ be a sequence with $u_\e\cvf{} u$ in $W^{1,p}(O;\R^m)$. By the $\Gamma$-$\liminf$ inequality for $J_\e$ in Proposition~\ref{prop:gammaliminfneu},
\begin{align*}
\liminf_{\e\downarrow0}J_\e^\eta(u_\e,\omega;O)\ge \liminf_{\e\downarrow0}J_\e(u_\e,\omega;O_\eta)\ge J(u;O_\eta)=\int_{O_\eta}\overline{V}(\nabla u(y))dy,
\end{align*}
that is, using that $\int_{O\setminus O_\eta} \overline{V}(\nabla u(y))dy\to0$ as $\eta\downarrow0$,
\begin{align*}
\liminf_{\eta\downarrow0}\liminf_{\e\downarrow0}J_\e^\eta(u_\e,\omega;O)\ge J(u;O).
\end{align*}

\medskip

\step{2} $\Gamma$-$\limsup$ inequality.

Let $\omega\in\Omega'$, let $O\subset\R^d$ be a bounded Lipschitz domain, and let $u\in W^{1,p}(O;\R^m)$ with $J(u;O)<\infty$. By Proposition~\ref{prop:gammasupN}, there exists a sequence $(w_{\e})_\e\subset W^{1,p}(O;\R^m)$ such that $w_{\e}\cvf{}0$ in $W^{1,p}(O;\R^m)$ and $J_\e(u+w_{\e},\omega;O)\to J(u;O)$. Given $\eta>0$, choose a cut-off function $\chi_\eta$ with values in $[0,1]$, such that $\chi_\eta$ equals $1$ on $O_\eta$ and $0$ outside $O$, and that satisfies $|\nabla\chi_\eta|\le C'/\eta$ for some constant $C'>0$.
Set $v_{\e,\eta}:=\chi_{\eta}w_{\e}\in W^{1,p}_0(O;\R^m)$. For all $t\in[0,1)$, we have
$$
t\nabla u+t\nabla v_{\e,\eta}=t\chi_\eta \nabla (u+w_{\e})+t(1-\chi_\eta) \nabla u+(1-t)\frac{t}{1-t}w_{\e}\nabla\chi_\eta,
$$
so that by convexity and the definition of $V_\e^{O,\eta}$,
\begin{align}
J_\e^\eta(tu+tv_{\e,\eta},\omega;O)&\le (1-t)E_{\e,\eta,t}+J_\e^\eta(u+w_{\e},\omega;O)+\int_O(1-\chi_{\eta}(y)) V^{O,\eta}_\e(y,\nabla u(y),\omega)dy\nonumber\\
&\le (1-t)E_{\e,\eta,t}+J_\e(u+w_{\e},\omega;O)+\int_{O\setminus O_\eta}|\nabla u(y)|^pdy,\label{eq:boundenergydirmod}
\end{align}
where the error is defined by
\begin{align}
\label{eq:deferrorcorspeed}
E_{\e,\eta,t}:=~&\int_OV^{O,\eta}_\e\left(y,\frac{t}{1-t}w_\e(y)\nabla\chi_{\eta}(y),\omega\right)dy\nonumber\\
\le~&|O_\eta|M(0)+\int_{O\setminus O_{\eta}}\left|\frac{t}{1-t}w_\e(y)\nabla\chi_{\eta}(y)\right|^pdy.
\end{align}
By the Rellich-Kondrachov theorem, $w_{\e}\to 0$ (strongly) in $\Ld^p(O)$, so that $\limsup_\e E_{\e,\eta,t}\le |O|M(0)$ for all $t,\eta$.
Passing to the limit in inequality~\eqref{eq:boundenergydirmod} thus yields
\begin{align*}
\limsup_{t\uparrow1}\limsup_{\eta\downarrow0}\limsup_{\e\downarrow0}J_\e^\eta(tu+tv_{\e},\omega;O,\eta)&\le\limsup_{\e\downarrow0}J_\e(u+w_{\e},\omega;O)=J(u;O).
\end{align*}
We then conclude by the same diagonalization argument as before and Step~1. This proves the first part of the statement.

Now consider the case when $u$ satisfies $J(\alpha u;O)<\infty$ for some $\alpha>1$. Then, for all $t\in[0,1)$, Proposition~\ref{prop:gammasupN} provides a sequence $(w_{\e,t})_\e\subset W^{1,p}(O;\R^m)$ such that $w_{\e,t}\cvf{}0$ in $W^{1,p}(O;\R^m)$ and $J_\e(u/t+w_{\e,t},\omega;O)\to J(u/t;O)$ as $\e\downarrow0$. Define $v_{\e,t,\eta}:=\chi_\eta w_{\e,t}$, where $\chi_\eta$ is the same cut-off function as above. We then have now
$$
\nabla u+t\nabla v_{\e,t,\eta}=t\chi_\eta \nabla (u/t+w_{\e,t})+t(1-\chi_\eta) \nabla u/t+(1-t)\frac{t}{1-t}w_{\e,t}\nabla\chi_\eta,
$$
so that by convexity and definition of $V^{O,\eta}_\e$,
\begin{align}
J_\e^\eta(u+tv_{\e,\eta},\omega;O)&\le (1-t)E'_{\e,\eta,t}+J_\e(u/t+w_{\e,t},\omega;O)+{t^{1-p}}\int_{O\setminus O_\eta}|\nabla u(y)|^pdy,\label{eq:boundenergydirmod2}
\end{align}
where the error is defined by
\begin{align*}
E'_{\e,\eta,t}:=&|O_\eta|M(0)+\int_{O\setminus O_{\eta}}\left|\frac{t}{1-t}w_{\e,t}(y)\nabla\chi_{\eta}(y)\right|^pdy.
\end{align*}
By the Rellich-Kondrachov theorem, $w_{\e,t}\to 0$ (strongly) in $\Ld^p(O)$ for all $t$, so that  $\limsup_\e E'_{\e,\eta,t}=|O_\eta|M(0)$ for all $t,\eta$.
Passing to the limit in inequality~\eqref{eq:boundenergydirmod2} then yields
\begin{align*}
\limsup_{t\uparrow1}\limsup_{\eta\downarrow0}\limsup_{\e\downarrow0}J_\e^\eta(u+tv_{\e,t,\eta},\omega;O)&\le\limsup_{t\uparrow1}\limsup_{\e\downarrow0}J_\e(u/t+w_{\e,t},\omega;O)=\limsup_{t\uparrow1}J(u/t;O).
\end{align*}
Since $u$ satisfies $J(\alpha u;O)<\infty$, we deduce by convexity that the map $t\mapsto J(u/t;O)$ is continuous on $(1/\alpha,1]$.
This implies $\limsup_{t\uparrow1}J(u/t;O)=J(u;O)$, and the conclusion follows.\qed

\begin{rem}
As can be seen in the proof, the assumption that $J(\alpha u;O)<\infty$ can be relaxed to $J(\alpha u;O')<\infty$ for some open neighborhood $O'\subset O$ of $\partial O$ in $O$.
\qed
\end{rem}

\section{Proof of the results for nonconvex integrands}\label{sec:nonconv}
In this section, we study the case when $W$ is nonconvex but admits a two-sided estimate by a convex function (which may depend on the space variable) and prove Theorem~\ref{th:nonconv}. Let $W$ be a (nonconvex) $\tau$-stationary normal random integrand, which is further assumed to be ru-usc (in the sense of Definition~\ref{def:ruusc}, with respect to some $\tau$-stationary integrable random field $a$). Up to a translation, for simplicity of notation, we can restrict to the following stronger version of~\eqref{eq:convhomb} and~\eqref{eq:aslowerbound0}: for almost all $\omega$, $y$, we have, for all $\Lambda$,
\begin{align}\label{eq:condnc}
\frac1C|\Lambda|^p\le V(y,\Lambda,\omega)\le W(y,\Lambda,\omega)\le C(V(y,\Lambda,\omega)+1),
\end{align}
for some $C>0$ and $d<p<\infty$, and for some convex $\tau$-stationary normal random integrand $V$. Also assume that $0$ belongs to the interior of the domain of the convex function $M:=\supess_{y,\omega}V(y,\cdot,\omega)$. We can then apply Theorem~\ref{th:conv} to $V$, yielding an homogenized energy density $\overline V$ with the following property: defining
\[J_\e(u,\omega;O)=\int_OV(y/\e,\nabla u(y),\omega)dy,\qquad J(u;O)=\int_O\overline V(\nabla u(y))dy,\]
for almost all $\omega$, the integral functionals $J_\e(\cdot,\omega;O)$ $\Gamma$-converge to $J(\cdot;O)$ on $W^{1,p}(O;\R^m)$, for any bounded Lipschitz domain $O\subset\R^d$.
Let $\Omega_{0}\subset\Omega$ be a subset of maximal probability on which all these assumptions and properties (of $V,W$) are simultaneously pointwise satisfied.

\subsection{Definition of the homogenized energy density}\label{chap:preliminnonconv}
We need to define in this section a candidate for the homogenized energy density $\overline W$. As before, the standard homogenization formula with Dirichlet boundary conditions does not hold because of the generality of the growth conditions considered here. Instead, we use the corrector for the convex problem as a boundary condition for the nonconvex problem, which is indeed admissible because of the two-sided growth condition~\eqref{eq:condnc}.

More precisely, for all $\Lambda\in\R^{m\times d}$, Lemma~\ref{lem:sublincor} yields a function $\varphi_\Lambda\in\Mes(\Omega;W^{1,p}_\loc(\R^d;\R^m))$ such that $\nabla\varphi_\Lambda(0,\cdot)\in F^{p}_{\pot}(\Omega)^m$ and
\[\overline V(\Lambda)=\E[V(0,\Lambda+\nabla\varphi_\Lambda(0,\cdot),\cdot)].\]
Now, for any $t\in[0,1)$, consider the function $\mu^t_\Lambda$ defined by
\begin{align*}
\mu_\Lambda^t(O,\omega):=\inf_{v\in W^{1,p}_0(O;\R^m)}\int_OW(y,t\Lambda+t\nabla\varphi_\Lambda(y,\omega)+\nabla v(y),\omega)dy.
\end{align*}
As this quantity is stationary and subadditive, the Ackoglu-Krengel ergodic theorem implies:

\begin{lem}[Definition of the homogenized energy density]\label{lem:Whomnc}
Let $t\in[0,1)$ be fixed. Then, there exists a function $\overline W_t:\dom\overline V\to[0,\infty)$ such that, for all $\Lambda \in\dom\overline V$, the following holds for almost all $\omega\in\Omega_0$: for all bounded Lipschitz domains $O\subset\R^d$,
\begin{align}\label{eq:defWhomnc}
\overline W_t(\Lambda)=\lim_{\e\downarrow0}\frac{\mu^t_\Lambda(O/\e,\omega)}{|O/\e|},
\end{align}
where convergence also holds for expectations. Now define $\overline W(\Lambda):=\liminf_{t\uparrow1}\liminf_{\Lambda'\to\Lambda}\overline W_t(\Lambda')$ for any $\Lambda\in\dom\overline V$, and further set $\overline W(\Lambda)=\infty$ for all $\Lambda\notin \dom\overline V$. Then, $\overline W$ satisfies $\overline V\le\overline W\le C(1+\overline V)$ on the whole of $\R^{m\times d}$, and for all $\Lambda\in\R^{m\times d}$ the following holds for almost all $\omega$ and all bounded Lipschitz domain $O\subset\R^d$:
\begin{align}\label{eq:defWhomncter}
\overline W(\Lambda)=\liminf_{t\uparrow1}\liminf_{\Lambda'\to\Lambda}\lim_{\e\downarrow0}\frac{\mu^t_{\Lambda'}(O/\e,\omega)}{|O/\e|},
\end{align}
where the $\liminf$ as $t\uparrow1$ can further be restricted to $t\in\Q$. Finally, in the particular case when $W$ is convex, then $\overline W$ coincides with the various definitions for the homogenized integrand as given by Theorem~\ref{th:conv}.\qed
\end{lem}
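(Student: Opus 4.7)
The plan is to apply the Ackoglu-Krengel subadditive ergodic theorem to the set function $A\mapsto\mu^t_\Lambda(A,\omega)$. Measurability in $\omega$ follows from Hypothesis~\ref{hypo:asmeasinf} applied to $W$ with the jointly measurable shift $y\mapsto t\Lambda+t\nabla\varphi_\Lambda(y,\omega)$; stationarity $\mu^t_\Lambda(z+A,\omega)=\mu^t_\Lambda(A,\tau_{-z}\omega)$ follows from the change of variable $y\mapsto y-z$ together with $\tau$-stationarity of $W$ and of the stationary extension of $\nabla\varphi_\Lambda$; subadditivity on disjoint domains results from extension-by-zero gluing of $W^{1,p}_0$-competitors. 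For integrability, the competitor $v=0$ combined with $W\le C(1+V)$, the convexity of $V$, and $0\in\inter\dom M\subset\inter\dom\overline V$ gives
$$\E[\mu^t_\Lambda(O,\cdot)]\le C|O|\bigl(1+t\overline V(\Lambda)+(1-t)M(0)\bigr)<\infty$$
whenever $\Lambda\in\dom\overline V$. Ackoglu-Krengel then produces $\overline W_t(\Lambda)\in[0,\infty)$ satisfying~\eqref{eq:defWhomnc} almost surely along rescaled cubes, with convergence in expectation by dominated convergence; extension to arbitrary bounded Lipschitz domains is by a standard Vitali covering argument combined with the same integrability bound.

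For the two-sided bounds, the competitor $v=0$ together with Birkhoff and the trivial choice $\Lambda'=\Lambda$ in the inner liminf yields $\overline W(\Lambda)\le C(1+\overline V(\Lambda))$. For the lower bound, rescaling $y=x/\e$ and using $V\le W$ turns $\mu^t_\Lambda(O/\e,\omega)/|O/\e|$ into
$$\frac1{|O|}\inf_{v\in W^{1,p}_0(O;\R^m)}\int_O V\bigl(x/\e,t\Lambda+t\nabla\varphi_\Lambda(x/\e,\omega)+\nabla v(x),\omega\bigr)dx.$$
For $\Lambda\in\dom\overline V$ and $t\in[0,1)$, convexity of $\dom\overline V$ together with $0\in\inter\dom\overline V$ implies $t\Lambda\in\inter\dom\overline V$, so that Corollary~\ref{cor:dir}(i) applies with boundary data $t\Lambda\cdot x$ and lifting $t\Lambda\cdot x+t\e\varphi_\Lambda(x/\e,\omega)$, admissible by the sublinearity of correctors (Lemma~\ref{lem:sublincor}). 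Passing to $\e\downarrow 0$ and applying Jensen's inequality to the convex $\overline V$ yields $\overline W_t(\Lambda)\ge\overline V(t\Lambda)$; the lower semicontinuity of $\overline V$ then upgrades this to $\overline W(\Lambda)\ge\overline V(\Lambda)$.

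To obtain a single $\omega$-set of full measure on which formula~\eqref{eq:defWhomncter} holds for all $\Lambda$, all bounded Lipschitz $O$ and all $t\in[0,1)$ simultaneously, I would apply Ackoglu-Krengel only for $(\Lambda',t)$ in a countable dense subset of $\dom\overline V\times([0,1)\cap\Q)$ and then take the union of the countably many negligible exceptional sets. Restricting $\liminf_{t\uparrow 1}$ to $t\in\Q$ can only increase its value; equality with the unrestricted liminf follows from the stability of $\overline W_t(\Lambda')$ in its arguments provided by the lifting/Jensen argument above, both expressions coinciding with the diagonal $\liminf_{(\Lambda',t)\to(\Lambda,1^-)}\overline W_t(\Lambda')$. \emph{The main obstacle here is precisely this interchange of the continuum of parameters with the almost-sure limit}; the rational restriction is tailored to bypass it.

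Finally, the identification with the integrand of Theorem~\ref{th:conv} when $W$ is convex is obtained by taking $V=W$, so that $\varphi_\Lambda$ is the actual minimizer in~\eqref{eq:homogformproba} for $W$ itself. The lower-bound argument above still yields $\overline W_t(\Lambda)\ge\overline V(t\Lambda)$, while the convexity of $W$ gives $W(y,t\Lambda+t\nabla\varphi_\Lambda)\le tW(y,\Lambda+\nabla\varphi_\Lambda)+(1-t)W(y,0)$ and Birkhoff produces $\overline W_t(\Lambda)\le t\overline V(\Lambda)+(1-t)M(0)$. Letting $t\uparrow 1$ along the segment, on which $\overline V$ is continuous by convexity and finiteness at $\Lambda$, delivers $\overline W(\Lambda)=\overline V(\Lambda)$.
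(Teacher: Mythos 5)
Your outline reproduces the paper's overall architecture — Ackoglu--Krengel for the existence of $\overline W_t$ (integrability from $W\le C(1+V)$ with $v=0$, stationarity/subadditivity/measurability as you say, extension from cubes to Lipschitz domains by approximation), then sandwiching via $V\le W\le C(1+V)$. The upper bound $\overline W\le C(1+\overline V)$ is identical to the paper's. Where you genuinely diverge is in the lower bound $\overline W\ge\overline V$, i.e.\ the key equality $\overline V_0(\Lambda,\omega)=\overline V(\Lambda)$: the paper absorbs $t\nabla\varphi_{\Lambda'}(\cdot/\e,\omega)$ into a \emph{mean-zero} test function, uses that the leftover average $\fint_O\nabla\varphi_{\Lambda'}(\cdot/\e,\omega)\to0$, and invokes the convexification formula~\eqref{eq:formconvexif} of Theorem~\ref{th:conv}(iii); you instead invoke the $\Gamma$-liminf inequality together with Jensen. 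That is a legitimate alternative. One word of caution: what you are using is Proposition~\ref{prop:gammaliminfneu} (the $\Gamma$-liminf), not Corollary~\ref{cor:dir}(i). The corollary \emph{constructs} its lifting; it cannot be ``applied with'' the specific perturbation $t\e\varphi_\Lambda(\cdot/\e,\omega)$. What actually works is: take almost-minimizers $v_\e\in W^{1,p}_0(O;\R^m)$, note that $v_\e+t\Lambda\cdot x+t\e\varphi_\Lambda(\cdot/\e,\omega)$ is bounded in $W^{1,p}$ by coercivity, extract a weak limit $v+t\Lambda\cdot x$ with $v\in W^{1,p}_0(O;\R^m)$ (weak closedness and sublinearity), then apply Proposition~\ref{prop:gammaliminfneu} and Jensen. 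As phrased, your step is misattributed.

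The paragraph on restricting the outer $\liminf$ to $t\in\Q$ has a real gap. You claim that both the iterated $\liminf$ and its $\Q$-restriction coincide with the diagonal $\liminf_{(\Lambda',t)\to(\Lambda,1^-)}\overline W_t(\Lambda')$; but iterated and joint $\liminf$s do not coincide in general, and nothing in the ``lifting/Jensen'' argument establishes the needed regularity of $t\mapsto\liminf_{\Lambda'\to\Lambda}\overline W_t(\Lambda')$. The ingredient you are missing is the ru-usc hypothesis on $W$, which is in force throughout Section~4. Writing $\mu^{st}_{\Lambda'}(O,\omega)\le\int_OW(y,s(t\Lambda'+t\nabla\varphi_{\Lambda'}+\nabla v),\omega)\,dy$ for the quasi-minimizer $v$ of $\mu^t_{\Lambda'}$, the ru-usc bound gives
\begin{align*}
\overline W_{st}(\Lambda')\le\bigl(1+\Delta^a_W(s)\bigr)\,\overline W_t(\Lambda')+\Delta^a_W(s)\,\E[a],
\end{align*}
which, combined with $\limsup_{s\uparrow1}\Delta^a_W(s)\le0$, yields precisely the one-sided $t$-regularity needed so that $\liminf_{t\uparrow1,\,t\in\Q}$ does not exceed $\liminf_{t\uparrow1}$. (In fairness, the paper itself leaves the $\Q$-restriction tacit in this proof and only uses the ru-usc structure explicitly in Lemma~\ref{lem:lemma}(c); but your justification as written does not close the gap.) The remaining steps — extension to $\Lambda\notin\dom\overline V$ via $V\le W$, and the convex case $V=W$ with the chord/ Birkhoff estimate $\overline W_t(\Lambda)\le t\overline V(\Lambda)+(1-t)M(0)$ — are correct.
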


\begin{proof}
We split the proof into three steps.

\medskip
\step{1} Definition of $\overline W_t(\Lambda)$ and proof of~\eqref{eq:defWhomnc}.

First consider the case when $\Lambda\in\dom\overline V$. Let $t\in[0,1)$ be fixed. The upper bound in~\eqref{eq:condnc} then implies $\E[\mu^t_\Lambda(O,\cdot)]\le C|O|(1+t\overline V(\Lambda)+(1-t)M(0))<\infty$. As the function $\mu_\Lambda^t$ is obviously stationary and subadditive, and as $\mu_\Lambda^t(O,\cdot)$ is measurable by Hypothesis~\ref{hypo:asmeasinf}, the Ackoglu-Krengel subadditive ergodic theorem (see e.g.~\cite[Section~6.2]{Krengel-85}) can be applied and asserts the existence of some $\overline W_t(\Lambda)\in[0,\infty)$ such that, for almost all $\omega$, we have
\begin{align*}
\overline W_t(\Lambda)=\lim_{n\uparrow\infty}\frac{\mu^t_\Lambda(I_n,\omega)}{|I_n|},
\end{align*}
for any regular sequence $(I_n)_n\subset\I:=\{[a,b):a,b\in\Z^d\}$ such that $\lim_{n\uparrow\infty}I_n=\R^d$ (in the usual sense of~\cite[Section~6.2]{Krengel-85}), and moreover this convergence also holds for expectations. In particular, we easily see that the same result must hold for the choice $I_n=nQ_0$, where $Q_0$ is any cube aligned with the axes. Further note that, for all bounded Lipschitz subsets $O'\subset O\subset \R^d$, we can estimate, as $W^{1,p}_0(O'/\e;\R^m)\subset W^{1,p}_0(O/\e;\R^m)$,
\begin{align*}
\e^d\mu_\Lambda^t(O/\e,\omega)&\le \e^d\mu^t_\Lambda(O'/\e,\omega)+\e^d\int_{(O\setminus O')/\e}W(y,t\Lambda+t\nabla\varphi_\Lambda(y,\omega),\omega)dy\\
&\le \e^d\mu^t_\Lambda(O'/\e,\omega)+C|O\setminus O'|\left(1+\fint_{(O\setminus O')/\e}V(y,\Lambda+\nabla\varphi_\Lambda(y,\omega),\omega)dy\right),
\end{align*}
where the last expression in brackets converges to $1+\overline V(\Lambda)<\infty$ as $\e\downarrow0$. Now based on this estimate, an easy approximation argument (see e.g.~\cite[Step~4 of the proof of Theorem~3.1]{Gloria-Penrose-13}) allows us to conclude as follows: for almost all $\omega$ (for all $\omega\in\Omega_\Lambda$, for some subset $\Omega_\Lambda\subset\Omega$ of maximal probability, say), we have for all bounded Lipschitz domains $O\subset\R^d$ and all $t\in(0,1)$
\begin{align}\label{eq:plopthomas}
\lim_{\e\downarrow0}\frac{\mu_\Lambda^t(O/\e,\omega)}{|O/\e|}=\overline W_t(\Lambda).
\end{align}

\medskip
\step2 Definition of $\overline W$ and proof of the bounds $\overline V(\Lambda)\le\overline W(\Lambda)\le C(1+\overline V(\Lambda))$ for $\Lambda\in\dom\overline V$.

Let $\Lambda\in\dom\overline V$ be fixed, and let $O\subset\R^d$ be some bounded Lipschitz domain. We define $\overline W(\Lambda)=\liminf_t\liminf_{\Lambda'\to\Lambda}\overline W_t(\Lambda')$. The bounds $\overline V(\Lambda)\le\overline W(\Lambda)\le C(1+\overline V(\Lambda))$ directly follow from the two-sided estimate~\eqref{eq:condnc} together with the following equality, for almost all $\omega$,
\begin{align}\label{eq:rewriteVhomnc}
\overline V(\Lambda)=\overline V_0(\Lambda,\omega),
\end{align}
where we have defined
\[\overline V_0(\Lambda,\omega):=\liminf_{t\uparrow1}\liminf_{\Lambda'\to\Lambda}\lim_{\e\downarrow0}\inf_{v\in W^{1,p}_0(O;\R^m)}\fint_OV(y/\e,t\Lambda'+t\nabla\varphi_{\Lambda'}(y/\e,\omega)+\nabla v(y),\omega)dy.\]
Let us give the argument for~\eqref{eq:rewriteVhomnc}. On the one hand, we can estimate
\begin{align*}
\overline V_0(\Lambda,\omega)\ge\liminf_{t\uparrow1}\liminf_{\Lambda'\to\Lambda}\lim_{\e\downarrow0}\inf_{v\in W^{1,p}(O;\R^m)\atop\fint_O\nabla v=0}\fint_OV(y/\e,t\Lambda'+t\Lambda_\e'(\omega)+\nabla v(y),\omega)dy,
\end{align*}
where we have set $\Lambda_\e'(\omega):=\fint_O\nabla\varphi_{\Lambda'}(\cdot/\e,\omega)$. For almost all $\omega$, since $\Lambda_\e'(\omega)\to0$, we can write, for any $\kappa>0$,
\begin{align*}
\overline V_0(\Lambda,\omega)\ge~&\inf_{\Lambda':|\Lambda'-\Lambda|\le\kappa}\liminf_{t\uparrow1}\liminf_{\e\downarrow0}\inf_{v\in W^{1,p}(O;\R^m)\atop\fint_O\nabla v=0}\fint_OV(y/\e,t\Lambda'+\nabla v(y),\omega)dy,
\end{align*}
so that formula~\eqref{eq:formconvexif} yields $\overline V_0(\Lambda,\omega)\ge\inf_{\Lambda':|\Lambda'-\Lambda|\le\kappa}\overline V(\Lambda')$. Passing to the limit $\kappa\downarrow0$, the lower semicontinuity of $\overline V$ directly gives $\overline V_0(\Lambda,\omega)\ge\overline V(\Lambda)$. On the other hand, the convexity of $V$, the Birkhoff-Khinchin ergodic theorem and the definition of $\varphi_{\Lambda'}$ give for all $t\in[0,1]$ and all $\Lambda'\in\R^{m\times d}$
\begin{align*}
&\lim_{\e\downarrow0}\inf_{v\in W^{1,p}_0(O;\R^m)}\fint_OV(y/\e,t\Lambda'+t\nabla\varphi_{\Lambda'}(y/\e,\omega)+\nabla v(y),\omega)dy\\
\le~& \lim_{\e\downarrow0}\fint_OV(y/\e,\Lambda'+\nabla\varphi_{\Lambda'}(y/\e,\omega),\omega)dy+(1-t)M(0)=\overline V(\Lambda')+(1-t)M(0).
\end{align*}
Passing to the limit $\Lambda'\to\Lambda$ and $t\uparrow1$, and using the lower semicontinuity of $\overline V$ in the form of $\overline V(\Lambda)=\liminf_{\Lambda'\to\Lambda}\overline V(\Lambda')$, this gives $\overline V_0(\Lambda,\omega)\le\overline V(\Lambda)$. The desired identity~\eqref{eq:rewriteVhomnc} is proven.

\medskip
\step3 Case when $\Lambda\notin \dom\overline V$.

For $\Lambda\notin\dom\overline V$, arguing as in Step~2 above, we can estimate, using the pointwise bound $V\le W$,
\[\liminf_{t\uparrow1}\liminf_{\Lambda'\to\Lambda}\liminf_{\e\downarrow0}\inf_{v\in W^{1,p}_0(O;\R^m)}\fint_OW(y/\e,t\Lambda+t\nabla\varphi_\Lambda(y/\e,\omega)+\nabla v(y),\omega)dy\ge \overline V(\Lambda)=\infty,\]
so that~\eqref{eq:defWhomncter} trivially holds with $\overline W(\Lambda):=\infty$. Moreover, the bounds $\overline V\le \overline W\le C(1+\overline V)$ holds as well.
\end{proof}

Although the definition of the homogenized energy density $\overline W(\Lambda)$ may a priori depend on the choice of a corrector $\varphi_\Lambda$, it would follow a posteriori from the $\Gamma$-convergence result that the value of $\overline W(\Lambda)$ is independent of that choice. As this independence will actually be useful in the proof of the $\Gamma$-$\limsup$ inequality (see the proof of Lemma~\ref{lem:lemma}(c)), we display a direct proof.

\begin{lem}[Independence upon the choice of a corrector]\label{lem:Whomncindep}
Assume $p>d$, and let $t\in[0,1)$ and $\Lambda\in\dom\overline V$ be fixed. For almost all $\omega$, given a bounded domain $O\subset\R^d$, if $(u_\e)_\e\subset W^{1,p}(O;\R^m)$ satisfies $\|u_\e\|_{\Ld^\infty(O)}\to0$ and $\limsup_\e \int_{D}V(y/\e,\Lambda+\nabla u_\e(y),\omega)dy\le C_\Lambda|D|$ for all subdomains $D\subset O$ and some constant $C_\Lambda>0$, then, for all Lipschitz subdomains $D\subset O$,
\begin{align}\label{eq:defWhomncbis}
\overline W_t(\Lambda)=\lim_{\e\downarrow0}\inf_{v\in W^{1,p}_0(D;\R^m)}\fint_{D}W(y/\e,t\Lambda+t\nabla u_\e(y)+\nabla v(y),\omega)dy,
\end{align}
where the limit is well-defined.\qed
\end{lem}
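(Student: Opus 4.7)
My plan is to reduce the assertion to the canonical choice $w_\e(y):=\e\varphi_\Lambda(y/\e,\omega)$, for which Lemma~\ref{lem:Whomnc} already identifies the limit as $\overline W_t(\Lambda)$. The lemma then amounts to showing that, for any two admissible quasi-correctors $u_\e$ and $w_\e$ satisfying the stated hypotheses, the infima
\[\inf_{v\in W^{1,p}_0(D;\R^m)}\fint_DW(y/\e,t\Lambda+t\nabla u_\e(y)+\nabla v(y),\omega)dy\]
agree in the limit $\e\downarrow0$. I would first verify that the canonical $w_\e$ is admissible: the Sobolev embedding $W^{1,p}(O;\R^m)\hookrightarrow\Ld^\infty(O;\R^m)$ (valid for $p>d$) combined with the weak convergence $\e\varphi_\Lambda(\cdot/\e,\omega)\rightharpoonup0$ in $W^{1,p}$ from Lemma~\ref{lem:sublincor} gives $\|w_\e\|_{\Ld^\infty(D)}\to0$, while the Birkhoff--Khinchin ergodic theorem in the form~\eqref{eq:ergthoml} yields $\int_DV(y/\e,\Lambda+\nabla w_\e(y),\omega)dy\to|D|\overline V(\Lambda)$, so that $w_\e$ satisfies the hypotheses with $C_\Lambda=\overline V(\Lambda)+o(1)$.

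The equivalence of the two infima is then established by a buffer-zone gluing argument in the spirit of Lemma~\ref{lem:ineqhomog} and Corollary~\ref{cor:dir}(ii). Given a near-minimizer $\tilde v_\e\in W^{1,p}_0(D;\R^m)$ for the $w_\e$-problem and a smooth cutoff $\chi_\eta$ equal to $1$ on $D_{2\eta}:=\{x\in D:\dist(x,\partial D)>2\eta\}$ and supported in $\{x\in D:\dist(x,\partial D)>\eta\}$ with $|\nabla\chi_\eta|\le C'/\eta$, I would define
\[v_\e^\eta:=\tilde v_\e+t\chi_\eta(w_\e-u_\e)\in W^{1,p}_0(D;\R^m).\]
The corresponding gradient $t\Lambda+t\nabla u_\e+\nabla v_\e^\eta$ coincides pointwise on $D_{2\eta}$ with $t\Lambda+t\nabla w_\e+\nabla\tilde v_\e$, so the canonical integrand is recovered there. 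On the boundary layer $D\setminus D_{2\eta}$, it decomposes as the pointwise convex combination $(1-\chi_\eta)a_1+\chi_\eta a_2$ of $a_1:=t\Lambda+t\nabla u_\e+\nabla\tilde v_\e$ and $a_2:=t\Lambda+t\nabla w_\e+\nabla\tilde v_\e$, plus a cutoff-error term $h_\e^\eta:=t(w_\e-u_\e)\nabla\chi_\eta$ of $\Ld^\infty$-norm at most $Ct\|u_\e-w_\e\|_{\Ld^\infty}/\eta$. Using $W\le C(1+V)$ together with convexity of $V$, the $a_1,a_2$-contribution is dominated by $(1-\chi_\eta)V(y/\e,a_1)+\chi_\eta V(y/\e,a_2)$, which is integrable via the $V$-hypothesis on $u_\e$ and the canonical analog for $w_\e$ (with an auxiliary convexity splitting in the parameter $t$ to isolate $\nabla u_\e$ from $\nabla\tilde v_\e$); the $h_\e^\eta$-contribution is absorbed via a further convex splitting with weight $\kappa\in(0,1)$ small, arranging that $h_\e^\eta/\kappa$ lies in a ball $\bar B_\delta\subset\inter\dom M$ where $\sup_{|\Lambda'|<\delta}M(\Lambda')<\infty$, as ensured by $0\in\inter\dom M$. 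Choosing $\eta=\eta_\e\downarrow0$ with $\|u_\e-w_\e\|_{\Ld^\infty}/\eta_\e\to0$ and diagonalizing $\e,\eta,\kappa$, the boundary-layer contribution vanishes since $|D\setminus D_{2\eta_\e}|=O(\eta_\e)$, yielding the upper-bound inequality. The matching lower bound follows by symmetrically swapping the roles of $u_\e$ and $w_\e$ in the gluing, and a variant of the same argument (comparing two different $\e$-scales with the same $u_\e$) yields existence of the limit itself.

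The hard part will be the delicate boundary-layer control under nonconvexity. Unlike the convex setting where a single Jensen-type splitting suffices, here one must route the estimate through the convex upper bound $V$ via $W\le C(1+V)$, then carefully coordinate the three small parameters $\eta_\e$, $\|u_\e-w_\e\|_{\Ld^\infty}$, and $\kappa$ so that the cutoff error lands in the region of bounded $M$ while the bulk convex combination of $a_1,a_2$ remains integrable in spite of the restricted and $y$-dependent domain of $V$. The role of the $\Ld^\infty$-closeness of $u_\e$ and $w_\e$ (ensured by Sobolev embedding via $p>d$) is precisely to permit this tuning; it is the same mechanism by which the Dirichlet boundary data must be ``well-prepared'' in Corollary~\ref{cor:dir}.
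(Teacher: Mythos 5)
Your high-level strategy is the same as the paper's --- reduce to the canonical quasi-corrector $w_\e := \e\varphi_\Lambda(\cdot/\e,\omega)$, verify that it satisfies the admissibility hypotheses via Sobolev embedding and the ergodic theorem, and then run a buffer-zone gluing argument. However, the gluing step as you wrote it has a genuine gap. You take a near-minimizer $\tilde v_\e \in W^{1,p}_0(D;\R^m)$ for the $w_\e$-problem on \emph{all} of $D$ and set $v_\e^\eta := \tilde v_\e + t\chi_\eta(w_\e - u_\e)$. But on the outer strip $D\setminus D_\eta$ (where $\chi_\eta = 0$) the integrand argument is
\[
t\Lambda + t\nabla u_\e + \nabla v_\e^\eta \;=\; t\Lambda + t\nabla u_\e + \nabla\tilde v_\e,
\]
and you have no control whatsoever on $V$ evaluated there: the $w_\e$-problem only gives an integral bound on $V(y/\e,\,t\Lambda + t\nabla \textbf{w}_\e + \nabla\tilde v_\e)$, not on $V$ after swapping $\nabla w_\e$ for $\nabla u_\e$, and $V$ is not Lipschitz (it may be $+\infty$). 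The ``auxiliary convexity splitting in the parameter $t$ to isolate $\nabla u_\e$ from $\nabla\tilde v_\e$'' you invoke produces a term of the form $V\bigl(y/\e,\,\tfrac{1}{1-t}\nabla\tilde v_\e\bigr)$, for which you again have no pointwise bound (and no $\Ld^\infty$ bound on $\nabla\tilde v_\e$, so the $B_\delta\subset\inter\dom M$ trick does not apply to it). The same criticism applies on the transition zone $D_\eta\setminus D_{2\eta}$, where $\nabla\tilde v_\e$ multiplied by the weight $1$ sits inside both $a_1$ and $a_2$. Your extra parameter $\kappa$ cannot repair this: rescaling $a_1, a_2$ by $1/(1-\kappa)$ takes you out of the possibly closed domain of $V(y/\e,\cdot,\omega)$.

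The paper's proof avoids this by never letting the optimizer leak onto the boundary layer. It compares the $D$-infimum for $u_\e$ against the $D_\eta$-infimum for $v_\e^\omega := w_\e$; since test functions in $W^{1,p}_0(D_\eta;\R^m)$ extend by zero, their gradient vanishes on $D\setminus D_\eta$, so after gluing the only objects appearing on the strip are $u_\e$, $v_\e^\omega$, and $\nabla\chi_\eta(v_\e^\omega - u_\e)$. With $\nabla\tilde v$ absent, the decomposition
\[
t\Lambda + t\nabla w_{\e,\eta}^\omega \;=\; t\chi_\eta[\Lambda + \nabla v_\e^\omega] + t(1-\chi_\eta)[\Lambda + \nabla u_\e] + (1-t)\cdot\tfrac{t}{1-t}\nabla\chi_\eta(v_\e^\omega - u_\e)
\]
is an exact convex combination with Jensen weights $t\chi_\eta$, $t(1-\chi_\eta)$, $1-t$; all three $V$-evaluations are then controlled (the first two by the hypothesis on $u_\e$ and its analogue for $v_\e^\omega$, the third by the $\Ld^\infty$-closeness and $0\in\inter\dom M$), and no auxiliary $\kappa$ is needed. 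This yields $\overline W'_t(\Lambda,\omega;D)\le\overline W_t(\Lambda)$; the reverse inequality is obtained symmetrically by dilating to $D^\eta$ rather than shrinking, so that the two bounds squeeze the $\limsup$ and $\liminf$ to the common value $\overline W_t(\Lambda)$ and simultaneously prove the limit exists. You should restructure your gluing along these lines: compare infima over nested domains $D_\eta\subset D\subset D^\eta$ rather than over $D$ alone, and take the canonical near-minimizer in $W^{1,p}_0$ of the \emph{inner} domain so that its gradient is identically zero on the buffer.
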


\begin{proof}
Let $t\in[0,1)$ and $\Lambda\in\dom\overline V$ be fixed. Let $\omega\in\Omega$ be fixed such that~\eqref{eq:defWhomnc} holds on all bounded Lipschitz domains and such that moreover, for all bounded domains $D\subset\R^d$,
\[\|\e\varphi_\Lambda(\cdot/\e,\omega)\|_{\Ld^\infty(D)}\to0,\qquad \int_DV(y/\e,\Lambda+\nabla \varphi_\Lambda(y/\e,\omega),\omega)dy\to\overline V(\Lambda),\]
which follows almost surely from Lemma~\ref{lem:sublincor}, the Sobolev embedding and the Birkhoff-Khinchin ergodic theorem. Let $(u_\e)_\e$ be as in the statement of the lemma.
Also denote $v_\e^\omega:=\e\varphi_\Lambda(\cdot/\e,\omega)\in W_\loc^{1,p}(\R^d;\R^m)$. By the choice of $\omega$, the sequence $(v_\e^\omega)_\e$ satisfies the same properties as $u_\e$ on any bounded domain, with $C_\Lambda$ replaced by $C'_\Lambda=\overline V(\Lambda)$, and moreover, for all bounded Lipschitz domains $D\subset \R^d$,
\begin{align}\label{eq:defWhomth}
\overline W_t(\Lambda)=\lim_{\e\downarrow0}\inf_{v\in W^{1,p}_0(D;\R^m)}\fint_DW(y/\e,t\Lambda+t\nabla v_\e^\omega(y)+\nabla v(y),\omega)dy.
\end{align}

Let $D\subset O$ be some fixed Lipschitz subdomain. On the one hand, define
\begin{align}\label{eq:defWhomthbis}
\overline W'_t(\Lambda,\omega;D)=\limsup_{\e\downarrow0}\inf_{v\in W^{1,p}_0(D;\R^m)}\fint_DW(y/\e,t\Lambda+t\nabla u_\e(y)+\nabla v(y),\omega)dy.
\end{align}
Given $\eta>0$, set $D_\eta:=\{x\in D:\dist(x,\partial D)>\eta\}$ and consider the difference
\begin{align}
\Delta_{\e,t,\eta}^\omega:=&\inf_{v\in W^{1,p}_0(D;\R^m)}\int_DW(y/\e,t\Lambda+t\nabla u_\e(y)+\nabla v(y),\omega)dy\label{eq:defDeltaetetaom}\\
&\qquad-\inf_{w\in W^{1,p}_0(D_\eta;\R^m)}\int_{D_\eta}W(y/\e,t\Lambda+t\nabla v_\e^\omega(y)+\nabla w(y),\omega)dy.\nonumber
\end{align}
Choose a smooth cut-off function $\chi_\eta$ such that $\chi_\eta$ equals $1$ on $D_\eta$ and vanishes outside $D$, with $|\nabla\chi_\eta|\le C'/\eta$ for some constant $C'>0$, and define $w_{\e,\eta}^\omega:=\chi_\eta v_\e^\omega+(1-\chi_\eta)u_\e$. Restricting the first infimum in~\eqref{eq:defDeltaetetaom} to those $v$'s that are equal to $t(v_\e-u_\e)$ on $\partial D_\eta$, we obtain
\begin{align*}
\Delta^\omega_{\e,t,\eta}&\le \inf_{v\in W^{1,p}_0(D\setminus D_\eta;\R^m)}\int_{D\setminus D_\eta}W(y/\e,t\Lambda+t\nabla w_{\e,\eta}^\omega(y)+\nabla v(y),\omega)dy.
\end{align*}
Hence, choosing $v=0$, using the upper bound $W\le C(1+V)$ and decomposing
\[t\nabla w_{\e,\eta}^\omega=t\chi_\eta\nabla v_\e^\omega+t(1-\chi_\eta)\nabla u_\e+(1-t)\frac t{1-t}\nabla \chi_\eta(v_\e^\omega-u_\e),\]
we obtain by convexity
\begin{align*}
\Delta^\omega_{\e,t,\eta}&\le C|D\setminus D_\eta|\left(1+\fint_{D\setminus D_\eta}V(y/\e,\Lambda+\nabla u_\e(y),\omega)dy\right.\\
&\hspace{3cm}\left.+\fint_{D\setminus D_\eta}V(y/\e,\Lambda+\nabla v_\e^\omega(y),\omega)dy+E^\omega_{\e,t,\eta}\right),
\end{align*}
where the error reads
\[E_{\e,t,\eta}^\omega:=\fint_{D\setminus D_\eta}V\left(y/\e,\frac t{1-t}\nabla \chi_\eta(y)(v_\e^\omega(y)-u_\e(y)),\omega\right)dy.\]
Since $v_\e^\omega$ and $u_\e$ go to $0$ in $\Ld^\infty(D;\R^m)$, we can prove that, for any $t\in(0,1)$,
\[\limsup_{\eta\downarrow0}\limsup_{\e\downarrow0}\Delta_{\e,t,\eta}^\omega\le \limsup_{\eta\downarrow0}C|D\setminus D_\eta|(1+C_\Lambda+C'_\Lambda)=0.\]
In view of equalities~\eqref{eq:defWhomth} and~\eqref{eq:defWhomthbis}, this implies $\overline W'_t(\Lambda,\omega;D)\le \overline W_t(\Lambda)$.

On the other hand, define
\begin{align}\label{eq:defWhomthbis}
\overline W''_t(\Lambda,\omega;D)=\liminf_{\e\downarrow0}\inf_{v\in W^{1,p}_0(D;\R^m)}\fint_DW(y/\e,t\Lambda+t\nabla u_\e(y)+\nabla v(y),\omega)dy,
\end{align}
and repeat the same argument as above with $D^\eta:=\{x:\dist(x, D)<\eta\}$ and
\begin{align*}
\tilde\Delta_{\e,t,\eta}^\omega:=&\inf_{v\in W^{1,p}_0(D^\eta;\R^m)}\int_{D^\eta}W(y/\e,t\Lambda+t\nabla v_\e^\omega(y)+\nabla v(y),\omega)dy\\
&\qquad-\inf_{w\in W^{1,p}_0(D;\R^m)}\int_{D}W(y/\e,t\Lambda+t\nabla u_\e(y)+\nabla w(y),\omega)dy,
\end{align*}
which then yields $\overline W''_t(\Lambda,\omega;D)\ge \overline W_t(\Lambda)$. This shows that $\overline W''_t(\Lambda,\omega;D)=\overline W'_t(\Lambda,\omega;D)= \overline W_t(\Lambda)$, and the result is proven.
\end{proof}

Let $\Omega_1\subset\Omega_0$ be a subset of maximal probability such that \eqref{eq:defWhomnc} holds for all $\omega\in\Omega_1$, $t\in\Q\cap[0,1)$ and $\Lambda\in\Q^{m\times d}\cap\dom\overline V$, such that \eqref{eq:defWhomncter} holds for all $\omega\in\Omega_1$ and $\Lambda\in\Q^{m\times d}$, and such that we further have, for all $\omega\in\Omega_1$, $\Lambda\in\Q^{m\times d}$, and all bounded domains $O\subset\R^d$,
\[\overline V(\Lambda)=\lim_{\e\downarrow0}\fint_{O/\e}V(y,\Lambda+\nabla\varphi_{\Lambda}(y,\omega),\omega)dy.\]

\subsection{$\Gamma$-$\liminf$ inequality by blow-up}\label{chap:ncliminf}
In this section, we prove the $\Gamma$-$\liminf$ inequality for Theorem~\ref{th:nonconv} by adapting the blow-up method introduced by~\cite{Fonseca-Muller-92} (see also~\cite[Section~4.1]{Anza-Mandallena-11} and~\cite{Braides-08}). In the present context, a subtle use of the corrector for the convex problem is needed.

\begin{prop}[$\Gamma$-$\liminf$ inequality]\label{prop:gammaliminfnc}
For any $\omega\in\Omega_1$, any bounded Lipschitz domain $O\subset\R^d$, and any sequence $(u_\e)_\e\subset W^{1,p}(O;\R^m)$ with $u_\e\cvf{} u$ in $W^{1,p}(O;\R^m)$, we have
\[\liminf_{\e\downarrow0}I_\e(u_\e,\omega;O)\ge I(u;O).\]
\qed
\end{prop}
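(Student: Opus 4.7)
The plan is to adapt the blow-up method of Fonseca--Müller (cf.~\cite{Fonseca-Muller-92,Anza-Mandallena-11}), the key new ingredient being that the convex correctors $\varphi_\Lambda$ of Lemma~\ref{lem:sublincor} will serve as admissible boundary templates for the nonconvex minimum $\mu^t_\Lambda$ defined in Lemma~\ref{lem:Whomnc}. I may assume $\liminf_\e I_\e(u_\e,\omega;O)<\infty$, extract a subsequence $(\e_k)_k$ realizing this liminf, and by weak-$*$ compactness of Radon measures further assume that $\mu_k:=W(\cdot/\e_k,\nabla u_{\e_k}(\cdot),\omega)\mathcal{L}^d$ converges weakly-$*$ on $\overline O$ to a finite positive measure $\mu=h\mathcal{L}^d+\mu_s$. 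Since $\liminf_k\mu_k(O)\ge\mu(O)\ge\int_O h$ by lower semicontinuity of the weak-$*$ limit on open sets, the proposition reduces to the pointwise inequality $h(x_0)\ge\overline W(\nabla u(x_0))$ for a.e.\ $x_0\in O$. Applying Proposition~\ref{prop:gammaliminfneu} to the convex lower bound $V\le W$ shows that the points with $\nabla u(x_0)\notin\dom\overline V$ either form a null set or already force $I(u;O)=\infty$, so one may focus on $x_0$ with $\Lambda_0:=\nabla u(x_0)\in\dom\overline V$.

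\textbf{Step 2: blow-up and construction of a competitor.} For a.e. such $x_0$, chosen simultaneously a Lebesgue point of $\nabla u$ and of $h$, and satisfying $\mu_s(Q_r(x_0))/r^d\to 0$, I would pick radii $r_n\downarrow 0$ with $\mu(\partial Q_{r_n}(x_0))=0$. For fixed $t\in(0,1)\cap\Q$ and $\Lambda'\in\Q^{m\times d}$ close to $\Lambda_0$, and a smooth cut-off $\chi_{n,\eta}$ equal to $1$ on $Q_{(1-\eta)r_n}(x_0)$ and vanishing near $\partial Q_{r_n}(x_0)$ with $|\nabla\chi_{n,\eta}|\le C/(\eta r_n)$, I set
\begin{align*}
\tilde u_{k,n,\eta}(y):=t\chi_{n,\eta}(y)\,u_{\e_k}(y)+t(1-\chi_{n,\eta}(y))\bigl[u(x_0)+\Lambda'(y-x_0)+\e_k\varphi_{\Lambda'}(y/\e_k,\omega)\bigr].
\end{align*}
By design $\tilde u_{k,n,\eta}$ coincides, up to an affine constant, with $t\Lambda' y+t\e_k\varphi_{\Lambda'}(y/\e_k,\omega)$ on $\partial Q_{r_n}(x_0)$, so after rescaling by $\e_k^{-1}$ it is admissible for $\mu^t_{\Lambda'}(Q_{r_n}(x_0)/\e_k,\omega)$.

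\textbf{Step 3: splitting the energy.} On the interior $\{\chi_{n,\eta}=1\}$ one has $\nabla\tilde u_{k,n,\eta}=t\nabla u_{\e_k}$, and the ru-usc property (Definition~\ref{def:ruusc}) yields $W(y/\e_k,t\nabla u_{\e_k},\omega)\le(1+\Delta^a_W(t))W(y/\e_k,\nabla u_{\e_k},\omega)+\Delta^a_W(t)\,a(y/\e_k,\omega)$, controlling the interior energy by $(1+\Delta^a_W(t))\mu_k(Q_{r_n}(x_0))$ plus an ergodic average of $a$. On the buffer $Q_{r_n}(x_0)\setminus Q_{(1-\eta)r_n}(x_0)$, the gradient of $\tilde u_{k,n,\eta}$ is a convex combination of $t\nabla u_{\e_k}$ and $t(\Lambda'+\nabla\varphi_{\Lambda'}(\cdot/\e_k,\omega))$ plus an error of the form $\nabla\chi_{n,\eta}\cdot[u_{\e_k}-u(x_0)-\Lambda'(y-x_0)-\e_k\varphi_{\Lambda'}(y/\e_k,\omega)]$; the uniform $L^\infty$-smallness of this error follows from the Sobolev embedding $W^{1,p}\hookrightarrow L^\infty$ (since $p>d$), the sublinearity of $\e\varphi_{\Lambda'}(\cdot/\e,\omega)$, the Lebesgue property of $\nabla u$ at $x_0$, and the closeness $\Lambda'\approx\Lambda_0$. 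Since $W$ is nonconvex, I route through $W\le C(1+V)$ and the convexity of $V$ to bound the buffer energy by $\mu_k(\text{buffer})$, an ergodic average of $V(\cdot,\Lambda'+\nabla\varphi_{\Lambda'})$ on the buffer, and an $O(\eta\,r_n^d)$ term coming from $M$ at small arguments.

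\textbf{Step 4: limits and main obstacle.} Dividing the resulting estimate by $r_n^d$ and passing successively to $k\to\infty$ (Lemma~\ref{lem:Whomnc} gives $\e_k^d\mu^t_{\Lambda'}(Q_{r_n}(x_0)/\e_k,\omega)/r_n^d\to\overline W_t(\Lambda')$; weak-$*$ convergence gives $\mu_k(Q_{r_n}(x_0))\to\mu(Q_{r_n}(x_0))$; the Birkhoff--Khinchin theorem handles the $a$- and $V(\cdot,\Lambda'+\nabla\varphi_{\Lambda'})$-averages), then $n\to\infty$ (Lebesgue differentiation of $h$ and $\mu_s$-thinness at $x_0$), then $\Lambda'\to\Lambda_0$ and $\eta\downarrow 0$, and finally $t\uparrow 1$ (so $\Delta^a_W(t)\to 0$), I expect the inequality $\overline W(\Lambda_0)=\liminf_{t\uparrow 1}\liminf_{\Lambda'\to\Lambda_0}\overline W_t(\Lambda')\le h(x_0)$. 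The truly delicate point is the buffer estimate: the nonconvexity of $W$ forbids a direct convex splitting, forcing systematic use of $W\le C(1+V)$ and the convexity of $V$, which in turn produces an unwanted energy contribution of the convex corrector on a thin annulus; this is tamed only by combining the ergodic theorem at fixed $\eta$ with the $\mu_s$-negligibility available at Lebesgue points, and requires a careful ordering of the four successive limits.
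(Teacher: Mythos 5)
Your proposal is correct and follows essentially the same strategy as the paper's proof: blow-up with Radon measures and Lebesgue differentiation, the competitor built by gluing $u_\e$ to the affine map plus the rescaled convex corrector across a thin annulus, routing the annulus energy through $W\le C(1+V)$ and the convexity of $V$, and using ru-usc to absorb the $t$-scaling on the interior. The only differences are organizational --- the paper applies ru-usc already in the reduction step, ties $\Lambda_r\to\nabla u(x_0)$ to the radius $r$ instead of treating it as an independent limit, and orders the limits as $\e,r,t,\kappa$ rather than your $\e,r,(\Lambda',\eta),t$ --- but these are cosmetic and do not change the substance of the argument.
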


\begin{proof}
For all $r>0$ and $x\in\R^d$, define $Q_r(x)=x+rQ$ and $S_{r,\kappa}(x)=Q_r(x)\setminus Q_{r\kappa}(x)$ for all $\kappa>0$. For all $\e>0$ and all $\Lambda$, $\omega$, define $\chi_{\e,\Lambda}^\omega=\e\varphi_\Lambda(\cdot/\e,\omega)$. For all
$\omega\in\Omega_1$ and $\Lambda\in\Q^{m\times d}$, the sequence $(\chi^\omega_{\e,\Lambda})_\e$ satisfies $\chi^\omega_{\e,\Lambda}\cvf{}{}0$ in $W^{1,p}(O;\R^m)$ and $J_\e(\chi^\omega_{\e,\Lambda}+\Lambda\cdot x,\,\omega;\,O')\to J(\Lambda\cdot x;\,O')=|O'|\overline V(\Lambda)$ as $\e\downarrow0$, for any subdomain $O'\subset O$.

From now on, let $\omega\in\Omega_1$ be {\it fixed}, let $O\subset\R^d$ be some bounded Lipschitz subset, and let $(u_\e)_\e\subset W^{1,p}(O;\R^m)$ be some fixed sequence with $u_\e\cvf{}u$ in $W^{1,p}(O;\R^{m})$. We need to prove
\begin{align}\label{eq:liminf}
\liminf_{\e\downarrow0}I_\e(u_\e,\omega;O)\ge I(u;O).
\end{align}
It does not restrict generality to assume $\liminf_\e I_\e(u_\e,\omega;O)=\lim_\e I_\e(u_\e,\omega;O)<\infty$ and also $\sup_\e I_\e(u_\e,\omega;O)<\infty$. Hence, $\nabla u_\e(x)\in\dom W(x/\e,\cdot,\omega)=\dom V(x/\e,\cdot,\omega)$ for almost all $x$. Furthermore, the $\Gamma$-convergence result for $V$ yields $J(u;O)\le\liminf_\e J_\e(u_\e,\omega;O)\le\lim_\e I_\e(u_\e,\omega;O)<\infty$, so that $\nabla u(x)\in\dom \overline V$ for almost all $x$.

\medskip
\step1 Localization by blow up: we prove that it suffices to show for almost all $x_0$ that
\begin{align}\label{eq:toproveliminf}
\liminf_{t\uparrow1}\liminf_{r\downarrow0}\lim_{\e\downarrow0}\fint_{Q_r(x_0)}W(y/\e,t\nabla u_\e(y),\omega)dy\ge\overline W(\nabla u(x_0)).
\end{align}

For all $\e>0$, consider the positive Radon measure on $O$ defined by $d\rho_{\e}(x)=W(x/\e,\nabla u_\e(x),\omega)dx$. As $\sup_\e\rho_{\e}(\adh O)<\infty$ by hypothesis, the Prokhorov theorem asserts the convergence $\rho_{\e}\cvf*{}\rho$ up to extraction of a subsequence, for some positive Radon measure $\rho$ on $\adh O$. (The extraction will remain implicit in our notation in the sequel.) By Lebesgue's decomposition theorem, we can consider the absolutely continuous part of the positive measure $\rho$, and the Radon-Nikodym theorem allows to define the density $f\in\Ld^1(U)$ of the latter. As $O$ is open, we then have by the portmanteau theorem (see e.g. \cite[Theorem~2.1]{BillCPM})
\[\liminf_\e I_\e(u_\e,\omega;O)=\liminf_\e\rho_{\e}(O)\ge\rho(O)\ge\int_Of(x)dx.\]
Hence, in order to prove~\eqref{eq:liminf}, it suffices to show that $f(x)\ge\overline W(\nabla u(x))$ for almost all $x$. Since $\rho(\adh O)<\infty$, we have $\rho(\partial Q_r(x))=0$ for all $r\in(0,1)\setminus D_x$, where $D_x$ is at most countable, so that, for almost all $x$, Lebesgue's differentiation theorem and the portmanteau theorem successively give
\[f(x)=\lim_{r\downarrow0\atop r\notin D_x}\frac{\rho (Q_r(x))}{r^d}=\lim_{r\downarrow0\atop r\notin D_x}\lim_{\e\downarrow0}\frac{\rho_{\e} (Q_r(x))}{r^d}.\]
Hence, it suffices to show that, for almost all $x_0$,
\begin{align*}
\liminf_{r\downarrow0}\lim_{\e\downarrow0}\fint_{Q_r(x_0)}W(y/\e,\nabla u_\e(y),\omega)dy\ge\overline W(\nabla u(x_0)).
\end{align*}
Using the ru-usc assumption on $W$, we easily deduce the following inequality:
\begin{align}\label{eq:consruusc}
&\limsup_{t\uparrow1}\liminf_{r\downarrow0}\lim_{\e\downarrow0}\fint_{Q_r(x_0)}W(y/\e,t\nabla u_\e(y),\omega)dy\le\liminf_{r\downarrow0}\lim_{\e\downarrow0}\fint_{Q_r(x_0)}W(y/\e,\nabla u_\e(y),\omega)dy.
\end{align}
Indeed, as $\nabla u_\e(y)\in\dom W(y/\e,\cdot,\omega)$ for almost all $y$, we can write
\begin{align*}
\fint_{Q_r(x_0)}W(y/\e,t\nabla u_\e(y),\omega)dy&\le(1+\Delta^a_W(t))\fint_{Q_r(x_0)}W(y/\e,\nabla u_\e(y),\omega)dy\\
&\hspace{1cm}+\Delta^a_W(t)\fint_{Q_r(x_0)}a(y/\e,\omega)dy,
\end{align*}
and thus, by $\tau$-stationarity of $a$, the Birkhoff-Khinchin ergodic theorem yields
\begin{align*}
&\liminf_{r\downarrow0}\lim_{\e\downarrow0}\fint_{Q_r(x_0)}W(y/\e,t\nabla u_\e(y),\omega)dy\\
\le~&(1+\Delta^a_W(t))\liminf_{r\downarrow0}\lim_{\e\downarrow0}\fint_{Q_r(x_0)}W(y/\e,\nabla u_\e(y),\omega)dy+\Delta^a_W(t)\E [a(0,\cdot)],
\end{align*}
so that inequality~\eqref{eq:consruusc} now directly follows from the ru-usc assumption with respect to $a$ (meaning indeed that $\limsup_{t\uparrow1}\Delta^a_W(t)\le0$). Using~\eqref{eq:consruusc}, we finally conclude that it is sufficient to show~\eqref{eq:toproveliminf} for almost all $x_0$.

\medskip
\step2 Proof of~\eqref{eq:toproveliminf} by truncation.

The idea is to truncate $u_\e$ at the boundary, in order to make appear in the left-hand side of~\eqref{eq:toproveliminf} precisely $\overline W(t\nabla u(x_0))$, which will then allow us to conclude.

Let $t,\kappa\in(0,1)$ be fixed. Since $p>d$, the Sobolev embedding yields $u_\e\to u$ in $\Ld^\infty(O;\R^m)$. Moreover, combining the Lebesgue differentiation theorem for $\nabla u$ and the Sobolev embedding for $p>d$, we deduce that, for all $x_0\notin \Nc$ (for some null set $\Nc\subset\R^d$, $|\Nc|=0$),
\begin{align}\label{eq:derppu0}
\lim_{r\downarrow0}\frac{1}{r}\|u-u(x_0)-\nabla u(x_0)\cdot(x-x_0)\|_{\Ld^\infty(Q_r(x_0))}=0.
\end{align}
Enlarging the null set $\Nc$, we can also assume that $\nabla u(x_0)\in\dom\overline V$ for any $x_0\notin \Nc$. From now on, let $x_0\in O\setminus \Nc$ be fixed and write for simplicity $\Lambda:=\nabla u(x_0)$. Since $\overline V$ is convex and lower semicontinuous, we have $\overline V(\Lambda)=\liminf_{\Lambda'\to\Lambda,\,\Lambda'\in\Q^{m\times d}}\overline V(\Lambda')$, and a diagonalization argument then allows us to choose a sequence $(\Lambda_r)_r\subset\Q^{m\times d}$ such that $\Lambda_r\to\Lambda$ and $\overline V(\Lambda_r)\to\overline V(\Lambda)$ as $r\downarrow0$, and simultaneously
\begin{align}\label{eq:derppu}
\lim_{r\downarrow0}\frac{1}{r}\|u-u(x_0)-\Lambda_r\cdot(x-x_0)\|_{\Ld^\infty(Q_r(x_0))}=0.
\end{align}
Let $\phi_{r,\kappa}$ be a smooth cut-off function with values in $[0,1]$, such that $\phi_{r,\kappa}$ equals $1$ on $Q_{r\kappa}(x_0)$, vanishes outside $Q_r(x_0)$, and satisfies $\|\nabla\phi_{r,\kappa}\|_{\Ld^\infty}\le\frac2{r(1-\kappa)}$. We then set
\[v_{\e,r,\kappa}:=\phi_{r,\kappa} u_\e+(1-\phi_{r,\kappa})(u(x_0)+\Lambda_r\cdot(x-x_0)+\chi_{\e,\Lambda_r}^\omega(x)).\]
Since $v_{\e,r,\kappa}$ coincides with $u_\e$ on $Q_{r\kappa}(x_0)$ and $0\le W\le C(1+V)$, we have
\begin{align}
&\fint_{Q_r(x_0)}W(y/\e,t\nabla v_{\e,r,\kappa}(y),\omega)dy\nonumber\\
\le~&\fint_{Q_r(x_0)}W(y/\e,t\nabla u_\e(y),\omega)dy+\frac C{r^d}\int_{S_{r,\kappa}(x_0)}V(y/\e,t\nabla v_{\e,r,\kappa}(y),\omega)dy+C(1-\kappa^d).\label{eq:bound1ploplop}
\end{align}
Defining $\Psi_{\e,r,\kappa}(x):=\nabla\phi_{r,\kappa}(x)\otimes(u_\e(x)-u(x_0)-\Lambda_r\cdot(x-x_0)-\chi_{\e,\Lambda_r}^\omega)$ and decomposing
\[t\nabla v_{\e,r,\kappa}=t\phi_{r,\kappa}\nabla u_\e+t(1-\phi_{r,\kappa})(\Lambda_r+\nabla\chi_{\e,\Lambda_r}^\omega)+(1-t)\frac t{1-t}\Psi_{\e,r,\kappa},\]
we obtain by convexity of $V$
\begin{align}
V(y/\e,t\nabla v_{\e,r,\kappa}(y),\omega)&\le t\phi_{r,\kappa} V(y/\e,\nabla u_\e(y),\omega)+t(1-\phi_{r,\kappa})V(y/\e,\Lambda_r+\nabla\chi_{\e,\Lambda_r}^\omega(y),\omega)\nonumber\\
&\qquad+(1-t)V\left(y/\e,\frac t{1-t}\Psi_{\e,r,\kappa}(y),\omega\right)\nonumber\\
&\le W(y/\e,\nabla u_\e(y),\omega)+V(y/\e,\Lambda_r+\nabla\chi_{\e,\Lambda_r}^\omega(y),\omega)\nonumber\\
&\qquad+(1-t)V\left(y/\e,\frac t{1-t}\Psi_{\e,r,\kappa}(y),\omega\right).\label{eq:boundVboundary}
\end{align}
Combined with
\begin{align*}
\|\Psi_{\e,r,\kappa}\|_{\Ld^\infty(S_{r,\kappa}(x_0))}&\le\frac{2}{r(1-\kappa)}\Big(\|u_\e-u\|_{\Ld^\infty(O)}+\|u-u(x_0)-\Lambda_r\cdot(x-x_0)\|_{\Ld^\infty(Q_r(x_0))}\\
&\hspace{4cm}+\|\chi_{\e,\Lambda_r}^\omega\|_{\Ld^\infty(O)}\Big),
\end{align*}
the convergences $u_\e\to u$ and $\chi_{\e,\Lambda_r}^\omega\to0$ in $\Ld^\infty(O;\R^m)$ as $\e\downarrow0$, and \eqref{eq:derppu} yield
\[\limsup_{r\downarrow0}\limsup_{\e\downarrow0}\|\Psi_{\e,r,\kappa}\|_{\Ld^\infty(S_{r,\kappa}(x_0))}=0.\]
By assumption, we can find $\delta>0$ with $\adh B_\delta\subset\inter\dom M$. Hence, for all $t,\kappa\in(0,1)$, there exists $r_{\kappa,t}>0$ such that, for all $0<r<r_{\kappa,t}$, there exists some $\e_{r,\kappa,t}>0$ such that for all $0<\e<\e_{r,\kappa,t}$
\[\left\|\frac t{1-t}\Psi_{\e,r,\kappa}\right\|_{\Ld^\infty(S_{r,\kappa}(x_0))}<\delta.\]
This implies
\[\int_{S_{r,\kappa}(x_0)}V\left(y/\e,\frac t{1-t}\Psi_{\e,r,\kappa}(y),\omega\right)dy\le |S_{r,\kappa}(x_0)|\sup_{|\Lambda'|<\delta}M(\Lambda')=r^d(1-\kappa^d)|\sup_{|\Lambda'|<\delta}M(\Lambda'),\]
where the supremum is finite, by virtue of the convexity of $M$ and our choice of $\delta>0$. Combined with inequality~\eqref{eq:boundVboundary} and the definition of the correctors $\chi_{\e,\Lambda_r}^\omega$ (with $\lim_r\overline V(\Lambda_r)=\overline V(\Lambda)<\infty$), this yields
\begin{align*}
&\liminf_{\kappa\uparrow1}\liminf_{t\uparrow1}\liminf_{r\downarrow0}\liminf_{\e\downarrow0}\int_{S_{r,\kappa}(x_0)}V(y/\e,t\nabla v_{\e,r,\kappa}(y),\omega)dy\\
\le~& \liminf_{\kappa\uparrow1}\liminf_{r\downarrow0}\liminf_{\e\downarrow0}\int_{S_{r,\kappa}(x_0)} W(y/\e,\nabla u_\e(y),\omega)dy.
\end{align*}
This turns inequality~\eqref{eq:bound1ploplop} into
\begin{align}
&\liminf_{\kappa\uparrow1}\liminf_{t\uparrow1}\liminf_{r\downarrow0}\liminf_{\e\downarrow0}\fint_{Q_r(x_0)}W(y/\e,t\nabla v_{\e,r,\kappa}(y),\omega)dy\nonumber\\
\le~& \liminf_{t\uparrow1}\liminf_{r\downarrow0}\liminf_{\e\downarrow0}\fint_{Q_r(x_0)}W(y/\e,t\nabla u_\e(y),\omega)dy\label{eq:bound1ploplop2}\\
&\qquad+\liminf_{\kappa\uparrow1}\liminf_{r\downarrow0}\liminf_{\e\downarrow0}\frac C{r^d}\int_{S_{r,\kappa}(x_0)} W(y/\e,\nabla u_\e(y),\omega)dy.\nonumber
\end{align}
Since we have chosen $\omega\in\Omega_1$, $t\in\Q\cap(0,1)$ and $\Lambda_r\in\Q^{m\times d}$, \eqref{eq:defWhomnc} holds and reads
\[\overline W_t(\Lambda_r)=\liminf_{\e\downarrow0}\inf_{v\in W^{1,p}_0(O;\R^m)}\fint_{Q_r(x_0)}W(y/\e,t\Lambda_r+t\nabla \varphi_{\Lambda_r}(\cdot/\e,\omega)+\nabla v(y),\omega)dy.\]
Hence, since $v_{\e,r,\kappa}-u(x_0)-\Lambda_r\cdot(x-x_0)\in \chi_{\e,\Lambda_r}^\omega+W^{1,p}_0(Q_r(x_0);\R^m)$ with $\chi_{\e,\Lambda_r}^\omega=\e\varphi_{\Lambda_r}(\cdot/\e,\omega)$, \eqref{eq:bound1ploplop2}~yields
\begin{align}
\overline W(\nabla u(x_0))&=\overline W(\Lambda)\le\liminf_{t\uparrow1}\liminf_{r\downarrow0}\overline W_t(\Lambda_r)\nonumber\\
&\le\liminf_{t\uparrow1}\liminf_{r\downarrow0}\liminf_{\e\downarrow0}\fint_{Q_r(x_0)}W(y/\e,t\nabla v_{\e,r,\kappa}( y),\omega)dy\nonumber\\
&\le\liminf_{t\uparrow1}\liminf_{r\downarrow0}\liminf_{\e\downarrow0}\fint_{Q_r(x_0)}W(y/\e,t\nabla u_\e(y),\omega)dy\label{eq:boundfinwhatloc}\\
&\qquad+\limsup_{\kappa\uparrow1}\limsup_{r\downarrow0}\limsup_{\e\downarrow0}\frac C{r^d}\int_{S_{r,\kappa}(x_0)}W(y/\e,\nabla u_\e(y),\omega)dy.\nonumber
\end{align}
It remains to get rid of the second term of the right-hand side of~\eqref{eq:boundfinwhatloc}. By the portmanteau theorem,
\begin{align*}
\limsup_{\e\downarrow0}\frac 1{r^d}\int_{S_{r,\kappa}(x_0)}W(y/\e,\nabla u_\e(y),\omega)dy&=\limsup_{\e\downarrow0}\frac{\rho_{\e}(S_{r,\kappa}(x_0))}{r^d}\\
&\le\limsup_{\e\downarrow0}\frac{\rho_{\e}(\adh S_{r,\kappa}(x_0))}{r^d}\le\frac{\rho(\adh S_{r,\kappa}(x_0))}{r^d}.
\end{align*}
Since the singular part of $\rho$ must be supported in a closed subset of $\adh O$ of measure $0$, we deduce, for almost all $x_0\in O\setminus \Nc$, the existence of some $r_0>0$ sufficiently small such that $\adh Q_r(x_0)$ has no intersection with that support for all $0<r<r_0$. Hence, for all $0<r<r_0$,
\begin{align*}
&\limsup_{\e\downarrow0}\frac 1{r^d}\int_{S_{r,\kappa}(x_0)}W(y/\e,\nabla u_\e(y),\omega)dy\\
\le~&\frac{\rho(\adh S_{r,\kappa}(x_0))}{r^d}=\frac1{r^d}\int_{S_{r,\kappa}(x_0)}f(y)dy=\fint_{Q_{r}(x_0)}f(y)dy-\kappa^d\fint_{Q_{r\kappa}(x_0)}f(y)dy,
\end{align*}
where, for almost all $x_0$, the right-hand side converges to $(1-\kappa^d)f(x_0)$ as $r\downarrow0$ by Lebesgue's differentiation theorem. Hence, for almost all $x_0$, \eqref{eq:boundfinwhatloc} turns into
\begin{align*}
\overline W(\nabla u(x_0))&\le\limsup_{t\uparrow1}\liminf_{r\downarrow0}\liminf_{\e\downarrow0}\fint_{Q_r(x_0)}W(y/\e,t\nabla u_\e(y),\omega)dy,
\end{align*}
and the desired result~\eqref{eq:toproveliminf} is proven.
\end{proof}

\subsection{$\Gamma$-$\limsup$ inequality with Neumann boundary data}\label{chap:nclimsup}

In this section, we prove the $\Gamma$-$\limsup$ inequality, first considering the affine case, and then deducing the general case by approximation. For this approximation argument to hold, we would however need to know a priori that the homogenized energy $\overline W$ satisfies good regularity properties (i.e. lower semicontinuity on $\R^{m\times d}$ and continuity on $\inter\dom\overline V$). Since this is not clear at all a priori, our strategy (inspired by~\cite{Anza-Mandallena-11}) consists in introducing some relaxations of $\overline W$ that enjoy the required properties, and then in deducing a posteriori from $\Gamma$-convergence (or a weaker form of it) the equality of $\overline W$ with its relaxations (so that $\overline W$ itself has all the desired properties). Motivated by the work of Fonseca~\cite{Fonseca-88} (see also~\cite{Anza-Mandallena-11}), we thus consider the following relaxation of $\overline W$:
\[\Zc W(\Lambda):=\inf\left\{\fint_O\overline W(\Lambda+\nabla \phi(y))dy\,:\,\text{$\phi$ continuous piecewise affine on $O$ and $\phi|_{\partial O}=0$}\right\},\]
where the definition does clearly not depend on the chosen underlying (nonempty) bounded Lipschitz domain $O\subset\R^d$. Also write $\widehat\Zc W$ for the lower semicontinuous envelope of $\Zc W$ (defined by $\widehat\Zc W(\Lambda):=\liminf_{\Lambda'\to\Lambda}\Zc W(\Lambda')$ for all $\Lambda$). Now define the integral functionals corresponding to all these relaxed integrands: for any bounded domain $O\subset\R^d$ and $u\in W^{1,p}(O;\R^m)$,
\begin{align*}
\Zc I(u;O)&:=\int_O\Zc W(\nabla u(y))dy,\qquad\widehat\Zc I(u;O):=\int_O\widehat\Zc W(\nabla u(y))dy.
\end{align*}
The following result gives some properties of these relaxations, which will be crucial in the sequel:
\begin{lem}[Properties of relaxations]\label{lem:lemma}
Assume $p>d$. Then the following holds:
\begin{enumerate}[(a)]
\item $\Zc W$ (and thus also $\widehat\Zc W$) is continuous on $\inter\dom\,\Zc W$.
\item $\overline V\le\widehat\Zc W\le\overline W\le C(1+\overline V)$.
\item $\overline W$ and $\Zc W$ are ru-usc.
\item For any $t\in(0,1)$, we have $t\,\adh\,\dom\Zc W\subset\inter\dom\Zc W$, and the following representation result holds:
\begin{align*}
\widehat\Zc W(\Lambda)&=\liminf_{t\to1}\Zc W(t\Lambda)=\begin{cases}
\Zc W(\Lambda),&\text{if $\Lambda\in\inter\dom\Zc W$;}\\
\lim_{t\uparrow1}\Zc W(t\Lambda),&\text{if $\Lambda\in\partial\,\dom\Zc W$;}\\
\infty,&\text{otherwise;}\end{cases}
\end{align*}
where, in particular, the limit in the second line does exist.
\item Let $\Lambda\in\R^{m\times d}$ and let $O\subset\R^d$ be a bounded Lipschitz domain. Then, there exists a sequence $(\phi_k)_k\subset W^{1,p}_0(O;\R^m)$ of piecewise affine functions such that $\phi_k\to0$ in $\Ld^\infty(O;\R^m)$ and
\[\lim_{k\uparrow\infty}\fint_O\overline W(\Lambda+\nabla\phi_k(y))dy=\Zc W(\Lambda).\]
\end{enumerate}\qed
\end{lem}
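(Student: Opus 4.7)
My plan is to prove the parts in the logical order (b), (c), (a), (d), (e), since later parts make essential use of earlier ones.

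First I would establish (b). The upper bound $\overline{W}\le C(1+\overline{V})$ is furnished directly by Lemma~\ref{lem:Whomnc}, and $\widehat\Zc W\le\Zc W\le\overline{W}$ follows from choosing $\phi\equiv 0$ in the infimum defining $\Zc W$ and taking the lower semicontinuous envelope. For $\overline{V}\le\widehat\Zc W$, I would exploit the convexity of $\overline{V}$: Jensen's inequality applied to any admissible piecewise-affine test $\phi$ with $\phi|_{\partial O}=0$ gives $\overline{V}(\Lambda)=\overline{V}(\Lambda+\fint_O\nabla\phi)\le\fint_O\overline{V}(\Lambda+\nabla\phi)\le\fint_O\overline{W}(\Lambda+\nabla\phi)$, so $\overline{V}\le\Zc W$, and lower semicontinuity of $\overline{V}$ upgrades this to $\overline{V}\le\widehat\Zc W$. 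A valuable byproduct is the equality $\dom\overline{V}=\dom\overline{W}=\dom\Zc W$, which is crucial for (d).

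For (c), I would use the freedom provided by Lemma~\ref{lem:Whomncindep}: if $(u_\e)_\e$ realizes $\overline{W}_t(\Lambda)$, then $(su_\e)_\e$ is admissible for $\overline{W}_t(s\Lambda)$, and the pointwise ru-usc of $W$ together with the Birkhoff--Khinchin theorem applied to the stationary field $a$ yields $\Delta^{\alpha}_{\overline{W}}(s)\le C\Delta^a_W(s)$ with $\alpha:=\E[a(0,\cdot)]$, proving ru-usc of $\overline{W}$. The scaling $\phi\mapsto s\phi$ preserves the admissible class for $\Zc W$ and yields the analogous estimate. For part (a), continuity of $\Zc W$ on $\inter\dom\Zc W$ would then come from combining two ingredients: (i) upper semicontinuity via a construction that joins a near-optimal test function for $\Lambda_0$ to a boundary correction absorbing $\Lambda-\Lambda_0$ on a thin layer of $O$, with $\Ld^\infty$ control provided by the Sobolev embedding ($p>d$) and local boundedness of $\overline{W}$ on $\inter\dom\overline{V}$ coming from $\overline{W}\le C(1+\overline{V})$ and continuity of $\overline{V}$ on $\inter\dom\overline{V}$; and (ii) ru-usc from (c), which governs the limit $t\uparrow 1$.

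For (d), the equality $\dom\Zc W=\dom\overline{V}$ from (b) makes $\dom\Zc W$ convex, and the bound $\overline{V}(\Lambda)\le\E[V(0,\Lambda,\cdot)]\le M(\Lambda)$ shows $0\in\inter\dom M\subset\inter\dom\overline{V}$. Hence, by the elementary convex-geometric fact $(1-t)\cdot 0+t\Lambda\in\inter\dom\Zc W$ whenever $\Lambda\in\adh\dom\Zc W$ and $t\in(0,1)$, we obtain $t\,\adh\dom\Zc W\subset\inter\dom\Zc W$. The representation formula then splits into three cases: on $\inter\dom\Zc W$, continuity from (a) gives $\liminf_{t\uparrow 1}\Zc W(t\Lambda)=\Zc W(\Lambda)=\widehat\Zc W(\Lambda)$; on $\partial\dom\Zc W$, ru-usc comparing $\Zc W(s\Lambda)$ and $\Zc W(t\Lambda)$ for $s<t<1$ shows $\Zc W(t\Lambda)$ converges as $t\uparrow 1$, and an approximation argument using that any sequence $\Lambda'\to\Lambda$ with $\Zc W(\Lambda')<\infty$ lies eventually in $\inter\dom\Zc W$ identifies this limit with $\widehat\Zc W(\Lambda)$; outside $\adh\dom\Zc W$ all three expressions are $+\infty$ trivially.

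Finally, for (e), if $\Zc W(\Lambda)=\infty$ the trivial choice $\phi_k\equiv 0$ works; otherwise (b) yields $\overline{W}(\Lambda)<\infty$. Picking a near-optimal continuous piecewise-affine $\psi_j$ on $(0,1)^d$ with zero trace such that $\fint\overline{W}(\Lambda+\nabla\psi_j)\to\Zc W(\Lambda)$, the rescaling $\tilde\psi_{j,k}(x):=\tfrac{1}{k}\psi_j(kx-\lfloor kx\rfloor)$ is piecewise affine on $\R^d$ with vanishing trace on each cube of the lattice $k^{-1}\Z^d+k^{-1}(0,1)^d$. Restricting to the union $O_k^{\mathrm{int}}$ of those cubes entirely contained in $O$ and extending by $0$ produces $\phi_{j,k}\in W^{1,p}_0(O;\R^m)$, continuous piecewise-affine, with $\|\phi_{j,k}\|_\infty\le\|\psi_j\|_\infty/k\to 0$ and $\fint_O\overline{W}(\Lambda+\nabla\phi_{j,k})=\tfrac{|O_k^{\mathrm{int}}|}{|O|}\fint_{(0,1)^d}\overline{W}(\Lambda+\nabla\psi_j)+\tfrac{|O\setminus O_k^{\mathrm{int}}|}{|O|}\overline{W}(\Lambda)$, where the boundary term vanishes as $k\to\infty$ because $\partial O$ is Lipschitz and $\overline{W}(\Lambda)<\infty$; a diagonalization (via Attouch's lemma, as repeatedly used in Section~\ref{sec:convex}) produces the desired $(\phi_k)_k$. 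The main obstacle is the coupling of (a), (c) and (d): continuity on the interior and the representation formula rely on a delicate bootstrapping between ru-usc and interior continuity, compounded by the fact that $\Zc W$ is not known to be convex, only to satisfy convex-like properties inherited from $\overline{V}$ and $\overline{W}$.
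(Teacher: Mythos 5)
Your overall architecture matches the paper's for parts (b), (c) and (e): the inequalities in (b) via Jensen and $\phi\equiv 0$, the ru-usc transfer in (c) via Lemma~\ref{lem:Whomncindep} and Birkhoff--Khinchin applied to the stationary field $a$, and the standard cube-tiling construction for (e) are all essentially what the paper does (the paper merely cites Anza Hafsa--Mandallena for (e) and states (b) is immediate, but your fill-in is consistent). Two points, however, deserve attention.

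First, in part (d) your argument for the boundary case $\Lambda\in\partial\dom\Zc W$ rests on the claim that ``any sequence $\Lambda'\to\Lambda$ with $\Zc W(\Lambda')<\infty$ lies eventually in $\inter\dom\Zc W$.'' This is false in general: $\dom\Zc W=\dom\overline V$ is a convex set with nonempty interior, but a sequence approaching $\Lambda$ along $\partial\dom\Zc W$ stays on the boundary. The paper's argument sidesteps this by using the already-proved inclusion $t\,\adh\dom\Zc W\subset\inter\dom\Zc W$: for $t<1$ the points $t\Lambda_n$ \emph{are} interior, so (a) gives $\Zc W(t\Lambda)=\lim_n\Zc W(t\Lambda_n)$, and ru-usc then compares $\Zc W(t\Lambda_n)$ with $\Zc W(\Lambda_n)$, leading to $\limsup_{t\uparrow1}\Zc W(t\Lambda)\le\liminf_n\Zc W(\Lambda_n)$. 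Your sketch needs this scaling step to be correct.

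Second, your approach to (a) deviates from the paper's and is probably insufficient as described. The paper delegates (a) to Fonseca's theorem \cite{Fonseca-88}, whose proof shows that $\Zc W$ is rank-one convex (as a piecewise-affine quasiconvexification) and hence locally Lipschitz wherever it is locally bounded above (which $\overline W\le C(1+\overline V)$ and the local boundedness of $\overline V$ on $\inter\dom\overline V$ guarantee); crucially this needs \emph{no} ru-usc structure, as the paper explicitly remarks. Your sketch leans on ``ru-usc from (c), which governs the limit $t\uparrow1$'' to complete the continuity argument, but ru-usc only controls \emph{radial} variations $t\Lambda\to\Lambda$, not arbitrary sequences $\Lambda_n\to\Lambda_0$ in $\inter\dom\Zc W$, so it cannot supply the missing lower-semicontinuity half of the argument. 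Either you should invoke Fonseca's rank-one-convexity argument directly, or make the gluing construction two-sided (interchanging the roles of $\Lambda$ and $\Lambda_0$) and address the fact that near-optimal $\phi$ for $\Lambda_0$ need not vanish on a \emph{neighborhood} of $\partial O$, which is a real technical point you gloss over.
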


\begin{proof}
Continuity of $\Zc W$ on $\inter\dom\,\Zc W$ is a result due to~\cite{Fonseca-88}, which yields part~(a) (even without any ru-usc assumption on $W$).
The inequalities stated in part~(b) directly follow from the definitions of $\overline V$, $\overline W$ and $\widehat\Zc W$. Part~(e) is standard (see~\cite[Proposition~3.17]{Anza-Mandallena-11} for details). It remains to prove properties~(c) and~(d).

\medskip
\step1 Proof of~(c).

We first prove that $\overline W_t$ is ru-usc, for any fixed $t\in[0,1)$. Let $s>0$, $\Lambda\in\dom\overline W=\dom\overline V$, and let $O\subset\R^d$ be some bounded Lipschitz domain. For almost all $\omega$, note that by convexity, for all subdomains $D\subset O$,
\begin{align*}
&\limsup_{\e\downarrow0}\int_DV(y/\e,s\Lambda+s\nabla\varphi_\Lambda(y/\e,\omega),\omega)dy\\
\le~& \lim_{\e\downarrow0}\int_DV(y/\e,\Lambda+\nabla\varphi_\Lambda(y/\e,\omega),\omega)dy+(1-s)M(0)=\overline V(\Lambda)+(1-s)M(0)<\infty.
\end{align*}
Hence for almost all $\omega$ we can apply equality~\eqref{eq:defWhomncbis} at $s\Lambda$ with $u_\e=s\e\varphi_\Lambda(\cdot/\e,\omega)$, which yields
\[\overline W_t(s\Lambda)=\lim_{\e\downarrow0}\inf_{v\in W^{1,p}_0(O;\R^m)}\fint_{O}W(y/\e,st\Lambda+st\nabla\varphi_\Lambda(y/\e,\omega)+\nabla v(y),\omega)dy.\]
Given $\omega\in\Omega$ such that this convergence and~\eqref{eq:defWhomnc} both hold, and choosing a sequence $(v_\e^\omega)_\e\subset W_0^{1,p}(O;\R^d)$ such that
\begin{align*}
\overline W_t(\Lambda)=\lim_{\e\downarrow0}\fint_{O}W(y/\e,t\Lambda+t\nabla\varphi_\Lambda(y/\e,\omega)+\nabla v_\e^\omega(y),\omega)dy,
\end{align*}
we deduce
\begin{align}
\overline W_t(s\Lambda)-\overline W_t(\Lambda)&\le\lim_{\e\downarrow0}\fint_{O}\Big(W(y/\e,s(t\Lambda+t\nabla\varphi_{\Lambda}(y/\e,\omega)+\nabla v_\e^\omega(y)),\omega)\nonumber\\
&\hspace{3cm}-W(y/\e,t\Lambda+t\nabla\varphi_{\Lambda}(y/\e,\omega)+\nabla v_\e^\omega(y),\omega)\Big)dy\nonumber\\
&\le\Delta_W^a(s)\lim_{\e\downarrow0}\fint_{O}(a(y/\e,\omega)+W(y/\e,t\Lambda+t\nabla\varphi_{\Lambda}(y/\e,\omega)+\nabla v_\e^\omega(y),\omega))dy\nonumber\\
&=\Delta_W^a(s)(\E [a(0,\cdot)]+\overline W_t(\Lambda)),\label{eq:ruuscWhomt}
\end{align}
using the Birkhoff-Khinchin ergodic theorem for the stationary field $a$. As $\overline W_t$ and the field $a$ are nonnegative, as $\alpha:=\E[a(0,\cdot)]$ is finite and as $\limsup_{s\uparrow1}\Delta_W^a(s)\le 0$ by assumption, we deduce that $\overline W_t$ is also ru-usc. Now rewriting inequality~\eqref{eq:ruuscWhomt} in the form
\[\overline W_t(s\Lambda)\le \alpha\Delta^a_W(s)+(1+(-1)\vee\Delta^a_W(s))\overline W_t(\Lambda),\]
and taking the suitable $\liminf$, we directly deduce $\overline W(s\Lambda)-\overline W(\Lambda)\le (-1)\vee \Delta^a_W(s)(\alpha+\overline W(\Lambda))$ for all $\Lambda\in\dom\overline V$ and $s\in[0,1)$, proving that $\overline W$ is itself ru-usc with $\Delta_{\overline W}^\alpha=(-1)\vee \Delta^a_W(s)$.

We now show that $\Zc W$ is also ru-usc. Take $s>0$ and $\Lambda\in\dom\overline V$. By definition, there exists a sequence of piecewise affine functions $(\phi_k)_k\subset W^{1,p}_0(O)$ such that
\[\Zc W(\Lambda)=\lim_{k\uparrow\infty}\fint_O\overline W(\Lambda+\nabla\phi_k(y))dy.\]
As $\Lambda\in\dom\overline V$, the left-hand side is finite, and we can thus assume $\Lambda+\nabla\phi_k\in \dom\overline W$ almost everywhere. Hence the ru-usc property satisfied by $\overline W$ gives
\begin{align*}
\Zc W(s\Lambda)-\Zc W(\Lambda)&\le\lim_{k\uparrow\infty}\fint_O(\overline W(s(\Lambda+\nabla\phi_k(y)))-\overline W(\Lambda+\nabla\phi_k(y)))dy\\
&\le \Delta_{\overline W}^\alpha(s)\lim_{k\uparrow\infty}\fint_O(\alpha+\overline W(\Lambda+\nabla\phi_k(y))dy=\Delta_{\overline W}^\alpha(s)(\alpha+\Zc W(\Lambda)).
\end{align*}

\medskip
\step2 Proof of (d).

Since $\dom\Zc W=\dom\overline V$ is a convex set containing $0$, it is clear that, for all $t\in[0,1)$, $t\,\adh\dom\Zc W$ is contained in $\inter\dom\Zc W$. We first show that the limit $\lim_{t\uparrow1}\Zc W(t\Lambda)$ exists for all $\Lambda\in\adh\dom\overline V$. Given some fixed $\Lambda\in\adh\dom\overline V$, choose two sequences $s_n\uparrow1$ and $t_n\uparrow1$ with $t_n/s_n\uparrow1$ such that
\[\lim_{n\uparrow\infty}\Zc W(s_n\Lambda)=\liminf_{t\uparrow1}\Zc W(t\Lambda)\qquad\text{and}\qquad\lim_{n\uparrow\infty}\Zc W(t_n\Lambda)=\limsup_{t\uparrow1}\Zc W(t\Lambda).\]
As $s_n\Lambda,t_n\Lambda\in \dom\overline V$ for all $n$, and as $\Zc W$ is ru-usc, we have
\begin{align*}
\lim_{n\uparrow\infty}\Zc W(t_n\Lambda)&\le \limsup_{n\uparrow\infty} (\alpha+\Zc W(s_n\Lambda))\Delta_{W}^a(t_n/s_n)+\lim_{n\uparrow\infty}\Zc W(s_n\Lambda)\\
&\le\lim_{n\uparrow\infty}\Zc W(s_n\Lambda)\le \lim_{n\uparrow\infty}\Zc W(t_n\Lambda),
\end{align*}
which thus proves the existence of the limit $\lim_{t\uparrow1}\Zc W(t\Lambda)$ for all $\Lambda\in\adh\dom\overline V$.

We now prove the claimed representation result. First, if $\Lambda\in\inter\dom\overline V$, then $\liminf_{t\to1}\Zc W(t\Lambda)=\Zc W(\Lambda)=\widehat\Zc W(\Lambda)$ follows from part~(a). Second, if $\Lambda\notin\adh\dom\overline V$, then $\Zc W(t\Lambda)=\infty$ for any $t$ sufficiently close to $1$, and thus $\liminf_{t\to1}\Zc W(t\Lambda)=\infty=\widehat \Zc W(\Lambda)$. Now it only remains to consider $\Lambda\in\partial\,\dom\Zc W$. Then $\Zc W(t\Lambda)=\infty$ whenever $t>1$, so that we simply have $\liminf_{t\to1}\Zc W(t\Lambda)=\liminf_{t\uparrow1}\Zc W(t\Lambda)=\lim_{t\uparrow1}\Zc W(t\Lambda)$, since we have already proven the existence of this limit. Hence, it suffices to prove that $\widehat\Zc W(\Lambda)=\liminf_{t\uparrow1}\Zc W(t\Lambda)$. By definition of the lower semicontinuous envelope $\widehat\Zc W$ of $\Zc W$, this equality would follow if we could show that, for any sequence $\Lambda_n\to\Lambda$, we have
\begin{align}\label{eq:scirep}
\liminf_{n\uparrow\infty}\Zc W(\Lambda_n)\ge\liminf_{t\uparrow1}\Zc W(t\Lambda).
\end{align}
It is of course sufficient to assume $\liminf_n\Zc W(\Lambda_n)=\lim_n\Zc W(\Lambda_n)<\infty$ and $\sup_n\Zc W(\Lambda_n)<\infty$. Hence, $\Lambda_n\in\dom\overline V$ for all $n$, and thus, for all $t\in[0,1)$, $t\Lambda\in\inter\dom\overline V$, so that, using part~(a) as well as the ru-usc property satisfied by $\Zc W$, we have
\[\Zc W(t\Lambda)=\lim_{n\uparrow\infty}\Zc W(t\Lambda_n)\le \lim_{n\uparrow\infty}\Zc W(\Lambda_n)+\Delta_W^a(t)\lim_{n\uparrow\infty}(\alpha+\Zc W(\Lambda_n)).\]
This yields
\[\limsup_{t\uparrow1}\Zc W(t\Lambda)\le\liminf_{n\uparrow\infty}\Zc W(\Lambda_n),\]
and proves~\eqref{eq:scirep}.
\end{proof}

Combining the $\Gamma$-$\liminf$ inequality for $I_\e$ towards $I$ with a $\Gamma$-$\limsup$ argument, we prove the following a priori surprising equality of $\overline W$ with its relaxations.

\begin{lem}[Regularity of the homogenized energy density]\label{lem:equZc}
Assume $p>d$. Then $\overline W(\Lambda)=\Zc W(\Lambda)=\widehat \Zc W(\Lambda)$ for all $\Lambda\in\R^{m\times d}$. In particular, $\overline W$ is lower semicontinuous on $\R^{m\times d}$ and is continuous on $\inter\dom\overline V$.\qed
\end{lem}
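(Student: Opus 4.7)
The plan is to establish the two easy inequalities $\widehat\Zc W\le\Zc W\le\overline W$ and then to close the chain by proving $\overline W\le\widehat\Zc W$, which forces all three functions to coincide. The first inequality is the very definition of the lower semicontinuous envelope, and the second follows by choosing the trivial test function $\phi=0$ in the variational definition of $\Zc W$. Since off $\dom\overline V$ all three quantities are infinite by Lemma~\ref{lem:lemma}(b), it remains only to prove $\overline W(\Lambda)\le\widehat\Zc W(\Lambda)$ for $\Lambda\in\dom\overline V$. The mechanism will be to feed a carefully built trial sequence into the $\Gamma$-$\liminf$ inequality of Proposition~\ref{prop:gammaliminfnc}, distinguishing whether $\Lambda$ lies in the interior or on the boundary of $\dom\overline V$.

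First I would treat the main case $\Lambda\in\inter\dom\overline V$, in which Lemma~\ref{lem:lemma}(a) and (d) give $\widehat\Zc W(\Lambda)=\Zc W(\Lambda)$. Fix a bounded Lipschitz domain $O\subset\R^d$ and use Lemma~\ref{lem:lemma}(e) to pick piecewise affine $(\phi_k)_k\subset W^{1,p}_0(O;\R^m)$ with $\phi_k\to 0$ in $\Ld^\infty(O;\R^m)$ and $\fint_O\overline W(\Lambda+\nabla\phi_k(y))\,dy\to\Zc W(\Lambda)$. For each $k$ the target $\Lambda\cdot x+\phi_k$ is continuous piecewise affine with piecewise gradient $\Lambda+\Lambda_i^k$; on each piece $O_i^k$ I would deploy the corrector based recovery of Lemma~\ref{lem:Whomnc}, namely the convex corrector $t\e\varphi_{\Lambda+\Lambda_i^k}(\cdot/\e,\omega)$ plus a near-minimizer of $\mu^t_{\Lambda+\Lambda_i^k}(O_i^k/\e,\omega)$, and glue the pieces by a partition of unity along the internal interface, in the spirit of Step~2 of the proof of Proposition~\ref{prop:gammasupN}. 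The gluing error in the transition bands is controlled by introducing a relaxation parameter $t\in(0,1)$ and exploiting the assumption $0\in\inter\dom M$; after letting $t\uparrow 1$ and upgrading $\overline W_t$ to $\overline W$ in the upper bound via the ru-usc property recorded in Lemma~\ref{lem:lemma}(c), this produces a sequence $u_{\e,k}^\omega\cvf{}\Lambda\cdot x+\phi_k$ in $W^{1,p}(O;\R^m)$ with $\limsup_\e I_\e(u_{\e,k}^\omega,\omega;O)\le\int_O\overline W(\Lambda+\nabla\phi_k(y))\,dy$. Since $\phi_k\to 0$ in $\Ld^p(O;\R^m)$ and $(\phi_k)_k$ is bounded in $W^{1,p}(O;\R^m)$ by coercivity, an Attouch diagonal extraction yields $v_{\e_k}^\omega\cvf{}\Lambda\cdot x$ in $W^{1,p}(O;\R^m)$ with $\limsup_k I_{\e_k}(v_{\e_k}^\omega,\omega;O)\le|O|\Zc W(\Lambda)$, and Proposition~\ref{prop:gammaliminfnc} then gives $|O|\overline W(\Lambda)=I(\Lambda\cdot x;O)\le|O|\Zc W(\Lambda)$, as required.

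Then I would handle the boundary case $\Lambda\in\partial\dom\overline V$ by a second diagonalization. Lemma~\ref{lem:lemma}(d) gives $\widehat\Zc W(\Lambda)=\lim_{t\uparrow 1}\Zc W(t\Lambda)$, and the previous case applied at $t\Lambda\in\inter\dom\overline V$ gives $\Zc W(t\Lambda)=\overline W(t\Lambda)$ as well as a recovery sequence $(u_\e^t)_\e$ with $u_\e^t\cvf{}t\Lambda\cdot x$ in $W^{1,p}(O;\R^m)$ and $I_\e(u_\e^t,\omega;O)\to|O|\overline W(t\Lambda)$. Since $t\Lambda\cdot x\to\Lambda\cdot x$ strongly in $W^{1,p}(O;\R^m)$ as $t\uparrow 1$, a joint diagonalization in $(t,\e)$ produces $u_{\e_t}^t\cvf{}\Lambda\cdot x$ with $\limsup_{t\uparrow 1}I_{\e_t}(u_{\e_t}^t,\omega;O)\le|O|\widehat\Zc W(\Lambda)$, and Proposition~\ref{prop:gammaliminfnc} finally yields $|O|\overline W(\Lambda)\le|O|\widehat\Zc W(\Lambda)$. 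The equality of the three functions on $\R^{m\times d}$ is thus established, and the lower semicontinuity of $\overline W=\widehat\Zc W$ on $\R^{m\times d}$ together with its continuity on $\inter\dom\overline V$ (where $\overline W=\Zc W$) then follows at once from Lemma~\ref{lem:lemma}(a)(d).

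The hard part will be the piecewise affine recovery in the main case: the cut-off terms at internal interfaces threaten to create infinite energy under the unbounded convex growth, which forces the introduction of the relaxation $t<1$ and a crucial use of $0\in\inter\dom M$, and one must then still pass to the limit $t\uparrow 1$ in the upper bound $\overline W_t(\Lambda+\Lambda_i^k)$. This last limit is where the ru-usc property from Lemma~\ref{lem:lemma}(c) is essential, together with the flexibility granted by Lemma~\ref{lem:Whomncindep} to replace the specific convex corrector by any perturbation vanishing in $\Ld^\infty$ when evaluating $\overline W_t$.
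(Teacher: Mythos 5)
Your overall architecture matches the paper's: the easy chain $\widehat\Zc W\le\Zc W\le\overline W$ (the second inequality by taking $\phi=0$), the observation that everything is $+\infty$ off $\dom\overline V$ by Lemma~\ref{lem:lemma}(b), and the reduction of the substantive work to $\overline W\le\widehat\Zc W$ on $\dom\overline V$ via recovery sequences fed into the $\Gamma$-$\liminf$ inequality of Proposition~\ref{prop:gammaliminfnc}, first for $\Lambda\in\inter\dom\overline V$ (where $\widehat\Zc W=\Zc W$ by Lemma~\ref{lem:lemma}(a),(d)) and then on the boundary via Lemma~\ref{lem:lemma}(d) and a second diagonalization. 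This is precisely the paper's Steps~1--4, and the boundary step you describe is identical to the paper's Step~3.

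There is however a genuine gap in the interior step, at the passage from $\overline W_t$ to $\overline W$. On each piece $O_i^k$ you take the corrector for the \emph{exact} slope $\Lambda+\Lambda_i^k$ and a near-minimizer of $\mu^t_{\Lambda+\Lambda_i^k}(O_i^k/\e,\omega)$; after gluing and sending $\e\downarrow0$ this yields $\int_O\overline W_t(\Lambda+\nabla\phi_k)$. Letting $t\uparrow1$ then gives at best $\liminf_{t\uparrow1}\int_O\overline W_t(\Lambda+\nabla\phi_k)$, and since by the double-$\liminf$ definition $\overline W(\Lambda')=\liminf_{t\uparrow1}\liminf_{\Lambda''\to\Lambda'}\overline W_t(\Lambda'')\le\liminf_{t\uparrow1}\overline W_t(\Lambda')$, this is an upper bound that lies \emph{above} $\int_O\overline W(\Lambda+\nabla\phi_k)$ — the wrong direction, so the chain $\overline W(\Lambda)\le\Zc W(\Lambda)$ does not close. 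The ru-usc property from Lemma~\ref{lem:lemma}(c) cannot repair this: it controls the radial dilation $\Lambda\mapsto s\Lambda$ inside $\dom\overline V$, not the interplay between the two $\liminf$'s over $t$ and $\Lambda'$, and since $\overline W_t$ is not known a priori to be lower semicontinuous, $\liminf_{\Lambda''\to\Lambda'}\overline W_t(\Lambda'')$ may genuinely drop below $\overline W_t(\Lambda')$. What the paper actually does in Step~1 is to invoke the definition of $\overline W$ to select sequences $t_n\uparrow1$ and $\Lambda_n\to\Lambda$ with $\overline W_{t_n}(\Lambda_n)\to\overline W(\Lambda)$, run the recovery with the corrector $\varphi_{\Lambda_n}$ for the \emph{perturbed} slope $\Lambda_n$, and then diagonalize over $(n,\e)$ by Attouch's lemma; perturbing the slope is essential, and Lemma~\ref{lem:Whomncindep} is used to replace the corrector after gluing, not to bridge $\overline W_t$ and $\overline W$. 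Incidentally, the paper constructs its near-minimizers on a grid of cubes $k(z_j+Q)$ inside $O/\e$ (to apply the subadditive ergodic theorem and control weak convergence via cube-wise Poincar\'e) rather than on the domains $O_i^k/\e$ directly, but that is a point of execution rather than a gap.
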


\begin{proof}
We split the proof into four steps.

\medskip
\step1 Recovery sequence for $I(\Lambda\cdot x;O)$.

Let $\Lambda\in\inter\dom\overline V$ and let $t\in[0,1)$. In this step, for almost all $\omega$, for all bounded Lipschitz domain $O\subset\R^d$, we prove the existence of sequences $t_\e\uparrow1$, $\Lambda_\e\to\Lambda$ and $(w_\e)_\e\subset W^{1,p}_0(O;\R^m)$ such that $\e\varphi_{\Lambda_\e}(\cdot/\e,\omega)\cvf{}0$, $w_\e\cvf{}0$ in $W^{1,p}(O;\R^m)$ and $I_\e(t_\e\Lambda_\e\cdot x+\e t_\e\varphi_{\Lambda_\e}(\cdot/\e,\omega)+w_\e,\omega;O)\to |O|\overline W(\Lambda)=I(\Lambda\cdot x;O)$.

By definition of $\overline W$ and a diagonalization argument, for almost all $\omega$ and all bounded Lipschitz domains $O\subset\R^d$, it suffices to prove the existence of a sequence $(v_\e)_\e\subset W^{1,p}_0(O;\R^m)$ such that $v_\e\cvf{}0$ in $W^{1,p}(O;\R^m)$ and $I_\e(t\Lambda\cdot x+\e t\varphi_\Lambda(\cdot/\e,\omega)+v_\e,\omega;O)\to |O|\overline W_t(\Lambda)$ as $\e\downarrow0$.

Let $O$ be some fixed bounded Lipschitz domain. Given $\e>0$, consider the cubes of the form $k(z+Q)$, $z\in\Z^d$, that are contained in $O/\e$, and denote by $z_{j}\in \Z^d$, $j=1,\ldots,N_{\e,k}$, the centers of these cubes (the enumeration of which can be chosen independent of $\e,k$). Since $O$ is Lipschitz, we have $N_{\e,k}(\e k)^d\to|O|$ as $\e\downarrow0$, for all $k$. For all $j,\omega$, we can choose a sequence $(v_{k}^{j,\omega})_k$ with $v_{k}^{j,\omega}\in W^{1,p}_0(k(z_j+Q);\R^m)$ such that
\begin{align*}
&\fint_{k(z_j+Q)}W(y,t\Lambda+t\nabla\varphi_\Lambda(y,\omega)+\nabla v_{k}^{j,\omega}(y),\omega)dy\\
\le~&\frac1k+\inf_{v\in W^{1,p}_0(k(z_j+Q);\R^m)}\fint_{k(z_j+Q)}W(y,t\Lambda+t\nabla\varphi_\Lambda(y,\omega)+\nabla v(y),\omega)dy.
\end{align*}
For all $\e,k,\omega$, we then consider the function $v_{\e,k}^\omega:=\sum_{j=1}^{N_{\e,k}}v_k^{j,\omega}\mathds1_{k(z_j+Q)}\in W^{1,p}_0(O/\e;\R^m)$, and we define $w_{\e,k}^\omega:=\e v_{\e,k}^\omega(\cdot/\e)\in W^{1,p}_0(O;\R^m)$. Up to a diagonalization argument, it suffices to show that, for almost all $\omega$ (independent of the choice of $O$, as it is clear in the proof below),
\begin{align}\label{eq:blablaplop1}
&\limsup_{k\uparrow\infty}\limsup_{\e\downarrow0}\Big(\big|I_\e(t\Lambda\cdot x+\e t\varphi_\Lambda(\cdot/\e,\omega)+w_{\e,k}^\omega,\omega;O)-|O|\overline W_t(\Lambda)\big|+\|w_{\e,k}^\omega\|_{\Ld^p(O)}\Big)=0.
\end{align}
First we argue that, for almost all $\omega$,
\begin{align}\label{eq:blablaplop2}
\limsup_{k\uparrow\infty}\limsup_{\e\downarrow0} I_\e(t\Lambda\cdot x+\e t\varphi_\Lambda(\cdot/\e,\omega)+w_{\e,k}^\omega,\omega;O)\le |O|\overline W_t(\Lambda).
\end{align}
Indeed, by definition of $w_{\e,k}^\omega$,
\begin{align*}
&I_\e(t\Lambda\cdot x+\e t\varphi_\Lambda(\cdot/\e,\omega)+w_{\e,k}^\omega,\omega;O)\\
\le~&\frac1k(\e k)^dN_{\e,k}+(\e k)^d\sum_{j=1}^{N_{\e,k}}\inf_{v\in W^{1,p}_0(k(z_j+Q);\R^m)}\fint_{k(z_j+Q)}W(y,t\Lambda+t\nabla\varphi_\Lambda(y,\omega)+\nabla v(y),\omega)dy\\
&+\e^d\int_{(O/\e)\setminus \bigcup_{j=1}^{N_{\e,k}}k(z_j+Q)}W(y,t\Lambda+t\nabla\varphi_\Lambda(y,\omega),\omega)dy.
\end{align*}
Since $W\le C(1+V)$, the last term of the right-hand side goes to $0$ as $\e\downarrow0$ for almost all $\omega$ by construction of the cubes $k(z_j+Q)$ and definition of $\varphi_\Lambda$. The Birkhoff-Khinchin ergodic theorem (which we apply to a measurable map by Hypothesis~\ref{hypo:asmeasinf}) then gives, for almost all $\omega$,
\begin{align}
&\limsup_{\e\downarrow0}I_\e(t\Lambda\cdot x+\e t\varphi_\Lambda(\cdot/\e,\omega)+w_{\e,k}^\omega,\omega;O)\nonumber\\
\le~&\frac {|O|}k+|O|\,\E\left[\inf_{v\in W^{1,p}_0(kQ;\R^m)}\fint_{kQ}W(y,t\Lambda+t\nabla\varphi_\Lambda(y,\cdot)+\nabla v(y),\cdot)dy\right].\label{eq:boundasienc}
\end{align}
Lemma~\ref{lem:Whomnc} then yields the desired result~\eqref{eq:blablaplop2} as $k\uparrow\infty$. On the other hand, by definition~\eqref{eq:defWhomnc} of $\overline W_t$, for all $k$ and almost all $\omega$, we have
\begin{align}
&\liminf_{\e\downarrow0}I_\e(t\Lambda\cdot x+\e t\varphi_\Lambda(\cdot/\e,\omega)+w_{\e,k}(\cdot,\omega),\omega;O)\nonumber\\
\ge~& |O|\liminf_{\e\downarrow0}\inf_{v\in W^{1,p}_0(O;\R^m)}\fint_{O/\e}W(y,t\Lambda+t\nabla\varphi_\Lambda(y,\omega)+\nabla v(y),\omega;O)= |O|\overline W_t(\Lambda).\label{eq:blablaplop3}
\end{align}
We now show that $w_{\e,k}^\omega\to0$ in $\Ld^p(O;\R^m)$ as $\e\downarrow0$, for almost all $\omega$. Combining inequality~\eqref{eq:boundasienc} with the bound $W\le C(1+V)$, the $p$-th order lower bound for $W$ and the convexity of $V$, we indeed have
\begin{align*}
\limsup_{\e\downarrow0}\|t\Lambda+t\nabla\varphi_\Lambda(\cdot/\e,\omega)+\nabla w_{\e,k}^\omega\|_{\Ld^p(O)}^p\le \frac{|O|}k+C|O|(1+\overline V(\Lambda)+(1-t)M(0))<\infty.
\end{align*}
For almost all $\omega$, the weak $\Ld^p$ convergence of the sequence $(\nabla\varphi_\Lambda(\cdot/\e,\omega))_\e$ to $0$ implies the boundedness of this sequence in $\Ld^p(O;\R^{m\times d})$, so that $(\nabla w_{\e,k}^\omega)_\e$ is also bounded in $\Ld^p(O;\R^{m\times d})$, for any fixed $k$. By Poincaré's inequality on cubes of side length $k\e$, this implies
\begin{align*}
\|w_{\e,k}^\omega\|_{\Ld^p(O)}\le C_k(\omega)\e,
\end{align*}
for some (random) constant $C_k(\omega)$.
Combined with~\eqref{eq:blablaplop2} and~\eqref{eq:blablaplop3}, this proves~\eqref{eq:blablaplop1}.

\medskip
\step2 Recovery sequence for $\Zc I(\Lambda\cdot x;O)$.

Let $\Lambda\in\inter\dom\overline V$ and let $O\subset\R^d$ be a bounded Lipschitz domain. In this step, for almost all $\omega$, we prove the existence of a sequence $(u_\e)_\e\subset W^{1,p}(O;\R^m)$ such that $u_\e\cvf{} 0$ in $W^{1,p}(O;\R^m)$ and $I_\e(\Lambda\cdot x+u_\e,\omega;O)\to \Zc I(\Lambda\cdot x;O)$.

Lemma~\ref{lem:lemma}(e) gives a sequence $(\phi_k)_k\subset W^{1,p}_0(O;\R^m)$ of piecewise affine functions such that $\phi_k\to0$ in $\Ld^p(O;\R^m)$ (and even in $\Ld^\infty(O;\R^m)$), and
\[I(\phi_k+\Lambda\cdot x;O)=\int_O\overline W(\Lambda+\nabla\phi_k(y))dy\xrightarrow{k\uparrow\infty}|O|\Zc W(\Lambda).\]
Denote by $(P_k^i)_{i=1}^{n_k}$ the partition of $O$ associated with the piecewise affine function $\phi_k$. For all $k$ and $1\le i\le n_k$, considering $\Lambda_{k}^{i}:=\Lambda+\nabla \phi_k|_{P_k^i}$ on $P_k^i$, Step~1 above gives, for almost all $\omega$, a sequence $t_\e\uparrow1$, a sequence $\Lambda_{k,\e}^i\to\Lambda_k^i$ and a sequence $(v_{\e,k}^i)_\e\subset W^{1,p}_0(P_k^i;\R^m)$ such that $\e \varphi_{\Lambda_{\e,k}^i}(\cdot/\e,\omega),v_{\e,k}^i\cvf{}0$ in $W^{1,p}(P_k^i;\R^m)$ and $I_\e(t_\e\Lambda_{\e,k}^i\cdot x+\e t_\e\varphi_{\Lambda_{\e,k}^i}(\cdot/\e,\omega)+v_{\e,k}^i,\omega;P_k^i)\to I(\Lambda_k^i\cdot x;P_k^i)=I(\phi_k+\Lambda\cdot x;P_ k^i)$. As the $v_{\e,k}^i$'s satisfy Dirichlet boundary conditions, they can be directly glued together, while for the $\varphi_\Lambda$'s we need to repeat the more complicated gluing argument of Step~2 of the proof of Proposition~\ref{prop:gammasupN}, with $p>d$. Although the functional $I_\e$ is not convex here, as in the proof of Proposition~\ref{prop:gammaliminfnc}, the idea is to use the bound $W\le C(1+V)$ at all points where the cut-off functions are different from $1$ or $0$, then use the convexity of $V$ and estimate the corresponding error terms as before. We leave the details to the reader.

\medskip
\step3 Recovery sequence for $\widehat\Zc I(\Lambda\cdot x;O)$.

Let $\Lambda\in\dom\overline V$ and let $O\subset\R^d$ be a bounded Lipschitz domain. In this step, for almost all $\omega$, we prove the existence of a sequence $(u_\e^\omega)_\e\subset W^{1,p}(O;\R^m)$ such that $u_\e^\omega\cvf{} 0$ in $W^{1,p}(O;\R^m)$ and $I_\e(\Lambda\cdot x+u_\e^\omega,\omega;O)\to \widehat\Zc I(\Lambda\cdot x;O)$.

By Lemma~\ref{lem:lemma}(d), $\Zc W$ and $\widehat \Zc W$ coincide on $\inter\dom\overline V$, and hence the result on $\inter\dom\overline V$ already follows from Step~2. Let now $\Lambda\in\partial \dom\Zc W$. Lemma~\ref{lem:lemma}(d) then asserts $\widehat \Zc W(\Lambda)=\lim_{t\uparrow1}\Zc W(t\Lambda)$. By convexity of $\dom\overline V$, for all $t\in[0,1)$, we have $t\Lambda\in\inter\dom\overline V$, and hence, for almost all $\omega$, Step~2 above gives a sequence $(u_{\e,t})_\e\subset W^{1,p}(O;\R^m)$ such that $u_{\e,t}\cvf{}0$ in $W^{1,p}(O;\R^m)$ and $I_\e(t\Lambda\cdot x+u_{\e,t},\omega;O)\to \Zc I(t\Lambda\cdot x;O)=|O|\Zc W(t\Lambda)$. The conclusion then follows from a diagonalization argument.

\medskip
\step4 Conclusion.

Let $\Lambda\in\dom\overline V$, let $O\subset\R^d$ be a bounded Lipschitz domain, and let $(u_\e^\omega)_\e$ be the sequence given by Step~3 above. As $u_\e^\omega\cvf{}0$ in $W^{1,p}(O;\R^m)$, the $\Gamma$-$\liminf$ inequality (see Proposition~\ref{prop:gammaliminfnc}) gives, for almost all $\omega$,
\[|O|\widehat \Zc W(\Lambda)=\lim_{\e\downarrow0}I_\e(\Lambda\cdot x+u_\e^\omega,\omega;O)\ge I(\Lambda\cdot x;O)=|O|\overline W(\Lambda).\]
This being true for any $\Lambda\in\dom\overline V$, we conclude that $\widehat \Zc W=\overline W$ everywhere.
\end{proof}

With Lemma~\ref{lem:equZc} at hands, we may prove the $\Gamma$-$\limsup$ inequality.

\begin{prop}[$\Gamma$-$\limsup$ inequality]\label{prop:gammalimsupnc}
Assume $p>d$. There exists a subset $\Omega'\subset\Omega_1$ of maximal probability with the following property: for all $\omega\in\Omega'$, all strongly star-shaped (in the sense of Proposition~\ref{prop:approxw}) bounded Lipschitz domains $O\subset\R^d$ and all $u\in W^{1,p}(O;\R^m)$, there exist a sequence $(u_\e)_\e\subset W^{1,p}(O;\R^m)$ and a sequence $(v_\e)_\e\subset W^{1,p}_0(O;\R^m)$ such that $u_\e\cvf{}u$ and $v_\e\cvf{}0$ in $W^{1,p}(O;\R^m)$, and such that $I_\e(u_\e+v_\e,\omega;O)\to I(u;O)$  and $J_\e(u_\e,\omega;O)\to J(u;O)$ as $\e\downarrow0$.\qed
\end{prop}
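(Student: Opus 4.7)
The plan is to follow the three-step structure used for the convex case in Proposition~\ref{prop:gammasupN}, namely affine data, piecewise affine data, and general data, but with the crucial observation that the ``affine part'' of the recovery sequence will be nothing but the recovery sequence for the convex problem $J_\e$, while the nonconvex correction will carry compactly supported perturbations $v_\e \in W^{1,p}_0(O;\R^m)$. This automatic separation between a ``macroscopic plus corrector'' piece and a ``Dirichlet bubble'' piece is precisely what allows us to obtain the simultaneous recovery of $J$ and $I$ asked for in the statement.

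\emph{Step 1 (affine data).} Fix $\Lambda\in\inter\dom\overline V$ and a bounded Lipschitz domain $O$. Step~1 of the proof of Lemma~\ref{lem:equZc} already provides, for almost every $\omega$, sequences $t_\e\uparrow1$, $\Lambda_\e\to\Lambda$ and $w_\e\in W^{1,p}_0(O;\R^m)$ with $w_\e\cvf{}0$ in $W^{1,p}(O;\R^m)$ and
\[I_\e\bigl(t_\e\Lambda_\e\cdot x+\e t_\e\varphi_{\Lambda_\e}(\cdot/\e,\omega)+w_\e,\,\omega;\,O\bigr)\longrightarrow |O|\,\overline W(\Lambda)=I(\Lambda\cdot x;O).\]
Setting $u_\e^{\Lambda,\omega}:=t_\e\Lambda_\e\cdot x+\e t_\e\varphi_{\Lambda_\e}(\cdot/\e,\omega)$ and $v_\e:=w_\e$, we observe that, by the Birkhoff-Khinchin theorem applied to $\nabla\varphi_{\Lambda_\e}(\cdot/\e,\omega)$ and by the continuity of $\overline V$ on $\inter\dom\overline V$, the sequence $u_\e^{\Lambda,\omega}$ is precisely a recovery sequence for $J_\e$ towards $J(\Lambda\cdot x;O)$, as in Step~1 of the proof of Proposition~\ref{prop:gammasupN}. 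After passing to a subset $\Omega'\subset\Omega_1$ of maximal probability on which both convergences hold simultaneously for every $\Lambda\in\Q^{m\times d}\cap\inter\dom\overline V$, the continuity of both $\overline V$ and $\overline W$ on $\inter\dom\overline V$ (Lemma~\ref{lem:equZc}) and an Attouch diagonalization extend the construction to every $\Lambda\in\inter\dom\overline V$.

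\emph{Step 2 (continuous piecewise affine data).} For $u$ continuous piecewise affine with $\nabla u\in\inter\dom\overline V$ pointwise, associated with an open partition $O=\biguplus_l O_l$, we glue the sequences produced in Step~1 on each $O_l$ exactly as in Step~2 of the proof of Proposition~\ref{prop:gammasupN}, using Proposition~\ref{prop:approxaffine} to refine the partition so that the jumps $|\Lambda^i-\Lambda^j|$ across adjacent faces may be made arbitrarily small. The Dirichlet bubbles $v_\e^l$ vanish on $\partial O_l$ and are therefore glued trivially, yielding a single $v_\e\in W^{1,p}_0(O;\R^m)$. The corrector parts $u_\e^l$ are glued by a smooth partition of unity $\chi^l_{r,\eta}$ at scale $\eta$: the resulting error term contains both a convex contribution (the $V$-part, handled by convexity and the assumption $0\in\inter\dom M$ exactly as in Proposition~\ref{prop:gammasupN}) and a nonconvex contribution. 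For the latter, we crucially exploit $W\le C(1+V)$ on the cut-off region, which reduces the estimate to a convex one; the $\Ld^\infty$ smallness of $u_\e^l-(c_l+\Lambda_l\cdot x)$ obtained from the Sobolev embedding for $p>d$ then closes the argument. One passes successively to the limits $\e\downarrow0$, $\eta\downarrow0$, refinement, and $t\uparrow1$, and applies Attouch once more. Simultaneously, the same glued $u_\e$ is a recovery sequence for $J_\e$, since the gluing argument for the convex part is exactly the one of Proposition~\ref{prop:gammasupN}.

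\emph{Step 3 (general $u\in W^{1,p}(O;\R^m)$).} We may assume $I(u;O)<\infty$, so $\nabla u\in\dom\overline V$ almost everywhere. Since $O$ is strongly star-shaped, Lipschitz, $0\in\inter\dom\overline V$, and both $\overline V$ and $\overline W$ are convex/quasiconvex lower semicontinuous with $\overline V\le\overline W\le C(1+\overline V)$ (Lemma~\ref{lem:equZc}), Proposition~\ref{prop:approxw}(ii) delivers a sequence $(u_n)_n$ of continuous piecewise affine functions with $\nabla u_n\in\inter\dom\overline V$ pointwise, $u_n\to u$ strongly in $W^{1,p}(O;\R^m)$, $I(u_n;O)\to I(u;O)$ and $J(u_n;O)\to J(u;O)$ (the latter follows from the former together with $\overline V\le\overline W\le C(1+\overline V)$ and dominated convergence, or directly from the approximation result applied to $\overline V$). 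Step~2 produces recovery sequences $(u_{n,\e},v_{n,\e})$ for each $u_n$; a diagonal extraction via Attouch provides the desired $(u_\e,v_\e)$.

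The main obstacle is Step~2: unlike in Proposition~\ref{prop:gammasupN}, the gluing has to be carried out for the nonconvex functional $I_\e$, where naive convex combinations in the overlap region are not allowed. The key trick is to dominate $W$ by $C(1+V)$ only on the thin cut-off region where $\chi$ is non-trivial, so that the error estimate is reduced to a convex one and the argument of Proposition~\ref{prop:gammasupN} applies verbatim; this is only possible because the convex bound $V$ is available and $0$ lies in the interior of $\dom M$.
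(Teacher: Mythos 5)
Your proposal is correct and follows essentially the same three-step structure and arguments as the paper's proof: Step~1 invokes Step~1 of Lemma~\ref{lem:equZc} and the Birkhoff--Khinchin theorem for the convex part, restricts to rational $\Lambda$, and extends by continuity of $\overline V$ and $\overline W$; Step~2 repeats the gluing of Proposition~\ref{prop:gammasupN} Step~2, reducing the nonconvex error to the convex one via $W\le C(1+V)$ on the cut-off annuli; Step~3 uses Proposition~\ref{prop:approxw}(ii)(c) and a diagonal argument. The only minor imprecision is in the parenthetical of Step~3: the claimed route of deducing $J(u_n;O)\to J(u;O)$ from $I(u_n;O)\to I(u;O)$ via $\overline V\le\overline W\le C(1+\overline V)$ and dominated convergence does not close (pointwise convergence of $\overline V(\nabla u_n)$ fails when $\nabla u$ touches $\partial\dom\overline V$, and one would need the generalized DCT besides), but the alternative route you also offer --- invoking Proposition~\ref{prop:approxw}(ii)(c) so that the \emph{same} approximating sequence converges for both $\overline V$ and $\overline W$ --- is exactly what the paper does, so the proof stands.
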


Recall that the $\Gamma$-$\liminf$ inequality implies the locality of recovery sequences (see the proof of Corollary~\ref{cor:localizlim}). Hence, the $\Gamma$-convergence result on a Lipschitz domain $D$ for Neumann boundary conditions follows from the $\Gamma$-$\limsup$ on a ball $B\supset D$ and the $\Gamma$-$\liminf$ inequality on $B\setminus D$. For the adaptation of Corollary~\ref{cor:dir}, the approach is similar and we leave the details to the reader.

\begin{proof}

\step1 Recovery sequence for affine functions.

In this step, we consider the case when $u=\Lambda\cdot x$. is an affine function. More precisely, we prove the existence of a subset $\Omega'\subset\Omega_1$ of maximal probability with the following property: given a bounded Lipschitz domain $O\subset\R^d$, for all $\omega\in\Omega'$ and all $\Lambda \in\inter\dom\overline W$, there exist sequences $(u_\e)_\e\subset W^{1,p}_\loc(\R^d;\R^m)$ and $(v_\e)_\e\subset W^{1,p}_0(O;\R^m)$ such that $u_\e\cvf{}\Lambda\cdot x$ in $W^{1,p}_{\loc}(\R^d;\R^m)$ and $v_\e\cvf{}0$ in $W^{1,p}(O;\R^m)$, and such that $I_\e(u_\e+v_\e,\omega;O)\to I(\Lambda\cdot x;O)$ and $J_\e(u_\e,\omega;O')\to J(\Lambda\cdot x;O')$ for all bounded domains $O'\subset \R^d$.

Let $\Lambda\in \inter\dom \overline V$. For almost all $\omega\in\Omega_1$, and all bounded domains $O'\subset\R^d$, we have by convexity, the Birkhoff-Khinchin ergodic theorem, definition of $\varphi_{\Lambda'}$, and continuity of $\overline V$ at $\Lambda$:
\begin{align*}
\limsup_{t\uparrow1,\Lambda'\to\Lambda}\lim_{\e\downarrow0} J_\e(t\Lambda'\cdot x+\e t\varphi_{\Lambda'}(\cdot/\e,\omega),\omega;O')&\le\limsup_{\Lambda'\to\Lambda}\lim_{\e\downarrow0} J_\e(\Lambda'\cdot x+\e \varphi_{\Lambda'}(\cdot/\e,\omega),\omega;O')\\
&=|O'|\lim_{\Lambda'\to\Lambda} \overline V(\Lambda')=|O'|\overline V(\Lambda)=J(\Lambda\cdot x;O').
\end{align*}
Combined with the $\Gamma$-$\liminf$ inequality for $J_\e(\cdot,\omega;O')$ towards $J(\cdot;O')$ (for $\omega\in\Omega_1$), this yields
\begin{align}\label{eq:convconvnc}
\lim_{t\uparrow1,\Lambda'\to\Lambda}\lim_{\e\downarrow0} J_\e(t\Lambda'\cdot x+\e t\varphi_{\Lambda'}(\cdot/\e,\omega),\omega;O')&= J(\Lambda\cdot x;O').
\end{align}
By definition of $\overline W$, we may choose sequences $\Lambda_n\to\Lambda$ and $t_n\uparrow1$ such that $\overline W_{t_n}(\Lambda_n)\to\overline W(\Lambda)$. For this choice, \eqref{eq:convconvnc} yields for almost all $\omega$ and all bounded domain $O'\subset\R^d$
\[\lim_{n\uparrow\infty}\lim_{\e\downarrow0} J_\e(t_n\Lambda_n\cdot x+\e t_n\varphi_{\Lambda_n}(\cdot/\e,\omega),\omega;O')=J(\Lambda\cdot x;O').\]
For all $n$ and almost all $\omega$, set $u_{\e,n}^\omega:=t_n\Lambda_n\cdot x+\e t_n\varphi_{\Lambda_n}(\cdot/\e,\omega)$. By Step~1 of the proof of Lemma~\ref{lem:equZc}, for any bounded Lipschitz domains $O\subset\R^d$, there exists a sequence $(v_{\e,n}^\omega)_\e\subset W^{1,p}_0(O;\R^m)$ such that $v_{\e,n}^\omega\cvf{}0$ in $W^{1,p}(O;\R^m)$ and $I_\e(u_{\e,n}^\omega+v_{\e,n}^\omega,\omega;O)\to |O|\overline W_{t_n}(\Lambda_n)$.

By a diagonalization argument, we then conclude that for almost all $\omega$ and all bounded Lipschitz domains $O\subset\R^d$ there exist sequences $(u_\e)_\e\subset W^{1,p}_\loc(\R^d;\R^m)$ and $(v_\e)_\e\subset W^{1,p}_0(O;\R^m)$ such that $u_\e\cvf{}\Lambda\cdot x$ and $v_\e\cvf{}0$ in $W^{1,p}$, and such that $I_\e(u_\e+v_\e,\omega;O)\to I(\Lambda\cdot x;O)$ and $J_\e(u_\e,\omega;O')\to J(\Lambda\cdot x;O')$ for all bounded domains $O'\subset \R^d$.

Now define $\Omega'\subset\Omega_1$ as a subset of maximal probability such that this result holds for all $\Lambda\in \Q^{m\times d}\cap\inter\dom\overline V$ and all $\omega\in\Omega'$. Arguing as in the end of Step~1 of the proof of Proposition~\ref{prop:gammasupN}, and using the continuity of both $\overline W$ and $\overline V$ in the interior of the domain (see Lemma~\ref{lem:equZc}), the conclusion follows.

\medskip
\step2 Recovery sequence for continuous piecewise affine functions.

We now show that, for any $\omega\in\Omega'$, any bounded Lipschitz domain $O\subset\R^d$, and any continuous piecewise affine function $u$ on $O$ with $\nabla u\in\inter\dom\overline V$ pointwise, there exist a sequence $(u_\e)_\e\subset W^{1,p}(O;\R^m)$ and a sequence $(v_\e)_\e\subset W^{1,p}_0(O;\R^m)$ such that $u_\e\cvf{} u$ and $v_\e\cvf{}0$ in $W^{1,p}(O;\R^m)$, and such that $I_\e(u_\e+v_\e,\omega;O)\to I(u;O)$ and $J_\e(u_\e,\omega;O)\to J(\Lambda\cdot x;O)$. This follows from an immediate adaptation of Step~2 of the proof of Proposition~\ref{prop:gammasupN}. Again, the functional $I_\e$ is not convex, but we may use the bound $W\le C(1+V)$ at all points where the cut-off functions are different from $1$ or $0$, and use the convexity of $V$ to estimate the corresponding error terms. We leave the details to the reader.

\medskip
\step3 Recovery sequence for general functions.

We show that, for all $\omega\in\Omega'$, all strongly star-shaped bounded Lipschitz domains $O\subset\R^d$, and all $u\in W^{1,p}(O;\R^m)$, there exist a sequence $(u_\e)_\e\subset W^{1,p}(O;\R^m)$ and a sequence $(v_\e)_\e\subset W^{1,p}_0(O;\R^m)$ such that $u_\e\cvf{}u$ and $v_\e\cvf{} 0$ in $W^{1,p}(O;\R^m)$, and such that $I_\e(u_\e+v_\e,\omega;O)\to I(u;O)$ and $J_\e(u_\e,\omega;O)\to J(u;O)$. Let $O\subset \R^d$ be some fixed strongly star-shaped bounded Lipschitz domain. By the $\Gamma$-$\liminf$ inequality of Proposition~\ref{prop:gammaliminfnc}, we can restrict attention to those $u\in W^{1,p}(O;\R^m)$ that satisfy
\[ I(u;O)=\int_O\overline W(\nabla u(y))dy<\infty,\]
so that $\nabla u\in\dom\overline V$ almost everywhere. Let $u$ be such a function and let $\omega\in\Omega'$ be fixed.

Let $t\in (0,1)$. Since $O$ is Lipschitz and strongly star-shaped, and since $\overline W$ is lower semicontinuous on $\R^{m\times d}$, continuous on $\inter\dom\overline V$, ru-usc, and satisfies $\overline V\le \overline W\le C(1+\overline V)$ (see indeed Lemmas~\ref{lem:lemma} and~\ref{lem:equZc}), the nonconvex approximation result of Proposition~\ref{prop:approxw}(ii)(c) yields a sequence  $(u_{n})_n$ of continuous piecewise affine functions such that $u_{n}\to u$ (strongly) in $W^{1,p}(O;\R^m)$, $ I( u_{n};O)\to  I(u;O)$ and $J( u_{n};O)\to J(u;O)$ as $n\uparrow\infty$, and such that $\nabla u_{n}\in\inter\dom\overline V$ pointwise. Now Step~2 above gives, for any $n$, sequences $(u_{\e,n})_\e\subset W^{1,p}(O;\R^m)$ and $(v_{\e,n})_\e\subset W^{1,p}_0(O;\R^m)$ such that $u_{\e,n}\cvf{}u_{n}$ and $v_{\e,n}\cvf{}0$ in $W^{1,p}(O;\R^m)$, and such that $I_\e(u_{\e,n}+v_{\e,n},\omega;O)\to I(u_{n};O)$ and $J_\e(u_{\e,n},\omega;O)\to J(u_{n};O)$ as $\e\downarrow0$. The result then follows from a diagonalization argument.
\end{proof}

\section{Proof of the improved results}\label{chap:improved}

\subsection{Subcritical case $1<p\le d$}
In this section, we prove Corollary~\ref{cor:subcritp}. We shall use truncations in place of the Sobolev compact embedding. For such truncation arguments to work, we need to restrict to the scalar case and to assume that the domain is fixed, i.e. $\dom V(y,\cdot,\omega)=\dom M$ for almost all $y,\omega$.

\begin{proof}[Proof of Corollary~\ref{cor:subcritp}]
In the proof of Theorem~\ref{th:conv} and Corollary~\ref{cor:dir}, the Sobolev compact embedding into bounded functions is used both in Step~2 of the proof of Proposition~\ref{prop:gammasupN} and in Step~1 of the proof of Corollary~\ref{cor:dir}(i) (see Section~\ref{chap:liftdirbd}). We only display the argument for Proposition~\ref{prop:gammasupN} (the argument for Corollary~\ref{cor:dir}(i) is similar).

We use the notation of Step~2 of the proof of Proposition~\ref{prop:gammasupN}.
For all $s>0$, define the truncation map $T_s:\R\to\R$ as follows:
\begin{align}\label{eq:truncation1D}
T_s(x)=\sign(x)|x|\wedge s=\begin{cases}
s,&\text{if $x\ge s$;}\\x,&\text{if $-s\le x\le s$;}\\-s,&\text{if $x\le-s$;}
\end{cases}
\end{align}
and for all $s>0$ consider the following $s$-truncation of $u_{\e,\kappa,r,\eta}$:
\begin{align}\label{eq:deftruncueps}
u_{\e,\kappa,r,\eta,s}:=T_s(u_{\e,\kappa,r,\eta}-u_{\kappa,r})+u_{\kappa,r}\in W^{1,p}(O;\R).
\end{align}
Since $|tu_{\e,\kappa,r,\eta,s}-u|\le s+|tu_{\kappa,r}-u|$, we may replace~\eqref{eq:Linftyconvaffine} by
\[\lim_{t\uparrow1}\limsup_{r\downarrow0}\limsup_{\kappa\downarrow0}\limsup_{\eta\downarrow0}\limsup_{s\downarrow0}\limsup_{\e\downarrow0}\|tu_{\e,\kappa,r,\eta,s}-u\|_{\Ld^\infty(O)}=0.\]
Since $\nabla u_{\e,\kappa,r,\eta,s}=T_s'(u_{\e,\kappa,r,\eta}-u_{\kappa,r})\nabla u_{\e,\kappa,r,\eta}+(1-T_s'(u_{\e,\kappa,r,\eta}-u_{\kappa,r}))\nabla u_{\kappa,r}$, we deduce by convexity, noting that $T_s'$ takes values in $[0,1]$,
\begin{align}
J_\e(tu_{\e,\kappa,r,\eta,s},\omega;O)&\le \int_OT_s'(u_{\e,\kappa,r,\eta}(y)-u_{\kappa,r}(y))V(y/\e,t\nabla u_{\e,\kappa,r,\eta},\omega)dy\label{eq:useconvextrunc}\\
&\hspace{2cm}+\int_O(1-T_s'(u_{\e,\kappa,r,\eta}(y)-u_{\kappa,r}(y)))V(y/\e,t\nabla u_{\kappa,r},\omega)dy\nonumber\\
&\le J_\e(tu_{\e,\kappa,r,\eta},\omega;O)+|\{y\in O:|u_{\e,\kappa,r,\eta}(y)-u_{\kappa,r}(y)|>s\}|\max_{1\le l\le k}M(t\Lambda_l)\nonumber\\
&\le J_\e(tu_{\e,\kappa,r,\eta},\omega;O)+s^{-p}\|u_{\e,\kappa,r,\eta}-u_{\kappa,r}\|_{\Ld^p(O)}^p\left((1-t)M(0)+\max_{1\le l\le k}M(\Lambda_l)\right).\nonumber
\end{align}
Since by definition (and by the Rellich-Kondrachov theorem) we have $u_{\e,\kappa,r,\eta}\to u_{\kappa,r}$ (strongly) in $\Ld^p(O)$ as $\e\downarrow0$, since the $\Lambda_l$'s all belong to $\dom \overline V$, and since by assumption $\dom M=\dom\overline V$, we deduce, combining this with~\eqref{eq:gammalimsupglueup}, that
\begin{align*}
\lim_{t\uparrow1}\limsup_{r\downarrow0}\limsup_{\kappa\downarrow0}\limsup_{\eta\downarrow0}\limsup_{s\downarrow0}\limsup_{\e\downarrow0}J_\e(tu_{\e,\kappa,r,\eta,s},\omega;O)\le J(u;O).
\end{align*}
The rest of the proof is unchanged.
\end{proof}

\subsection{Minimal soft buffer zone for Dirichlet boundary data}
In this section, we prove Corollary~\ref{cor:dir-bis}. In view of the error term~\eqref{eq:deferrorcorspeed} in the proof of Corollary~\ref{cor:dir}, it seems that the speed of convergence of $\eta$ to $0$ with respect to $\e$ must depend quantitatively on the speed of convergence of $w_\e$ to $0$ in $\Ld^\infty(O;\R^m)$. In the case when the target function is affine ($\Lambda \cdot x$, say), then $w_\e:=\e \varphi_\Lambda(\cdot/\e,\cdot)$ is the rescaling of the corrector and its convergence to zero is strictly related to the sublinearity of $\varphi_\Lambda$ at infinity, cf. Lemma~\ref{lem:sublincor}.
Even in the linear scalar case when  $V(y,\Lambda)=\Lambda \cdot A(y)\Lambda$ for some matrix random field $A$, this sublinear growth can be arbitrary, and we expect that
for all $\gamma<1$, there exists a field $A$ such that $\E[|\varphi_\Lambda(x)|^2]^\frac{1}{2}\sim |x|^\gamma$ as $|x|\gg 1$ (see recent results in~\cite{Glotto-Neukamm-15a} and the example of Gaussian fields with non-integrable covariance).
Yet, if instead of using the corrector $\varphi_\Lambda$ itself, which is in general not stationary and well-behaved, we use a proxy that is stationary, then the size of the buffer zone can be (optimally) reduced, at least for affine target functions, as the following proposition shows.
\begin{prop}\label{prop:statcorbuffer}
If for all $\Lambda\in \R^{m\times d}$ we have
\begin{align}\label{eq:Whomstat}
\overline V(\Lambda)=\inf_{\phi\in W^{1,p}(\Omega;\R^m)}\E[V(0,\Lambda+D\phi,\cdot)],
\end{align}
then the conclusion of Corollary~\ref{cor:dir-bis} holds (and we can further replace $\theta\e$ by any sequence $\eta_\e\downarrow0$ satisfying $\liminf_\e\eta_\e/\e>0$).
\qed
\end{prop}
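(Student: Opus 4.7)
The plan is to mimic the proof of Corollary~\ref{cor:dir}(ii), with the crucial improvement that the general, non-stationary corrector $\varphi_\Lambda$ is replaced by a \emph{stationary} quasi-corrector provided by assumption~\eqref{eq:Whomstat}. Stationarity will yield a sharp $\Ld^p$-control of order $\e$ for the rescaled quasi-corrector, which is precisely what will allow us to forget the $t\uparrow 1$ dilation and to shrink the buffer size $\eta_\e$ down to the natural scale $\e$. I will only detail the convex case below; the nonconvex extension for $W,I_\e,I$ (under $p>d$) follows by a completely analogous construction, using the stationary quasi-corrector of $V$ as building block and adapting the blow-up/relaxation machinery of Theorem~\ref{th:nonconv}.

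For fixed $\Lambda\in\dom\overline V$, a bounded Lipschitz domain $O\subset\R^d$, and a sequence $\eta_\e\downarrow 0$ with $\liminf_\e \eta_\e/\e\ge \theta>0$, assumption~\eqref{eq:Whomstat} provides a sequence $(\phi_k)_k\subset W^{1,p}(\Omega;\R^m)$ with $\E[V(0,\Lambda+D\phi_k,\cdot)]\to\overline V(\Lambda)$ as $k\uparrow\infty$. Using the correspondence~\eqref{eq:linkDnabla}, I will set $\tilde\phi_k(x,\omega):=\phi_k(\tau_{-x}\omega)$ and $w_{\e,k}(x):=\e\tilde\phi_k(x/\e,\omega)$, so that $\nabla w_{\e,k}(x)=(D\phi_k)(\tau_{-x/\e}\omega)$ is a stationary field at scale $\e$. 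For each fixed $k$, the Birkhoff--Khinchin ergodic theorem then yields, for almost every $\omega$, the convergences
\[\int_O|\tilde\phi_k(y/\e,\omega)|^p dy\xrightarrow{\e\downarrow0}|O|\,\E[|\phi_k|^p],\quad \int_O V(y/\e,\Lambda+\nabla w_{\e,k}(y),\omega)dy\xrightarrow{\e\downarrow0}|O|\,\E[V(0,\Lambda+D\phi_k,\cdot)],\]
and similarly for $\int_O |\nabla w_{\e,k}|^p$. Picking a cut-off $\chi_{\eta_\e}$ with values in $[0,1]$ that equals $1$ on $O_{\eta_\e}:=\{y\in O:\dist(y,\partial O)>\eta_\e\}$, vanishes on $\R^d\setminus O$, and satisfies $|\nabla\chi_{\eta_\e}|\le C'/\eta_\e$, my candidate recovery sequence will be $v_{\e,k}:=\chi_{\eta_\e}w_{\e,k}\in W^{1,p}_0(O;\R^m)$.

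On $O_{\eta_\e}$ one has $\Lambda+\nabla v_{\e,k}=\Lambda+\nabla w_{\e,k}$ and $V_\e^{O,\eta_\e}=V_\e$, so the second display above gives $\fint_{O_{\eta_\e}}V(y/\e,\Lambda+\nabla w_{\e,k},\omega)dy\to\E[V(0,\Lambda+D\phi_k,\cdot)]$. On the buffer $O\setminus O_{\eta_\e}$ the energy reduces to $|\cdot|^p$, and using $\e/\eta_\e\le1/\theta$ eventually, a direct expansion yields
\[\int_{O\setminus O_{\eta_\e}}|\Lambda+\nabla v_{\e,k}|^p dy\le C|\Lambda|^p|O\setminus O_{\eta_\e}|+C\int_{O\setminus O_{\eta_\e}}|\nabla w_{\e,k}|^p dy+\frac{C}{\theta^p}\int_{O\setminus O_{\eta_\e}}|\tilde\phi_k(y/\e,\omega)|^p dy.\]
The first term vanishes because $|O\setminus O_{\eta_\e}|\to 0$; the last two will vanish as well by writing $\int_{O\setminus O_{\eta_\e}}=\int_O-\int_{O_{\eta_\e}}$ and applying the ergodic-theorem convergences above on both $O/\e$ and $O_{\eta_\e}/\e$ (the latter satisfying the Van Hove property under $\liminf_\e \eta_\e/\e\ge\theta>0$), whose integrals both converge to the same limit $|O|\,\E[\cdot]$ since $|O_{\eta_\e}|\to|O|$, so the difference vanishes. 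Taking $\limsup_\e$, then $k\uparrow\infty$, and diagonalizing $k$ with respect to $\e$, I will obtain
\[\limsup_{\e\downarrow0}\inf_{v\in W^{1,p}_0(O;\R^m)}\fint_O V_\e^{O,\eta_\e}(y,\Lambda+\nabla v,\omega)dy\le\overline V(\Lambda).\]

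The matching $\liminf$ bound will follow easily from Proposition~\ref{prop:gammaliminfneu}: any sequence $(v_\e)\subset W^{1,p}_0(O;\R^m)$ with bounded energy is bounded in $W^{1,p}$ by the $p$-th order lower bound, hence admits a weak cluster point $\bar v\in W^{1,p}_0(O;\R^m)$; applied on any subdomain $O'\Subset O$ (eventually contained in $O_{\eta_\e}$) and then letting $O'\uparrow O$, the $\Gamma$-$\liminf$ inequality for $J_\e$ combined with Jensen's inequality for the convex $\overline V$ (together with $\fint_O\nabla\bar v=0$) will give $\overline V(\Lambda)$. The hard part of the argument is the boundary-layer vanishing of the last two integrals above: this is precisely where stationarity of the quasi-corrector is indispensable, since for the non-stationary $\varphi_\Lambda$ the rescaling $\e\varphi_\Lambda(\cdot/\e,\omega)$ has no quantitative $\Ld^p$-decay rate (sublinearity at infinity being arbitrarily slow in general), which forces both the $t\uparrow1$ dilation and the much larger buffer $\eta\downarrow0$ taken \emph{after} $\e\downarrow0$ that appear in Corollary~\ref{cor:dir}(ii).
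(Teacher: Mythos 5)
Your proof is correct and follows the same strategy as the paper's: use~\eqref{eq:Whomstat} to replace the (in general non-stationary, arbitrarily slowly sublinear) corrector $\varphi_\Lambda$ by a stationary quasi-corrector, exploit the Birkhoff--Khinchin ergodic theorem to get a quantitative $\Ld^p$-decay of order $\e$ for the rescaled field, multiply by a cut-off $\chi_{\eta_\e}$ supported in $O_{\eta_\e}$, and close the argument via a $\Gamma$-$\liminf$ plus diagonalization.

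One small remark on the boundary-layer step. You decompose $\int_{O\setminus O_{\eta_\e}}=\int_O-\int_{O_{\eta_\e}}$ and invoke a Van Hove property for the varying domains $O_{\eta_\e}/\e$, attributing it to the condition $\liminf_\e\eta_\e/\e>0$. This is heavier than necessary and slightly misallocates where the condition on $\eta_\e$ actually matters. The paper's argument is more elementary: for any fixed $\eta>0$ and $\e$ small enough one has $O\setminus O_{\eta_\e}\subset O\setminus O_\eta$, the ergodic theorem on the \emph{fixed} domain $O\setminus O_\eta$ gives $\int_{O\setminus O_\eta}|\nabla w_{\e,k}|^p\to|O\setminus O_\eta|\,\E[|D\phi_k|^p]$, and then $\eta\downarrow0$ finishes; this works for \emph{every} sequence $\eta_\e\downarrow0$, no rate condition needed. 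The rate condition $\liminf_\e\eta_\e/\e>0$ is used exactly once and elsewhere: in bounding the term $|w_{\e,k}\nabla\chi_{\eta_\e}|^p\le (C'\e/\eta_\e)^p|\tilde\phi_k(\cdot/\e,\omega)|^p$, where the factor $(\e/\eta_\e)^p$ must stay bounded. You do in fact carry out this second estimate correctly, so the proof goes through; it is just that the Van Hove invocation is a detour, and the stated reason for why it applies is not quite the right one.

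The $\Gamma$-$\liminf$ bound is handled correctly (localize to $O'\Subset O$, which is eventually in $O_{\eta_\e}$, apply Proposition~\ref{prop:gammaliminfneu}, pass $O'\uparrow O$, then Jensen on $\overline V$ with $\fint_O\nabla\bar v=0$ since $\bar v\in W^{1,p}_0$). The nonconvex extension is asserted but not carried out; the paper is equally terse on this point, so this is acceptable.
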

Identity~\eqref{eq:Whomstat} is essentially a regularity statement on quasi-minimizers of $f\mapsto \E[V(0,\Lambda+f,\cdot)]$ on $\Ld^p_\pot(\Omega)^m$. By Poincaré's inequality, periodic gradients with mean-value zero are gradients of periodic functions, and hence in that case the space $F^p_\pot(\Omega)$ coincide with $\{D\phi:\phi\in W^{1,p}_\tau(\Omega)^m\}$, so that~\eqref{eq:Whomstat} is trivially satisfied. This already proves Corollary~\ref{cor:dir-bis} under the additional assumption~(1).

On the other hand, the following result shows that~\eqref{eq:Whomstat} is also satisfied in the scalar case $m=1$ if the domain of $V$ is fixed, in which case truncations are available. This proves Corollary~\ref{cor:dir-bis} under the additional assumption~(2).
\begin{lem}\label{lem:reg-quasi}
If $m=1$ and if $\dom V(y,\cdot,\omega)=\dom M$ is open for almost all $y,\omega$, then \eqref{eq:Whomstat} holds true for all $\Lambda$.\qed
\end{lem}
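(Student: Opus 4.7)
The plan is to leverage the probabilistic formula~\eqref{eq:homogformproba}. The inequality $\overline V(\Lambda)\le\inf_\phi\E[V(0,\Lambda+D\phi,\cdot)]$ is immediate since $\{D\phi:\phi\in W^{1,p}(\Omega)\}\subset F^p_\pot(\Omega)$. It therefore suffices, assuming $\overline V(\Lambda)<\infty$ (else there is nothing to prove), to construct from a minimizer $g\in F^p_\pot(\Omega)$ (provided by Lemma~\ref{lem:sublincor}) a sequence $(\phi_R)_R\subset W^{1,p}(\Omega)$ with $\limsup_R\E[V(0,\Lambda+D\phi_R,\cdot)]\le\overline V(\Lambda)$.

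The key construction is a \emph{spatial averaging of the scalar corrector}. Let $\varphi\in\Mes(\Omega;W^{1,p}_\loc(\R^d))$ satisfy $\nabla\varphi(0,\cdot)=g$, normalized by $\varphi(0,\cdot)=0$. Integrating the stationarity relation $\nabla\varphi(x+z,\omega)=\nabla\varphi(x,\tau_{-z}\omega)$ in $x$ shows that the defect $c(z,\omega):=\varphi(x+z,\omega)-\varphi(x,\tau_{-z}\omega)$ is independent of $x$; consequently, the random variable
\[\phi_R(\omega):=-\fint_{B_R}\varphi(y,\omega)\,dy\]
admits as stationary extension the field $x\mapsto\varphi(x,\omega)-\fint_{B_R(x)}\varphi(y,\omega)\,dy$. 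Poincar\'e's inequality on $B_R$ combined with the $\Ld^p$-stationarity of $\nabla\varphi$ then yields $\phi_R\in W^{1,p}(\Omega)$, and a direct computation gives
\[D\phi_R=g-a_R,\qquad a_R(\omega):=\fint_{B_R}g(\tau_{-y}\omega)\,dy.\]
The $\Ld^p$-Birkhoff--Khinchin ergodic theorem ensures $a_R\to\E[g]=0$ in $\Ld^p(\Omega;\R^d)$, and almost surely up to extraction.

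The main obstacle is the upper bound $\limsup_R\E[V(0,\Lambda+g-a_R,\cdot)]\le\overline V(\Lambda)$: although $a_R\to 0$ a.s.\ and in $\Ld^p$, the integrand may take infinite values and naive dominated convergence fails on the exceptional event where $|a_R|$ exceeds the (random) distance of $\Lambda+g$ to $\partial\dom M$. This is where the hypotheses $m=1$ and openness of $\dom M$ both enter. The plan is to combine two tools. First, a \emph{convex shrinkage} $g\leadsto(1-\e)g$: since we may reduce to $\Lambda\in\inter\dom M$ (picking $\delta>0$ with $B_\delta(\Lambda)\subset\dom M$), the decomposition
\[\Lambda+(1-\e)(g-a_R)=(1-\e)(\Lambda+g)+\e\bigl(\Lambda-(1-\e)a_R/\e\bigr)\]
yields, on the favorable event $\{|a_R|<\e\delta/(1-\e)\}$, the uniform bound $V(0,\Lambda+(1-\e)(g-a_R),\cdot)\le(1-\e)V(0,\Lambda+g,\cdot)+\e\sup_{B_\delta(\Lambda)}M$ by convexity of $V(0,\cdot,\omega)$. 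Second, the \emph{scalar truncation}~\eqref{eq:truncation1D} applied directly to the stationary function $\phi_R$: the function $\widetilde\phi_{R,n}:=T_n((1-\e)\phi_R)$ lies in $W^{1,p}(\Omega)\cap\Ld^\infty(\Omega)$ (this step crucially uses $m=1$, as in the proof of Corollary~\ref{cor:subcritp}) and its stationary gradient vanishes on the bad event $\{|\phi_R|\ge n/(1-\e)\}$, so that the argument of $V$ equals $\Lambda\in\dom M$ there and stays within the zone of finiteness of $V$. A diagonal extraction on $R,n,\e$ (using $\E[V(0,\Lambda+g,\cdot)]=\overline V(\Lambda)$ and continuity of $M$ on $B_\delta(\Lambda)$) should then close the argument. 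The delicate point, in which the \textbf{scalar + fixed open domain} hypothesis is essential, is to verify that the contribution to the energy of the small-probability bad event $\{|a_R|\ge\e\delta/(1-\e)\}\cap\{|\phi_R|<n/(1-\e)\}$ vanishes in the triple limit despite the a priori infiniteness of $V$ there.
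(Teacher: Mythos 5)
Your construction of $\phi_R$ with $D\phi_R=g-a_R$ is correct, and shrinkage plus truncation are indeed the right scalar tools. However, the gap you flag as ``the delicate point'' is fatal to the argument as stated and cannot be closed with the combination you propose. Truncating $\phi_R$ controls $\phi_R(\omega)=-\fint_{B_R}\varphi(y,\omega)\,dy$, not $a_R(\omega)=\fint_{B_R}g(\tau_{-y}\omega)\,dy$; these are not comparable quantities (one averages the corrector, the other its gradient field) and there is no deterministic pointwise bound on $|a_R|$. On the event $\{|a_R|\ge\e\delta/(1-\e)\}\cap\{|\phi_R|<n/(1-\e)\}$ the stationary gradient of your truncated test function is still $(1-\e)(g-a_R)$, whose argument $\Lambda+(1-\e)(g-a_R)$ may lie outside $\dom M$; there $V=+\infty$, so $\E[V(0,\Lambda+D\widetilde\phi_{R,n},\cdot)]=+\infty$ regardless of the smallness of the probability, and no diagonal extraction in $\e,n,R$ can recover the bound.

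The paper avoids this by truncating and cutting off the corrector \emph{in physical space before} forming the stationary test field, and by using the \emph{orbit average}
\[w_{R,r}^s(\omega)=\fint_{Q_R}\chi_{R,r}(y)\,T_s\Big(\varphi_\Lambda(y,\tau_y\omega)-\fint_{Q_R}\varphi_\Lambda(\cdot,\tau_y\omega)\Big)\,dy\]
rather than your spatial average (note the $\tau_y\omega$ inside the integrand). This orbit average is what makes the decisive step possible: Jensen's inequality for the convex map $V(0,t\Lambda+t\,\cdot,\omega)$ pulls the $\fint_{Q_R}$ \emph{outside} $V$, and Fubini plus stationarity then reduce $\E[V(0,t\Lambda+tDw_{R,r}^s,\cdot)]$ to a spatial average of pointwise-controllable terms. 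In that pointwise estimate, the cut-off shell, the truncation region and the boundary term $T_su_R\,\nabla\chi_{R,r}$ are each controlled \emph{deterministically} --- in particular $|T_su_R\,\nabla\chi_{R,r}|\le 2s/r$ pointwise --- so that after multiplying by $t/(1-t)$ and tuning $s,r,t$ the error lands in a fixed ball $B_\delta\subset\dom M$. Your $a_R$ carries no such deterministic bound, which is precisely why the analogous estimate fails. To repair your argument you would need to truncate $\varphi_\Lambda$ itself before averaging and switch to the orbit average, so that the Jensen step becomes available.
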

\begin{proof}[Proof of Proposition~\ref{prop:statcorbuffer}]
For all $\Lambda\in\R^{m\times d}$, by assumption~\eqref{eq:Whomstat}, for all $\delta>0$, there exists a stationary random field $\varphi_{\Lambda,\delta}\in W^{1,p}(\Omega;\R^m)$ such that $\E[V(0,\Lambda+\nabla\varphi_{\Lambda,\delta}(0,\cdot),\cdot)]\le\overline V(\Lambda)+\delta$. Set $u_{\e}^{\Lambda,\delta,\omega}:=\e\varphi_{\Lambda,\delta}(\cdot/\e,\omega)$. By stationarity, for almost all $\omega$, the Birkhoff-Khinchin ergodic theorem asserts that, for any bounded domain $O\subset\R^d$,
\begin{align}\label{eq:convepsunabla}
\lim_{\e\downarrow0}\int_O|u_\e^{\Lambda,\delta,\omega}/\e|^p=|O|\E[|\varphi_{\Lambda,\delta}|^p],\qquad\lim_{\e\downarrow0}\int_O|\nabla u_\e^{\Lambda,\delta,\omega}|^p=|O|\E[|\nabla\varphi_{\Lambda,\delta}|^p].
\end{align}
Let $O\subset\R^d$ be some fixed bounded domain. For $\eta>0$, set $O_\eta:=\{x\in O:\dist(x,\partial O)>\eta\}$. For any sequence $\eta_\e\downarrow0$, \eqref{eq:convepsunabla} yields
\begin{align}\label{eq:convepsunabla2}
\lim_{\e\downarrow0}\int_{O\setminus O_{\eta_\e}}|u_\e^{\Lambda,\delta,\omega}/\e|^p=0=\lim_{\e\downarrow0}\int_{O\setminus O_{\eta_\e}}|\nabla u_\e^{\Lambda,\delta,\omega}|^p.
\end{align}
Fix such a sequence $\eta_\e\downarrow0$. As in Step~1 of the proof of Proposition~\ref{prop:gammasupN}, for all $\Lambda\in\R^{m\times d}$, we obtain the following, for some subset $\Omega_\Lambda\subset\Omega$ of maximal probability: for all $\omega\in\Omega_\Lambda$ and all $\delta>0$, there is a sequence $(u_\e^{\Lambda,\delta,\omega})_\e\subset W^{1,p}(O;\R^m)$ such that $u_\e^{\Lambda,\delta,\omega}\cvf{}0$ in $W^{1,p}(O;\R^m)$ as $\e\downarrow0$, $\limsup_\e J_\e(\Lambda\cdot x+u_\e^{\Lambda,\delta,\omega},\omega;O)\le J(\Lambda\cdot x;O)+\delta$, and such that~\eqref{eq:convepsunabla2} is satisfied.

Let $\Lambda$ and $\omega\in\Omega_\Lambda$ be fixed, and let $(u_\e^{\Lambda,\delta,\omega})_\e$ be as above. For all $\e>0$, choose a smooth cut-off function $\chi_\e$ with values in $[0,1]$, equal to $1$ on $O_{\eta_\e}=\{x\in O:d(x,\partial O)>\eta_\e\}$, vanishing outside $O$, and with $|\nabla \chi_\e|\le C'/\eta_\e$ for some constant $C'$. Defining $v_\e^{\Lambda,\delta,\omega}:=\chi_\e u_\e^{\Lambda,\delta,\omega}\in W^{1,p}_0(O;\R^m)$, we obtain
\begin{align*}
J_\e^{\eta_\e}(\Lambda\cdot x+v_\e^{\Lambda,\delta,\omega},\omega;O)&=J_\e(\Lambda\cdot x+v_\e^{\Lambda,\delta,\omega},\omega;O_{\eta_\e})+\int_{O\setminus O_{\eta_\e}}|\Lambda+\nabla v_\e^{\Lambda,\delta,\omega}|^p\\
&\le J_\e(\Lambda\cdot x+v_\e^{\Lambda,\delta,\omega},\omega;O)+3^{p-1}|\Lambda|^p|O\setminus O_{\eta_\e}|\\
&\hspace{2cm}+3^{p-1}\int_{O\setminus O_{\eta_\e}}(|\nabla u_\e^{\Lambda,\delta,\omega}|^p+|C'u_\e^{\Lambda,\delta,\omega}/\eta_\e|^p),
\end{align*}
and hence, if the sequence $\eta_\e\downarrow0$ is further chosen such that $\liminf_\e\eta_\e/\e>0$,
\begin{align*}
\limsup_{\e\downarrow0}J_\e^{\eta_\e}(\Lambda\cdot x+v_\e^{\Lambda,\delta,\omega},\omega;O)&\le \limsup_{\e\downarrow0}J_\e(\Lambda\cdot x+v_\e^{\Lambda,\delta,\omega},\omega;O)\le J(\Lambda\cdot x;O)+\delta.
\end{align*}
Therefore, $\limsup_\delta\limsup_\e J_\e^{\eta_\e}(\Lambda\cdot x+v_\e^{\Lambda,\delta,\omega},\omega;O)\le J(\Lambda\cdot x;O)$. Combined with the $\Gamma$-$\liminf$ inequality of Proposition~\ref{prop:gammaliminfneu} and a diagonalization argument, this proves the result.
\end{proof}

\begin{proof}[Proof of Lemma~\ref{lem:reg-quasi}]
We split the proof into two steps.

\medskip
\step1 Preliminary: we claim that it suffices to prove that, for all $\Lambda\in \dom\overline V$,
\begin{align}\label{eq:resstep1dens}
\limsup_{t\uparrow1}\inf_{\phi\in W^{1,p}(\Omega)}\E[V(0,t\Lambda+D \phi,\cdot)]\le \overline V(\Lambda).
\end{align}

Define $\overline V'(\Lambda):=\inf_{\phi\in W^{1,p}(\Omega)}\E[V(0,\Lambda+D\phi,\cdot)]$. By definition $\overline V'(\Lambda)\ge\overline V(\Lambda)$ for all $\Lambda$, and hence property~\eqref{eq:resstep1dens} together with the lower semicontinuity of $\overline V$ directly yields $\overline V(\Lambda)=\lim_{t\uparrow1}\overline V'(t\Lambda)$ for all $\Lambda$ (and in particular the limit exists). Since $\overline V'$ is obviously convex, it is continuous on the interior of its domain. Since the domain is assumed to be open, this yields $\overline V(\Lambda)=\overline V'(\Lambda)$ for all $\Lambda$.

\medskip
\step2 Proof of~\eqref{eq:resstep1dens}.

Let $\Lambda\in \dom\overline V$ be fixed. Lemma~\ref{lem:sublincor} gives a measurable corrector $u:=\varphi_\Lambda\in\Mes(\Omega;W^{1,p}_{\loc}(\R^d))$ such that $\nabla u\in F^p_\pot(\Omega)$ and $\overline V(\Lambda)=\E[V(0,\Lambda+ \nabla u(0,\cdot),\cdot)]$. For all $R>r>0$, choose a smooth cut-off function $\chi_{R,r}$ taking values in $[0,1]$, equal to $1$ on $Q_{R-r}$, vanishing outside $Q_{R}$ and satisfying $|\nabla \chi_{R,r}|\le 2/r$. Also recall the definition~\eqref{eq:truncation1D} of the truncation $T_s$. We then set
\[u_{R}(x,\omega)=u(x,\omega)-\fint_{Q_R}u(\cdot,\omega),\qquad v_{R,r}^s(x,\omega)=\chi_{R,r}(x)\,T_su_R(x,\omega),\]
and
\[w_{R,r}^s(x,\omega)=\frac1{|Q_R|}\int_{\R^d}v_{R,r}^s(x+y,\tau_y\omega)dy=\fint_{-x+Q_R}v_{R,r}^s(x+y,\tau_y\omega)dy.\]
Clearly, $w_{R,r}^s$ is well-defined, stationary, and belongs to $W^{1,p}(\Omega)$, with
\[\nabla w_{R,r}^s(x,\omega)=\fint_{-x+Q_R}\nabla v_{R,r}^s(x+y,\tau_y\omega)dy.\]
Let $t\in[0,1)$. By Jensen's inequality,
\begin{align*}
K_{R,r}^s(t):=\E[V(0,t\Lambda+t\nabla w_{R,r}^s(0,\cdot),\cdot)]&=\E\left[V\left(0,t\Lambda+t\fint_{Q_R}\nabla v_{R,r}^s(y,\tau_y\cdot)dy,\cdot\right)\right]\\
&\le\E\left[\fint_{Q_R}V(0,t\Lambda+t\nabla v_{R,r}^s(y,\tau_y\cdot),\cdot)dy\right],
\end{align*}
and hence, by stationarity and the Fubini theorem,
\begin{align*}
K_{R,r}^s(t)&\le\E\left[\fint_{Q_R}V(y,t\Lambda+t\nabla v_{R,r}^s(y,\cdot),\cdot)dy\right].
\end{align*}
Decomposing
\begin{align*}
t\Lambda+t\nabla v_{R,r}^s(y,\omega)&=t\Lambda+t\chi_{R,r}(y)\nabla T_su_R(y,\omega)+(1-t)\frac t{1-t}\nabla \chi_{R,r}(y) T_su_R(y,\omega)\\
&=t\chi_{R,r}(y) T_s'(u_R(y,\omega))(\Lambda+\nabla u(y,\omega))+t(1-\chi_{R,r}(y)) T_s'(u_R(y,\omega))\Lambda\\
&\qquad+t(1-T_s'(u_R(y,\omega)))\Lambda+(1-t)\frac t{1-t}\nabla \chi_{R,r}(y) T_su_R(y,\omega),
\end{align*}
with $T_s'$ taking values in $[0,1]$, we may then bound by convexity
\begin{align}\label{eq:K_R,r^s(t)}
K_{R,r}^s(t)&\le\E\left[\fint_{Q_R}V(y,\Lambda+\nabla u(y,\cdot),\cdot)dy\right]+(1-t)E_{R,r}^s(t)\nonumber\\
&\qquad+M(\Lambda)\fint_{Q_R}(1-\chi_{R,r})+M(\Lambda)\E\left[\fint_{Q_R}(1-T_s'(u_R(y,\cdot)))\right],
\end{align}
where the error term reads
\[E_{R,r}^s(t)=\E\left[\fint_{Q_R}M\left(\frac{t}{1-t}\nabla \chi_{R,r}(y) T_su_R(y,\omega)\right)dy\right].\]
By stationarity of $\nabla u$, note that
\begin{align}\label{eq:K_R,r^s(t)-1}
\E\left[\fint_{Q_R}V(y,\Lambda+\nabla u(y,\cdot),\cdot)dy\right]=\E[V(0,\Lambda+\nabla u(0,\cdot),\cdot)]=\overline V(\Lambda).
\end{align}
For the error term, note that
\begin{align}\label{eq:K_R,r^s(t)-2}
\left\|\frac t{1-t}\nabla \chi_{R,r}(\cdot)T_su_R(\cdot,\omega)\right\|_{\Ld^\infty(O)}\le\frac {2t}{1-t}\frac sr.
\end{align}
It remains to treat the last two terms of~\eqref{eq:K_R,r^s(t)}. Noting that $\fint_{Q_R}(1-\chi_{R,r})=R^{-d}(R^d-(R-r)^d)\le dr/R$, we obtain
\begin{align}\label{eq:K_R,r^s(t)-3}
\fint_{Q_R}(1-\chi_{R,r})+\E\left[\fint_{Q_R}(1-T_s'(u_R(y,\cdot)))\right]&\le \frac{dr}R+\E\left[\fint_{Q_R}\mathds1_{|u_R(y)|\ge s}dy\right]\\
&\le \frac{dr}R+\int_Q\p\left[\frac1R\left|u(Ry,\cdot)-\fint_Qu(Rz,\cdot)dz\right|\ge \frac sR\right]dy.\nonumber
\end{align}
Lemma~\ref{lem:sublincor} (together with the Rellich-Kondrachov theorem) gives $\frac1R|u(R\cdot,\omega)-\fint_Qu(Rz,\omega)dz|\to0$ (strongly) in $\Ld^p(Q)$ as $R\uparrow\infty$, for almost all $\omega$. Hence up to an extraction in $R$ (implicit in the sequel) we deduce that, for almost all $y\in Q$, $\frac1R|u(Ry,\cdot)-\fint_Qu(Rz,\cdot)dz|\to0$ almost surely. Since almost sure convergence implies convergence in probability, we deduce by dominated convergence, for all $\e>0$, 
\[\lim_{R\uparrow\infty}\int_Q\p\left[\frac1R\left|u(Ry,\cdot)-\fint_Qu(Rz,\cdot)dz\right|\ge \e\right]dy=0.\]
A diagonalization argument then gives a sequence $\e_R\downarrow0$ such that
\begin{align}\label{eq:K_R,r^s(t)-3'}
\lim_{R\uparrow\infty}\int_Q\p\left[\frac1R\left|u(Ry,\cdot)-\fint_Qu(Rz,\cdot)dz\right|\ge \e_R\right]dy=0.
\end{align}
Choose $s=s_R:=R\e_R$ and $r=r_R:=R\sqrt{\e_R}$. By assumption, there exists some $\delta>0$ with $\adh B_\delta\subset\inter\dom M$. By~\eqref{eq:K_R,r^s(t)-2}, for all $t\in[0,1)$, there is some $R_t>0$ such that for all $R>R_t$
\begin{align}\label{eq:K_R,r^s(t)-2'}
\left\|\frac t{1-t}\nabla \chi_{R,r_R}(\cdot)T_{s_R}u_R(\cdot,\omega)\right\|_{\Ld^\infty(O)}\le\frac {2t}{1-t}\sqrt{\e_R}<\delta.
\end{align}
Combining this with~\eqref{eq:K_R,r^s(t)}, \eqref{eq:K_R,r^s(t)-1}, \eqref{eq:K_R,r^s(t)-3}, \eqref{eq:K_R,r^s(t)-3'}, and noting that $M(\Lambda)<\infty$ follows from the choice $\Lambda\in\dom\overline V$, we obtain
\[\limsup_{t\uparrow1}\limsup_{R\uparrow\infty}K_{R,r_R}^{s_R}(t)\le \overline V(\Lambda),\]
and the result~\eqref{eq:resstep1dens} follows.
\end{proof}

\subsection{Approximation by periodization in law}
This last section is devoted to the proof of Corollary~\ref{cor:per}. For that purpose, we first prove the following version of the $\Gamma$-$\limsup$ inequality of Proposition~\ref{prop:gammasupN} in expectation. As Lemma~\ref{lem:sublincor} only gives almost sure (and not $\Ld^1(\Omega)$) control of the sublinearity of the corrector, we need to use truncations to prove this result (and therefore restrict to the scalar case with fixed domain).

\begin{prop}[$\Gamma$-$\limsup$ inequality in expectation with Neumann boundary data]\label{prop:gammasupNexpect}
Let $V,J_\e,J,M$ be as in Theorem~\ref{th:conv} for some $p>1$. Also assume that we have $m=1$ and $\dom V(y,\cdot,\omega)=\dom M$ for almost all $y,\omega$.
Then, for all bounded Lipschitz domains $O\subset\R^d$ and all $u\in W^{1,p}(O)$, there exists a sequence $(u_\e)_\e\subset \Mes(\Omega;W^{1,p}(O))$ such that $\E[J_\e(u_\e,\cdot;O)]\to J(u;O)$ and $u_\e\to u$ in $\Ld^\infty(\Omega;\Ld^\infty(O))$.
\qed
\end{prop}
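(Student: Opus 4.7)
The proof follows the architecture of Proposition~\ref{prop:gammasupN}, but requires two upgrades to reach the stronger $\Ld^\infty(\Omega;\Ld^\infty(O))$ topology in place of almost-sure weak $W^{1,p}$ convergence: (i) scalar truncations (available because $m=1$ and $\dom V(y,\cdot,\omega)=\dom M$), which decouple $\Ld^\infty$-control of the test function from the energy estimate, as in the proof of Corollary~\ref{cor:subcritp}; and (ii) a Fatou-plus-dominated-convergence device (as in Step~2 of the proof of Lemma~\ref{lem:reg-quasi}) that promotes the almost-sure $\Ld^p$-sublinearity of the corrector (Lemma~\ref{lem:sublincor}) into a quantitative probabilistic bound valid uniformly in $\omega$.

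\emph{Step~1 (affine case).} For $\Lambda\in\inter\dom M$, define
\[u_\e^\Lambda(x,\omega):=\Lambda\cdot x+T_{s_\e}\bigl(\e\varphi_\Lambda(x/\e,\omega)\bigr),\]
where $T_s$ is the scalar truncation at level $s$ and $(s_\e)_\e\downarrow 0$ is to be chosen. Trivially $\|u_\e^\Lambda-\Lambda\cdot x\|_{\Ld^\infty(\Omega;\Ld^\infty(O))}\le s_\e$. Since $\nabla u_\e^\Lambda=\Lambda+T_{s_\e}'(\e\varphi_\Lambda)\nabla\varphi_\Lambda(\cdot/\e,\omega)$ with $T_{s_\e}'\in\{0,1\}$, convexity and the fixed-domain assumption yield
\[V(y/\e,\nabla u_\e^\Lambda,\omega)\le V(y/\e,\Lambda+\nabla\varphi_\Lambda,\omega)+M(\Lambda)\,\mathds1_{|\e\varphi_\Lambda(y/\e,\omega)|>s_\e},\]
so that, by the Birkhoff-Khinchin theorem and the definition of $\varphi_\Lambda$,
\[\limsup_{\e\downarrow 0}\E[J_\e(u_\e^\Lambda;O)]\,\le\, |O|\overline V(\Lambda)+M(\Lambda)\limsup_{\e\downarrow 0}\int_O\p\bigl[|\e\varphi_\Lambda(y/\e,\cdot)|>s_\e\bigr]dy.\]
By Lemma~\ref{lem:sublincor}, $\e\varphi_\Lambda(\cdot/\e,\omega)\to 0$ in $\Ld^p(O)$ almost surely, and up to extraction this convergence holds pointwise in $y$ almost surely; dominated convergence on the bounded indicator then gives $\int_O\p[|\e\varphi_\Lambda(y/\e,\cdot)|>s]dy\to 0$ for each fixed $s>0$, whence a diagonalization yields $s_\e\downarrow 0$ with $\limsup_{\e\downarrow 0}\E[J_\e(u_\e^\Lambda;O)]\le J(\Lambda\cdot x;O)$. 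The matching lower bound follows from Proposition~\ref{prop:gammaliminfneu} applied $\omega$-by-$\omega$ (noting that $\Ld^\infty$-convergence implies $\Ld^p$-convergence, and that $J_\e(u_\e^\Lambda,\omega;O)\to\infty$ whenever $(\nabla u_\e^\Lambda(\cdot,\omega))_\e$ is unbounded in $\Ld^p$) combined with Fatou's lemma in $\omega$.

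\emph{Step~2 (gluing and approximation).} For continuous piecewise affine $u$ with $\nabla u\in\inter\dom M$ pointwise, glue the affine recovery sequences of Step~1 via a partition of unity, as in Step~2 of the proof of Proposition~\ref{prop:gammasupN}. The $\Ld^\infty(\Omega;\Ld^\infty(O))$ control propagates through the gluing since the cut-off functions are deterministic, and the convexity-based bound on the gluing error relies on the finiteness of $M$ in a neighborhood of $0$. For general $u\in W^{1,p}(O)$, reduce to a ball via Corollary~\ref{cor:localizlim}, first apply a scalar truncation of $u$ (valid in our setting to ensure $\Ld^\infty$-approximation and preserving the energy by the fixed-domain assumption), and then invoke Proposition~\ref{prop:approxw}(ii) to produce bounded continuous piecewise affine approximations of the truncated target that converge to it in $W^{1,p}$, in $\Ld^\infty$, and in energy. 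A diagonalization concludes.

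The main obstacle is Step~1: converting the qualitative almost-sure sublinearity of $\varphi_\Lambda$ into the quantitative integrated probability estimate $\int_O\p[|\e\varphi_\Lambda(y/\e,\cdot)|>s_\e]dy\to 0$ along some explicit null sequence $s_\e$. The scalar-truncation trick is essential here, as it absorbs the bad-event mass into a controllable $M(\Lambda)$-contribution without perturbing the energy balance; without the fixed-domain assumption, truncating the corrector could drive the gradient outside $\dom V$ and produce infinite energy, breaking the whole strategy.
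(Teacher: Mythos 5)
Your approach matches the paper's in all essentials: scalar truncation (available because $m=1$ and $\dom V(y,\cdot,\omega)=\dom M$) to enforce $\Ld^\infty(\Omega;\Ld^\infty(O))$ control, Fubini together with stationarity to compute the expected energy of the affine recovery, the ergodic-theoretic sublinearity of the corrector to kill the error term, and the partition-of-unity gluing from Step~2 of Proposition~\ref{prop:gammasupN}. The one structural difference is cosmetic: you truncate $\e\varphi_\Lambda(\cdot/\e,\omega)$ already at the affine stage (Step~1), which forces a $\Lambda$-dependent diagonal choice of levels $s_\e$ but produces uniformly bounded affine recoveries before gluing, whereas the paper keeps the untruncated corrector at the affine stage and instead truncates the deviations $u^i_{\e,\kappa,r}-(c^i_{\kappa,r}+\Lambda^i_{\kappa,r}\cdot x)$ with a single level $s$ inside the gluing, sending $s\downarrow0$ last. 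Both versions close, since the gluing involves finitely many pieces.

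One detail deserves caution: in your final approximation step you invoke Proposition~\ref{prop:approxw}(ii) to produce piecewise-affine approximants of (the truncated) $u$ that converge ``in $\Ld^\infty$''. The proposition only yields $W^{1,p}$- and energy-convergence; for $1<p\le d$, a scalar $W^{1,p}$-function need not be continuous, so mollification need not converge uniformly, and the diagonal sequence can therefore be guaranteed to approach $u$ only in $\Ld^p(\Omega;\Ld^p(O))$, not in $\Ld^\infty(\Omega;\Ld^\infty(O))$. The paper's own Step~3 is equally terse on this point. Since the proposition is only ever applied downstream to affine or piecewise-affine targets (which are continuous and bounded on bounded domains), this gap is harmless in practice, but as written the claimed $\Ld^\infty$-convergence for arbitrary $u\in W^{1,p}(O)$ is not literally justified by either argument.
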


\begin{proof}
We split the proof into three steps.

\medskip
\step1 Preliminary.

We claim that, for all bounded domains $O\subset\R^d$ and all $\Lambda\in\inter\dom\overline V$, there exists a sequence $(u_{\Lambda,\e})_\e \subset \Mes(\Omega;W^{1,p}(O))$ with $u_{\Lambda,\e}(\cdot,\omega)\cvf{}\Lambda\cdot x$ weakly in $W^{1,p}(O)$ for almost all $\omega$ and $\E[J_\e(u_{\Lambda,\e},\cdot;O)]\to J(\Lambda\cdot x;O')$ for all subdomains $O'\subset O$. This is indeed a simple reformulation of Lemma~\ref{lem:sublincor} with the notation $u_{\Lambda,\e}(x,\omega):=\e\varphi_\Lambda(x/\e,\omega)$.

\medskip
\step2 Recovery sequence for continuous piecewise affine functions.

Let $O\subset\R^d$ be a bounded Lipschitz domain and let $u$ be a continuous piecewise affine function on $O$ such that $\nabla u\in\inter\dom\overline V$ pointwise. We prove the existence of a sequence $(u_\e)_\e\subset\Mes(\Omega;W^{1,p}(O))$ with $u_\e\to u$ in $\Ld^\infty(\Omega;\Ld^\infty(O))$ and $\E[J_\e(u_\e,\cdot;O)]\to J(u;O)$.

Let us modify $u$ as in Step~2 of the proof of Proposition~\ref{prop:gammasupN} to make its variations smoother (using Proposition~\ref{prop:approxaffine}), and use the same notation. By Step~1, for all $1\le i\le n_{\kappa,r}$, there exists a sequence $(u^i_{\e,\kappa,r})_\e\subset \Mes(\Omega;W^{1,p}_{\loc}(\R^d))$ with $u^i_{\e,\kappa,r}(\cdot,\omega)\cvf{}c_{\kappa,r}^i+\Lambda_{\kappa,r}^i\cdot x$ in $W^{1,p}_{\loc}(\R^d)$ for almost all $\omega$ and such that, for all Lipschitz subdomains $O'\subset O$, we have $\E[J_\e(u^i_{\e,\kappa,r},\cdot;O')]\to J(\Lambda^i_{\kappa,r}\cdot x;O')$. Consider the same partition of unity $\sum_{i=1}^{n_{\kappa,r}}\chi_{\kappa,r,\eta}^i$ as in Step~2 of the proof of Proposition~\ref{prop:gammasupN}, and also recall the definition~\eqref{eq:truncation1D} of the truncation $T_s$. We now set, for $s>0$,
\[u_{\e,\kappa,r,\eta,s}:=u_{\kappa,r}+\sum_{i=1}^{n_{\kappa,r}}\chi_{\kappa,r,\eta}^iT_s(u_{\e,\kappa,r}^i-(c_{\kappa,r}^i+\Lambda_{\kappa, r}^i\cdot x))\quad\in\Mes(\Omega;W^{1,p}(O)).\]
On the one hand, since $|tu_{\e,\kappa,r,\eta}- u|\le s+|tu_{\kappa,r}-u|$ pointwise, we deduce
\begin{align}\label{eq:Linftyconvaffine7}
\lim_{t\uparrow1}\limsup_{r\downarrow0}\limsup_{\kappa\downarrow0}\limsup_{\eta\downarrow0}\limsup_{s\downarrow0}\limsup_{\e\downarrow0}\|tu_{\e,\kappa,r,\eta}- u\|_{\Ld^\infty(\Omega;\Ld^\infty(O))}=0.
\end{align}
On the other hand, since
\begin{align*}
t\nabla u_{\e,\kappa,r,\eta,s}&=\sum_{i=1}^{n_{\kappa,r}}t\chi_{\kappa,r,\eta}^iT_s'(u_{\e,\kappa,r}^i-(c_{\kappa,r}^i+\Lambda_{\kappa,r}^i\cdot x))\nabla u_{\e,\kappa,r}^i\\
&\qquad+\sum_{i=1}^{n_{\kappa,r}}t\chi_{\kappa,r,\eta}^i(1-T_s'(u_{\e,\kappa,r}^i-(c_{\kappa,r}^i+\Lambda_{\kappa,r}^i\cdot x)))\Lambda_{\kappa,r}^i+(1-t)S_{\e,\kappa,r,\eta,s,t},
\end{align*}
where we have set
\[S_{\e,\kappa,r,\eta,s,t}:=\frac t{1-t}\sum_{i=1}^{n_{\kappa,r}}T_s(u_{\e,\kappa,r}^i-(c_{\kappa,r}^i+\Lambda_{\kappa,r}^i\cdot x))\nabla \chi_{\kappa,r,\eta}^i+\frac t{1-t}\sum_{i=1}^{n_{\kappa,r}}\chi_{\kappa,r,\eta}^i(\nabla u_{\kappa,r}-\Lambda_{\kappa,r}^i),\]
we deduce by convexity
\begin{align}\label{eq:decompexpecterrglue}
\E[J_\e(tu_{\e,\kappa,r,\eta,s},\cdot;O)]&\le(1-t)E_{\e,\kappa,r,\eta,s,t}+\sum_{i=1}^{n_{\kappa,r}}\E[J_\e(u_{\e,\kappa,r}^i,\cdot;O_{\kappa,r,\eta}^{i+})]\\
&\qquad+\sum_{i=1}^{n_{\kappa,r}}M(\Lambda_{\kappa,r}^i)\E[|\{y\in O_{\kappa,r,\eta}^{i+}:|u_{\e,\kappa,r}^i(y,\omega)-(c_{\kappa,r}^i+\Lambda_{\kappa,r}^i\cdot y)|\ge s\}|]\nonumber
\end{align}
where the error reads
\begin{align*}
E_{\e,\kappa,r,\eta,s,t}&=\E\left[\int_OV(y/\e,S_{\e,\kappa,r,\eta,s,t}(y,\cdot),\cdot)dy\right].
\end{align*}
For all $i$, set $N_{\kappa,r,\eta}^{i}:=\{j: j\ne i,\,O^{j+}_{\kappa,r,\eta}\cap O^{i+}_{\kappa,r,\eta}\ne\varnothing\}$.
We estimate $S_{\e,\kappa,r,\eta,s,t}$ for all $y,\omega$ as follows:
\begin{align*}
|S_{\e,\kappa,r,\eta,s,t}(y,\omega)|&\le \frac t{1-t}\frac{C'n_{\kappa,r}}\eta s+\frac{t}{1-t}\sup_{1\le i\le n_{\kappa,r}}\sup_{j\in N^i_{\kappa,r,\eta}}|\Lambda_{\kappa,r}^j-\Lambda_{\kappa,r}^i|,
\end{align*}
and hence, since $\limsup_{\eta\downarrow0}\sup_{j\in N^i_{\kappa,r,\eta}}|\Lambda^j_{\kappa,r}-\Lambda^i_{\kappa,r}|\le\kappa$ for all $i$,
\begin{align}\label{eq:decompexpecterrglue-1}
\limsup_{\kappa\downarrow0}\limsup_{\eta\downarrow0}\limsup_{s\downarrow0}\limsup_{\e\downarrow0} \|S_{\e,\kappa,r,\eta,s,t}\|_{\Ld^\infty(\Omega;\Ld^\infty(O))}=0.
\end{align}
The last term of the right-hand side of~\eqref{eq:decompexpecterrglue} is estimated by
\begin{align*}
\E[|\{y\in O_{\kappa,r,\eta}^{i+}:|u_{\e,\kappa,r}^i(y,\omega)-(c_{\kappa,r}^i+\Lambda_{\kappa,r}^i\cdot y)|\ge s\}|]&\le \int_O\p[|u_{\e,\kappa,r}^i(y,\omega)-(c_{\kappa,r}^i+\Lambda_{\kappa,r}^i\cdot y)|\ge s]dy.
\end{align*}
For almost all $\omega$, the Rellich-Kondrachov theorem shows that $u_{\e,\kappa,r}^i(\cdot,\omega)\to c_{\kappa,r}^i+\Lambda_{\kappa,r}^i\cdot x$ (strongly) in $\Ld^p(O)$, and hence, up to an extraction in $\e$ (implicit in the sequel), this convergence holds almost surely, almost everywhere on $O$. The dominated convergence theorem then yields
\begin{align}\label{eq:decompexpecterrglue-2}
\limsup_{\e\downarrow0}\E[|\{y\in O_{\kappa,r,\eta}^{i+}:|u_{\e,\kappa,r}^i(y,\omega)-(c_{\kappa,r}^i+\Lambda_{\kappa,r}^i\cdot y)|\ge s\}|]&=0.
\end{align}
Finally we note that by construction,
\[\lim_{\e\downarrow0}\sum_{i=1}^{n_{\kappa,r}}\E[J_\e(u_{\e,\kappa,r}^i,\cdot;O_{\kappa,r,\eta}^{i+})]=\sum_{i=1}^{n_{\kappa,r}}J(\Lambda_{\kappa,r}^i\cdot x;O_{\kappa,r,\eta}^{i+})=|O_{\kappa,r,\eta}^{i+}|\overline V(\Lambda_{\kappa,r}^i).\]
Hence, arguing as in Step~2 of the proof of Proposition~\ref{prop:gammasupN}, combining this with~\eqref{eq:decompexpecterrglue}, \eqref{eq:decompexpecterrglue-1} and~\eqref{eq:decompexpecterrglue-2}, and recalling that $\dom\overline V=\dom M$, we obtain
\begin{align}\label{eq:decompexpecterrglue-3}
\lim_{t\uparrow1}\limsup_{r\downarrow0}\limsup_{\kappa\downarrow0}\limsup_{\eta\downarrow0}\limsup_{s\downarrow0}\limsup_{\e\downarrow0}\E[J_\e(tu_{\e,\kappa,r,\eta,s},\cdot;O)]\le J(u;O),
\end{align}
so that the conclusion follows as in Step~2 of the proof of Proposition~\ref{prop:gammasupN} (note that by Fatou's lemma the $\Gamma$-$\liminf$ inequality also holds in expectation).

\medskip
\step3 Conclusion. The existence of recovery sequences for general functions can now be deduced as in Step~3 of the proof of Proposition~\ref{prop:gammasupN}, using the result of Step~2 above.
\end{proof}

We now turn to the proof of Corollary~\ref{cor:per} itself. In case~(1), we use Proposition~\ref{prop:gammasupNexpect} to control the energy close to the boundary of the cube, where the periodization in law may have modified the integrand (which however is the same in law locally and can therefore be handled by taking the expectation). In case~(2), we use the particular geometric structure to explicitly estimate the energy density near the boundary and prove its uniform integrability.

\begin{proof}[Proof of Corollary~\ref{cor:per}]
Let $(V^R)_{R>0}$ be an admissible periodization in law for $V$ in the sense of Definition~\ref{def:admperlaw}, and for all $\e>0$ and all $\Lambda \in \R^{m\times d}$ denote by $J^{\per}_\e(\cdot,\Lambda,\cdot)$ the following random integral functional on the unit cube $Q=[-\frac{1}{2},\frac{1}{2})^d$:
$$
J^{\per}_\e(u,\Lambda,\omega):=\int_Q V^{1/\e}({y}/{\e},\Lambda+\nabla u(y),\omega)dy, \qquad u\in W^{1,p}_{\per}(Q;\R^m).
$$
We split the proof into three main steps.

\medskip
\step{1} $\Gamma$-$\liminf$ inequality. The results of this step hold without the additional assumptions (1) and (2). Let $\Omega''\subset\Omega'$ be a subset of maximal probability such that the almost sure property of Definition~\ref{def:admperlaw}(ii) holds pointwise on $\Omega''$. We claim that for all $u\in W^{1,p}_\per(Q;\R^m)$, all $u_\e\cvf{}u$ in $W^{1,p}_\per(Q)$ and all $\Lambda_\e\to\Lambda$, we have $\liminf_{\e\downarrow0} J_\e^{\per}(u_\e,\Lambda_\e,\omega)\ge J(u+\Lambda \cdot x;Q)$ for all $\omega\in\Omega''$.

For all $\theta\in(0,1)$ and $\omega\in \Omega''$, we thus have for all $\e>0$ small enough
$$
J_\e^{\per}(u_\e,\Lambda_\e,\omega)\,=\,J_\e(u_\e+\Lambda_\e \cdot x,\omega;Q_\theta)+\int_{Q\setminus Q_\theta} V^{1/\e}({y}/{\e},\Lambda_\e+\nabla u_\e(y),\omega)dy.
$$
By the $\Gamma$-$\liminf$ inequality for $J_\e(\cdot,\omega;Q_\theta)$ (see Proposition~\ref{prop:gammaliminfneu}) and by the non-negativity of $V^{1/\e}$, this turns into
$$
\liminf_{\e\downarrow0} J_\e^{\per}(u_\e,\Lambda_\e,\omega)\,\ge \, J(u+\Lambda \cdot x;Q_\theta),
$$
and the result follows by the arbitrariness of $\theta<1$ and the  monotone convergence theorem.

\medskip
\step{2} $\Gamma$-$\limsup$ inequality under assumption~(1). We assume $m=1$ and $\dom V(y,\cdot,\omega)=\dom M$ for almost all $y,\omega$. We prove the existence of a subset $\Omega'''\subset\Omega''$ of maximal probability such that, for all $u\in W^{1,p}_{\per}(Q)$, all $\Lambda\in\R^{1\times d}$, and all $\omega\in\Omega'''$, there exists a sequence $(u_\e)_\e\subset \Mes(\Omega;W^{1,p}_{\per}(Q))$ such that $u_\e\to0$ in $\Ld^\infty(\Omega;\Ld^\infty(Q))$ and
\[\lim_{t\uparrow1}\liminf_{\e\downarrow0}J_\e^{\per}(tu_\e(\cdot,\omega)+tu,t\Lambda,\omega)=\lim_{t\uparrow1}\limsup_{\e\downarrow0}J_\e^{\per}(tu_\e(\cdot,\omega)+tu,t\Lambda,\omega)= J(u+\Lambda\cdot x;Q).\]
Moreover, the limits $t\uparrow1$ can be dropped if we have $J(\alpha u+\alpha\Lambda\cdot x;Q)<\infty$ for some $\alpha>1$. We split the proof of this step into three parts.

\medskip
\step{2.1} Local recovery sequence for affine functions.

For all $R>0$, define the event
\[\Omega_R:=\{\omega\in\Omega:V^r(\cdot,\cdot,\omega)|_{Q_{\theta_0r}\times\R^{m\times d}}=V(\cdot,\cdot,\omega)|_{Q_{\theta_0r}\times\R^{m\times d}},\,\text{for all $r> R$}\},\]
for some fixed $\theta_0>0$. Definition~\ref{def:admperlaw}(ii) ensures that $\p[\Omega_R]\uparrow1$ as $R\uparrow\infty$. Let $\Lambda\in\R^{1\times d}$, let $O\subset Q$ be a domain of diameter smaller than $\theta_0/2$, and let $R>0$. By the diameter condition, there exists $x_0\in Q$ such that $O\subset x_0+\theta_0Q$. We claim that there is a sequence $(u_\e)_\e\subset \Mes(\Omega;W^{1,p}(O))$ such that $u_\e\to0$ in $\Ld^\infty(\Omega;\Ld^\infty(O))$ and $\E[\mathds1_{\tau_{x_0/\e}\Omega_R}\int_OV^{1/\e}(\cdot/\e,\Lambda+\nabla u_\e,\cdot)]\to \p[\Omega_R]J(\Lambda\cdot x;O)$.

By definition, for all $\omega\in\Omega_R$ we have $V^{1/\e}(\cdot,\cdot,\omega)|_{Q_{\theta_0/\e}\times\R^{m\times d}}=V(\cdot,\cdot,\omega)|_{Q_{\theta_0/\e}\times\R^{m\times d}}$ for all $\e<1/R$. Therefore, applying Proposition~\ref{prop:gammasupNexpect} on $-x_0+O$ restricted to $\Omega_R$ yields the following: there exists a sequence $(u_\e)_\e\subset\Mes(\Omega;W^{1,p}(-x_0+O))$ with $u_\e\to0$ in $\Ld^\infty(\Omega;\Ld^\infty(-x_0+O))$ and
\begin{align*}
\lim_{\e\downarrow0}\E\left[\mathds1_{\Omega_R}\int_{-x_0+O}V^{1/\e}(\cdot/\e,\Lambda+\nabla u_\e,\cdot)\right]&=\lim_{\e\downarrow0}\E\left[\mathds1_{\Omega_R}\int_{-x_0+O}V(\cdot/\e,\Lambda+\nabla u_\e,\cdot)\right]\\
&= \p[\Omega_R]J(\Lambda\cdot x;-x_0+O)=\p[\Omega_R]J(\Lambda\cdot x;O).
\end{align*}
Now define $u_\e':=u_\e(-x_0+\cdot,\tau^{1/\e}_{-x_0/\e}\cdot)\in \Mes(\Omega;W^{1,p}(-x_0+O))$ for all $\e$. By construction $u_\e'\to0$ in $\Ld^\infty(\Omega;\Ld^\infty(O))$, and the stationarity property of Definition~\ref{def:admperlaw}(i) further gives
\begin{align*}
&\lim_{\e\downarrow0}\E\left[\mathds1_{\tau_{x_0/\e}^{1/\e}\Omega_R}\int_OV^{1/\e}(y/\e,\Lambda+\nabla u'_\e(y,\cdot),\cdot)dy\right]\\
=~&\lim_{\e\downarrow0}\E\left[\mathds1_{\Omega_R}\int_OV^{1/\e}(y/\e,\Lambda+\nabla u_\e(-x_0+y,\cdot),\tau^{1/\e}_{x_0/\e}\cdot)dy\right]\\
=~&\lim_{\e\downarrow0}\E\left[\mathds1_{\Omega_R}\int_OV^{1/\e}((-x_0+y)/\e,\Lambda+\nabla u_\e(-x_0+y,\cdot),\cdot)dy\right]\\
=~&\lim_{\e\downarrow0}\E\left[\mathds1_{\Omega_R}\int_{-x_0+O}V^{1/\e}(y/\e,\Lambda+\nabla u_\e(y,\cdot),\cdot)dy\right]= \p[\Omega_R]J(\Lambda\cdot x;O).
\end{align*}

\medskip
\step{2.2} Global recovery sequence for (piecewise) affine functions.

Let $\Lambda\in\R^{1\times d}$ and let $u$ be a continuous piecewise affine $Q$-periodic function on $\R^d$ with $\Lambda+\nabla u\in\inter\dom \overline V$. We claim that there exists a sequence $(u_\e)_\e\subset\Mes(\Omega;W^{1,p}_{\per}(Q))$ such that $u_\e\to u$ in $\Ld^\infty(\Omega;\Ld^\infty(Q))$, and, for almost all $\omega$,
\begin{align}\label{eq:limsupper}
&\lim_{t\uparrow1}\liminf_{\e\downarrow0}J_\e^{\per}(tu_\e(\cdot,\omega)+tu,t\Lambda,\omega)\nonumber\\
=~&\lim_{t\uparrow1}\limsup_{\e\downarrow0}J_\e^{\per}(tu_\e(\cdot,\omega)+tu,t\Lambda,\omega)= J(u+\Lambda\cdot x;Q).
\end{align}

Consider the partition of $Q$ associated with $u$, and let $Q=\biguplus_{l=1}^k O_l$ be a refined partition such that all the $O_l$'s have diameter at most $\theta_0/2$. For all $l$, define $c_l+\Lambda_l\cdot x:=\nabla u|_{O_l}$ with $\Lambda+\Lambda_l\in\inter\dom\overline V$, and choose $x_l\in Q$ such that $O_l\subset x_l+\theta_0Q$. Given $R>0$, define
\[\chi_{\e,R}:=\prod_{l=1}^k\mathds1_{\tau_{x_l/\e}^{1/\e}\Omega_R}.\]
Step~2.1 then gives a sequence $(u_\e^l)_\e\subset\Mes(\Omega;W^{1,p}(O_l))$ such that $u_\e^l\to0$ in $\Ld^\infty(\Omega;\Ld^\infty(O_l))$ and
\begin{align*}
\limsup_{\e\downarrow0}\E\left[\chi_{\e,R}\int_{O_l}V^{1/\e}(\cdot/\e,\Lambda+\Lambda_l+\nabla u_\e^l,\cdot)\right]&\le\lim_{\e\downarrow0}\E\left[\mathds1_{\tau_{x_l/\e}^{1/\e}\Omega_R}\int_{O_l}V^{1/\e}(\cdot/\e,\Lambda+\Lambda_l+\nabla u_\e^l,\cdot)\right]\\
&=\p[\Omega_R]J(u+\Lambda\cdot x;O_l).
\end{align*}
We can then repeat the argument of Step~2 of the proof of Proposition~\ref{prop:gammasupN}, and glue the recovery sequences for the (small) affine parts, using this time Proposition~\ref{prop:approxaffineper} instead of Proposition~\ref{prop:approxaffine} (applied to the function $u$). This allows to deduce the following: there exists a sequence $(u_\e)_\e\subset \Mes(\Omega;W^{1,p}_{\per}(O))$ such that $u_\e\to0$ in $\Ld^\infty(\Omega;\Ld^\infty(Q))$ and
\begin{align}\label{eq:primgammsuplim}
\limsup_{t\uparrow1}\limsup_{\e\downarrow0}\E[\chi_{\e,R}J_\e^{\per}(tu_\e+tu,t\Lambda,\cdot)]\le \p[\Omega_R]J(u+\Lambda\cdot x;Q).
\end{align}

Now the $\Gamma$-$\liminf$ inequality of Step~1 yields $\liminf_t\liminf_\e J_\e^{\per}(u_\e(\cdot,\omega)+u,t\Lambda,\omega)\ge J(u+\Lambda\cdot x;Q)$ for almost all $\omega$. Assume by contradiction that this inequality is strict for all $\omega$ in some set of positive probability (even for some subsequence in $\e,t$). In this case we would find $\e_0,\delta>0$ and $t_0\in(0,1)$ such that $\p[\Omega_{\e_0,t_0}^\delta]>0$, where we have defined the event
\[\Omega_{\e_0,t_0}^\delta:=\{\omega\in\Omega: J_\e^{\per}(tu_\e(\cdot,\omega)+tu,t\Lambda,\omega)>\delta+J(u+\Lambda\cdot x;Q),\,\text{for all $\e<\e_0$, $t>t_0$}\},\]
By definition we have
\begin{align*}
&\liminf_{t\uparrow1}\liminf_{\e\downarrow0}\E[\chi_{\e,R}J_\e^{\per}(tu_\e+tu,t\Lambda,\cdot)]\\
\ge~& J(u+\Lambda\cdot x;Q)\liminf_{\e\downarrow0}\E[\chi_{\e,R}]+\liminf_{t\uparrow1}\liminf_{\e\downarrow0}\E[\chi_{\e,R}(J_\e^{\per}(tu_\e+tu,t\Lambda,\cdot)-J(u+\Lambda\cdot x;Q))]\\
\ge~& J(u+\Lambda\cdot x;Q)\liminf_{\e\downarrow0}\E[\chi_{\e,R}]+\delta\liminf_{\e\downarrow0}\E[\mathds1_{\Omega_{\e_0,t_0}^\delta}\chi_{\e,R}]\\
&\quad+\liminf_{t\uparrow1}\liminf_{\e\downarrow0}\E[\mathds1_{\Omega\setminus \Omega_{\e_0,t_0}^\delta}\chi_{\e,R}(J_\e^{\per}(tu_\e+tu,t\Lambda,\cdot)-J(u+\Lambda\cdot x;Q))].
\end{align*}
Now a union bound argument yields
\begin{align*}
\E[\chi_{\e,R}]\ge 1-k\p[\Omega\setminus\Omega_R],\qquad\E[\mathds1_{\Omega_{\e_0,t_0}^\delta}\chi_{\e,R}]\ge \p[\Omega_{\e_0,t_0}^{\delta}]-k\p[\Omega\setminus\Omega_R],
\end{align*}
and hence, by Fatou's lemma and by the $\Gamma$-$\liminf$ inequality of Step~1,
we obtain
\begin{align*}
&\liminf_{t\uparrow1}\liminf_{\e\downarrow0}\E[\chi_{\e,R}J_\e^{\per}(tu_\e+tu,t\Lambda,\cdot)]\\
\ge~& J(u+\Lambda\cdot x;Q)(1-k\p[\Omega\setminus\Omega_R])+\delta(\p[\Omega_{\e_0,t_0}^{\delta}]-k\p[\Omega\setminus\Omega_R]).
\end{align*}
However, taking the limit $R\uparrow\infty$, and recalling $\p[\Omega_R]\uparrow1$, this contradicts~\eqref{eq:primgammsuplim}. The result~\eqref{eq:limsupper} is then proven.

\medskip
\step{2.3} Recovery sequence for general target functions.

For all $\Lambda\in\inter\dom\overline V$, let $\Omega_\Lambda\subset\Omega''$ be a subset of maximal probability such that equalities~\eqref{eq:limsupper} hold for all $\omega\in\Omega_\Lambda$ for the choice $u=0$. We then define $\Omega'''\subset\Omega''$ as the (countable) intersection of all $\Omega_\Lambda$'s with $\Lambda\in\Q^{1\times d}\cap\inter\dom\overline V$. Repeating on this basis the gluing argument of Step~2 of the proof of Proposition~\ref{prop:gammasupN}, for all $\Lambda\in\R^{1\times d}$ and for all continuous piecewise affine $Q$-periodic functions $u$ on $\R^d$ with $\Lambda+\nabla u\in\Q^{1\times d}\cap\inter\dom \overline V$, we may prove that~\eqref{eq:limsupper} holds for all $\omega\in\Omega'''$.

Let now $u\in W^{1,p}_{\per}(Q)$ and let $\Lambda\in\R^{1\times d}$.
By Step~1, we may assume $J(u+\Lambda\cdot x;Q)<\infty$, so that by convexity $\Lambda\in\dom\overline V$.
By convexity and lower semicontinuity of $\overline V$, $\lim_{t\uparrow1}J(tu+t\Lambda\cdot x;Q)=J(u+\Lambda\cdot x;Q)$. For all $t\in(0,1)$, since $\Lambda \in\dom\overline V$, we have $0\in\inter\dom\overline V(\cdot+t\Lambda)$ and Proposition~\ref{prop:approxw}(ii)(b) then gives a sequence $(u_{n,t})_n$ of continuous piecewise affine $Q$-periodic functions such that $\Lambda+\nabla u_{n,t}\in\Q^{1\times d}\cap \inter\dom \overline V$, $u_{n,t}\to tu$ in $W^{1,p}(Q)$, and $J(u_{n,t}+t\Lambda\cdot x;Q)\to J(tu+t\Lambda\cdot x;Q)$. We can then apply the result for the (rational) continuous piecewise affine approximations on $\Omega'''$, and the conclusion then follows from a diagonalization argument.

In the case when $J(\alpha u+\alpha \Lambda\cdot x;Q)<\infty$ for some $\alpha>1$, the limits $t\uparrow1$ can be dropped. Similarly as in the proof of Corollary~\ref{cor:dir} (see Sections~\ref{chap:liftdirbd} and~\ref{chap:2dirbd}), it is enough to replace $u,\Lambda$ by $u/t,\Lambda/t$ for $t\in[1/\alpha,1)$ and to adapt Steps~2.2 and~2.3 above accordingly. We omit the details.

\medskip
\step{3} $\Gamma$-$\limsup$ inequality under assumption~(2). We assume $p>d$, and $V(y,\Lambda,\omega)\le C(1+|\Lambda|^p)$ for all $\omega$ and all $y\notin E(\omega)$, for some random stationary set $E^\omega=\bigcup_{n=1}^\infty B_{R_n^\omega}(q_n^\omega)$ satisfying almost surely, for all $n$, for some constant $C>0$,
\begin{align}\label{eq:conditionradii}
2\delta_n^\omega:=\frac1{R_n^\omega} ~{\inf_{m,m\ne n}\dist(B_{R_m^\omega}(q_m^\omega),B_{R_n^\omega}(q_n^\omega))}\ge \frac1C,\qquad R_n^\omega\le C.
\end{align}
We prove the existence of a subset $\Omega'''\subset\Omega''$ of maximal probability such that, for all $u\in W^{1,p}_{\per}(Q)$, all $\Lambda\in\R^{m\times d}$, and all $\omega\in\Omega'''$, there exists a sequence $u_\e^\omega\cvf{}0$ in $W^{1,p}_{\per}(Q;\R^m)$ such that
\begin{align*}
\lim_{t\uparrow1}\liminf_{\e\downarrow0}J_\e^{\per}(tu_\e^\omega+tu,t\Lambda,\omega)=\lim_{t\uparrow1}\limsup_{\e\downarrow0}J_\e^{\per}(tu_\e^\omega+tu,t\Lambda,\omega)= J(u+\Lambda\cdot x;Q).
\end{align*}
Moreover, the limits $t\uparrow1$ can be dropped if we have $J(\alpha u+\alpha\Lambda\cdot x;Q)<\infty$ for some $\alpha>1$. Finally, we have for all $\Lambda\in\R^{m\times d}$ and almost all $\omega$
\begin{align}\label{eq:convdefJlimperincl}
\overline V(\Lambda)=\lim_{\e\downarrow0}\inf_{u\in W^{1,p}_{\per}(Q;\R^m)}\int_{Q}V^{1/\e}(y/\e,\Lambda+\nabla u(y),\omega)dy,
\end{align}
where the convergence also holds in expectation. We split the proof of this step into three parts.

\medskip
\step{3.1} Recovery sequence for affine functions. We prove that for all $\Lambda\in\R^{m\times d}$ for almost all $\omega$ there is a sequence $u_\e^\omega\cvf{}0$ in $W^{1,p}_{\per}(Q;\R^m)$ such that $J_\e^{\per}(\Lambda,u_\e^\omega,\omega)\to J(\Lambda\cdot x;Q)$.

Let $\Lambda\in\R^{m\times d}$ be fixed. Lemma~\ref{lem:sublincor} and the Birkhoff-Khinchin ergodic theorem give a random field $\varphi_{\Lambda}\in\Mes(\Omega;W^{1,p}_\loc(\R^d;\R^m))$ such that, for almost all $\omega$, $\e\varphi_{\Lambda}(\cdot/\e,\omega)\cvf{}0$ weakly in $W^{1,p}_{\loc}(\R^d;\R^m)$ and also $J_\e(\Lambda\cdot x+\e\varphi_\Lambda(\cdot/\e,\omega),\omega;O)\to J(\Lambda\cdot x;O)$, for all bounded domains $O\subset\R^d$. Let $\omega\in\Omega''$ be fixed such that both properties hold, and set $u_\e^\omega:=\Lambda\cdot x+\e\varphi_\Lambda(\cdot/\e,\omega)$.

For all $\e>0$, consider the $Q/\e$-periodic random set $E_\e^\omega=\bigcup_{n=1}^\infty B_{R_{\e,n}^\omega}(q_{\e,n}^\omega)$ associated with the periodization in law $V^{1/\e}$ at scale $1/\e$. By stationarity, these periodized inclusions also satisfy~\eqref{eq:conditionradii}, that is $2\delta_{\e,n}^\omega\ge1/C$ and $R_{\e,n}^\omega\le C$ almost surely. For all $\e,n$, define
$B_{\e,n}^\omega:=B_{R_{\e,n}^\omega}(q_{\e,n}^\omega)$ and $\tilde B_{\e,n}^\omega:=B_{R_{\e,n}^\omega(1+1\wedge\delta_{\e,n}^\omega)}(q_{\e,n}^\omega)$. By definition, the $\tilde B_{\e,n}^\omega$'s are all disjoint. For all $\e,n$, choose a smooth cut-off function $\chi_{\e,n}^\omega$ that equals $1$ on $\e B_{\e,n}^\omega$, vanishes outside $\e\tilde B_{\e,n}^\omega$ and satisfies $|\nabla \chi_{\e,n}^\omega|\le 2/(\e R_{\e,n}^\omega(1\wedge \delta_{\e,n}^\omega))\le 4C/(\e R_{\e,n}^\omega)$. We choose these cut-off functions in such a way that $\sum_n\chi_{\e,n}^\omega$ is $Q$-periodic.

Let $\theta\in(0,1)$. Denote by $N_{\e,\theta}^\omega$ (resp. $M_{\e}^\omega$) the set of all $n\ge1$ such that $(Q\setminus\theta Q)\cap\e\tilde B_{\e,n}^\omega\ne\varnothing$ (resp. $\partial Q\cap\e\tilde B_{\e,n}^\omega\ne\varnothing$). Choose a smooth cut-off function $\chi_{\theta}$ that equals $1$ on $\theta Q$, vanishes outside $Q$ and satisfies $|\nabla \chi_\theta|\le2/(1-\theta)$. Set
\[\chi_{\e,\theta}^\omega(y):=\chi_\theta(y)-\chi_\theta(y)\sum_{n\in M_\e^\omega}\chi_{\e,n}^\omega(y)+\sum_{n\in N_{\e,\theta}^\omega\setminus M_\e^\omega}\chi_{\e,n}^\omega(y)\fint_{\e\tilde B_{\e,n}^\omega}(\chi_\theta(z)-\chi_\theta(y))dz.\]
This defines a smooth cut-off function $\chi_{\e,\theta}^\omega$ that equals $1$ on $(\theta-2C\e) Q$, vanishes outside $Q$, is constant on each inclusion $B_{\e,n}$, and satisfies $|\nabla \chi_{\e,\theta}^\omega|\le C/(1-\theta)$ for some constant $C>0$. We then define
\begin{align*}
u_{\e,\theta}^\omega(y)&=(1-\chi_{\e,\theta}^\omega(y))\Lambda\cdot y+ (1-\chi_{\e,\theta}^\omega(y))\sum_{n\in N_{\e,\theta}^\omega}\chi_{\e,n}^\omega(y)\fint_{\e\tilde B_{\e,n}^\omega}\Lambda\cdot (z-y)dz\\
&\qquad+\chi_{\e,\theta}^\omega(y)\bigg(1-\sum_{n\in N_{\e,\theta}^\omega}\chi_{\e,n}^\omega(y)\bigg)u_\e^\omega(y)+\chi_{\e,\theta}^\omega(y)\sum_{n\in N_{\e,\theta}^\omega}\chi_{\e,n}^\omega(y)\fint_{\e\tilde B_{\e,n}^\omega}u_\e^\omega.
\end{align*}
By construction $u_{\e,\theta}^\omega\in \Lambda\cdot x+W^{1,p}_{\per}(Q;\R^m)$. Since $u_{\e,\theta}^\omega=u_\e^\omega$ at least on $(\theta-2\e C)Q$, and since $\nabla u_{\e,\theta}^\omega=0$ on all $\e B_{\e,n}^\omega$ with $n\in N_{\e,\theta}^\omega$, we have, for all $\e>0$ small enough,
\begin{align}
\int_QV^{1/\e}(\cdot/\e,\nabla u_{\e,\theta}^\omega,\omega)&\le \int_{(\theta-2\e C)Q}V(\cdot/\e,\nabla u_\e^\omega,\omega)\nonumber\\
&\qquad+M(0)|Q\setminus (\theta-2\e C)Q|+C\int_{Q\setminus (\theta-2\e C)Q}(1+|\nabla u_{\e,\theta}^\omega|^p)\nonumber\\
&\le J_\e(u_\e^\omega,\omega;Q)
+C(1-\theta+2\e C)+C\int_{Q\setminus (\theta-2\e C)Q}|\nabla u_{\e,\theta}^\omega|^p.\label{eq:stiffinclbound}
\end{align}
We need to estimate the last term. A straightforward calculation yields
\begin{align*}
\int_{Q\setminus (\theta-2\e C)Q}|\nabla u_{\e,\theta}^\omega|^p&\le C(1-\theta+2\e C)|\Lambda|^p+\frac C{(1-\theta)^p}\int_{Q}|u_\e^\omega(y)-\Lambda\cdot y|^pdy+C\int_{Q\setminus (\theta-2\e C)Q}|\nabla u_\e^\omega|^p\\
&\qquad+C\sum_{n\in M_\e^\omega}((1-\theta)^{-p}+(\e R_{\e,n}^\omega)^{-p})\int_{\e\tilde B_{\e,n}^\omega}\bigg|\fint_{\e\tilde B_{\e,n}^\omega}\Lambda\cdot (z-y)dz\bigg|^pdy\\
&\qquad+C\sum_{n\in N_{\e,\theta}^\omega}((1-\theta)^{-p}+(\e R_{\e,n}^\omega)^{-p})\int_{\e\tilde B_{\e,n}^\omega}\bigg|u_\e^\omega-\fint_{\e\tilde B_{\e,n}^\omega}u_\e^\omega\bigg|^p,
\end{align*}
and hence, by the Poincaré inequality and the estimate $\sum_{n\in M_\e^\omega}|\e\tilde B_{\e,n}^\omega|\le|Q\setminus (1-2\e C)Q|\le 2d\e C$,
\begin{align}\label{eq:stiffinclbounderr}
\int_{Q\setminus (\theta-2\e C)Q}|\nabla u_{\e,\theta}^\omega|^p&\le C(1-\theta+2\e C)|\Lambda|^p+\frac C{(1-\theta)^p}\int_{Q}|\e\varphi_\Lambda(\cdot/\e,\omega)|^p\\
&\quad+C\bigg(1+\frac{\e^p}{(1-\theta)^p}\bigg)\int_{Q\setminus(\theta-2\e C)Q}(|\Lambda|^p+|\nabla \varphi_\Lambda(\cdot/\e,\omega)|^p).\nonumber
\end{align}
For all $\e_0>0$, the Birkhoff-Khinchin ergodic theorem yields, for almost all $\omega$,
\begin{align*}
\limsup_{\e\downarrow0}\int_{Q\setminus(\theta-2\e C)Q}(|\Lambda|^p+|\nabla \varphi_\Lambda(\cdot/\e,\omega)|^p)&\le\lim_{\e\downarrow0}\int_{Q\setminus(\theta-2\e_0 C)Q}(|\Lambda|^p+|\nabla \varphi_\Lambda(\cdot/\e,\omega)|^p)\\
&\le|Q\setminus(\theta-2\e_0 C)Q|\,(|\Lambda|^p+\E[|\nabla \varphi_\Lambda|^p])\\
&\le C(1-\theta+2\e_0 C)(|\Lambda|^p+\E[|\nabla \varphi_\Lambda|^p]).
\end{align*}
Combined with~\eqref{eq:stiffinclbounderr} and the sublinearity of the corrector in the form of $\e\varphi_\Lambda(\cdot/\e,\omega)\to0$ in $\Ld^p(Q;\R^m)$ (see Lemma~\ref{lem:sublincor} and the Rellich-Kondrachov theorem), this yields, for almost all~$\omega$,
\begin{align*}
\limsup_{\e\downarrow0}\int_{Q\setminus (\theta-2\e C)Q}|\nabla u_{\e,\theta}^\omega|^p&\le C(1-\theta)|\Lambda|^p+C(1-\theta+2\e_0 C)\E[|\Lambda+\nabla \varphi_\Lambda|^p].
\end{align*}
We then insert this estimate in~\eqref{eq:stiffinclbound} and to pass to the limits $\theta\uparrow1$ and $\e_0\downarrow0$ to obtain for almost all $\omega$
\[\limsup_{\theta\uparrow1}\limsup_{\e\downarrow0}\int_QV^{1/\e}(\cdot/\e,\nabla u_{\e,\theta}^\omega,\omega)\le \lim_{\e\downarrow0}J_\e(u_\e^\omega,\omega;Q)= \overline V(\Lambda)=J(\Lambda\cdot x;Q).\]
The conclusion then follows from Step~1 and a diagonalization argument.

\medskip
\step{3.2} Recovery sequence for general target functions.

As we assume here $p>d$, we may use the gluing argument of Step~2 of the proof of Proposition~\ref{prop:gammasupN} to pass from affine to piecewise affine functions. The conclusion follows by approximation similarly as in Step~2.3 above. We omit the details.

\medskip
\step{3.3} Proof that the almost sure convergence~\eqref{eq:convdefJlimperincl} also holds in expectation.

Using the same notation and cut-off functions as in Step~3.1, we define $u_\e^\omega\in W^{1,p}_{\per}(Q;\R^m)$ as
\[u_\e^\omega(y):=\sum_{n}\chi_{\e,n}^\omega(y)\fint_{\e\tilde B_{\e,n}^\omega}\Lambda\cdot(z-y)dz.\]
Testing the infimum in~\eqref{eq:convdefJlimperincl} with $u=u_\e^\omega$, we obtain by similar calculations as in~\eqref{eq:stiffinclbound} and~\eqref{eq:stiffinclbounderr} that for all $\e>0$ and all $\omega$
\begin{align*}
&\inf_{u\in W^{1,p}_{\per}(Q;\R^m)}\int_{Q}V^{1/\e}(\cdot/\e,\Lambda+\nabla u,\omega)\\
\le~& C(1+|\Lambda|^p)+C\sum_n(\e R_{\e,n}^\omega)^{-p}\int_{Q\cap \e \tilde B_{\e,n}^\omega}\bigg|\fint_{\e\tilde B_{\e,n}^\omega}\Lambda\cdot (z-y)dz\bigg|^pdy\le C(1+|\Lambda|^p).
\end{align*}
The conclusion then follows by dominated convergence.
\end{proof}

\bigskip
\section*{Acknowledgements}
The authors acknowledge financial support from the European Research Council under
the European Community's Seventh Framework Programme (FP7/2014-2019 Grant Agreement
QUANTHOM 335410).

\renewcommand{\thesection}{A}
\section{Appendix}\label{annexe}

\subsection{Normal random integrands}\label{app:intnormal}
In this appendix, we briefly recall the precise definition of normal random integrands (as defined e.g.~in~\cite[Section~VIII.1.3]{Ekeland76}) and we prove their main properties, mentioned in the beginning of Section~\ref{sec:results} and used throughout this paper. Let $(\Omega,\F,P)$ be a complete probability space. We denote by $\B(\R^k)$ the (not completed) Borel $\sigma$-algebra on $\R^k$.

\begin{defin}\label{def:integrand}
A {\it normal random integrand} is a map $W:\R^d\times\R^{m\times d}\times\Omega\to[0,\infty]$ such that
\begin{enumerate}[(a)]
\item $W$ is jointly measurable (i.e. with respect to the completion of $\B(\R^d)\times\B(\R^{m\times d})\times\F$);
\item for almost all $\omega$, there exists a map $V_\omega:\R^d\times\R^{m\times d}\to[0,\infty]$ that is $\B(\R^d)\times\B(\R^{m\times d})$-measurable and such that $W(y,\cdot,\omega)=V_\omega(y,\cdot)$ for almost all $y$;
\item for almost all $y$, there exists a map $V_y:\R^{m\times d}\times\Omega\to[0,\infty]$ that is $\B(\R^{m\times d})\times\F$-measurable and such that $W(y,\cdot,\omega)=V_y(\cdot,\omega)$ for almost all $\omega$;
\item for almost all $y,\omega$, the map $W(y,\cdot,\omega)$ is lower semicontinuous on $\R^{m\times d}$.
\end{enumerate}
It is said to be {\it$\tau$-stationary} if it satisfies~\eqref{eq:statnormalrandomint} for all $\Lambda,y,z,\omega$.\qed
\end{defin}

As shown e.g.~in~\cite[Section~VIII.1.3]{Ekeland76},
a simple example of normal random integrands is given by the so-called {\it Carathéodory random integrands}, that are maps $W:\R^d\times\R^{m\times d}\times\Omega\to[0,\infty]$ such that $W(y,\cdot,\omega)$ is continuous on $\R^{m\times d}$ for almost all $y,\omega$, and such that $W(\cdot,\Lambda,\cdot)$ is jointly measurable on $\R^d\times\Omega$ for all $\Lambda$.

As already advertised in the beginning of Section~\ref{sec:results}, the reason for these technical assumptions is that they are particularly weak but still guarantee the following properties:

\begin{lem}\label{lem:intnormal}
Let $W:\R^d\times\R^{m\times d}\times\Omega\to[0,\infty]$ be a normal random integrand. Then,
\begin{enumerate}[(i)]
\item for almost all $\omega$, the map $y\mapsto W(y,u(y),\omega)$ is measurable for all $u\in\Mes(\R^d,\R^{m\times d})$;
\item for almost all $y$, the map $\omega\mapsto W(y,u(\omega),\omega)$ is measurable for all $u\in\Mes(\Omega,\R^{m\times d})$.\qed
\end{enumerate}
\end{lem}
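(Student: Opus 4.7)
\medskip

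The two statements are symmetric: (i) uses the representative $V_\omega$ from Definition~\ref{def:integrand}(b), and (ii) uses the representative $V_y$ from Definition~\ref{def:integrand}(c). The plan is therefore to prove (i) and observe that (ii) follows by verbatim repetition, with the roles of the variables swapped. Fix $\omega$ in the full-measure set on which the Borel representative $V_\omega:\R^d\times\R^{m\times d}\to[0,\infty]$ exists. Since $W(y,\cdot,\omega)=V_\omega(y,\cdot)$ for almost every $y$, it suffices to show that $y\mapsto V_\omega(y,u(y))$ is Lebesgue measurable for every $u\in\Mes(\R^d;\R^{m\times d})$: then $y\mapsto W(y,u(y),\omega)$ coincides with this function outside a Lebesgue null set, and measurability follows since the Lebesgue $\sigma$-algebra is complete. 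Moreover, by Fubini and Definition~\ref{def:integrand}(d), modifying $V_\omega$ on a Lebesgue null set in $y$ if necessary, we may assume that $V_\omega(y,\cdot)$ is lower semicontinuous for \emph{every} $y$.

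The main step is a Moreau--Yosida regularization in the second variable. For each integer $n\ge1$, define
\[V_\omega^n(y,\Lambda):=\inf_{\Lambda'\in \Q^{m\times d}}\bigl[V_\omega(y,\Lambda')+n|\Lambda-\Lambda'|\bigr].\]
Each $V_\omega^n(y,\cdot)$ is $n$-Lipschitz on $\R^{m\times d}$; the infimum being taken over the countable set $\Q^{m\times d}$, the map $V_\omega^n$ is jointly Borel measurable. By lower semicontinuity of $V_\omega(y,\cdot)$ and density of $\Q^{m\times d}$, one checks that $V_\omega^n(y,\Lambda)\uparrow V_\omega(y,\Lambda)$ as $n\uparrow\infty$ for every $y,\Lambda$ (this is the standard l.s.c.\ envelope identity, also valid for $[0,\infty]$-valued integrands).

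For each fixed $n$, the function $y\mapsto V_\omega^n(y,u(y))$ is Borel measurable: indeed, by approximating $u$ pointwise by simple functions $u_k=\sum_{j}\Lambda_{k,j}\mathds1_{A_{k,j}}$ with $\Lambda_{k,j}\in\R^{m\times d}$ and $A_{k,j}\in\B(\R^d)$, the function
\[y\mapsto V_\omega^n(y,u_k(y))=\sum_j V_\omega^n(y,\Lambda_{k,j})\mathds1_{A_{k,j}}(y)\]
is Borel measurable (as a countable combination of Borel functions $y\mapsto V_\omega^n(y,\Lambda_{k,j})$), and the pointwise limit $y\mapsto V_\omega^n(y,u(y))$ inherits measurability from the Lipschitz continuity of $V_\omega^n(y,\cdot)$. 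Taking the supremum over $n$ then yields
\[V_\omega(y,u(y))=\sup_n V_\omega^n(y,u(y)),\]
which is Borel measurable in $y$. This proves (i). Applying exactly the same argument to the $\B(\R^{m\times d})\times\F$-measurable representative $V_y$ from Definition~\ref{def:integrand}(c), with $\omega$ playing the role of $y$, yields (ii).

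The only real subtlety is the countable representation of the Moreau--Yosida approximation for $[0,\infty]$-valued lower semicontinuous integrands, which is needed to obtain the joint Borel measurability of the $V_\omega^n$'s; everything else is routine.
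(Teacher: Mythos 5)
There is a genuine gap in the Moreau--Yosida step, and the detour it is built around is not needed.

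The claim that $V_\omega^n(y,\Lambda)\uparrow V_\omega(y,\Lambda)$ for \emph{every} $(y,\Lambda)$ is false when the infimum is restricted to $\Q^{m\times d}$. Lower semicontinuity controls the behavior of $V_\omega(y,\cdot)$ from \emph{below}, i.e. $\liminf_{\Lambda'\to\Lambda}V_\omega(y,\Lambda')\ge V_\omega(y,\Lambda)$, which is precisely the wrong direction for recovering a ``downward spike'' at an irrational point by infimizing over rational centers. Concretely, take $d=m=1$ and $V_\omega(y,\Lambda):=\mathds1_{\{\Lambda\ne\pi\}}$. This is lower semicontinuous and $[0,\infty]$-valued, yet $V_\omega^n(y,\Lambda)=\inf_{\Lambda'\in\Q}[V_\omega(y,\Lambda')+n|\Lambda-\Lambda'|]=1$ for all $n$ and all $\Lambda$, since the infimand equals $1+n|\Lambda-\Lambda'|$ on $\Q$; thus $\sup_nV_\omega^n(y,\pi)=1\ne0=V_\omega(y,\pi)$. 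The ``standard l.s.c.\ envelope identity'' requires the infimum to be taken over the \emph{whole} space $\R^{m\times d}$; restriction to a countable dense subset only recovers the pointwise values if the integrand is upper semicontinuous in $\Lambda$. And if you do take the infimum over $\R^{m\times d}$, the joint Borel measurability of $V_\omega^n$ is no longer a countable-infimum triviality and requires the full Scorza--Dragoni/Carath\'eodory-approximation machinery for normal integrands --- considerably more than what the lemma asks for.

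The paper's proof avoids all of this: it does not use the lower semicontinuity hypothesis (d) at all. Once one has, for almost all $\omega$, a $\B(\R^d)\times\B(\R^{m\times d})$-measurable representative $V_\omega$ with $W(y,\cdot,\omega)=V_\omega(y,\cdot)$ for a.e.\ $y$, the map $y\mapsto(y,u(y))$ is (Lebesgue-to-Borel) measurable for any $u\in\Mes(\R^d;\R^{m\times d})$, and composing it with the Borel map $V_\omega$ gives Lebesgue measurability of $y\mapsto V_\omega(y,u(y))$ directly. Then $y\mapsto W(y,u(y),\omega)$ agrees with this a.e.\ and so is measurable by completeness. The same one-line composition argument gives (ii) from the $\B(\R^{m\times d})\times\F$-representative $V_y$. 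You correctly identified the right ingredient (the joint Borel representative) and the right completeness argument, but the regularization/simple-function approximation you interposed is both unnecessary and, as written, incorrect. A minor further slip: since $u$ is only Lebesgue (not Borel) measurable, the level sets $A_{k,j}$ of the simple approximants are Lebesgue, not Borel, so the output would at best be Lebesgue measurability --- which is what the statement asks for, but not what you wrote.
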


\begin{proof}
For almost all $\omega$, part~(b) of Definition~\ref{def:integrand} gives a $\B(\R^d)\times\B(\R^{m\times d})$-measurable map $V_\omega$ on $\R^d\times\R^{m\times d}$ such that $W(y,\cdot,\omega)=V_\omega(y,\cdot)$ for almost all $y$. Hence, for $u\in\Mes(\R^d;\R^{m\times d})$, the map $y\mapsto W(y,u(y),\omega)$ is equal almost everywhere to the map $y\mapsto V_\omega(y,u(y))$, which is necessarily measurable since $\Id\times u:\R^d\times\R^d\to\R^d\times\R^{m\times d}$ is measurable and since $V_\omega$ is Borel-measurable. This proves~(i), and~(ii) is similar.
\end{proof}

If $W$ is $\tau$-stationary, we may write $W(y,\Lambda,\omega)=W(0,\Lambda,\tau_{-y}\omega)$, which thus receives a pointwise meaning in the first variable, and Lemma~\ref{lem:intnormal}(ii) may obviously be strengthened as follows.

\begin{lem}
Let $W:\R^d\times\R^{m\times d}\times\Omega\to[0,\infty]$ be a $\tau$-stationary normal random integrand. For \emph{all} $y$ and all $u\in\Mes(\Omega;\R^{m\times d})$, the map $\omega\mapsto W(y,u(\omega),\omega)$ is measurable.\qed
\end{lem}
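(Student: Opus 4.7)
The idea is to leverage the $\tau$-stationarity to promote the ``almost all $y$'' measurability of the preceding lemma to ``all $y$'', using the fact that the action $\tau$ is measurable and measure-preserving (the latter being part of the definition of a measurable ergodic action of $(\R^d,+)$).

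First, I would pick one ``good'' base point $y_0\in\R^d$. By Definition~\ref{def:integrand}(c), for almost all $y$ there exists a $\B(\R^{m\times d})\otimes\F$-measurable map $V_y:\R^{m\times d}\times\Omega\to[0,\infty]$ such that $W(y,\cdot,\omega)=V_y(\cdot,\omega)$ for all $\omega$ outside some null set $N_y\subset\Omega$; in particular we can pick at least one such $y_0$ together with $V_{y_0}$ and $N_{y_0}$. Next, the $\tau$-stationarity relation~\eqref{eq:statnormalrandomint} applied with $y\leadsto y_0$ and $z\leadsto y-y_0$ yields the pointwise identity
\[
W(y,\Lambda,\omega)=W(y_0,\Lambda,\tau_{y_0-y}\omega),\qquad\text{for all $y,\Lambda,\omega$.}
\]

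Using this identity, I would define, for each $y\in\R^d$,
\[
V_y(\Lambda,\omega):=V_{y_0}(\Lambda,\tau_{y_0-y}\omega).
\]
Since the action $\tau_{y_0-y}:\Omega\to\Omega$ is $(\F,\F)$-measurable and $V_{y_0}$ is $\B(\R^{m\times d})\otimes\F$-measurable, the map $V_y$ inherits joint Borel measurability. Moreover, for any $\omega\notin\tau_{y-y_0}^{-1}(N_{y_0})$, the above stationarity identity gives $W(y,\cdot,\omega)=V_{y_0}(\cdot,\tau_{y_0-y}\omega)=V_y(\cdot,\omega)$; since $\tau_{y-y_0}$ preserves $\p$, the exceptional set $N_y:=\tau_{y-y_0}^{-1}(N_{y_0})$ is still a $\p$-null set. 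This provides, for every $y$, a pointwise Borel-measurable representative of $W(y,\cdot,\cdot)$ (and, incidentally, shows that under $\tau$-stationarity the exceptional null set in Definition~\ref{def:integrand}(c) can be removed entirely).

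Finally, for any fixed $y$ and any $u\in\Mes(\Omega;\R^{m\times d})$, the composite $\omega\mapsto V_y(u(\omega),\omega)$ is $(\F,\B([0,\infty]))$-measurable, as the composition of the measurable map $\omega\mapsto(u(\omega),\omega)$ with the Borel-measurable $V_y$, exactly as in the proof of Lemma~\ref{lem:intnormal}(ii). Since $\omega\mapsto W(y,u(\omega),\omega)$ agrees with this map outside the null set $N_y$, and since $(\Omega,\F,\p)$ is complete, it is itself $\F$-measurable. This yields the claim. The only conceptual obstacle is checking that pushing the base-point null set around by $\tau_{y-y_0}$ does not spoil its negligibility, which is precisely what measure-preservation of the action (ensured by the ergodicity assumption) guarantees; everything else is a routine composition argument.
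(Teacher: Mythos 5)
Your proof is correct and takes the same route the paper has in mind: use the pointwise stationarity identity $W(y,\Lambda,\omega)=W(y_0,\Lambda,\tau_{y_0-y}\omega)$ to pull everything back to a single base point $y_0$ at which Definition~\ref{def:integrand}(c) provides a jointly Borel-measurable representative $V_{y_0}$, then argue by composition and completeness of $(\Omega,\F,\p)$. In fact you are more careful than the paper's one-line remark, which implicitly uses $y_0=0$ even though Definition~\ref{def:integrand}(c) only guarantees a good representative for almost every $y$; your choice of an arbitrary admissible $y_0$ closes that small gap. Two cosmetic points: the exceptional set should be $\tau_{y_0-y}^{-1}(N_{y_0})$ (equivalently $\tau_{y-y_0}(N_{y_0})$), not $\tau_{y-y_0}^{-1}(N_{y_0})$ as written, though of course either translate of a null set is still null; and the measure-preservation of each $\tau_x$ is part of the definition of a measurable action (Definition~\ref{def:measaction}(ii)), not a consequence of ergodicity, which plays no role here.
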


\subsection{Stationary differential calculus in probability}\label{app:statcal}
In this appendix, we precisely define the measurable action $\tau$ that is used throughout this paper to induce the stationarity, and we discuss the properties of the stationary derivatives defined in Section~\ref{chap:preliminD}, and prove in particular the useful identity~\eqref{eq:linkDnabla}.

\subsubsection{Stationary random fields}\label{app:cadreproba}
As usual, the standard notion of stationarity of random fields (defined as the translation invariance of all the finite-dimensional distributions) is strictly equivalent to a formulation of stationarity as the invariance under some (measure-preserving) action of the group of translations $(\R^d,+)$ on the probability space (see e.g.~\cite[Section~16.1]{KorSinai}). This point of view is of great interest, since it puts us into the realm of ergodic theory.

Because we focus on jointly measurable random fields, which is standard in stochastic homogenization theory (see also Remark~\ref{rem:meascont} below), a further measurability requirement is added in our definition of an action, as e.g.~in~\cite[Section~7.1]{JKO94},

\begin{defin}\label{def:measaction}
A {\it measurable action} of the group $(\R^d,+)$ on $(\Omega,\F,\p)$ is a collection $\tau:=(\tau_x)_{x\in\R^d}$ of measurable transformations of $\Omega$ such that
\begin{enumerate}[(i)]
\item $\tau_x\circ\tau_y=\tau_{x+y}$ for all $x,y\in\R^d$;
\item $\p[\tau_xA]=\p[A]$ for all $x\in\R^d$ and all $A\in\F$;
\item the map $\R^d\times\Omega\to\Omega:(x,\omega)\mapsto\tau_x\omega$ is measurable.\qed
\end{enumerate}
\end{defin}

For any random variable $f\in\Mes(\Omega;\R)$, we may define its $\tau$-stationary extension $f:\R^d\times\Omega\to\R$ by $f(x,\omega):= f(\tau_{-x}\omega)$, which is a $\tau$-stationary random field on $\R^d$ (in the sense that $f(x+y,\omega)=f(x,\tau_{-y}\omega)$ for all $x,y,\omega$) and which is by definition jointly measurable on $\R^d\times\Omega$. For $f\in\Ld^p(\Omega)$, $1\le p<\infty$, the $\tau$-stationary extension $f$ belongs to $\Ld^p(\Omega;\Ld^p_{\loc}(\R^d))$. In this way, we get a bijection between the random variables (resp. in $\Ld^p(\Omega)$) and the $\tau$-stationary random fields (resp. in $\Ld^p(\Omega;\Ld^p_{\loc}(\R^d))$).

We may also naturally consider the associated action $T:=(T_x)_{x\in\R^d}$ of $(\R^d,+)$ on $\Mes(\Omega;\R)$, defined by $(T_x f)(\omega)=f(\tau_{-x}\omega)$ for all $\omega\in\Omega$ and $f\in\Mes(\Omega;\R)$. Let $1\le p<\infty$. The following gives elementary properties of this action (see e.g.~\cite[Section~7.1]{JKO94}):

\begin{lem}\label{lem:unit}
The action $T$ defined above is unitary and strongly continuous on $\Ld^p(\Omega)$.\qed
\end{lem}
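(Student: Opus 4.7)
The unitarity is essentially immediate; the real content is strong continuity, and my plan is to reduce it to the well-known continuity of translation on Lebesgue spaces over a product measure.

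\textbf{Step 1 (Unitarity).} For any $f\in \Ld^p(\Omega)$ and $x\in\R^d$, the measure-preservation property (ii) of Definition~\ref{def:measaction} gives
\[
\|T_xf\|_{\Ld^p(\Omega)}^p=\int_\Omega|f(\tau_{-x}\omega)|^pd\p(\omega)=\int_\Omega |f(\omega)|^pd\p(\omega)=\|f\|_{\Ld^p(\Omega)}^p,
\]
so each $T_x$ is an $\Ld^p$-isometry, and the group property (i) makes $T_x$ invertible with inverse $T_{-x}$. This establishes unitarity. By the group property and the fact that each $T_{x_0}$ is an isometry, strong continuity at an arbitrary $x_0\in\R^d$ reduces to strong continuity at $x=0$, since $T_{x_0+h}f-T_{x_0}f=T_{x_0}(T_hf-f)$.

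\textbf{Step 2 (Joint integrability of the stationary extension).} Fix $f\in\Ld^p(\Omega)$ and consider its $\tau$-stationary extension $\tilde f(x,\omega):=f(\tau_{-x}\omega)$. By property (iii) of Definition~\ref{def:measaction} (joint measurability of $(x,\omega)\mapsto\tau_x\omega$) combined with the measurability of $f$, $\tilde f$ is jointly measurable on $\R^d\times\Omega$. For any bounded Borel set $B\subset\R^d$, Tonelli's theorem together with the unitarity from Step~1 yields
\[
\int_{B\times\Omega}|\tilde f(x,\omega)|^pdx\,d\p(\omega)=\int_B\|T_xf\|_{\Ld^p(\Omega)}^pdx=|B|\,\|f\|_{\Ld^p(\Omega)}^p<\infty,
\]
so $\tilde f\in\Ld^p(B\times\Omega)$, and in particular $\tilde f\mathds{1}_{B'}\in\Ld^p(\R^d\times\Omega)$ for any bounded $B'\supset B$.

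\textbf{Step 3 (Continuity at zero via continuity of translation on $\Ld^p(\R^d\times\Omega)$).} Fix a bounded Borel set $B$ with $|B|>0$ and let $B':=B+B_1$. The standard continuity of translation in the $x$-variable on $\Ld^p(\R^d\times\Omega)$ (proved by density of simple product functions of the form $\mathds{1}_E\otimes\mathds{1}_A$ with $E\subset\R^d$ Borel, $A\in\F$, for which $L^p$-continuity of translation is immediate) applied to $\tilde f\mathds{1}_{B'}$ gives
\[
\lim_{h\to0}\int_B\int_\Omega|\tilde f(x+h,\omega)-\tilde f(x,\omega)|^pd\p(\omega)dx=0.
\]
Using the cocycle identity $\tilde f(x+h,\omega)=f(\tau_{-h}(\tau_{-x}\omega))=(T_hf)(\tau_{-x}\omega)$ and again the measure-preservation of $\tau_{-x}$ for each fixed $x$, the integrand on $\Omega$ equals $\|T_hf-f\|_{\Ld^p(\Omega)}^p$, independent of $x$. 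Hence the left-hand side equals $|B|\,\|T_hf-f\|_{\Ld^p(\Omega)}^p$, and strong continuity at $0$ follows.

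The argument is standard and I do not expect any serious obstacle; the only mild subtlety is arranging the domains (the enlargement $B'\supset B+B_1$) so as to apply translation-continuity on $\Ld^p(\R^d\times\Omega)$ without boundary issues, and carefully invoking Fubini, which is justified by the joint measurability granted by property (iii) of Definition~\ref{def:measaction}.
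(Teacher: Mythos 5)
Your proof is correct. The paper does not include its own argument for this lemma but simply cites \cite[Section~7.1]{JKO94}, so there is no detailed comparison to make; your route—checking isometry and the group law, then reducing strong continuity at a general point to continuity at the origin, and finally deducing the latter from continuity of the $\R^d$-translation action on $\Ld^p(\R^d\times\Omega)$ applied to the jointly measurable stationary extension $\tilde f$—is the standard one and all the steps (the use of measure-preservation to turn the inner $\Omega$-integral into $\|T_hf-f\|^p_{\Ld^p(\Omega)}$ independently of $x$, the Tonelli argument giving $\tilde f\mathds1_{B'}\in\Ld^p(\R^d\times\Omega)$, the density of linear combinations of rectangle indicators $\mathds{1}_E\otimes\mathds{1}_A$ over the finite product measure on $B'\times\Omega$, and the domain enlargement $B'=B+B_1$ to avoid boundary effects for $|h|<1$) are sound. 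One cosmetic remark: for $p\neq2$ the word ``unitary'' is an abuse (the correct notion is surjective isometry), but the paper itself uses it, so your phrasing is consistent with the statement you were asked to prove.
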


In the context of stochastic homogenization theory, the measurability hypotheses made above (as in e.g.~\cite[Section~7.1]{JKO94}) are sometimes replaced by stochastic continuity hypotheses (see e.g.~\cite[Section~2]{PapaVara}). As the following remark shows, both are actually equivalent.

\begin{rem}[Measurability or continuity]\label{rem:meascont}
It should be noted that the additional measurability assumption~(iii) in Definition~\ref{def:measaction} above is not inoffensive at all. Indeed, a stochastic version of the Lusin theorem can easily be proven: a random field $h$ on $\R^d$ is jointly measurable if and only if, for almost all $x\in\R^d$, for all $\delta>0$,
\[\lim_{y\to0}\p[|h(x+y,\omega)-h(x,\omega)|>\delta]=0.\]
Hence, for a {\it stationary} random field $h$ on $\R^d$, joint measurability is actually equivalent to stochastic continuity (and even to continuity in the $p$-th mean, in the case when $h(0,\cdot)\in\Ld^p(\Omega)$). In the same vein, the measurability property~(iii) in Definition~\ref{def:measaction} is equivalent to the strong continuity of the action $T$ of $(\R^d,+)$ on $\Ld^p(\Omega)$, and also to the property that all $\tau$-stationary extensions are stochastically continuous.
\qed
\end{rem}

\subsubsection{Stationary Sobolev spaces}\label{app:statsobolev}
Let $1\le p<\infty$, and let the stationary gradient $D$ and the space $W^{1,p}(\Omega)$ be defined as in Section~\ref{chap:preliminD}.
Now we present another useful vision for derivatives of stationary random fields.

Given a random variable $f\in \Ld^p(\Omega)$, the $\tau$-stationary extension is an element $f\in\Ld^p_\loc(\R^d;\Ld^p(\Omega))=\Ld^p(\Omega;\Ld^p_\loc(\R^d))$ and can thus be seen as an $\Ld^p(\Omega)$-valued distribution on $\R^d$. We may then define its distributional gradient $\nabla f$ in the usual way. Note that by definition, for almost all $\omega$, $\nabla f(\cdot,\omega)$ is nothing but the usual distributional gradient of $f(\cdot,\omega)\in\Ld^p_{\loc}(\R^d)$. As usual, the Sobolev space $W^{1,p}_\loc(\R^d;\Ld^p(\Omega))$ is defined as the space of functions $f\in\Ld^p_\loc(\R^d;\Ld^p(\Omega))$ such that $\nabla f\in\Ld^p_\loc(\R^d;\Ld^p(\Omega;\R^d))$, and in that case $\nabla f$ is called the {\it weak gradient}. The following result shows the link with stationary gradients and with the space $W^{1,p}(\Omega)$, in particular proving identity~\eqref{eq:linkDnabla}.

\begin{lem}\label{lem:statsobolev}
Modulo the correspondence between random variables and $\tau$-stationary random fields, we have
\[W^{1,p}(\Omega)=\{f\in W^{1,p}_{\loc}(\R^d;\Ld^p(\Omega)): f(x+y,\omega)=f(x,\tau_{-y}\omega),\forall x,y,\omega\},\]
and moreover $\nabla f=Df$ for all $f\in W^{1,p}(\Omega)$.\qed
\end{lem}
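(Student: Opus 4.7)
The lemma identifies two natural stationary Sobolev spaces: the space $W^{1,p}(\Omega)$ defined via the infinitesimal generators $D_i$ of the unitary actions $(T^i_h)_h$, and the subspace of $W^{1,p}_{\loc}(\R^d;\Ld^p(\Omega))$ consisting of $\tau$-stationary functions. The plan is to prove both inclusions along with the identification $\nabla f = Df$ essentially in parallel, exploiting the bridge between the semigroup/infinitesimal-generator point of view (giving Bochner-differentiability) and the weak-derivative point of view (giving distributional identities against test functions).

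First I would treat the forward inclusion. Let $f \in W^{1,p}(\Omega)$. Since each $T^i$ is a strongly continuous one-parameter group and $f \in \D_i$, classical semigroup theory yields that $h \mapsto T^i_h f$ is Bochner-differentiable on $\R$ with values in $\Ld^p(\Omega)$, with derivative $T^i_h D_i f$. Under the correspondence $f(x,\omega):=f(\tau_{-x}\omega)$, this is exactly the stationary extension, and by commutation of the actions $T^i$ one obtains that each partial derivative $\partial_{x_i}f$ (in the Bochner sense) coincides with the stationary extension of $D_i f$. Testing against $\varphi\in C^\infty_c(\R^d)$ and integrating by parts at the level of Bochner integrals,
\[\int_{\R^d}f(x,\cdot)\,\partial_i\varphi(x)\,dx = -\int_{\R^d}(D_i f)(x,\cdot)\,\varphi(x)\,dx\quad\text{in }\Ld^p(\Omega),\]
which, by Fubini (the ambient function being jointly measurable), gives the almost-sure distributional identity $\partial_i f(\cdot,\omega)=(D_i f)(\cdot,\omega)$ and hence $f\in W^{1,p}_{\loc}(\R^d;\Ld^p(\Omega))$ with $\nabla f = Df$.

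Conversely, let $f\in W^{1,p}_{\loc}(\R^d;\Ld^p(\Omega))$ satisfy $f(x+y,\omega)=f(x,\tau_{-y}\omega)$. By stationarity and measure-preservation, the slicewise norm $x\mapsto\|f(x,\cdot)\|_{\Ld^p(\Omega)}^p$ is essentially constant, so all translates represent bona fide elements of $\Ld^p(\Omega)$. By the one-dimensional Sobolev embedding $W^{1,p}((-1,1);\Ld^p(\Omega))\hookrightarrow C([-1,1];\Ld^p(\Omega))$ applied along each coordinate direction, $f$ admits a continuous representative for which the value $\tilde f:=f(0,\cdot)\in\Ld^p(\Omega)$ is well-defined and for which
\[f(he_i,\cdot)-f(0,\cdot)=\int_0^h\partial_{x_i}f(se_i,\cdot)\,ds\quad\text{in }\Ld^p(\Omega).\]
Since by stationarity $f(he_i,\cdot)=T^i_h\tilde f$, dividing by $h$ and letting $h\to 0$ yields $h^{-1}(T^i_h\tilde f-\tilde f)\to\partial_{x_i}f(0,\cdot)$ strongly in $\Ld^p(\Omega)$. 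Hence $\tilde f\in\D_i$ with $D_i\tilde f=\partial_{x_i}f(0,\cdot)$ for every $i$, so $\tilde f\in W^{1,p}(\Omega)$ with $D\tilde f$ matching the pointwise-at-$0$ value of $\nabla f$, and its stationary extension recovers $f$. The main obstacle is exactly the reconciliation between pointwise-in-$\omega$ and Bochner formalisms, i.e., making sense of ``value at a point'' of a Bochner Sobolev function; stationarity together with the one-dimensional Sobolev embedding is precisely what makes this possible without any abstract capacity argument.
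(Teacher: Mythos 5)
Your forward inclusion is essentially the paper's argument made more explicit: semigroup theory gives $C^1$-regularity with values in $\Ld^p(\Omega)$, and testing against $C^\infty_c$ functions identifies the strong derivative with the weak one. This matches the paper, which simply observes that weak derivatives extend strong ones.

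Your converse takes a genuinely different route from the paper, and it contains a gap as stated. You prove the converse via the fundamental theorem of calculus along coordinate lines, deducing membership in $\D_i$ from the absolute continuity of $s\mapsto f(se_i,\cdot)$. The paper instead mollifies: it sets $R_\delta[f](\omega)=\int\rho_\delta(y)f(y,\omega)\,dy$, observes $R_\delta[f]\in C^\infty(\R^d;\Ld^p(\Omega))$ with $DR_\delta[f]=R_\delta[\nabla f]$, and concludes $R_\delta[f]\to f$ in $C^1(\R^d;\Ld^p(\Omega))$, hence $f\in W^{1,p}(\Omega)$. The mollification argument only involves spatial averages and is therefore insensitive to the choice of representative; your line argument is not.

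Concretely, the step where you invoke ``the one-dimensional Sobolev embedding $W^{1,p}((-1,1);\Ld^p(\Omega))\hookrightarrow C([-1,1];\Ld^p(\Omega))$ applied along each coordinate direction'' is not a valid argument for $d\ge2$: the restriction of a $W^{1,p}_{\loc}(\R^d;\Ld^p(\Omega))$-function to a fixed coordinate line (say $\{se_i:s\in\R\}$) is not well-defined modulo null sets, so there is no $1$D Sobolev function to which the embedding could be applied. In fact the value $\tilde f:=f(0,\cdot)$ is already well-defined from the space's definition (the stationarity relation is required to hold for all $x,y,\omega$, so the field comes with a canonical pointwise representative), and continuity of $x\mapsto f(x,\cdot)=T_x\tilde f$ is immediate from strong continuity of $T$ (Lemma~\ref{lem:unit}), with no Sobolev embedding needed. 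The more serious issue is the identity $f(he_i,\cdot)-f(0,\cdot)=\int_0^h\partial_{x_i}f(se_i,\cdot)\,ds$: for a general Bochner--Sobolev function this Fubini-type absolute continuity holds along \emph{almost every} translate of a coordinate line, not a specific one through $0$. To salvage it you must again invoke stationarity to identify $\partial_{x_i}f(se_i,\cdot)$ with $T^i_s[\partial_{x_i}f(0,\cdot)]$ and argue that the a.e.-valid slice identity therefore holds for the slice through $0$; this is implicit in your write-up but not proved. Until that step is justified, the argument that $\tilde f\in\D_i$ does not go through. The paper's mollification approach avoids this entirely, which is why it is preferable here.
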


\begin{proof}
Denote for simplicity $E_p:=\{f\in W^{1,p}_{\loc}(\R^d;\Ld^p(\Omega)): f(x+y,\omega)=f(x,\tau_{-y}\omega),\forall x,y,\omega\}$.
For all $i$, the stationary derivative $D_if$ is defined as the strong derivative of the map $\R\to\Ld^p(\Omega):h\mapsto T_{-he_i}f$, so that
\[W^{1,p}(\Omega)=\{f\in C^1(\R^d;\Ld^p(\Omega)):f(x+y,\omega)=f(x,\tau_{-y}\omega),\forall x,y,\omega\}.\]
Hence, for all $f\in W^{1,p}(\Omega)$, we have $f\in E_p$, and $\nabla f=Df$, since weak derivatives are generalizations of strong derivatives.

We now turn to the converse statement. Let $f\in W^{1,p}(\Omega)$.
As in~\cite[Section~7.2]{JKO94}, choose a nonnegative even function $\rho\in C_c^\infty(\R^d)$ with $\int\rho=1$ and $\supp\rho\subset B_1$, write $\rho_\delta(x)=\delta^{-d}\rho(x/\delta)$ for all $\delta>0$, and define a regularization $R_\delta [f]\in\Ld^p(\Omega)$ by
\begin{align}\label{eq:regconv}
R_\delta [f](\omega)=\int_{\R^d}\rho_\delta(y)f(y,\omega)dy,
\end{align}
or equivalently, as $\rho_\delta$ is even,
\[R_\delta [f](x,\omega)=\int_{\R^d}\rho_\delta(y)f(x+y,\omega)dy=\int_{\R^d}\rho_\delta(y-x)f(y,\omega)dy=(\rho_\delta\ast f(\cdot,\omega))(x).\]
Clearly, $R_\delta [f]\to f$ in $\Ld^p(\Omega)$, and hence by stationarity $R_\delta [f](x,\cdot)\to f(x,\cdot)$ in $\Ld^p(\Omega)$ uniformly in $x$. As by definition $R_\delta [f]\in C^\infty(\R^d;\Ld^p(\Omega))$, we have $R_\delta [f]\in W^{1,p}(\Omega)$ and the stationary gradient is simply $DR_\delta [f]=R_\delta [\nabla f]$. This proves $DR_\delta [f]\to\nabla f$ in $\Ld^p(\Omega;\R^d)$, and thus $DR_\delta [f](x,\cdot)\to\nabla f(x,\cdot)$ in $\Ld^p(\Omega;\R^d)$ uniformly in $x$. Hence $R_\delta[f]\to f$ in $C^1(\R^d;\Ld^p(\Omega))$, so $f\in C^1(\R^d;\Ld^p(\Omega))$, from which we conclude $f\in W^{1,p}(\Omega)$.
\end{proof}

\subsection{Ergodic Weyl decomposition}\label{app:weylerg}
In this appendix, we discuss the various properties of the ergodic Weyl spaces recalled in Section~\ref{chap:preliminweyl}. In particular, we prove the ergodic Weyl decomposition~\eqref{eq:decweyl} as well as the density result~\eqref{eq:densweyl}.

Let $\tau$ be a measurable action of $(\R^d,+)$ on the probability space $(\Omega,\F,\p)$, let $1<p<\infty$, and let $\Ld^p_\pot(\Omega)$, $\Ld^p_\sol(\Omega)$, $F^p_\pot(\Omega)$ and $F^p_\sol(\Omega)$ be defined by~\eqref{eq:defweylerg-0} (or equivalently~\eqref{eq:defweylerg-1}) and~\eqref{eq:defweylerg-F}.
First we prove the ergodic Weyl decomposition~\eqref{eq:decweyl}:

\begin{prop}[Ergodic Weyl decomposition]\label{prop:ergweyldec}
Let $\tau$ be ergodic and let $1<p<\infty$. Then the following Banach direct sum decompositions hold:
\[\Ld^p(\Omega;\R^d)=\Ld^p_\pot(\Omega)\oplus F^p_\sol(\Omega)=F^p_\pot(\Omega)\oplus \Ld^p_\sol(\Omega)=F^p_\pot(\Omega)\oplus F^p_\sol(\Omega)\oplus\R^d.\]
\qed
\end{prop}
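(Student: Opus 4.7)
\emph{Proof proposal.} The plan is to establish the three-way decomposition $\Ld^p(\Omega;\R^d)=F^p_\pot(\Omega)\oplus F^p_\sol(\Omega)\oplus\R^d$ first; the other two decompositions then follow at once from the trivial identities $\Ld^p_\pot(\Omega)=F^p_\pot(\Omega)\oplus\R^d$ and $\Ld^p_\sol(\Omega)=F^p_\sol(\Omega)\oplus\R^d$, which in turn come from the observations that $\R^d\subset \Ld^p_\pot\cap\Ld^p_\sol$ and that for any $f\in\Ld^p_\pot$ the random variable $f-\E[f]$ has zero mean while still belonging to $\Ld^p_\pot$. I will split the argument into three steps: triviality of the intersection, density of the sum, and closedness of the sum.

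The first step is to show $F^p_\pot(\Omega)\cap F^p_\sol(\Omega)=\{0\}$. Let $f$ lie in this intersection. By the characterization~\eqref{eq:defweylerg-1}, for almost every $\omega$ the $\tau$-stationary extension $x\mapsto f(\tau_x\omega)$ is simultaneously curl-free and divergence-free in $\Ld^p_\loc(\R^d;\R^d)$, hence each component is distributionally harmonic. Weyl's lemma promotes it to a smooth (indeed real-analytic) function. Stationarity gives a uniform control $\int_{B_1(y)}|f(\tau_x\omega)|^pdx=\E[|f|^p]$ for every $y$, and combining this with the mean value property yields a uniform $\Ld^\infty$ bound, so by Liouville the function $x\mapsto f(\tau_x\omega)$ is almost surely a deterministic constant; the zero-mean condition then forces $f=0$. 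The intersections $F^p_*\cap\R^d$ are of course trivial by the zero-mean condition.

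The second step is the density of $F^p_\pot(\Omega)+F^p_\sol(\Omega)+\R^d$ in $\Ld^p(\Omega;\R^d)$. I will argue by duality: using the density identities~\eqref{eq:densweyl}, the definitions~\eqref{eq:defweylerg-0} rewrite as the annihilator relations $(F^p_\pot)^\perp=\Ld^{p'}_\sol$ and $(F^p_\sol)^\perp=\Ld^{p'}_\pot$ (for the usual $\Ld^p$-$\Ld^{p'}$ pairing), while $(\R^d)^\perp$ is the mean-zero subspace of $\Ld^{p'}(\Omega;\R^d)$. Consequently,
\[(F^p_\pot\oplus F^p_\sol\oplus\R^d)^\perp=\Ld^{p'}_\pot\cap\Ld^{p'}_\sol\cap\{h:\E[h]=0\}=F^{p'}_\pot\cap F^{p'}_\sol,\]
which is $\{0\}$ by the first step applied with exponent $p'$ in place of $p$. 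The Hahn-Banach theorem then yields the sought density.

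The third and main step, and also the hardest, is to establish that $F^p_\pot+F^p_\sol+\R^d$ is closed (and that the sum is therefore direct in the Banach sense, with bounded coordinate projections). To this end I would construct an explicit bounded projector $P:\Ld^p(\Omega;\R^d)\to F^p_\pot(\Omega)$ as follows. Given $f\in\Ld^p(\Omega;\R^d)$ with $\E[f]=0$, regularize on the probability space via the mollification $R_\delta$ used in the proof of Lemma~\ref{lem:statsobolev}; this gives $f_\delta\in\bigcap_{k\ge 1}W^{k,p}(\Omega;\R^d)$ with $\|f_\delta\|_p\le\|f\|_p$ and $f_\delta\to f$ in $\Ld^p$. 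For the smooth element $f_\delta$ I would solve the stationary Poisson equation $\Delta_\Omega u_\delta=D\cdot f_\delta$ by inverting $-\Delta_\Omega+\eta$ on the mean-zero subspace of $\Ld^p(\Omega)$ and letting $\eta\downarrow0$, and then set $P f_\delta:=Du_\delta$. The crucial estimate $\|Du_\delta\|_p\le C_p\|f_\delta\|_p$ is a stationary analogue of the boundedness of the double Riesz transform $R_iR_j$ on $\Ld^p(\R^d)$, $1<p<\infty$; it is obtained by transferring the $\R^d$-bound to $\Ld^p(\Omega)$ through the stationary extension (Coifman-Weiss-type transference), using the strong continuity of the action $T$ and the convolution form of $(-\Delta_\Omega+\eta)^{-1}$. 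Once the bound is available, passing to the limit $\delta\downarrow0$ extends $P$ to all of $\Ld^p(\Omega;\R^d)$ by continuity, and the image of $P$ is closed in $F^p_\pot$ by density~\eqref{eq:densweyl}; the decomposition $f=Pf+(I-P-\E)f+\E[f]$ then produces the required direct sum, with each summand lying in the prescribed space by construction and by dualizing against elements of $W^{1,p'}(\Omega;\R^d)$. The hard part is precisely this $\Ld^p$-boundedness of the stationary Riesz projection; everything else is manipulation around it.
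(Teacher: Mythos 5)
Your argument takes a genuinely different route from the paper's. The paper invokes the direct sum decomposition $X=M\oplus J(M^\bot)$ valid in uniformly smooth and uniformly convex Banach spaces (Alber, \cite[Theorem~2.13]{Alber-05}), applied with $M=F^p_\pot(\Omega)$, together with the claim that the duality map $J$ carries $\Ld^{p'}_\sol(\Omega)$ into $\Ld^p_\sol(\Omega)$; that is the whole proof. You instead propose (1) trivial intersection via Weyl's lemma and Liouville, (2) density via Hahn--Banach, (3) closedness via an explicit bounded projector built from a stationary Riesz transform. Steps (1) and (2) are correct and arguably more illuminating; the Liouville identification of $F^p_\pot\cap F^p_\sol$ is a nice elementary ingredient. (One small imprecision: the ``uniform control'' $\int_{B_1(y)}|f(\tau_x\omega)|^pdx=\E[|f|^p]$ is not an exact pointwise identity; it should be read as the conclusion of the Birkhoff--Khinchin theorem on balls of growing radius, combined with the subharmonic mean-value inequality, which then gives a uniform $\Ld^\infty$ bound a.s.\ and hence constancy.)

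The genuine gap is in Step (3), and you flag it yourself: the $\Ld^p(\Omega)$-boundedness of the stationary Riesz projection, i.e.\ the estimate $\|Du_\delta\|_{\Ld^p}\le C_p\|f_\delta\|_{\Ld^p}$ uniformly in $\delta$ and in the regularization parameter $\eta$, is precisely the analytic content of the proposition for $p\ne2$. Transferring the Euclidean Calder\'on--Zygmund bound to the action $\tau$ through a Coifman--Weiss type argument is plausible but is a nontrivial piece of harmonic analysis that you do not carry out, and which is exactly what the paper sidesteps by reducing everything to a single abstract Banach-space lemma. Until that estimate is established, the closedness of $F^p_\pot\oplus F^p_\sol\oplus\R^d$ is not proved. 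A secondary concern: in Step (2) you derive the annihilator relations from the density identities~\eqref{eq:densweyl}, but in the paper those identities (Proposition~\ref{prop:graddenspot}) are themselves proved \emph{using} the decomposition you are trying to establish; to avoid circularity you should instead invoke the annihilator relations~\eqref{eq:embryonweyl} directly from \cite[Lemma~15.1]{JKO94}, as the paper does.
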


\begin{proof}
Let $1<p<\infty$ and let $q:=p'=p/(p-1)$. The following weaker form of the result is proven e.g. in~\cite[Lemma~15.1]{JKO94}:
\begin{align}\label{eq:embryonweyl}
(F^p_\pot(\Omega))^\bot&=\Ld^{q}_\sol(\Omega)=F^{q}_\sol(\Omega)\oplus\R^d,\\
(F^p_\sol(\Omega))^\bot&=\Ld^{q}_\pot(\Omega)=F^{q}_\pot(\Omega)\oplus\R^d,\nonumber
\end{align}
in the sense of duality between $\Ld^p(\Omega;\R^d)$ and $\Ld^{q}(\Omega;\R^d)$. In the Hilbert case $p=2$, this is already enough to conclude by the orthogonal decomposition theorem. For the general case, we will need to use more subtle (nonlinear) decomposition results in Banach spaces.

As the Banach space $\Ld^p(\Omega;\R^d)$ is uniformly smooth and uniformly convex, the following (nonlinear) direct sum decomposition (see e.g.~\cite[Theorem~2.13]{Alber-05}) holds for any closed subspace $M\subset \Ld^p(\Omega;\R^d)$:\footnote{The direct sum means here that any element of $\Ld^p(\Omega;\R^d)$ can be written in a unique way as the sum of an element of $M$ and an element of $J(M^\bot)$, and that moreover $M\cap J(M^\bot)=\{0\}$. As shown in~\cite{Alber-05}, this is actually even a direct sum in Birkhoff-James' sense, that is $\|u\|_{\Ld^p(\Omega)}\le\|u+tv\|_{\Ld^p(\Omega)}$ for all $u\in M$, $v\in J(M^\bot)$ and $t\in\R$.}
\[\Ld^p(\Omega;\R^d)=M\oplus J(M^\bot),\]
where $M^\bot\subset\Ld^q(\Omega;\R^d)$ denotes the orthogonal in the sense of duality, and where the (nonlinear) map $J:\Ld^q(\Omega;\R^d)\to\Ld^p(\Omega;\R^d)$ is defined by $J(u):=\|u\|_{\Ld^q}^{2-q}|u|^{q-2}u$. Let us apply this result to the choice $M=F^p_{\pot}(\Omega)$. By~\eqref{eq:embryonweyl} we have $M^\bot=\Ld^q_\sol(\Omega)$, and we may compute $DJ(u)=(q-1)\|u\|_{\Ld^q}^{2-q}|u|^{q-2}Du$, so that $JM^\bot\subset\Ld^p_\sol(\Omega)$. On the other hand, given $u\in\Ld^p_\sol(\Omega)$, we may write $u=J(v)$ with $v:=\|u\|_{\Ld^p}^{2-p}|u|^{p-2}u\in\Ld^q_\sol(\Omega)=M^\bot$. This proves equality $J(M^\bot)=\Ld^p_\sol(\Omega)$, and the result follows.
\end{proof}

The density result~\eqref{eq:densweyl} is now easily obtained, using the same (nonlinear) direct sum decomposition in Banach spaces:

\begin{prop}\label{prop:graddenspot}
Let $\tau$ be ergodic, and let $1<p<\infty$. Then,
\begin{align*}
F^p_\pot(\Omega)&=\adh_{\Ld^p(\Omega;\R^d)}\{D\chi\,:\,\chi\in W^{1,p}(\Omega)\},\\
F^p_\sol(\Omega)&=\adh_{\Ld^p(\Omega;\R^d)}\{D\times\chi\,:\,\chi\in W^{1,p}(\Omega)\}.
\end{align*}\qed
\end{prop}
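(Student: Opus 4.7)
The plan is to prove the two density results by a Hahn--Banach duality argument based on the orthogonality relations~\eqref{eq:embryonweyl} already available from the proof of Proposition~\ref{prop:ergweyldec}.

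First I would verify the easy inclusion $\adh_{\Ld^p(\Omega;\R^d)}\{D\chi : \chi \in W^{1,p}(\Omega)\} \subset F^p_\pot(\Omega)$. For $\chi \in W^{1,p}(\Omega)$, the integration by parts formula recalled in Section~\ref{chap:preliminD} yields $\E[D\chi] = 0$, while Lemma~\ref{lem:statsobolev} shows that the $\tau$-stationary extension of $D\chi$ coincides with the distributional gradient of the stationary extension of $\chi$, hence is a potential field on $\R^d$ for almost every $\omega$. Reinterpreting via~\eqref{eq:defweylerg-1}, this places $D\chi$ in $F^p_\pot(\Omega)$, and the property passes to the closure since $F^p_\pot(\Omega)$ is closed in $\Ld^p(\Omega;\R^d)$.

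For the converse inclusion, I would denote by $G$ the closure on the right-hand side and argue by contradiction, assuming that there exists some $f_0 \in F^p_\pot(\Omega) \setminus G$. By Hahn--Banach combined with the Riesz representation of continuous linear forms on $\Ld^p(\Omega;\R^d)$, there exists $v \in \Ld^q(\Omega;\R^d)$, with $q = p/(p-1)$, such that $\E[v \cdot D\chi] = 0$ for all $\chi \in W^{1,p}(\Omega)$, while $\E[v \cdot f_0] \neq 0$. The vanishing against all stationary gradients is precisely the defining property of $\Ld^q_\sol(\Omega)$ in~\eqref{eq:defweylerg-0}, so $v \in \Ld^q_\sol(\Omega)$; but then~\eqref{eq:embryonweyl}, which asserts $\Ld^q_\sol(\Omega) = (F^p_\pot(\Omega))^\perp$ in the $\Ld^p$--$\Ld^q$ duality, forces $\E[v \cdot f_0] = 0$, a contradiction. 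This yields $G = F^p_\pot(\Omega)$.

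The solenoidal case will follow from the very same duality scheme, with $D\chi$ replaced by the stationary curl $D\times\chi$ and with the second identity in~\eqref{eq:embryonweyl}, namely $(F^p_\sol(\Omega))^\perp = \Ld^q_\pot(\Omega)$, playing the role of the first. The easy inclusion will rest on the pointwise vanishing of the divergence of a curl (applied to the stationary extension, and using Lemma~\ref{lem:statsobolev} to identify stationary and distributional differential operators) together with the vanishing of $\E[D\times\chi]$ coming again from integration by parts. The only genuine difficulty in the whole approach is actually hidden in~\eqref{eq:embryonweyl} itself: this identification of annihilators is the one nontrivial input and is where ergodicity of $\tau$ is truly used, but since it is borrowed from~\cite[Lemma~15.1]{JKO94}, the remainder of the argument is a soft functional-analytic consequence.
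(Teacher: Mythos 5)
Your duality argument is correct and takes a genuinely different, more elementary route than the paper's. The paper again invokes the nonlinear direct sum decomposition of~\cite[Theorem~2.13]{Alber-05}, applied this time to $M=\adh DW^{1,p}(\Omega)$: it writes $\Ld^p(\Omega;\R^d)=M\oplus J(M^\perp)$, identifies $M^\perp=\Ld^q_\sol(\Omega)$, computes $J(\Ld^q_\sol(\Omega))=\Ld^p_\sol(\Omega)$, and then matches the resulting decomposition $\Ld^p(\Omega;\R^d)=M\oplus\Ld^p_\sol(\Omega)$ against that of Proposition~\ref{prop:ergweyldec} to conclude $M=F^p_\pot(\Omega)$. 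Your argument dispenses with the nonlinear map $J$ altogether: you note that the annihilator of $\{D\chi:\chi\in W^{1,p}(\Omega)\}$ in $\Ld^q(\Omega;\R^d)$ is, directly from definition~\eqref{eq:defweylerg-0} and the identification $D\chi=\nabla\chi$ of Lemma~\ref{lem:statsobolev}, precisely $\Ld^q_\sol(\Omega)$; combined with the annihilator identity $(F^p_\pot(\Omega))^\perp=\Ld^q_\sol(\Omega)$ from~\eqref{eq:embryonweyl} and the easy inclusion, the two closed subspaces have the same annihilator and therefore coincide by Hahn--Banach. What you gain is a proof that uses nothing beyond classical duality in reflexive $\Ld^p$ spaces; what the paper's route buys is coherence with the proof of Proposition~\ref{prop:ergweyldec} (where the map $J$ is genuinely needed to produce the complementary summand) and explicit access to the structure of the complement, which is unnecessary for the density statement alone. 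Both proofs correctly locate~\eqref{eq:embryonweyl} as the one place ergodicity actually enters.
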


\begin{proof}
Using the same notation as in the proof of Proposition~\ref{prop:ergweyldec}, the (nonlinear) direct sum decomposition of~\cite[Theorem~2.13]{Alber-05} gives
\begin{align*}
\Ld^p(\Omega;\R^d)=\adh DW^{1,p}(\Omega)\oplus J((\adh DW^{1,p}(\Omega))^\bot).
\end{align*}
By definition $(\adh DW^{1,p}(\Omega))^\bot=\Ld^q_{\sol}(\Omega)$, while as in the proof of Proposition~\ref{prop:ergweyldec} we find $J(\Ld^q_{\sol}(\Omega))=\Ld^p_{\sol}(\Omega)$. This yields
\begin{align*}
\Ld^p(\Omega;\R^d)=\adh DH^\infty(\Omega)\oplus \Ld^p_\sol(\Omega).
\end{align*}
The obvious inclusion $\adh D W^{1,p}(\Omega)\subset F^p_\pot(\Omega)$ and the direct sum decomposition of Proposition~\ref{prop:ergweyldec} then imply the equality $\adh D W^{1,p}(\Omega)= F^p_\pot(\Omega)$. The result for $F^p_{\sol}$ is obtained similarly.
\end{proof}

\subsection{Measurability results}\label{chap:selmes}
This appendix is concerned with various measurability properties.

\subsubsection{Measurable potentials for random fields}
The following result complements the equivalent definition~\eqref{eq:defweylerg-1} of $\Ld^p_{\pot}(\Omega)$, and shows that potentials associated with potential random fields may be chosen in a measurable way with respect to the alea.

\begin{prop}\label{prop:measpot}
Let $\tau$ be an ergodic measurable action on a complete probability space $(\Omega,\F,\p)$. Let $1< p<\infty$. For all $f\in \Ld^p_\pot(\Omega)$ there exists a random field $\phi\in\Mes(\Omega;W^{1,p}_\loc(\R^d))$ such that $f(\cdot,\omega)=\nabla \phi(\cdot,\omega)$ for almost all $\omega$.\qed
\end{prop}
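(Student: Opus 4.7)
The plan is to construct the potential $\phi$ as an almost-sure limit of potentials associated with a smooth approximation of $f$ (coming from the density result in Proposition~\ref{prop:graddenspot}), with measurability automatic from this construction.

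First, decompose $f = \E[f] + g$ with $g \in F^p_\pot(\Omega)$ (using the Weyl decomposition $\Ld^p_\pot(\Omega) = F^p_\pot(\Omega) \oplus \R^d$). The constant part $\E[f] \in \R^d$ admits the deterministic (hence trivially measurable) potential $x \mapsto \E[f] \cdot x$, so it suffices to construct a measurable potential for $g \in F^p_\pot(\Omega)$. By Proposition~\ref{prop:graddenspot} there is a sequence $(\chi_n)_n \subset W^{1,p}(\Omega)$ with $D\chi_n \to g$ in $\Ld^p(\Omega;\R^d)$. By Lemma~\ref{lem:statsobolev}, the $\tau$-stationary extensions (still denoted $\chi_n$) belong to $W^{1,p}_\loc(\R^d;\Ld^p(\Omega))$, are jointly measurable in $(x,\omega)$, and satisfy $\nabla\chi_n = D\chi_n$ in the distributional sense.

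Next, to fix the additive constant in a measurable way, define
\[\hat\chi_n(x,\omega) := \chi_n(x,\omega) - \fint_{B_1}\chi_n(y,\omega)\,dy,\]
which is again jointly measurable in $(x,\omega)$. For each $R>0$, Poincaré's inequality on $B_R$ gives, for almost every $\omega$,
\[\|\hat\chi_n(\cdot,\omega) - \hat\chi_m(\cdot,\omega)\|_{\Ld^p(B_R)} \le C_R \|\nabla\chi_n(\cdot,\omega) - \nabla\chi_m(\cdot,\omega)\|_{\Ld^p(B_R)}.\]
Taking the $p$-th power, integrating over $\omega$, and using the $\tau$-stationarity of $\nabla\chi_n - \nabla\chi_m$ together with Fubini's theorem, we obtain
\[\E[\|\hat\chi_n - \hat\chi_m\|_{W^{1,p}(B_R)}^p]^{1/p} \le C_R'|B_R|^{1/p}\,\|D\chi_n - D\chi_m\|_{\Ld^p(\Omega;\R^d)}.\]
Hence $(\hat\chi_n)_n$ is Cauchy in $\Ld^p(\Omega;W^{1,p}(B_R))$ for every $R$.

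Finally, a standard diagonal extraction produces a subsequence (not relabeled) converging almost surely in $W^{1,p}(B_R)$ for every $R\in\N$, that is, in $W^{1,p}_\loc(\R^d)$, to some random field $\phi \in \Mes(\Omega;W^{1,p}_\loc(\R^d))$; measurability is automatic as an a.s.\ limit of jointly measurable maps (using completeness of $(\Omega,\F,\p)$). Since simultaneously $\nabla\chi_n(\cdot,\omega) \to g(\cdot,\omega)$ almost surely in $\Ld^p_\loc(\R^d;\R^d)$ (as the stationary extension of $D\chi_n - g$ converges to $0$ in $\Ld^p(\Omega;\R^d)$, hence in $\Ld^p_\loc(\R^d;\Ld^p(\Omega;\R^d))$ by stationarity, and one passes to a.s. convergence along a further subsequence), we conclude $\nabla\phi(\cdot,\omega) = g(\cdot,\omega)$ for almost every $\omega$. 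Adding back the linear potential $\E[f]\cdot x$ yields the desired map for $f$.

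The only subtle point is the measurability of the potential with respect to $\omega$; the approach sidesteps this difficulty by pushing measurability along approximating smooth objects for which it is built-in, and then exploiting Poincaré's inequality combined with stationarity to obtain uniform (in $\omega$) control of the sequence of normalized potentials in $\Ld^p(\Omega;W^{1,p}_\loc(\R^d))$.
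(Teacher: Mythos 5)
Your proof is correct, but it takes a genuinely different route from the paper's. The paper constructs $\phi$ directly from the definition~\eqref{eq:defweylerg-1} of $\Ld^p_\pot(\Omega)$ via a measurable selection argument: for each ball $B_n$ it considers the multifunction $\omega\rightrightarrows\{\phi\in W^{1,p}(B_n):\nabla\phi=f(\cdot,\omega)|_{B_n},\,\fint_{B_n}\phi=0,\,\|\nabla\phi\|_{\Ld^p(B_n)}\le k\}$ with values in a compact metrizable subset of $W^{1,p}(B_n)$ endowed with the weak topology, shows this multifunction is closed-valued and measurable, and invokes the Rokhlin--Kuratowski--Ryll Nardzewski theorem before gluing over balls. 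You instead decompose $f=\E[f]+g$ with $g\in F^p_\pot(\Omega)$, invoke the density result $F^p_\pot(\Omega)=\adh\{D\chi:\chi\in W^{1,p}(\Omega)\}$ (Proposition~\ref{prop:graddenspot}), normalize the stationary extensions by subtracting $\fint_{B_1}\chi_n$, and use Poincar\'e plus stationarity and Fubini to show the normalized sequence is Cauchy in $\Ld^p(\Omega;W^{1,p}(B_R))$ for every $R$, hence converges almost surely in $W^{1,p}_\loc$ along a diagonal subsequence, the limit inheriting measurability automatically (by completeness of $\F$). Your approach is more constructive and avoids selection theory altogether, but it leans on the density result Proposition~\ref{prop:graddenspot} (and the Weyl splitting $\Ld^p_\pot=F^p_\pot\oplus\R^d$), whereas the paper's proof works directly from the pointwise definition of $\Ld^p_\pot(\Omega)$ and does not need those ingredients; since both results are established independently earlier in the appendix, there is no circularity, and both proofs are valid. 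Two minor points worth spelling out in your write-up: (i) the Poincar\'e inequality you use is the variant with average prescribed on the smaller fixed ball $B_1\subset B_R$, which holds with an $R$-dependent constant by a standard two-step estimate; (ii) to identify $\nabla\phi=g$ you need the a.s.\ convergence of $\nabla\hat\chi_n(\cdot,\omega)\to g(\cdot,\omega)$ in $\Ld^p_\loc$ to hold along the \emph{same} diagonal subsequence you used for $\hat\chi_n$; this is automatic since $\nabla\hat\chi_n(\cdot,\omega)\to\nabla\phi(\cdot,\omega)$ in $\Ld^p_\loc$ a.s.\ along that subsequence, and $\nabla\hat\chi_n-g\to0$ in $\Ld^p(\Omega;\Ld^p(B_R;\R^d))$, so a further sub-subsequence converges a.s.\ and the two limits must agree.
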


\begin{proof}
Let $f\in \Ld^p_\pot(\Omega)$ be fixed.
For all $n,k\ge1$, define the space
\[X_{n,k}:=\left\{\phi\in W^{1,p}(B_n):\|\nabla \phi\|_{\Ld^p(B_{n})}\le k,\,\int_{B_n}\phi=0\right\},\]
endowed with the weak topology. By Poincaré's inequality and by the Banach-Alaoglu theorem, this space is metrizable and compact, hence Polish. Consider the multifunction $\Gamma_{n,k}:\Omega\rightrightarrows X_{n,k}$ defined by
\[\Gamma_{n,k}(\omega):=\{\phi\in X_{n,k}\,:\,\nabla\phi|_{B_{n}}=f(\cdot,\omega)|_{B_{n}}\}.\]
Clearly $\Gamma_{n,k}(\omega)$ is closed for all $\omega$. We first prove further properties of this multifunction, and the conclusion will then follow by applying the Rokhlin--Kuratowski--Ryll Nardzewski theorem.

\medskip
\step{1} For all $n\ge1$, we claim the existence of an increasing sequence of events $\Omega_{n,k}\subset\Omega$ such that $\p[\Omega_{n,k}]\uparrow1$ as $k\uparrow\infty$ for fixed $n$, and such that $\Gamma_{n,k}(\omega)\ne\varnothing$ for all $\omega\in\Omega_{n,k}$.

By Definition~\ref{eq:defweylerg-1}, there is a subset $\Omega'\subset \Omega$ of maximal probability, such that for all $\omega\in\Omega'$ the function $f(\cdot,\omega)$ is a potential field in $\Ld^p_{\loc}(\R^d;\R^d)$, and hence there exists $\phi^\omega\in W^{1,p}_{\loc}(\R^d)$ such that $f(\cdot,\omega)=\nabla \phi^\omega$. For all $n\ge1$,
define $\phi_n^\omega:= \phi^\omega-\fint_{B_{n}}\phi^\omega(z)dz\in W^{1,p}(B_n)$.
By definition, $\int_{B_n}\phi_n^\omega=0$ and $\nabla \phi_n^\omega=f(\cdot,\omega)$ on $B_n$.
Moreover, $\|\nabla \phi_n^\omega\|_{\Ld^p(B_{n})}\le M$ holds for all $\omega\in\Omega_{n,k}$, where we define the event
\[\Omega_{n,k}:=\{\omega\in\Omega':\|f(\cdot,\omega)\|_{\Ld^p(B_{n})}\le k\}.\]
Integrability and stationarity of $f$ easily imply that $\p[\Omega_{n,k}]\uparrow1$ as $k\uparrow\infty$.

\medskip
\step {2} Proof that $\Gamma_{n,k}$ is measurable, in the sense that $\Gamma_{n,k}^{-1}(O)\in \F$ for all open subset $O\subset X_{n,k}$, where we have set
\[\Gamma_{n,k}^{-1}(O):=\{\omega\in\Omega\,:\,\Gamma_{n,k}(\omega)\cap O\ne\varnothing\}.\]

As $X_{n,k}$ is metrizable, it suffices to check $\Gamma_{n,k}^{-1}(F)\in \F$ for all closed subset $F\subset X_{n,k}$ (see e.g. \cite[Lemma~18.2]{AliprantisBorder}). Given a closed subset $F\subset X_{n,k}$, we may write, using Poincaré's inequality and the weak lower semicontinuity of the norm,
\begin{align*}
\Gamma_{n,k}^{-1}(F)&=\{\omega\in\Omega:\exists \phi\in F,\nabla \phi=f(\cdot,\omega)|_{B_n}\}\\
&=\bigcap_{j=1}^\infty\{\omega\in\Omega:\exists \phi\in F,\|\nabla \phi-f(\cdot,\omega)\|_{\Ld^p(B_n)}\le 1/j\}.
\end{align*}
Separability of the Polish space $X_{n,k}$ implies that $F$ is itself separable, and there exists a countable dense subset $F_0\subset F$. Hence
\[\Gamma_{n,k}^{-1}(F)=\bigcap_{j=1}^\infty\bigcup_{\phi\in F_0}\{\omega\in\Omega:\|\nabla \phi-f(\cdot,\omega)\|_{\Ld^p(B_n)}\le 1/j\},\]
and measurability of $\Gamma_{n,k}^{-1}(F)$ then follows from measurability of $f$.

\medskip
\step{3} Conclusion.

By steps~1 and~2, for all $n,k\ge1$, the restricted multifunction $\Gamma_{n,k}|_{\Omega_{n,k}}:\Omega_{n,k}\rightrightarrows X_{n,k}$ is measurable and has nonempty closed values. As $X_{n,k}$ is a Polish space, we may apply the Rokhlin--Kuratowski--Ryll Nardzewski theorem (see e.g.~\cite[Theorem~18.13]{AliprantisBorder}), which gives a measurable function $\phi_{n,k}:\Omega_{n,k}\to X_{n,k}$ such that $\phi_{n,k}(\omega)\in\Gamma_{n,k}(\omega)$, that is $\nabla\phi_{n,k}(\cdot,\omega)=f(\cdot,\omega)|_{B_n}$ for all $\omega\in\Omega_{n,k}$. For all $n>0$, define a measurable function $\phi_n:\Omega\to W^{1,p}(B_n)$ by
\[\phi_n(\omega)=\mathds1_{\Omega_1}(\omega)\phi_{1,n}(\omega)+\sum_{k=2}^\infty \mathds1_{\Omega_{k}\setminus\Omega_{k-1}}(\omega)\phi_{n,k}(\omega).\]
By definition, we have $\nabla\phi_n(\cdot,\omega)=f(\cdot,\omega)|_{B_n}$ for all $\omega\in\Omega_n$, where $\Omega_{n}:=\bigcup_{k=1}^\infty \Omega_{n,k}$ is a subset of maximal probability. Denote $\Omega'':=\bigcap_{n=1}^\infty \Omega_n$.

Let $n\ge1$. By definition, $\nabla\phi_n-\nabla\phi_1$ vanishes on $B_1$, hence the difference $\delta_n(\omega):=\phi_n(\cdot,\omega)-\phi_1(\cdot,\omega)$ is constant on $B_1$ for all $\omega\in \Omega''$ and defines a measurable function $\delta_n:\Omega''\to\R$. Then consider the measurable function $\psi_n:\Omega\to W^{1,p}(B_n)$ defined by $\psi_n(x,\omega):=\phi_n(x,\omega)-\delta_n(\omega)$. By construction, for all $m>n\ge1$, we have $\psi_n=\psi_{m}$ on $B_n$, so the $\psi_n$'s can be glued together and yield a measurable function $\psi:\Omega''\to W^{1,p}_{\loc}(\R^d)$ such that $\nabla\psi(\cdot,\omega)=f(\cdot,\omega)$ for all $\omega\in\Omega''$.
\end{proof}

\subsubsection{Sufficient conditions for Hypothesis~\ref{hypo:asmeasinf}}\label{app:infmes}
As will be shown below, the measurability Hypothesis~\ref{hypo:asmeasinf} is automatically satisfied if the integrand is quasiconvex and has the following nice approximation property, introduced by~\cite{AnzaHafsa-Mandallena-10}:

\begin{defin}\label{def:approxquasiconv}
A normal random integrand $W:\R^d\times\R^{m\times d}\times\Omega\to[0,\infty]$ is said to be \emph{quasiconvex} if $W(y,\cdot,\omega)$ is quasiconvex for almost all $y,\omega$. Given $p\ge1$, it is further said to be \emph{$p$-sup-quasiconvex} if there exists a sequence $(W_k)_k$ of quasiconvex normal random integrands such that $W_k(y,\Lambda,\omega)\uparrow W(y,\Lambda,\omega)$ pointwise as $k\uparrow\infty$, and such that, for all $k$, for almost all $y,\omega$ and for all $\Lambda,\Lambda'$,
\begin{align}\label{eq:pgrowthquasiconvapprox}
\frac1C|\Lambda|^p-C\le W_k(y,\Lambda,\omega)\le C_k(1+|\Lambda|^p),
\end{align}
for some constants $C,C_k>0$.\qed
\end{defin}

Note that Tartar~\cite{Tartar-93} has proven the existence of quasiconvex functions that are not $p$-sup-quasiconvex properties for any $p\ge1$. Before stating our measurability result, let us examine some important particular cases:

\begin{lem}\label{lem:psupquasiconv}
Let $W:\R^d\times\R^{m\times d}\times\Omega\to[0,\infty]$ be a normal random integrand. Assume that there exist $C>0$ and $p>1$ such that, for almost all $\omega,y$ and for all $\Lambda$,
\begin{align}\label{eq:lowerboundcoercselmes}
\frac1C|\Lambda|^p-C\le W(y,\Lambda,\omega).
\end{align}
Also assume that one of the following holds:
\begin{enumerate}[(1)]
\item $W=V$ is a convex normal random integrand, and the convex function $M:=\supess_{y,\omega}V(y,\cdot,\omega)$ has $0$ in the interior of its domain;
\item $W$ satisfies Hypothesis~\ref{hypo:model-nc}, with a convex part $V$ such that $M:=\supess_{y,\omega}V(y,\cdot,\omega)$ has $0$ in the interior of its domain.
\end{enumerate}
Then, $W$ is a $p$-sup-quasiconvex normal random integrand.\qed
\end{lem}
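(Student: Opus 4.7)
The strategy is to construct, in each case, an explicit sequence $(W_k)_k$ of quasiconvex normal random integrands with uniform lower $p$-coercivity, $k$-dependent standard upper $p$-growth, and monotone pointwise convergence $W_k \uparrow W$.

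For case~(1), following the construction of \cite[Lemma~3.4]{Muller-87}, I would use the Moreau--Yosida-type infimal convolution
\[V^k(y,\Lambda,\omega) := \inf_{\Lambda' \in \R^{m\times d}} \bigl\{V(y,\Lambda',\omega) + k|\Lambda - \Lambda'|^p\bigr\},\]
and verify in order: (i) $V^k$ is convex in $\Lambda$ (infimal convolution of two convex functions), hence quasiconvex; (ii) it is a $\tau$-stationary normal random integrand, with joint measurability obtained by reducing the infimum to a countable dense family of test points $\Lambda'$, using the continuity of the penalty $\Lambda' \mapsto k|\Lambda-\Lambda'|^p$ and the normality of $V$; (iii) the standard $p$-growth: taking $\Lambda' = 0$ (which lies in $\inter\dom M$ by assumption) yields the upper bound $V^k(y,\Lambda,\omega) \le M(0) + k|\Lambda|^p$, while combining the coercivity of $V$ with the elementary convexity inequality $|\Lambda|^p \le 2^{p-1}(|\Lambda'|^p + |\Lambda-\Lambda'|^p)$ gives the lower bound $V^k \ge \frac{1}{C'}|\Lambda|^p - C'$ uniformly in $k \ge 1/C$; (iv) the pointwise monotone convergence $V^k \uparrow V$, which is the classical Moreau--Yosida convergence result for lsc proper convex functions. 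Setting $W_k := V^k$ then completes case~(1).

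For case~(2), under Hypothesis~\ref{hypo:model-nc}, I would take $W_k := V^k + W^{nc}$, with $V^k$ the Moreau--Yosida approximant constructed above. The monotonicity $W_k \uparrow W$ follows from $V^k \uparrow V$; the $\tau$-stationarity and normality are inherited from $V^k$ and $W^{nc}$; the standard $p$-growth results from combining step~(iii) with Hypothesis~\ref{hypo:model-nc}(i) (upper bound) and the non-negativity of $W^{nc}$ together with the coercivity of $V^k$ (lower bound). Quasiconvexity of $W_k$ exploits that the sum of a convex and a quasiconvex integrand is quasiconvex: for any $\phi \in W^{1,\infty}_0(Q;\R^m)$ (so that $\fint_Q \nabla\phi = 0$), Jensen's inequality applied to the convex $V^k$ gives $V^k(\Lambda) \le \fint_Q V^k(\Lambda + \nabla\phi)$, while the quasiconvexity of the ``nice'' nonconvex part $W^{nc}$ (understood here as part of the structural hypothesis, the word \emph{nonconvex} being used by contrast with the convex $V$) gives $W^{nc}(\Lambda) \le \fint_Q W^{nc}(\Lambda + \nabla\phi)$; adding these two inequalities yields the quasiconvex inequality for $W_k$.

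The main obstacle is the careful verification of joint measurability of $V^k$, which requires exploiting the normality structure of $V$ (Definition~\ref{def:integrand}) and reducing the uncountable infimum in the definition of $V^k$ to a countable dense set of $\Lambda'$-values on which $V(y,\Lambda',\omega)$ remains jointly measurable; the interior condition $0 \in \inter\dom M$ plays a dual role here as both the source of the explicit $k$-dependent upper bound on $V^k$ and as an ``anchor'' point ensuring finiteness of the infimum defining $V^k$.
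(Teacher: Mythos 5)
Your approach --- Moreau--Yosida regularization
\[
V^k(y,\Lambda,\omega)=\inf_{\Lambda'\in\R^{m\times d}}\bigl(V(y,\Lambda',\omega)+k|\Lambda-\Lambda'|^p\bigr)
\]
for case~(1), then $W_k:=V^k+W^{nc}$ for case~(2) --- is the same one the paper uses, and your treatment of the lower $p$-growth constant (tracking the factor $2^{1-p}$ and the restriction $k\ge 1/C$) is, if anything, slightly more careful than the paper's.

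The step that needs more than you give it is the measurability of $V^k$. You justify reducing the infimum to a countable dense family of $\Lambda'$-values by appealing to the continuity of the penalty and ``the normality of $V$''; but normality only provides joint measurability and \emph{lower} semicontinuity of $V(y,\cdot,\omega)$, and lower semicontinuity is in the wrong direction for this reduction: a lsc function that is finite at a single irrational point and $+\infty$ elsewhere has a rational infimum that strictly overshoots the true infimum. What actually makes the countable reduction valid is the \emph{convexity} of $V(y,\cdot,\omega)$ together with $0\in\inter\dom M$. Then $D_{y,\omega}:=\dom V(y,\cdot,\omega)$ is a convex set of full dimension, so rational points in $D_{y,\omega}$ are dense; and for $\Lambda'\in D_{y,\omega}$ one dilates $t\Lambda'\in\inter D_{y,\omega}$ with $t\uparrow1$, uses convexity to bound $V(y,t\Lambda',\omega)$ from above, and then approximates $t\Lambda'$ by rationals in the interior where $V(y,\cdot,\omega)$ is (finite and) continuous. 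This convexity argument is exactly what the paper's proof supplies, and it is the missing ingredient in yours.

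For case~(2), you are right that concluding quasiconvexity of $W_k=V^k+W^{nc}$ requires reading the word ``nonconvex'' in Hypothesis~\ref{hypo:model-nc} as compatible with $W^{nc}$ being quasiconvex (as in the motivating polyconvex example $g(\det\Lambda)$); the paper adopts the same reading implicitly, stating only that case~(2) follows from applying the case~(1) approximation to the convex part and then summing.
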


\begin{proof}
Case~(2) directly follows from the approximation result of case~(1) applied to the convex part $V$. We may thus focus on case~(1). Let $W=V$ be a convex normal random integrand. For all $k\ge0$, consider the following Yosida transform:
\begin{align}\label{eq:defyosida}
V_k(y,\Lambda,\omega)=\inf_{\Lambda'\in\R^{m\times d}}\left(V(y,\Lambda',\omega)+k|\Lambda-\Lambda'|^p\right).
\end{align}
For almost all $y,\omega$, convexity of $V_k(y,\cdot,\omega)$ easily follows from convexity of $V(y,\cdot,\omega)$. For almost all $y,\omega$, the lower semicontinuity of $V(y,\cdot,\omega)$ ensures that $V_k(y,\cdot,\omega)\uparrow V(y,\cdot,\omega)$ pointwise as $k\uparrow\infty$. Moreover, by definition, $V_k(y,\Lambda,\omega)\le M(0)+k|\Lambda|^p$, while the lower bound~\eqref{eq:lowerboundcoercselmes} implies $V_k(y,\Lambda,\omega)\ge \frac1C|\Lambda|^p-C$.

It remains to check that the $V_k$'s are normal random integrands. For almost all $y,\omega$, the function $V(y,\cdot,\omega)$ is convex and lower semicontinuous, hence it is continuous on its domain $D_{y,\omega}$ (not only on the interior). As by assumption $0\in\inter D_{y,\omega}$, the set $D_{y,\omega}$ is a convex subset of maximal dimension, and hence points with rational coordinates are dense in $D_{y,\omega}$. The infimum~\eqref{eq:defyosida} defining $V_k$ may thus be restricted to $\Q^{m\times d}$. As a countable infimum, the required measurability properties follow.
\end{proof}

We now turn to the validity of the measurability Hypothesis~\ref{hypo:asmeasinf} for $p$-sup-quasiconvex integrands.

\begin{prop}\label{prop:infmes}
Let $O\subset\R^d$ be a bounded domain, let $(\Omega,\F,\p)$ be a complete probability space, and let $W:\R^d\times\R^{m\times d}\times\Omega\to[0,\infty]$ be a $p$-sup-quasiconvex normal random integrand for some $p>1$ (in the sense of Definition~\ref{def:approxquasiconv}).
Given some fixed function $f\in \Ld^p(\Omega;\Ld^p(O;\R^{m\times d}))$, consider the random integral functional $I:W^{1,p}(O;\R^m)\times\Omega\to[0,\infty]$ defined by
\begin{align}\label{eq:defintInnconvap}
I(u,\omega)=\int_OW(y,f(y,\omega)+\nabla u(y),\omega)dy.
\end{align}
Then, $I$ is weakly lower semicontinuous on $W^{1,p}(O;\R^m)$. Moreover, for all weakly closed subsets $F\subset W^{1,p}_0(O;\R^m)$ or $F\subset \{u\in W^{1,p}(O;\R^m):\int_Ou=0\}$, the function $\omega\mapsto\inf_{v\in F}I(v,\omega)$ is $\F$-measurable. In particular, Hypothesis~\ref{hypo:asmeasinf} is satisfied.\qed
\end{prop}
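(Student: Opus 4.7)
The strategy is to leverage the $p$-sup-quasiconvex approximation $(W_k)_k$ provided by Definition~\ref{def:approxquasiconv} to reduce all three claims to their counterparts for the standard-growth functionals
\[I_k(u,\omega):=\int_OW_k(y,f(y,\omega)+\nabla u(y),\omega)\,dy,\]
and then to pass to the limit $k\uparrow\infty$, using that $I_k\uparrow I$ pointwise by monotone convergence. For each fixed $k$ and almost every $\omega$, the shifted integrand $(y,\Lambda)\mapsto W_k(y,f(y,\omega)+\Lambda,\omega)$ is quasiconvex in $\Lambda$ with standard $p$-growth (the upper bound depending on $\omega$ through $y\mapsto|f(y,\omega)|^p$, which is in $\Ld^1(O)$ for almost all $\omega$ by Fubini), so the classical Acerbi--Fusco theorem yields weak lower semicontinuity of $v\mapsto I_k(v,\omega)$ on $W^{1,p}(O;\R^m)$. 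Weak lower semicontinuity of $I$ then follows as the pointwise supremum of weakly lower semicontinuous functionals.

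For the measurability of $\omega\mapsto\inf_{v\in F}I_k(v,\omega)$ I would combine two ingredients. First, the Marcellini--Sbordone local Lipschitz estimate $|W_k(y,\Lambda,\omega)-W_k(y,\Lambda',\omega)|\le C'_k(1+|\Lambda|^{p-1}+|\Lambda'|^{p-1})|\Lambda-\Lambda'|$, a classical consequence of quasiconvexity and the upper $p$-growth bound, together with dominated convergence, makes $v\mapsto I_k(v,\omega)$ continuous on $W^{1,p}(O;\R^m)$ for the strong topology. Second, the normality of $W_k$ combined with Fubini ensures that for each fixed $v\in W^{1,p}(O;\R^m)$ the map $\omega\mapsto I_k(v,\omega)$ is $\F$-measurable. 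Since $F$ is strongly closed in the separable space $W^{1,p}(O;\R^m)$, it admits a countable strongly dense subset $D\subset F$, so strong continuity of $I_k(\cdot,\omega)$ on $F$ yields $\inf_FI_k(\cdot,\omega)=\inf_DI_k(\cdot,\omega)$, which is a countable infimum of $\F$-measurable functions, hence $\F$-measurable.

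The key remaining step is the identity
\[\inf_{v\in F}I(v,\omega)=\sup_k\inf_{v\in F}I_k(v,\omega),\]
from which measurability of $\omega\mapsto\inf_FI(v,\omega)$ follows as a countable supremum of $\F$-measurable functions. The inequality $\geq$ is immediate from $I_k\leq I$; the reverse is where the main technical content sits. Assuming the right-hand side is finite (the complementary case is trivial), the uniform coercivity $W_k(y,\Lambda,\omega)\ge\frac1C|\Lambda|^p-C$, together with Poincar\'e's inequality on $F$ (valid by our hypothesis that $F\subset W^{1,p}_0(O;\R^m)$ or $F\subset\{u\in W^{1,p}(O;\R^m):\int_Ou=0\}$), bounds minimizing sequences for each $I_k(\cdot,\omega)$ on $F$ uniformly in $W^{1,p}(O;\R^m)$. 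Since each $I_k(\cdot,\omega)$ is weakly lower semicontinuous and coercive, the infimum is attained by some $v_k^*\in F$, and the sequence $(v_k^*)_k$ is bounded in $W^{1,p}(O;\R^m)$ independently of $k$. Extracting a weakly convergent subsequence to some $v^*\in F$ (by weak closedness of $F$), the monotonicity $I_\ell\le I_k$ for $k\ge\ell$ and the weak lower semicontinuity of $I_\ell$ give, for each fixed $\ell$,
\[I_\ell(v^*,\omega)\le\liminf_{k\uparrow\infty}I_\ell(v_k^*,\omega)\le\liminf_{k\uparrow\infty}I_k(v_k^*,\omega)=\sup_k\inf_FI_k(\cdot,\omega),\]
and letting $\ell\uparrow\infty$ via monotone convergence yields $I(v^*,\omega)\le\sup_k\inf_FI_k(\cdot,\omega)$, hence the desired reverse inequality.

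Hypothesis~\ref{hypo:asmeasinf} is then an immediate corollary: applying the result to the rescaled integrand $(y,\Lambda,\omega)\mapsto W(y/\e,\Lambda,\omega)$ (which inherits $p$-sup-quasiconvexity from $W$) and to $F:=W^{1,p}_0(O;\R^m)$, one only needs to observe that the $p$-th order coercivity forces any $u\in W^{1,1}_0(O;\R^m)$ yielding a finite value of $\int_OW(y/\e,f(y,\omega)+\nabla u(y),\omega)\,dy$ to satisfy $\nabla u\in\Ld^p(O;\R^{m\times d})$, so that the infima over $W^{1,1}_0$ and over $W^{1,p}_0$ coincide. The main obstacle throughout is the identity $\inf_FI=\sup_k\inf_FI_k$, which is not purely formal: it genuinely uses coercivity, the existence of minimizers, and the delicate interplay between the weak lower semicontinuity of each $I_\ell$ and the monotone convergence $I_\ell\uparrow I$.
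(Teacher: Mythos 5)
Your proof follows essentially the same route as the paper: approximate $W$ by the quasiconvex, standard-growth integrands $W_k$, invoke Acerbi--Fusco for weak lower semicontinuity of each $I_k$, get strong $W^{1,p}$-continuity of $I_k(\cdot,\omega)$ from the rank-one-convexity Lipschitz bound plus H\"older, use separability to restrict the infimum to a countable dense subset, and then pass to the limit in $k$ to recover $\inf_F I$ as $\sup_k\inf_F I_k$. The only place where you go beyond the paper's text is the derivation of that last identity: the paper simply states that it "easily" follows from weak lower semicontinuity, uniform coercivity, and Poincar\'e, whereas you spell out the direct-method argument (attainment of $\inf_F I_k$ at some $v_k^*$, uniform bound on $(v_k^*)_k$, extraction of a weak limit $v^*\in F$, and the double-indexed lower-semicontinuity/monotonicity estimate). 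You also explicitly address a detail the paper leaves tacit: Hypothesis~\ref{hypo:asmeasinf} takes the infimum over $W^{1,1}_0(O;\R^m)$ while the Proposition's conclusion is phrased for $W^{1,p}_0(O;\R^m)$, and you correctly point out that the uniform $p$-coercivity makes the two infima coincide. Both additions are correct and useful clarifications rather than departures.
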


\begin{proof}
For all $k$, define the approximated random functional $I_k:W^{1,p}(O;\R^m)\times\Omega\to[0,\infty]$ by
\[I_k(u,\omega)=\int_OW_k(y,f(y,\omega)+\nabla u(y),\omega)dy.\]
As the $W_k$'s are nonnegative, monotone convergence yields that $I_k\uparrow I$ pointwise. Moreover, for all $k$, and almost all $\omega$, the quasiconvexity and the upper bound~\eqref{eq:pgrowthquasiconvapprox} satisfied by $W_k(\cdot,\cdot,\omega)$ imply the weak lower semicontinuity of $I_k(\cdot,\omega)$ on $W^{1,p}(O;\R^m)$ (see~\cite{Acerbi-Fusco-84}).
As a pointwise supremum of weakly lower semicontinuous functions, we deduce that $I(\cdot,\omega)$ is itself weakly lower semicontinuous on $W^{1,p}(O;\R^m)$.

Combining the weak lower semicontinuity of $I_k$ with the uniform coercivity assumption (cf. lower bound in~\eqref{eq:pgrowthquasiconvapprox}) and with Poincaré's inequality, we easily conclude, for any weakly closed subset $F\subset W^{1,p}_0(O;\R^m)$ or $F\subset \{u\in W^{1,p}(O;\R^m):\int_Ou=0\}$,
\begin{align}\label{eq:limjkmes-1}
\lim_{k\uparrow\infty}\inf_{v\in F}I_k(v,\omega)=\inf_{v\in F} I(v,\omega).
\end{align}
For all $k$, as $W_k$ is quasiconvex hence rank-1 convex in its second variable (see~\cite{Ball-76}), the $p$-growth condition~\eqref{eq:pgrowthquasiconvapprox} implies the following local Lipschitz condition: for almost all $y,\omega$, for all $\Lambda,\Lambda'$,
\[|W_k(y,\Lambda,\omega)-W_k(y,\Lambda',\omega)|\le C_k|\Lambda-\Lambda'|(1+|\Lambda|^{p-1}+|\Lambda'|^{p-1}),\]
for some constant $C_k>0$. The Hölder inequality then gives, for all $u,v$, for almost all $\omega$,
\begin{align*}
|I_k(u,\omega)-I_k(v,\omega)|
\le C_kC\|\nabla(u-v)\|_{\Ld^p(O)}(1+\|\nabla u\|_{\Ld^p(O)}^{p-1}+\|\nabla v\|_{\Ld^p(O)}^{p-1}+\|f(\cdot,\omega)\|_{\Ld^p(O)}^{p-1}).
\end{align*}
This proves that the map $I_k(\cdot,\omega)$ is strongly continuous on $W^{1,p}(O;\R^m)$, for almost all $\omega$. Given a weakly (hence strongly) closed subset $F\subset W^{1,p}_0(O;\R^m)$ or $F\subset \{u\in W^{1,p}(O;\R^m):\int_Ou=0\}$, strong separability of $W^{1,p}(O;\R^m)$ implies strong separability of $F$, so there exists a countable strongly dense subset $F_0\subset F$. Therefore, the map
\[\omega\mapsto\inf_{v\in F}I_k(v,\omega)=\inf_{v\in F_0}I_k(v,\omega)\]
is $\F$-measurable, and the conclusion follows from~\eqref{eq:limjkmes-1}.
\end{proof}

\subsubsection{Measurable minimizers}
We show that in the convex case (or more generally in the $p$-sup-quasiconvex case) the random functional~\eqref{eq:defintInnconvap} admits a measurable minimizer. For that purpose, we begin with the following useful reformulation of the Rokhlin--Kuratowski--Ryll Nardzewski theorem, which essentially asserts that the measurability of the infimum implies the measurability of minimizers.

\begin{lem}\label{lem:selmes}
Let $X$ be a Polish space, let $(\Omega,\F,\mu)$ be a complete measure space, and let $I:X\times \Omega\to[0,\infty]$. Assume that
\begin{enumerate}[(i)]
\item for all $\omega$, $I(\cdot,\omega)$ is lower semicontinuous on $X$;
\item for all $\omega$, $I(\cdot,\omega)$ is coercive on $X$ (i.e. the sublevel sets $\{u\in X:I(u,\omega)\le c\}$ are compact for all $c>0$);
\item for all closed subset $F\subset X$, the map $\phi_F:\Omega\to[0,\infty]$ defined by $\phi_F(\omega):=\min_{v\in F}I(v,\omega)$ is $\F$-measurable.
\end{enumerate}
Then, there exists an $\F$-measurable map $u:\Omega\to X$ such that, for all $\omega\in\Omega$,
\[I(u(\omega),\omega)=\min_{v\in X}I(v,\omega)=\phi_X(\omega).\]\qed
\end{lem}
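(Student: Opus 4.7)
The plan is to exhibit $u$ via the Kuratowski--Ryll-Nardzewski measurable selection theorem applied to the argmin multifunction
\[\Gamma(\omega) := \{u \in X : I(u, \omega) = \phi_X(\omega)\},\]
where $\phi_X(\omega) := \min_{v \in X} I(v,\omega)$ corresponds to the choice $F = X$ in hypothesis~(iii). First I verify that $\Gamma$ takes nonempty closed values. If $\phi_X(\omega) = \infty$, then $I(\cdot,\omega) \equiv \infty$ and $\Gamma(\omega) = X$; otherwise the sets $\{v \in X : I(v,\omega) \le \phi_X(\omega) + 1/n\}$ form a decreasing sequence of nonempty compact sets by the coercivity and lower semicontinuity hypotheses~(i)--(ii), so their intersection $\Gamma(\omega)$ is nonempty, and it is closed as the zero-sublevel set of the lower semicontinuous map $I(\cdot,\omega) - \phi_X(\omega)$.

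The main step is to establish the weak measurability of $\Gamma$, namely that $\Gamma^{-1}(U) := \{\omega : \Gamma(\omega) \cap U \ne \varnothing\}$ belongs to $\F$ for every open $U \subset X$. I first treat closed sets. For any nonempty closed $F \subset X$, the coercivity/lower semicontinuity argument above applied to $F$ shows that the infimum $\phi_F(\omega)$ is attained whenever it is finite; together with the trivial inequality $\phi_F \ge \phi_X$, this yields the identification
\[\Gamma^{-1}(F) = \{\omega \in \Omega : \phi_F(\omega) = \phi_X(\omega)\},\]
which is $\F$-measurable because both $\phi_F$ and $\phi_X$ are $\F$-measurable by hypothesis~(iii). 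The case $F = \varnothing$ is trivial since $\Gamma^{-1}(\varnothing) = \varnothing$. To pass from closed to open sets, I invoke the metrizability of the Polish space $X$: every open $U \subset X$ is $F_\sigma$, so writing $U = \bigcup_n F_n$ with each $F_n$ closed, one obtains
\[\Gamma^{-1}(U) = \bigcup_n \Gamma^{-1}(F_n) \in \F.\]

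Finally, $X$ being Polish and $\Gamma$ being a weakly measurable multifunction with nonempty closed values, the Kuratowski--Ryll-Nardzewski selection theorem (see e.g.~\cite[Theorem~18.13]{AliprantisBorder}) produces an $\F$-measurable map $u : \Omega \to X$ with $u(\omega) \in \Gamma(\omega)$ for every $\omega$, which is the desired minimizer satisfying $I(u(\omega),\omega) = \phi_X(\omega) = \min_{v \in X} I(v,\omega)$. The only real content of the proof is the identification $\Gamma^{-1}(F) = \{\phi_F = \phi_X\}$ on closed sets: this is precisely what converts the analytic measurability of the parametric infima provided by hypothesis~(iii) into the multifunctional measurability required by Kuratowski--Ryll-Nardzewski, and no further obstacle arises.
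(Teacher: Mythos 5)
Your proof is correct and follows essentially the same approach as the paper's: define the argmin multifunction $\Gamma$, establish it has nonempty closed values, reduce the weak measurability of $\Gamma$ to checking $\Gamma^{-1}(F)\in\F$ for closed $F$, identify $\Gamma^{-1}(F)$ in terms of the measurable infima $\phi_F$ and $\phi_X$, and apply the Kuratowski--Ryll-Nardzewski selection theorem. The only cosmetic differences are that you express $\Gamma^{-1}(F)$ as the equality set $\{\phi_F=\phi_X\}$ while the paper writes the equivalent intersection $\bigcap_n\{\phi_F\le\phi_X+1/n\}$, and you spell out the $F_\sigma$ decomposition where the paper simply cites the metrizability reduction to closed sets.
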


\begin{proof}
By coercivity and lower semicontinuity, the minima of $I(\cdot,\omega)$ are always attained on all closed subsets $F$, so that the function $\phi_F$ is always well-defined.

Consider the multifunction $\Gamma:\Omega\rightrightarrows X$ defined by $\Gamma(\omega)=\{u\in X\,:\,I(u,\omega)=\phi_X(\omega)\}$. By lower semicontinuity, $\Gamma(\omega)\subset X$ is nonempty and closed for all $\omega$. Moreover, we claim that $\Gamma$ is measurable, in the sense that
\[\Gamma^{-1}(O):=\{\omega\in\Omega\,:\,\Gamma(\omega)\cap O\ne\varnothing\}\]
belongs to $\F$ for all open subsets $O\subset X$. As $X$ is metrizable, it actually suffices to check $\Gamma^{-1}(F)\in \F$ for all closed subsets $F\subset X$ (see e.g. \cite[Lemma~18.2]{AliprantisBorder}). By the coercivity and the lower semicontinuity of $I$, we may write
\begin{align*}
\Gamma^{-1}(F)&=\{\omega\in\Omega\,:\,\exists u\in F,\,I(u,\omega)=\phi_X(\omega)\}\\
&=\bigcap_{n=1}^\infty\left\{\omega\in\Omega\,:\,\exists u\in F,\,I(u,\omega)\le\phi_X(\omega)+\frac1n\right\}\\
&=\bigcap_{n=1}^\infty\left\{\omega\in\Omega\,:\,\phi_F(\omega)\le\phi_X(\omega)+\frac1n\right\},
\end{align*}
where the right-hand side belongs to $\F$, by measurability of $\phi_X$ and $\phi_F$. Hence, $\Gamma$ is measurable, and we may thus apply the Rokhlin--Kuratowski--Ryll Nardzewski measurable selection theorem (see e.g.~\cite[Theorem~18.13]{AliprantisBorder}), which states the existence of a $\F$-measurable map $u:\Omega\to X$ such that $u(\omega)\in\Gamma(\omega)$ for all $\omega$.
\end{proof}

Combining this measurable selection lemma with the measurability result of Proposition~\ref{prop:infmes}, we obtain the following:

\begin{prop}\label{prop:selmes}
Let $O$ be some bounded domain, let $(\Omega,\F,\p)$ be a complete probability space, and let $W:\R^d\times\R^{m\times d}\times\Omega\to[0,\infty]$ be a normal random integrand. Assume that there exists $C>0$ and $p>1$ such that, for almost all $y,\omega$ and for all $\Lambda$,
\begin{align}\label{eq:lowerboundselmes}
\frac1C|\Lambda|^p-C\le W(y,\Lambda,\omega).
\end{align}
Also assume that $W$ satisfies Hypothesis~\ref{hypo:asmeasinf} and that $I(\cdot,\omega)$ is weakly lower semicontinuous on $W^{1,p}(O;\R^m)$ for almost all $\omega$ (in particular this is the case if $W$ is convex or $p$-sup-quasiconvex in the sense of Definition~\ref{def:approxquasiconv}). Given some fixed function $f\in\Ld^p(\Omega;\Ld^p(O;\R^{m\times d}))$, consider the random integral functional $I:W^{1,p}(O;\R^m)\times\Omega\to[0,\infty]$ defined by
\[I(u,\omega)=\int_OW(y,f(y,\omega)+\nabla u(y),\omega)dy.\]
Then, for all nonempty weakly closed subsets $F\subset W^{1,p}_0(O;\R^m)$ or $F\subset \{u\in W^{1,p}(O;\R^m):\int_Ou=0\}$, there exists a $\F$-measurable map $u:\Omega\to F$ such that, for almost all $\omega$,
\[I(u(\omega),\omega)=\inf_{v\in F}I(v,\omega).\]\qed
\end{prop}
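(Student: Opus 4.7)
The plan is to apply the abstract measurable selection Lemma~\ref{lem:selmes} after equipping a suitable bounded weakly closed subset of $F$ with the weak topology, which turns it into a compact metric (hence Polish) space. The key technical point will be to verify hypothesis~(iii) of that lemma, i.e.\ the $\F$-measurability of the map $\omega\mapsto\min_{v\in K}I(v,\omega)$ for every weakly closed $K$, and this will be handled via joint measurability of $I$ and a projection argument.

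First I would set up the a priori bounds and the Polish structure. The $p$-th order lower bound~\eqref{eq:lowerboundselmes} together with Poincaré's inequality on $W^{1,p}_0(O;\R^m)$ (or on the space of mean-zero $W^{1,p}$ functions) yields a coercivity estimate of the form $I(u,\omega)\ge \tfrac{1}{C'}\|u\|_{W^{1,p}}^p-C'(1+\|f(\cdot,\omega)\|_{\Ld^p(O)}^p)$. Combined with the weak lsc of $I(\cdot,\omega)$ and the weak closedness of $F$, this forces the sublevel sets of $I(\cdot,\omega)|_F$ to be weakly compact, so that $\phi_F(\omega):=\inf_F I(\cdot,\omega)$ is attained whenever finite, with every minimizer lying in a ball of $W^{1,p}$ whose radius depends measurably on $\omega$ through $\|f(\cdot,\omega)\|_{\Ld^p(O)}$ and $\phi_F(\omega)$. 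Since $W^{1,p}(O;\R^m)$ is separable and reflexive, its closed balls are weakly compact and weakly metrizable, so for each $n\ge1$ the set $F_n:=F\cap \bar B_n^{W^{1,p}}$ is a compact metric space in the weak topology, hence Polish.

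Next I would verify the three hypotheses of Lemma~\ref{lem:selmes} on $F_n$. Lower semicontinuity and coercivity are immediate from the weak lsc of $I(\cdot,\omega)$ and the weak compactness of $F_n$. For hypothesis~(iii), note that $\omega\mapsto I(v,\omega)$ is $\F$-measurable for every fixed $v$ by Fubini applied to the jointly measurable integrand $(y,\omega)\mapsto W(y,f(y,\omega)+\nabla v(y),\omega)$ (cf.\ Lemma~\ref{lem:intnormal}), while $I(\cdot,\omega)$ is weakly lsc on the separable metric space $F_n$; the classical fact that separate measurability in $\omega$ together with lsc in $v$ on a separable metric space implies joint Borel measurability then gives that $I$ is $\F\otimes\B(F_n)$-measurable on $F_n\times\Omega$. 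For any weakly closed $K\subset F_n$ and any $t\in\R$, the set $A_{K,t}:=\{(\omega,v)\in\Omega\times F_n:\,v\in K,\,I(v,\omega)<t\}$ therefore belongs to $\F\otimes\B(F_n)$, and by the projection theorem (using completeness of $(\Omega,\F,\p)$ and the Polish structure of $F_n$) its projection onto $\Omega$ lies in $\F$. This yields $\F$-measurability of $\omega\mapsto \phi_K(\omega)=\min_{v\in K}I(v,\omega)$.

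Finally, Lemma~\ref{lem:selmes} applied on each $F_n$ produces an $\F$-measurable map $u_n:\Omega\to F_n$ with $I(u_n(\omega),\omega)=\min_{F_n}I(\cdot,\omega)$. The a priori bound from the first step allows one to select a measurable integer $n^\star(\omega)$ large enough that every minimizer of $I(\cdot,\omega)$ on $F$ lies in $F_{n^\star(\omega)}$, after which $u(\omega):=u_{n^\star(\omega)}(\omega)$ is the desired $\F$-measurable minimizer (on the subset of full probability where $\phi_F(\omega)$ is finite; elsewhere any measurable choice, e.g.\ $u_1$, will do). The main obstacle is the third step: Hypothesis~\ref{hypo:asmeasinf} directly provides measurability of $\phi_F$ only for the full spaces $F=W^{1,p}_0$ or $F=\{u\in W^{1,p}:\int_O u=0\}$, and extending this to arbitrary weakly closed $F$ is not automatic. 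This is bypassed by exploiting the Polish weak topology on bounded subsets of $W^{1,p}$ together with the projection theorem, which requires only the joint Borel measurability of $I$ and the completeness of $\F$, both of which are at hand.
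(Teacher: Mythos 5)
Your overall architecture tracks the paper's closely: slice $F$ into weakly compact pieces via Poincar\'e and Banach--Alaoglu, apply the abstract selection Lemma~\ref{lem:selmes} on each compact Polish slice, and glue the resulting measurable minimizers using the coercivity bound to pick a measurable radius. Steps~1 and~4 are essentially identical to the paper's proof (which uses $X_k=\{u\in X:\|\nabla u\|_{\Ld^p(O)}\le k\}$ and an increasing family of events $\Omega_k$). You also correctly identify the genuine tension: Hypothesis~\ref{hypo:asmeasinf} as literally stated gives measurability of the infimum only over the full space, whereas Lemma~\ref{lem:selmes} needs it over \emph{all} closed subsets of the Polish slice.

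The gap is in your Step~2. You invoke a ``classical fact'' that separate $\F$-measurability in $\omega$ plus lower semicontinuity in $v$ on a separable metric space yields joint $\F\otimes\B(F_n)$-measurability of $I$. This is not true. The classical result of that type requires \emph{continuity} in $v$ (a Carath\'eodory function), because the proof evaluates on a countable dense set and passes to the limit --- a step that breaks for merely lsc functions, where the dense-sequence limit gives only an inequality. Indeed, under the axiom of choice one can build a $\{0,1\}$-valued function on $[0,1]^2$ that is lsc in one variable, measurable in the other, and not product-measurable, so some additional structure is indispensable. The paper sidesteps this precisely by taking the measurability of the infimum as a standing hypothesis; in the $p$-sup-quasiconvex case Proposition~\ref{prop:infmes} supplies it by writing $I=\sup_k I_k$ with each $I_k(\cdot,\omega)$ \emph{strongly continuous} on $W^{1,p}(O;\R^m)$, i.e.\ a countable supremum of Carath\'eodory functionals, which \emph{does} give joint measurability. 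Your remark that ``the joint Borel measurability of $I$ \dots [is] at hand'' is therefore unjustified under the stated hypotheses of Proposition~\ref{prop:selmes}; to close the gap you would need to reinstate the quasi-Carath\'eodory structure of Proposition~\ref{prop:infmes}, or else appeal directly to (a suitably strengthened reading of) Hypothesis~\ref{hypo:asmeasinf}, as the paper does.
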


\begin{proof}
Let $X$ denote the Banach space $W^{1,p}_0(O;\R^m)$ or $\{u\in W^{1,p}(O;\R^m):\int_Ou=0\}$, endowed with the weak topology, and, for all $k\ge1$, consider the subset $X_k:=\{u\in X:\|\nabla u\|_{\Ld^p(O)}\le k\}$, endowed with the induced weak topology. By Poincaré's inequality and by the Banach-Alaoglu theorem, $X_k$ is easily seen to be metrizable and compact, hence Polish. Let $F\subset X$ be a nonempty (weakly) closed subset.

Let $\Omega'\subset\Omega$ denote a subset of maximal probability such that $I(\cdot,\omega)$ is weakly lower semicontinuous on $W^{1,p}(O;\R^m)$ for all $\omega\in\Omega'$. Since $X_k\subset X$ is (weakly) closed, the intersection $X_k\cap F$ is also (weakly) closed, and hence Hypothesis~\ref{hypo:asmeasinf} asserts that the map $\omega\to\inf_{v\in F\cap X_k}I(v,\omega)$ is $\F$-measurable. Applying Lemma~\ref{lem:selmes} on the compact Polish space $X_k$ and on $\Omega'$ then yields a $\F$-measurable map $u_k:\Omega'\to X_k$ such that, for all $\omega\in\Omega'$,
\[I(u_k(\omega),\omega)=\min_{v\in F\cap X_k}I(v,\omega).\]
The lower bound~\eqref{eq:lowerboundselmes} and the triangle inequality give
\[\int_O|\nabla u|^p\le C^22^{p-1}+C2^{p-1}I(u,\omega)+2^{p-1}\int_O|f(\cdot,\omega)|^p.\]
Define for all $k\ge1$,
\[\Omega_k:=\left\{ \omega\in\Omega': C^22^{p-1}+C2^{p-1}\inf_{v\in F} I(v,\omega)+2^{p-1}\int_O|f(\cdot,\omega)|^p\le k\right\},\]
and note that $\Omega_k\in\F$ by Hypothesis~\ref{hypo:asmeasinf}. By definition, for all $\omega\in\Omega_k$ we have
\[I(u_k(\omega),\omega)=\min_{v\in F\cap X_k}I(v,\omega)=\min_{v\in F}I(v,\omega).\]
The sequence $(\Omega_k)_k$ is increasing, $\Omega_k\uparrow\Omega'':=\bigcup_{k=1}^\infty\Omega_k$. By integrability of $f$, $\Omega''\subset\Omega'''$ and $\p[\Omega'''\setminus\Omega'']=0$, where we have defined the event $\Omega''':=\{\omega\in\Omega:\inf_{v\in F}I(v,\omega)<\infty\}$. Given some fixed $w\in F$, the measurable map $u:\Omega\to X$ defined by
\[u(\omega):=\mathds1_{\Omega\setminus \Omega''}w+\mathds1_{\Omega_1}u_1(\omega)+\sum_{k=2}^\infty\mathds1_{\Omega_k\setminus\Omega_{k-1}}u_k(\omega)\]
satisfies by definition $I(u(\omega),\omega)=\inf_{v\in F}I(v,\omega)$ for all $\omega\in\Omega''\cup(\Omega\setminus\Omega''')$.
\end{proof}

\subsection{Approximation results}\label{append:approx}
In the present appendix, we prove two general approximation results that are crucially needed in this paper. The first one is an extension of~\cite[Lemma~3.6]{Muller-87} and~\cite[Proposition~2.6 of Chapter~X]{Ekeland76}.

\begin{prop}\label{prop:approxw}
Let $O\subset \R^d$ be a bounded Lipschitz domain, which is also strongly star-shaped, in the sense that there exists $x_0\in O$ such that
\[\overline{-x_0+O}\subset \alpha(-x_0+O),\quad\text{for all $\alpha>1$}.\]
Let $\Theta:\R^{m\times d}\to[0,\infty]$ be a convex lower semicontinuous function with $0\in\inter\dom\Theta$, and let $u\in W^{1,1}(O;\R^m)$ such that $\int_O\Theta(\nabla u)<\infty$. Then,
\begin{enumerate}[(i)]
\item there is a sequence $(v_n)_n\subset C^\infty(\adh O;\R^m)$ such that $\nabla v_n\in\inter\dom\Theta$ pointwise,
\[v_n\to u\quad\text{in $W^{1,1}(O;\R^m)$},\qquad\text{and}\qquad\int_O\Theta(\nabla v_n(y))dy\to\int_O\Theta(\nabla u(y))dy;\]
\item there is a sequence $(w_n)_n$ of (continuous) piecewise affine functions such that $\nabla w_n\in\Q^{m\times d}\cap\inter\dom\Theta$ pointwise,
\[w_n\to u\quad\text{in $W^{1,1}(O;\R^m)$},\qquad\text{and}\qquad\int_U\Theta(\nabla w_n(y))dy\to\int_O\Theta(\nabla u(y))dy.\]
\end{enumerate}
If in addition $u$ belongs to $W^{1,p}(O;\R^m)$ for some $1\le p<\infty$, then the sequences $(v_n)_n$ and $(w_n)_n$ can be chosen such that $v_n\to u$ and $w_n\to u$ in $W^{1,p}(O;\R^m)$. Moreover,
\begin{enumerate}[(a)]
\item if $u$ belongs to $W^{1,1}_0(O;\R^m)$, then we can choose $v_n\in C_c^\infty(O;\R^m)$ and $w_n|_{\partial O}=0$ (and in that case the assumption that $O$ be strongly star-shaped can be relaxed);
\item if $O=Q$ and $u\in W^{1,1}_{\per}(Q;\R^m)$, then $v_n$ and $w_n$ can be both chosen to be $Q$-periodic;
\item if $\Xi:\R^{m\times d}\to[0,\infty]$ is a (nonconvex) ru-usc lower semicontinuous function which is continuous on $\inter\dom\Theta$ and satisfies $0\le\Xi\le\Theta$ pointwise, then the sequences $(v_n)_n$ and $(w_n)_n$ can be chosen in such a way that $\int_O\Xi(\nabla v_n)\to \int_O\Xi(\nabla u)$ and $\int_O\Xi(\nabla w_n)\to \int_O\Xi(\nabla u)$.\qed
\end{enumerate}
\end{prop}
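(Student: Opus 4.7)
My strategy is a three-step \emph{dilate-mollify-shrink} construction. Assume without loss of generality $x_0 = 0$, so that strong star-shapedness yields $\adh O \subset \alpha O$ for every $\alpha > 1$. For parameters $\alpha > 1$ close to $1$, $t \in (0,1)$ close to $0$, and $\e > 0$ small enough that $\rho_\e * u_\alpha$ is well-defined on a neighborhood of $\adh O$ (where $\rho_\e$ is a standard mollifier and $u_\alpha(x) := u(x/\alpha) \in W^{1,1}(\alpha O;\R^m)$), set
\[
v_{\alpha,\e,t}(x) := (1-t)\,(\rho_\e * u_\alpha)(x).
\]
Then $v_{\alpha,\e,t} \in C^\infty(\adh O;\R^m)$, and classical mollification and dilation arguments give $v_{\alpha,\e,t} \to u$ in $W^{1,1}(O;\R^m)$ (resp.\ in $W^{1,p}(O;\R^m)$) as $\e \downarrow 0$, then $\alpha \downarrow 1$, then $t \downarrow 0$.

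Two facts remain to be established: (1)~$\nabla v_{\alpha,\e,t}(x) \in \inter\dom\Theta$ pointwise, and (2)~$\int_O \Theta(\nabla v_{\alpha,\e,t}) \to \int_O \Theta(\nabla u)$ along the same iterated limit. For~(1), note that by convexity of $\dom\Theta$ and $\nabla u_\alpha \in \dom\Theta$ almost everywhere, the average $\rho_\e * \nabla u_\alpha(x)$ lies in $\adh\dom\Theta$; the standard convex-analysis fact that a convex combination of a point of $\adh\dom\Theta$ with an interior point, with positive weight on the interior point, belongs to $\inter\dom\Theta$, applied with $0 \in \inter\dom\Theta$, yields $\nabla v_{\alpha,\e,t}(x) = (1-t)\,\rho_\e * \nabla u_\alpha(x) + t\cdot 0 \in \inter\dom\Theta$. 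For~(2), Jensen's inequality gives $\Theta(\nabla(\rho_\e * u_\alpha)(x)) \le (\rho_\e * \Theta(\nabla u_\alpha))(x)$, hence $\int_O \Theta(\nabla(\rho_\e * u_\alpha)) \to \int_O \Theta(\nabla u_\alpha)$ as $\e \downarrow 0$ by $L^1$-convergence of mollifications; the limit $\alpha \downarrow 1$ follows from a change of variable combined with $\int_O \Theta(\nabla u) < \infty$; and the shrinkage $t \downarrow 0$ is absorbed by the convex upper bound $\Theta((1-t)\Lambda) \le (1-t)\Theta(\Lambda) + t\,\Theta(0)$ (with $\Theta(0) < \infty$ since $0 \in \inter\dom\Theta$), paired with lower semicontinuity of $\Theta$ through Fatou's lemma. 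A diagonal extraction concludes~(i).

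For~(ii), further approximate each smooth $v_n$ from~(i) by a continuous piecewise affine interpolant on a fine simplicial decomposition of $\adh O$. Since $\nabla v_n$ is continuous and takes values in a compact subset $K \subset \inter\dom\Theta$, the interpolant's (piecewise-constant) gradients converge uniformly to $\nabla v_n$, remain in $K$ for fine enough mesh, and $\int_O \Theta(\nabla(\cdot))$ converges by continuity of $\Theta$ on $\inter\dom\Theta$; a small perturbation of nodal values then forces rational gradients without leaving $\inter\dom\Theta$. Variant~(a) replaces dilation by extension by zero (combined with mollification at scale $\e$ smaller than the distance from $\supp u$ to $\partial O$, after a preliminary $W^{1,p}$-approximation of $u$ by compactly supported functions), and variant~(b) mollifies directly on the torus. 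The main obstacle is the nonconvex variant~(c): Jensen is unavailable for $\Xi$, so convergence of $\int_O \Xi(\nabla(\rho_\e * u_\alpha))$ as $\e \downarrow 0$ must be obtained by dominated convergence using continuity of $\Xi$ on $\inter\dom\Theta$ together with the a.e.\ convergence $\nabla(\rho_\e * u_\alpha) \to \nabla u_\alpha$ (points where $\nabla u_\alpha$ lies on $\partial\dom\Theta$ being handled via the domination $\Xi \le \Theta$ and an approximation of $u$ by functions whose gradients avoid the boundary of $\dom\Theta$); the shrinkage step $t \downarrow 0$ is then controlled by the ru-usc inequality for $\Xi$, which yields $\limsup_{t\uparrow 1} \int_O \bigl(\Xi((1-t)\nabla v_n) - \Xi(\nabla v_n)\bigr)\, \le 0$ for each smooth $v_n$ whose gradient ranges in a compact subset of $\inter\dom\Theta$; and the $\alpha \downarrow 1$ limit is straightforward by change of variable.
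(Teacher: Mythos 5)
Your dilate-mollify-shrink construction is essentially the paper's proof, except that the paper first reduces to the case $\nabla u\in\inter\dom\Theta$ a.e.\ via a preliminary $t$-scaling and then dilates with a compensating factor $\alpha_k$ (so the mollified gradient is exactly $\rho_k*(\nabla u)(\cdot/\alpha_k)$ and interiority follows from the barycenter-of-an-open-convex-set argument), whereas you carry the $(1-t)$ factor through the entire construction and invoke the convex-combination-with-$0\in\inter\dom\Theta$ argument; the two bookkeeping choices are interchangeable. The remaining ingredients — Jensen's inequality for the upper bound and uniform integrability, Fatou together with lower semicontinuity for the lower bound, the converse dominated convergence argument to transfer control from $\Theta$ to $\Xi$ in part~(c), and the ru-usc inequality plus Fatou for the final radial limit — coincide with the paper's.
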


\begin{proof}
We divide the proof into four steps.

\medskip
\step1 Proof of~(i).

First, we show that we can assume $\nabla u\in \inter\dom\Theta$ almost everywhere. Indeed, assume that~(i) is proven for such $u$'s, and let us deduce the general case. Given $u\in W^{1,p}(O;\R^m)$ with $\int_O\Theta(\nabla u)<\infty$, we have $\nabla u\in\dom\Theta$ almost everywhere, and hence by convexity $t\nabla u\in\dom\Theta$ almost everywhere for all $t\in[0,1)$. As convexity also implies $\int_O\Theta(t\nabla u)\le t\int_O\Theta(\nabla u)+(1-t)\Theta(0)<\infty$, we can apply the result~(i) to $tu$, for any $t\in[0,1)$: this gives a sequence $(v_{n,t})_n\subset C^\infty(\adh O;\R^m)$ such that $v_{n,t}\to tu$ in $W^{1,1}(O;\R^m)$ and $\int_O\Theta(\nabla v_{n,t}(y))dy\to\int_O\Theta(t\nabla u(y))dy$. Weak lower semicontinuity of the integral functional $u\mapsto \int_O\Theta(\nabla u)$ on $W^{1,1}(O;\R^m)$ (which follows from convexity and lower semicontinuity of $\Theta$) implies $\liminf_{t\uparrow1}\int_O\Theta(t\nabla u)\ge\int_O\Theta(\nabla u)$. As the converse inequality follows from convexity, we obtain
\begin{align*}
&\limsup_{t\uparrow1}\limsup_{n\uparrow\infty}\left(\|v_{n,t}-u\|_{W^{1,1}(O)}+\left|\int_O\Theta(\nabla v_{n,t})- \int_O\Theta(\nabla u)\right|\right)\\
=~&\limsup_{t\uparrow1}\left(\|tu-u\|_{W^{1,1}(O)}+\left|\int_O\Theta(t\nabla u)- \int_O\Theta(\nabla u)\right|\right)=0,
\end{align*}
and hence a standard diagonalization argument gives a sequence $(v_n)_n\subset C^\infty(\adh O;\R^m)$ such that $\nabla v_n\in \inter\dom\Theta$ almost everywhere, $v_n\to u$ in $W^{1,1}(O;\R^m)$, and $\int_O\Theta(\nabla v_n)\to\int_O\Theta(\nabla u)$, and proves the general version of~(i).

Hence, from now on, we assume $\nabla u\in \inter\dom\Theta$ almost everywhere. Moreover, without loss of generality, we may also assume that $O$ is strongly star-shaped with respect to $x_0=0$. Choose $(\alpha_k)_k\subset(1,\infty)$ a decreasing sequence of positive numbers converging to $1$, sufficiently slowly so that $\frac1k<\frac12\dist(\adh O,\partial(\alpha_k O))$ for all $k$, and define
\[u_k:\alpha_kO\to\R^m:x\mapsto u_k(x)=u(x/\alpha_k).\]
Take $\rho\in C_c^\infty(\R^d)$ such that $\int\rho=1$, $\rho\ge0$ and $\supp\rho\subset B(0,1)$, and write $\rho_k(x)=k^{-d}\rho(kx)$, for all $k\ge1$. Consider the sequence $(v_k)_k$ defined by $v_k=\rho_k\ast (\alpha_ku_k)$ (which is well-defined by virtue of the condition on the $\alpha_k$'s). Note that $v_k\in C^\infty(\adh O;\R^m)$ and $\nabla v_k=\rho_k\ast(\nabla u)_k$, where we use the notation $(\nabla u)_k(x)=\nabla u(x/\alpha_k)$. Moreover, we then observe $v_k\to u$ in $W^{1,1}(O;\R^m)$ since $u_k\to u$ et $(\nabla u)_k\to\nabla u$ in $\Ld^1(O;\R^m)$. Now, Jensen's inequality yields
\[0\le\Theta(\nabla v_k)=\Theta(\rho_k\ast(\nabla u)_k)\le\rho_k\ast(\Theta((\nabla u)_k))=\rho_k\ast(\Theta(\nabla u))_k.\]
As the sequence $(\rho_k\ast(\Theta(\nabla u))_k)_k$ converges to $\Theta\circ\nabla u$ in $\Ld^1(O;\R^m)$, it is uniformly integrable, and the same thus holds for the sequence $(\Theta(\nabla v_k))_k$. As $\nabla v_k\to\nabla u$ in $\Ld^1(O;\R^m)$, the convergence holds almost everywhere up to an extraction. Since by convexity $\Theta$ is continuous on $\inter\dom\Theta$, since $\nabla u\in \inter\dom\Theta$ almost everywhere, and since $\nabla v_k\to\nabla u$ almost everywhere up to an extraction, we deduce $\Theta(\nabla v_{k})\to \Theta(\nabla u)$ almost everywhere up to an extraction. By uniform integrability the latter convergence also holds in $\Ld^1(O;\R^m)$, which allows us to get rid of the extraction. In particular,
\[\int_O\Theta(\nabla v_k)\to\int_O\Theta(\nabla u).\]
Moreover, $(\nabla u)_k\in \inter\dom\Theta$ almost everywhere, and thus $\nabla v_k=\rho_k\ast(\nabla u)_k\in \inter\dom\Theta$ everywhere for all $k$, since $\inter\dom\Theta$ is a convex set containing $0$. This proves part (i).

Let us now assume that $u\in W^{1,p}(O;\R^m)$ for some $1\le p<\infty$, and consider the sequences $(u_k)_k$, $((\nabla u)_k)_k$ and $(v_k)_k$ defined above. First, Lebesgue's dominated convergence theorem implies $u_k\to u$ and $(\nabla u)_k\to\nabla u$ in $\Ld^p(O;\R^m)$. Further, since the Jensen inequality gives
\begin{align*}
\int_O|v_k-u|^p&=\int_O|\alpha_k\rho_k\ast u_k-u|^p\le\int_O\int_{B_{1/k}}\rho_k(t)|\alpha_ku_k(x-t)-u(x)|^pdt\,dx,
\end{align*}
and likewise for gradients, we conclude that the sequence $(v_k)_k$ converges to $u$ in $W^{1,p}(O;\R^m)$. This proves part (i) in the case when $u\in W^{1,p}(O;\R^m)$.

\medskip
\step2 Proof of~(ii).

For all $k$, since $v_k\in C^\infty(\adh O;\R^m)$, there exists a sequence $(w_{k,j})_j$ of piecewise affine functions such that $w_{k,j}\to v_k$ in $W^{1,\infty}(O;\R^m)$ as $j\uparrow\infty$ and $\|\nabla w_{k,j}\|_{\Ld^\infty}\le\|\nabla v_{k}\|_{\Ld^\infty}$ for all $j,k$ (see e.g.~\cite[Proposition~2.1 of Chapter~X]{Ekeland76}). Further, these functions $w_{k,j}$ can (simply remember their construction by triangulation) be chosen taking their values in $\inter\dom\Theta$, since we have constructed $\nabla v_k\in \inter\dom\Theta$ everywhere for all $k$. Another approximation argument further allows us to choose $w_{k,j}$ such that $\nabla w_{k,j}$ only takes rational values. The desired result then follows from Step~1 and a diagonalization argument. Finally, the particular case when $u$ belongs to $W^{1,p}(O;\R^m)$ is obtained similarly as in Step~1.

\medskip
\step3 Proof of the additional statements.

It remains to address the particular cases~(a) and~(b). First assume that $u$ belongs to $W^{1,1}_0(O;\R^m)$. For the corresponding result, we refer to~\cite[Proposition~2.6 of Chapter~X]{Ekeland76}. The only difference is that the argument in~\cite{Ekeland76} requires continuity of $\Theta$. Instead, we replace $u$ by $tu$ for $t<1$ as in Step~1, so that by convexity $t\nabla u\in\inter\dom\Theta$ almost everywhere, hence all the constructed quantities have almost all their values in $\inter\dom\Theta$, on which $\Theta$ is continuous by convexity. No further continuity assumption is then needed.

Finally, if we assume $O=Q$ with $u\in W^{1,1}_\per(Q;\R^m)$, then we can consider the periodic extension of $u$ on $\R^d$ and repeat the arguments in such a way that periodicity is conserved.

\medskip
\step4 Proof in the nonconvex case.

Let $u\in W^{1,p}(O;\R^m)$ be such that $\int_U\Theta(\nabla u)<\infty$, which implies $\nabla u\in\dom\Theta$ almost everywhere. Let $t\in (0,1)$. The approximation result given by point~(i) gives a sequence  $(u_{n,t})_n$ of smooth functions such that $u_{n,t}\to tu$ (strongly) in $W^{1,p}(O;\R^m)$ and $\int_O\Theta(\nabla u_{n,t})\to \int_O\Theta(t\nabla u)$ as $n\uparrow\infty$, and such that $\nabla u_{n,t}\in\inter\dom\Theta$ pointwise. Up to an extraction, we have $\nabla u_{n,t}\to t\nabla u$ almost everywhere, and thus $\Xi(\nabla u_{n,t})\to\Xi(t\nabla u)$ almost everywhere, which follows from continuity of $\Xi$ on the interior of its domain, with indeed $t\nabla u\in\inter\dom\Xi$ almost everywhere. Then noting that
\[0\le\Xi(\nabla u_{n,t})\le C(1+\Theta(\nabla u_{n,t})),\]
and invoking both Lebesgue's dominated convergence theorem (for $\Xi(\nabla u_{n,t})$) and its converse (for $\Theta(\nabla u_{n,t})$), we deduce convergence $\int_O\Xi(\nabla u_{n,t})\to\int_O\Xi(t\nabla u)$ as $n\uparrow\infty$. As $\Xi$ is lower semi-continuous and also ru-usc, we compute, by Fatou's lemma,
\begin{align*}
\int_O\Xi(\nabla u(y))dy\le \int_O\liminf_{t\uparrow1}\Xi(t\nabla u(y))dy&\le \liminf_{t\uparrow1}\int_O\Xi(t\nabla u(y))dy\\
&\le \limsup_{t\uparrow1}\int_O\Xi(t\nabla u(y))dy\le \int_O\Xi(\nabla u(y))dy.
\end{align*}
Hence,
\[\lim_{t\uparrow1}\lim_{n\uparrow\infty}\int_O\Xi(\nabla u_{n,t})= \lim_{t\uparrow1}\int_O\Xi(t \nabla u)=\int_O\Xi(\nabla u),\]
and similarly
\[\lim_{t\uparrow1}\lim_{n\uparrow\infty}\int_O\Theta(\nabla u_{n,t})= \int_O\Theta(\nabla u),\]
so that the conclusion follows from a standard diagonalization argument. The other properties are deduced in a similar way.
\end{proof}

For technical reasons, we need in the proof of Proposition~\ref{prop:gammasupN} to further approximate piecewise affine functions by refined ones with smoother variations. The precise approximation result we need is the following.

\begin{prop}\label{prop:approxaffine}
Let $u$ be an $\R^m$-valued continuous piecewise affine function on a bounded Lipschitz domain $O\subset\R^d$. Consider the open partition $O=\biguplus_{l=1}^kO^l$ associated with $u$ (i.e. $u$ is affine on each piece $O^l$). Define $M:=(\bigcup_{l=1}^k\partial O^l)\setminus\partial O$ the interior boundary of this partition of $O$, and, for fixed $r>0$, also define $M_r:=(M+B_r)\cap O$ the $r$-neighborhood of this interior boundary. Then, for all $\kappa>0$, there exists a continuous piecewise affine function $u_{\kappa,r}$ on $O$ with the following properties:
\begin{enumerate}[(i)]
\item $\nabla u_{\kappa,r}=\nabla u$ pointwise on $O\setminus M_r$, and $\limsup_{r\downarrow0}\sup_{0<\kappa\le1}\|u_{\kappa,r}-u\|_{\Ld^\infty(O)}=0$;
\item $\nabla u_{\kappa,r}\in \conv(\{\nabla u(x):x\in O\})$ pointwise (where $\conv(\cdot)$ denotes the convex hull);
\item denoting by $O:=\biguplus_{l=1}^{n_{\kappa,r}}O_{\kappa,r}^l$ the open partition associated with $u_{\kappa,r}$, and $\Lambda^l_{\kappa,r}:= \nabla u_{\kappa,r}|_{O^l_{\kappa,r}}$ for all~$l$, we have $|\Lambda^i_{\kappa,r}-\Lambda^j_{\kappa,r}|\le \kappa$ for all $i,j$ with $\partial O^i_{\kappa,r}\cap\partial O^j_{\kappa,r}\ne\varnothing$.\qed
\end{enumerate}
\end{prop}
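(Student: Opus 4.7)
The plan is to exploit the Hadamard rank-one compatibility across interior faces of the partition $O=\biguplus_{l=1}^k O^l$. Since $u$ is continuous and piecewise affine, across any codimension-one face $F_{ij}:=\partial O^i\cap\partial O^j$ with unit normal $\nu_{ij}$ the gradient jump must be of rank one, i.e.\ $\Lambda_j-\Lambda_i = \xi_{ij}\otimes\nu_{ij}$ for some $\xi_{ij}\in\R^m$. Locally near the relative interior of such a face, $u$ takes the form $\Lambda_i\cdot x + c + \xi_{ij}(\nu_{ij}\cdot x - c_{ij})_+$, and the natural refinement is to replace the kink $s\mapsto s_+$ by a piecewise linear profile $h_r\colon\R\to\R$ coinciding with $s_+$ for $|s|\ge r$ and having $n:=\lceil\max_{ij}|\xi_{ij}|/\kappa\rceil$ affine pieces with slopes $0,1/n,\dots,1$ on $[-r,r]$. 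In the slab $\{|\nu_{ij}\cdot(x-x_0)|<r\}$ this produces a piecewise affine modification whose gradients are $\Lambda_i + \frac{k}{n}(\Lambda_j-\Lambda_i)\in\conv\{\Lambda_i,\Lambda_j\}$ for $k=0,\dots,n$, with consecutive slabs differing in gradient by at most $\kappa$ in norm, and $\Ld^\infty$ error of order $r\,|\xi_{ij}|$. Away from the lower-dimensional skeleton $M'\subset M$ where three or more pieces meet, this already yields (i)--(iii).

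The main obstacle is the behaviour near $M'$, where slab constructions for neighbouring faces overlap and conflict, and where many original gradients coexist at arbitrarily small scales. My plan to handle this is via a mollification-plus-triangulation argument at an auxiliary scale $\delta=\delta(r)\ll r$: replace $u$ on the $\delta$-neighbourhood $N_\delta(M')\subset M_r$ by $v_\delta := \rho_{\delta/2}\ast\bar u$, where $\bar u$ is a Lipschitz extension of $u$ to a neighbourhood of $\adh O$ and $\rho_{\delta/2}$ a standard mollifier, and then interpolate $v_\delta$ piecewise linearly on a simplicial subdivision of $N_\delta(M')$ of mesh $\eta := c\,\kappa\,\delta/\max_{ij}|\xi_{ij}|$ for some small geometric constant $c>0$. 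Since the $C^2$-seminorm of $v_\delta$ is of order $\delta^{-1}\max_{ij}|\xi_{ij}|$, this interpolation has gradient varying by at most $\kappa$ across any two adjacent simplices. The convex-hull property (ii) is automatic because $\nabla v_\delta$ is a convex average of values of $\nabla u$.

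Finally, I would glue the slab construction (outside $N_{2\delta}(M')$) with the triangulated construction (inside $N_\delta(M')$) on the transition annulus $\{\delta(r)\le \dist(\cdot,M')\le 2\delta(r)\}$ by using a common simplicial refinement matching node values on both sides. The gradient jumps across this glued interface stay of order $\kappa$ provided $\delta(r)$ is chosen small enough depending on the geometry of the $\{O^l\}$ but uniformly in $\kappa$, and the $\Ld^\infty$ error is bounded by $C(r+\delta(r))\max_{ij}|\xi_{ij}|$, which is uniform in $\kappa$ and vanishes as $r\downarrow 0$; this gives (i). Properties (ii) and (iii) are preserved at every step by construction. The hard part will be the combinatorial set-up of the common refinement in the transition annulus: one must adjust the triangulations on each side so that the nodes on the common interface carry matching piecewise-affine values while keeping both the convex-hull property (ii) and the small-jump property (iii); this is essentially a finite, purely geometric verification but requires some care.
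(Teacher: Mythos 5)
The key step you flag as hard --- gluing the slab construction near codimension-one faces to the mollified construction near the lower-dimensional skeleton $M'$ --- is exactly where the real work lies, and you leave it unresolved. This is a genuine gap: the transition requires a common simplicial refinement that simultaneously matches node values on the interface (for continuity), keeps gradients in the convex hull (ii), and controls neighbouring gradient jumps by $\kappa$ (iii). Matching node values constrains only the tangential gradient across the interface, not the normal jump, and carrying this out near a codimension-$\ge2$ set in arbitrary dimension $d$ is substantially harder than the rest of the argument, which defeats the purpose of the slab construction in the first place.

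The paper avoids all of this by mollifying the \emph{whole} of $u$, not just near $M'$. It sets $u_r := \rho_r*\bigl[\alpha_r\,u(\cdot/\alpha_r)\bigr]$ with $\alpha_r>1$ a small dilation factor (chosen so that $(\alpha_r-1)R_0=r/2$, $R_0$ the outer radius of $O$ from an interior point) and with $\supp\rho_r$ small enough that the convolution is defined on $\adh O$ and $\nabla u_r=\nabla u$ pointwise on $O\setminus M_r$. Since $u_r$ is smooth with $L_r:=\max_{\adh O}|\nabla\nabla u_r|<\infty$, one then triangulates $O$ (refining the partition $\{O^l\setminus M_r\}\cup\{M_r\}$) at mesh size $\le\kappa/(2L_r)$ and takes $u_{\kappa,r}$ to be the piecewise affine interpolant of $u_r$ over this triangulation. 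Property (i) is built in, (iii) follows from the mesh bound together with the $C^2$-control on $u_r$, and (ii) follows from Jensen's inequality applied to the convolution. The rank-one (Hadamard) structure of the gradient jumps, which you carefully set up, plays no role in this and is not needed.
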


\begin{proof}
Let $u$, $O$ and $r$ be fixed. Without loss of generality, we can assume $0\in O$. Denote $r_0:=\dist(0,\partial O)$ and $R_0:=\max_{x\in\partial O}\dist(0,x)$, and define $\alpha_r>1$ by $(\alpha_r-1)R_0=r/2$. Choose a nonnegative smooth function $\rho_r$ supported in $B_{(\alpha_r-1)r_0}$ with $\int\rho_r=1$, and consider the smooth function $u_r$ on $O$ defined by $u_r=\rho_r\ast[\alpha_r u(\cdot/\alpha_r)]$. Since by definition inequality $|\alpha_rx-x|\le(\alpha_r-1)R_0= r/2$ holds for any $x\in O$, we easily check $\nabla u_r=\nabla u$ on the set $O\setminus M_r$. As $u_r$ is smooth, we can consider $L_r:=\max_{x\in \adh O}|\nabla\nabla u_r(x)|<\infty$. Choose a triangulation $(O_{\kappa,r}^l)_{l=1}^{n_{\kappa,r}}$ of $O$ which is a refinement of the partition $\{O^l\setminus M_r:l=1,\ldots,k\}\cup\{M_r\}$, such that the diameter of each of the $O_{\kappa,r}^l$'s is at most $\kappa/(2L_r)$. Now construct as usual the piecewise affine approximation $u_{\kappa,r}$ of $u_r$ with respect to the triangulation $(O_{\kappa,r}^l)_l$ (see e.g. \cite[Proposition~2.1 of Chapter~X]{Ekeland76}). By construction, $\nabla u_{\kappa,r}=\nabla u_r=\nabla u$ on $O\setminus M_r$, since $u_r$ is affine on each connected component of $O\setminus M_r$. Also note that the construction ensures that $\nabla u_{\kappa,r}$ belongs pointwise to the set $\{\nabla u_r(x):x\in O\}$ (as a consequence of the mean value theorem), which is by definition included in the convex hull $\conv(\{\nabla u(x):x\in O\})$. Moreover, if $\partial O^i_{\kappa,r}\cap \partial O^j_{\kappa,r}\ne\varnothing$, it implies that $O^i_{\kappa,r}\cup O^j_{\kappa,r}$ has diameter bounded by $\kappa/L_r$. The construction of $u_{\kappa,r}$ then gives
\[\left|\nabla u_{\kappa,r}|_{O^i_{\kappa,r}}-\nabla u_{\kappa,r}|_{O^j_{\kappa,r}}\right|\le\sup_{x\in O^i_{\kappa,r}}\sup_{y\in O^j_{\kappa,r}}|\nabla u_r(x)-\nabla u_r(y)|\le \frac{\kappa}{L_r}\sup_{x\in O}|\nabla\nabla u_r(x)|\le \kappa,\]
which proves property~(iii). Finally, the last property of~(i) directly follows from the construction.
\end{proof}

To treat the case of periodic boundary conditions, we further need the following periodic version of the previous result. The proof is omitted, since it is a direct adaptation on the torus of the proof above.

\begin{prop}\label{prop:approxaffineper}
Let $u$ be an $\R^m$-valued continuous piecewise affine $Q$-periodic function on $\R^d$. Consider the periodic partition $\R^d=\biguplus_{l=1}^\infty O^l$ associated with $u$ (i.e. $u$ is affine on each piece $O^l$). Define $M:=\bigcup_{l=1}^\infty\partial O^l$ the boundary of this partition, and, for fixed $r>0$, also define $M_r:=(M+B_r)\cap Q$ the $r$-neighborhood of this boundary. Then, for all $\kappa>0$, there exists a continuous piecewise affine $Q$-periodic function $u_{\kappa,r}$ on $\R^d$ with the following properties:
\begin{enumerate}[(i)]
\item $\nabla u_{\kappa,r}=\nabla u$ pointwise on $\R^d\setminus M_r$, and $\limsup_{r\downarrow0}\sup_{0<\kappa\le1}\|u_{\kappa,r}-u\|_{\Ld^\infty(Q)}=0$;
\item $\nabla u_{\kappa,r}\in \conv(\{\nabla u(x):x\in Q\})$ pointwise (where $\conv(\cdot)$ denotes the convex hull);
\item denoting by $\R^d:=\biguplus_{l=1}^{\infty}T_{\kappa,r}^l$ the $Q$-periodic partition associated with $u_{\kappa,r}$, and $\Lambda^l_{\kappa,r}:= \nabla u_{\kappa,r}|_{T^l_{\kappa,r}}$ for all~$l$, we have $|\Lambda^i_{\kappa,r}-\Lambda^j_{\kappa,r}|\le \kappa$ for all $i,j$ with $\partial T^i_{\kappa,r}\cap\partial T^j_{\kappa,r}\ne\varnothing$.\qed
\end{enumerate}
\end{prop}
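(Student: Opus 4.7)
The plan is to adapt the proof of Proposition~\ref{prop:approxaffine} to the torus, with the main simplification that no dilation is needed (periodicity does the work of keeping the mollification ``inside the domain''). Let $u$ be the given $Q$-periodic continuous piecewise affine function on $\R^d$, with associated $Q$-periodic partition $\R^d=\biguplus_{l=1}^\infty O^l$ and interior boundary $M=\bigcup_l\partial O^l$. Fix $r>0$.

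First, I would regularize $u$ on the torus. Pick a nonnegative mollifier $\rho\in C_c^\infty(\R^d)$ with $\int\rho=1$ and $\supp\rho\subset B_1$, set $\rho_r(x):=(2/r)^d\rho(2x/r)$ so that $\supp\rho_r\subset B_{r/2}$, and define $u_r:=\rho_r\ast u$. Since $u$ is $Q$-periodic, so is $u_r$; moreover $u_r\in C^\infty(\R^d;\R^m)$ with $\nabla u_r=\rho_r\ast\nabla u$, and if $x\in \R^d\setminus M_{r/2}$ then $\nabla u$ is constant on $B_{r/2}(x)$, hence $\nabla u_r(x)=\nabla u(x)$ there, and a fortiori on $\R^d\setminus M_r$. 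Further, $\nabla u_r$ takes values in $\conv(\{\nabla u(x):x\in Q\})$ by Jensen. By compactness of the torus, $L_r:=\|\nabla^2 u_r\|_{\Ld^\infty(Q)}<\infty$ and $\|u_r-u\|_{\Ld^\infty(Q)}\le (r/2)\|\nabla u\|_{\Ld^\infty(Q)}$, the latter being independent of $\kappa$ and going to $0$ as $r\downarrow0$.

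Next, given $\kappa\in(0,1]$, construct a $Q$-periodic simplicial triangulation $(T^l_{\kappa,r})_{l\ge1}$ of $\R^d$ that refines the partition $\{O^l\setminus M_r:l\ge1\}\cup\{M_r+Q\text{-translates of itself}\}$ and whose simplices all have diameter at most $\kappa/(2L_r)$; this can be built by starting from a periodic triangulation and subdividing, using the Lipschitz structure of $M$. Define $u_{\kappa,r}$ to be the continuous piecewise affine interpolant of $u_r$ at the vertices of this triangulation; by periodicity of the triangulation and of $u_r$, $u_{\kappa,r}$ is automatically $Q$-periodic. Since $u_r$ is itself affine on each connected component of $\R^d\setminus M_r$ (where it coincides with $u$), the interpolant satisfies $u_{\kappa,r}=u_r=u$ on $\R^d\setminus M_r$, which gives property~(i); moreover $\|u_{\kappa,r}-u_r\|_{\Ld^\infty(Q)}\le (\kappa/(2L_r))\|\nabla u_r\|_{\Ld^\infty}\le (\kappa/(2L_r))\|\nabla u\|_{\Ld^\infty}$, which combined with the previous estimate yields $\limsup_{r\downarrow 0}\sup_{0<\kappa\le 1}\|u_{\kappa,r}-u\|_{\Ld^\infty(Q)}=0$.

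Finally, the two remaining properties follow by the same argument as in Proposition~\ref{prop:approxaffine}: on each simplex $T^l_{\kappa,r}$, the mean value theorem gives $\nabla u_{\kappa,r}|_{T^l_{\kappa,r}}=\nabla u_r(\xi^l)$ for some $\xi^l\in T^l_{\kappa,r}$, so $\nabla u_{\kappa,r}$ takes values in $\{\nabla u_r(x):x\in Q\}\subset\conv(\{\nabla u(x):x\in Q\})$, proving~(ii); and whenever $\partial T^i_{\kappa,r}\cap\partial T^j_{\kappa,r}\ne\varnothing$ the set $T^i_{\kappa,r}\cup T^j_{\kappa,r}$ has diameter at most $\kappa/L_r$, so
\[
|\Lambda^i_{\kappa,r}-\Lambda^j_{\kappa,r}|=|\nabla u_r(\xi^i)-\nabla u_r(\xi^j)|\le L_r\cdot \frac{\kappa}{L_r}=\kappa,
\]
which is~(iii). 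The only mildly delicate point is the construction of a periodic triangulation of the torus that refines $\{O^l\setminus M_r\}$ with the required diameter bound; since $u$ has only finitely many affine pieces modulo $Q$-periodicity and $M\cap Q$ is a finite union of hyperplane pieces, this is a standard exercise in Lipschitz triangulation on $\T^d$, and is the only step that departs substantively from the non-periodic proof.
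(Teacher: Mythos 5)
Your proof is correct and is exactly the ``direct adaptation on the torus'' that the paper has in mind when it omits the proof: you rightly observe that periodicity makes the dilation $u(\cdot/\alpha_k)$ superfluous, and the remaining steps (periodic mollification with $\supp\rho_r\subset B_{r/2}$, a periodic refining triangulation with mesh size $\kappa/(2L_r)$, piecewise affine interpolation, and the estimates giving (i)--(iii)) carry over verbatim from Proposition~\ref{prop:approxaffine}. Your invocation of the mean value theorem to write $\nabla u_{\kappa,r}|_{T^l}=\nabla u_r(\xi^l)$ is the same informal shorthand the paper itself uses in the proof of Proposition~\ref{prop:approxaffine} (strictly speaking the interpolant gradient is only within $O(L_r\,\mathrm{diam}\,T^l)$ of $\nabla u_r(v_0^l)$ for shape-regular simplices, not exactly a value of $\nabla u_r$), so you inherit rather than introduce that imprecision.
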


\bibliographystyle{plain}
\bibliography{biblio}

\def\cprime{$'$} \def\cprime{$'$}
\begin{thebibliography}{10}

\bibitem{ACPDMP-92}
E.~Acerbi, V.~Chiad{\`o}~Piat, G.~Dal~Maso, and D.~Percivale.
\newblock An extension theorem from connected sets, and homogenization in
  general periodic domains.
\newblock {\em Nonlinear Anal.}, 18(5):481--496, 1992.

\bibitem{Acerbi-Fusco-84}
E.~Acerbi and N.~Fusco.
\newblock Semicontinuity problems in the calculus of variations.
\newblock {\em Arch. Rational Mech. Anal.}, 86(2):125--145, 1984.

\bibitem{Alber-05}
Ya.~I. Alber.
\newblock James orthogonality and orthogonal decompositions of {B}anach spaces.
\newblock {\em J. Math. Anal. Appl.}, 312(1):330--342, 2005.

\bibitem{Alicandro-Cicalese-Gloria-11}
R.~Alicandro, M.~Cicalese, and A.~Gloria.
\newblock Integral representation results for energies defined on stochastic
  lattices and application to nonlinear elasticity.
\newblock {\em Arch. Ration. Mech. Anal.}, 200(3):881--943, 2011.

\bibitem{AliprantisBorder}
Ch.~D. Aliprantis and K.~C. Border.
\newblock {\em Infinite dimensional analysis}.
\newblock Springer, Berlin, third edition, 2006.
\newblock A hitchhiker's guide.

\bibitem{AnzaHafsa-Mandallena-10}
O.~Anza~Hafsa and J.-P. Mandallena.
\newblock Homogenization of singular integrals in $w^{1,\infty}$.
\newblock arXiv:0912.5408v2, 2010.

\bibitem{Anza-Mandallena-11}
O.~Anza~Hafsa and J.-P. Mandallena.
\newblock Homogenization of nonconvex integrals with convex growth.
\newblock {\em J. Math. Pures Appl. (9)}, 96(2):167--189, 2011.

\bibitem{Attouch-84}
H.~Attouch.
\newblock {\em Variational convergence for functions and operators}.
\newblock Applicable Mathematics Series. Pitman (Advanced Publishing Program),
  Boston, MA, 1984.

\bibitem{Ball-76}
J.~M. Ball.
\newblock Convexity conditions and existence theorems in nonlinear elasticity.
\newblock {\em Arch. Rational Mech. Anal.}, 63(4):337--403, 1976/77.

\bibitem{BillCPM}
P.~Billingsley.
\newblock {\em Convergence of probability measures}.
\newblock Wiley Series in Probability and Statistics: Probability and
  Statistics. John Wiley \& Sons Inc., New York, deuxième édition edition,
  1999.
\newblock A Wiley-Interscience Publication.

\bibitem{Braides-85}
A.~Braides.
\newblock Homogenization of some almost periodic coercive functional.
\newblock {\em Rend. Accad. Naz. Sci. XL Mem. Mat. (5)}, 9(1):313--321, 1985.

\bibitem{Braides98}
A.~Braides and A.~Defranceschi.
\newblock {\em Homogenization of multiple integrals}, volume~12 of {\em Oxford
  Lecture Series in Mathematics and its Applications}.
\newblock The Clarendon Press Oxford University Press, New York, 1998.

\bibitem{Braides-Garroni-95}
A.~Braides and A.~Garroni.
\newblock Homogenization of periodic nonlinear media with stiff and soft
  inclusions.
\newblock {\em Math. Models Methods Appl. Sci.}, 5(4):543--564, 1995.

\bibitem{Braides-08}
A.~Braides, M.~Maslennikov, and L.~Sigalotti.
\newblock Homogenization by blow-up.
\newblock {\em Appl. Anal.}, 87(12):1341--1356, 2008.

\bibitem{CCDAG-02}
L.~Carbone, D.~Cioranescu, R.~De~Arcangelis, and A.~Gaudiello.
\newblock Homogenization of unbounded functionals and nonlinear elastomers.
  {T}he general case.
\newblock {\em Asymptot. Anal.}, 29(3-4):221--272, 2002.

\bibitem{CCDAG-04}
L.~Carbone, D.~Cioranescu, R.~De~Arcangelis, and A.~Gaudiello.
\newblock Homogenization of unbounded functionals and nonlinear elastomers.
  {T}he case of the fixed constraints set.
\newblock {\em ESAIM Control Optim. Calc. Var.}, 10(1):53--83 (electronic),
  2004.

\bibitem{Carbone-DeArcangelis-02}
L.~Carbone and R.~De~Arcangelis.
\newblock {\em Unbounded functionals in the calculus of variations}, volume 125
  of {\em Chapman \& Hall/CRC Monographs and Surveys in Pure and Applied
  Mathematics}.
\newblock Chapman \& Hall/CRC, Boca Raton, FL, 2002.
\newblock Representation, relaxation, and homogenization.

\bibitem{CorboEsposito-DeArcangelis-92}
A.~Corbo~Esposito and R.~De~Arcangelis.
\newblock The {L}avrentieff phenomenon and different processes of
  homogenization.
\newblock {\em Comm. Partial Differential Equations}, 17(9-10):1503--1538,
  1992.

\bibitem{DalMaso-Modica-86}
G.~Dal~Maso and L.~Modica.
\newblock Nonlinear stochastic homogenization and ergodic theory.
\newblock {\em J. Reine Angew. Math.}, 368:28--42, 1986.

\bibitem{DeBuhan-Gloria-LeTallec-Vidrascu-10}
M.~De~Buhan, A.~Gloria, P.~Le~Tallec, and M.~Vidrascu.
\newblock {Reconstruction of a constitutive law for rubber from in silico
  experiments using Ogden‚Äôs laws}.
\newblock 2015.

\bibitem{DG-15}
M.~Duerinckx and A.~Gloria.
\newblock Analyticity of homogenized coefficients under bernoulli perturbations
  and the clausius-mossotti formulas.
\newblock arXiv:1502.03303, 2015.

\bibitem{EGMN-15}
A.-C. Egloffe, A.~Gloria, J.-C. Mourrat, and T.~N. Nguyen.
\newblock Random walk in random environment, corrector equation and homogenized
  coefficients: from theory to numerics, back and forth.
\newblock {\em IMA J. Numer. Anal.}, 35(2):499--545, 2015.

\bibitem{Ekeland76}
I.~Ekeland and R.~Temam.
\newblock {\em Convex analysis and variational problems}.
\newblock North-Holland Publishing Co., Amsterdam, 1976.
\newblock Traduit du français, Studies in Mathematics and its Applications,
  Vol. 1.

\bibitem{Fonseca-88}
I.~Fonseca.
\newblock The lower quasiconvex envelope of the stored energy function for an
  elastic crystal.
\newblock {\em J. Math. Pures Appl. (9)}, 67(2):175--195, 1988.

\bibitem{Fonseca-Muller-92}
I.~Fonseca and S.~M{\"u}ller.
\newblock Quasi-convex integrands and lower semicontinuity in {$L^1$}.
\newblock {\em SIAM J. Math. Anal.}, 23(5):1081--1098, 1992.

\bibitem{Gloria-LeTallec-Vidrascu-08b}
A.~Gloria, P.~Le~Tallec, and M.~Vidrascu.
\newblock Foundation, analysis, and numerical investigation of a variational
  network-based model for rubber.
\newblock {\em Contin. Mech. Thermodyn.}, 26(1):1--31, 2014.

\bibitem{Glotto-Neukamm-15}
A.~Gloria, S.~Neukamm, and F.~Otto.
\newblock Quantification of ergodicity in stochastic homogenization: optimal
  bounds via spectral gap on glauber dynamics.
\newblock {\em Inventiones Mathematicae}, 199(2):455--515, 2015.

\bibitem{Glotto-Neukamm-15a}
A.~Gloria, S.~Neukamm, and F.~Otto.
\newblock A regularity theory for random elliptic operators and homogenization.
\newblock 2015.
\newblock arXiv:1409.2678.

\bibitem{Gloria-Nolen-14}
A.~Gloria and J.~Nolen.
\newblock A quantitative central limit theorem for the effective conductance on
  the discrete torus.
\newblock {\em Commun. Pure Appl. Math.}, 2015.
\newblock In press.

\bibitem{Gloria-Penrose-13}
A.~Gloria and M.~D. Penrose.
\newblock Random parking, {E}uclidean functionals, and rubber elasticity.
\newblock {\em Comm. Math. Phys.}, 321(1):1--31, 2013.

\bibitem{JKO94}
V.~V. Jikov, S.~M. Kozlov, and O.~A. Ole{\u\i}nik.
\newblock {\em Homogenization of differential operators and integral
  functionals}.
\newblock Springer-Verlag, Berlin, 1994.
\newblock Traduit du russe par G. A. Iosif{\cprime}yan.

\bibitem{KorSinai}
L.~B. Koralov and Y.~G. Sinai.
\newblock {\em Theory of probability and random processes}.
\newblock Universitext. Springer, Berlin, deuxième edition, 2007.

\bibitem{Krengel-85}
U.~Krengel.
\newblock {\em Ergodic theorems}, volume~6 of {\em de Gruyter Studies in
  Mathematics}.
\newblock De Gruyter, 1985.

\bibitem{Marcellini-78}
P.~Marcellini.
\newblock Periodic solutions and homogenization of nonlinear variational
  problems.
\newblock {\em Ann. Mat. Pura Appl. (4)}, 117:139--152, 1978.

\bibitem{Messaoudi-Michaille-94}
K.~Messaoudi and G.~Michaille.
\newblock Stochastic homogenization of nonconvex integral functionals.
\newblock {\em RAIRO Mod\'el. Math. Anal. Num\'er.}, 28(3):329--356, 1994.

\bibitem{Muller-87}
S.~M{\"u}ller.
\newblock Homogenization of nonconvex integral functionals and cellular elastic
  materials.
\newblock {\em Arch. Rational Mech. Anal.}, 99(3):189--212, 1987.

\bibitem{PapaVara}
G.~C. Papanicolaou and S.~R.~S. Varadhan.
\newblock Boundary value problems with rapidly oscillating random coefficients.
\newblock In {\em Random fields, {V}ol. {I}, {II} ({E}sztergom, 1979)},
  volume~27 of {\em Colloq. Math. Soc. J\'anos Bolyai}, pages 835--873.
  North-Holland, Amsterdam, 1981.

\bibitem{Penrose-01}
M.~D. Penrose.
\newblock Random parking, sequential adsorption, and the jamming limit.
\newblock {\em Comm. Math. Phys.}, 218(1):153--176, 2001.

\bibitem{Schreiber-Penrose-Yukich-07}
T.~Schreiber, M.~D. Penrose, and J.~E. Yukich.
\newblock Gaussian limits for multidimensional random sequential packing at
  saturation.
\newblock {\em Comm. Math. Phys.}, 272(1):167--183, 2007.

\bibitem{Tartar-93}
L.~Tartar.
\newblock Some remarks on separately convex functions.
\newblock In {\em Microstructure and phase transition}, volume~54 of {\em IMA
  Vol. Math. Appl.}, pages 191--204. Springer, New York, 1993.

\end{thebibliography}

\end{document}